\documentclass{amsart}

\usepackage{amssymb, amsmath,mathtools}
\usepackage{mathrsfs}
\usepackage{amscd}
\usepackage{verbatim}
\usepackage{stmaryrd}
\usepackage[dvipsnames]{xcolor}
\usepackage{url}

\usepackage{enumerate}

\usepackage[colorlinks,linkcolor={blue},citecolor={blue},urlcolor={blue},]{hyperref}

\usepackage[colorinlistoftodos,prependcaption,textsize=tiny]{todonotes}


\usepackage{forest}

\usepackage{tabularx}
\newcolumntype{L}{>{\arraybackslash}X}
\usepackage{multirow}

\theoremstyle{plain}
\newtheorem*{maintheorem}{Main Result}
\newtheorem{theorem}{Theorem}[section]
\theoremstyle{remark}
\newtheorem{remark}[theorem]{Remark}

\theoremstyle{plain}

\newtheorem{lemma}[theorem]{Lemma}
\newtheorem{proposition}[theorem]{Proposition}
\newtheorem{definition}[theorem]{Definition}

\newtheorem{assumption}[theorem]{Assumption}

\numberwithin{equation}{section}

\def\N{{\mathbb N}}

\def\R{{\mathbb R}}

\newcommand{\one}{{{\bf 1}}}


\newcommand{\E}{{\mathsf E}}
\renewcommand{\P}{{\mathsf P}}
\newcommand{\F}{{\mathscr F}}


\newcommand{\om}{\omega}
\renewcommand{\O}{\Omega}

\newcommand{\n}{{\rm N}}


\newcommand{\Dom}{\mathcal{O}}

\renewcommand{\emptyset}{\varnothing}

\newcommand{\Tor}{\mathbb{T}}

\newcommand{\A}{{\mathcal A}}

\newcommand{\loc}{\mathrm{loc}}

\newcommand{\calL}{{\mathscr L}}



\newcommand{\h}{{\rm H}}


\newcommand{\Ls}{\mathbb{L}}

\newcommand{\Hs}{\mathbb{H}}




\newcommand{\e}{\mathcal{E}}




\newcommand{\wt}{\widetilde}

\newcommand{\Do}{\mathsf{D}}

\usepackage{stmaryrd}

\newcommand{\forcetwo}{\mathcal{E}}
\newcommand{\force}{\mathcal{F}}
\renewcommand{\div}{\mathrm{div}}
\newcommand{\w}{w}

\newcommand{\p}{\mathbb{P}}
\newcommand{\q}{\mathbb{Q}}
\newcommand{\embed}{\hookrightarrow}

\renewcommand{\l}{\langle}
\renewcommand{\r}{\rangle}
\newcommand{\Progress}{\mathscr{P}}

\newcommand{\Temp}{\theta}
\newcommand{\T}{\Temp}

\newcommand{\op}{\mathcal{J}}
\newcommand{\Ttwo}{\theta^{\tau}}
\newcommand{\Tthree}{\varphi}
\newcommand{\ellip}{\nu}

\newcommand{\vtau}{v^{\tau}}
\newcommand{\Hr}{H_{{\rm R}}}
\newcommand{\kone}{\kappa}

\newcommand{\Borel}{\mathscr{B}}
\newcommand{\Dr}{\Delta_{{\rm R}}^{{\rm w}}}

\newcommand{\norm}{\mathcal{N}}

\newcommand{\pr}{\mathbb{P}_{\h}}
\newcommand{\qr}{\mathbb{Q}_{\h}}
\newcommand{\ft}{F_{\T}}
\newcommand{\fvt}{f}
\newcommand{\gvtn}{g_n}

\newcommand{\fv}{F_{v}}

\newcommand{\Lv}{\mathcal{L}_v}
\newcommand{\LTphi}{\mathcal{L}_{\psi}}
\newcommand{\Lvphi}{\mathcal{L}_{\phi}}
\newcommand{\Lvp}{\mathcal{P}_{\phi}}
\newcommand{\Lp}{\mathcal{P}_{\hp}}
\newcommand{\Lpp}{\mathcal{P}_{\hp,\phi}}
\newcommand{\Lpg}{\mathcal{P}_{\hp,G}}
\newcommand{\hp}{\gamma}
\newcommand{\LT}{\mathcal{L}_{\T}}
\newcommand{\LTw}{\mathcal{L}_{\T}^{{\rm w}}}

\newcommand{\Br}{\mathrm{B}}

\newcommand{\gtn}{G_{\T,n}}
\newcommand{\gvn}{G_{v,n}}
\newcommand{\Ff}{{\rm F}}
\newcommand{\M}{\mathrm{M}_m}
\newcommand{\vz}{v_3}
\newcommand{\y}{\Xi}

\newcommand{\Ib}{\overline{I}}

\allowdisplaybreaks


\begin{document}


\date\today

\title[Primitive equations with transport noise and turbulent pressure]{The stochastic primitive equations with transport noise and turbulent pressure}

\subjclass[2010]{Primary 35Q86; Secondary 35R60, 60H15, 76M35, 76U60} 


\keywords{stochastic partial differential equations, primitive equations, global existence, transport noise, stochastic maximal regularity, turbulence flows, Kraichnan's turbulence}

\author[Agresti]{Antonio Agresti}
\address{Department of Mathematics,
	TU Kaiserslautern, Paul-Ehrlich-Stra{\ss}e 31,
	67663 Kaiserslautern, Germany}
\email{antonio.agresti92@gmail.com}
\curraddr{Institute of Science and Technology Austria (ISTA), Am Campus 1, 3400 Klosterneuburg, Austria}

\author[Hieber]{Matthias Hieber} 
\address{Department of Mathematics,
	TU Darmstadt, Schlossgartenstr. 7, 64289 Darmstadt, Germany}
\email{hieber@mathematik.tu-darmstadt.de}

\author[Hussein]{Amru Hussein}
\address{Department of Mathematics,
	TU Kaiserslautern, Paul-Ehrlich-Stra{\ss}e 31,
	67663 Kaiserslautern, Germany}
\email{hussein@mathematik.uni-kl.de}

\author[Saal]{Martin Saal}
\address{Scuola Normale Superiore, Piazza dei Cavalieri 7, 56126 Pisa, Italy}
\email{msaal@mathematik.tu-darmstadt.de}
\curraddr{TU Darmstadt, Schlossgartenstr. 7, 64289 Darmstadt, Germany}

\thanks{The first author has  been supported partially by the Nachwuchsring – Network for the promotion of young scientists – at TU Kaiserslautern. The first and the third authors have been  supported by MathApp – Mathematics Applied to Real-World Problems - part of the Research Initiative of the Federal State of Rhineland-Palatinate, Germany.
The fourth author gratefully acknowledges the financial support of the Deutsche Forschungsgemeinschaft (DFG) through the research fellowship SA 3887/1-1.
}

\begin{abstract}
In this paper we consider the stochastic primitive equation for geophysical flows  subject to transport noise and turbulent pressure.
Admitting very rough noise terms, the global existence and uniqueness of solutions to this stochastic partial differential equation are proven using stochastic maximal $L^2$-regularity, the theory of critical spaces for stochastic evolution equations, and global a priori bounds. 
Compared to other results in this direction, we do not need any smallness assumption on the transport noise which acts directly on the velocity field and we also allow rougher noise terms.
The adaptation to Stratonovich type noise  and, more generally, to variable viscosity and/or conductivity are discussed as well.   
\end{abstract}

\maketitle

\addtocontents{toc}{\protect\setcounter{tocdepth}{1}}
\tableofcontents

\section{Introduction}\label{s:intro}
In this paper we study the stochastic primitive equation with transport noise and turbulent pressure. 
The primitive equations are one of the fundamental models 
for geophysical flows  used to describe
oceanic and atmospheric dynamics. They are derived from the
Navier-Stokes equations in domains where the vertical scale
is much smaller than the horizontal scale by the small aspect ratio limit.
Detailed information on the geophysical background for the various versions of the deterministic
primitive equations are given e.g.\ in \cite{Ped, Vallis06}. 
The introduction of additive and multiplicative noise
into models for geophysical flows  can  
 be used on the one hand 
 to account for numerical
and empirical uncertainties and errors
and on the other hand as 
subgrid-scale parameterizations for data assimilation, and ensemble prediction 
as described in the review articles \cite{Delsole04, Franzke14, Palmer19}.

In this paper we are mainly concerned with stochastic perturbations of transport type. In the study of turbulent flows,  transport noise has been introduced by R.H.~Kraichanan in 
\cite{K68,K94} and has been widely studied in the context of stochastic Navier-Stokes equations, see \cite{MR01,MR04} for a physical justification and also  \cite{BCF91,BCF92,MR05,F_intro,AV21_NS} and the references therein for related mathematical results.
The aim of this paper is to give a  systematic and detailed treatment of transport noise in the context of the primitive equations.

Here, we consider the following stochastic primitive equations on  the cylindrical domain $\Dom=\Tor^2\times (-h,0)$, where $h>0$, $\Tor^2$ is the two-dimensional torus, and we denote the coordinates by $(x_{\h},x_3)\in\Tor^2\times (-h,0)$ with the horizontal part $x_{\h}=(x_1,x_2)\in\Tor^2$, $\div_{\h}=\partial_1+\partial_2$ and $\nabla_{\h}=(\partial_1,\partial_2)$, i.e., the system
\begin{equation}
	\label{eq:primitive_full}
	\begin{cases}
		\displaystyle{d v -\Delta v\, dt=\Big[-\nabla_{\h} P  -(v\cdot \nabla_{\h})v- w\partial_3 v + \fv
		+\partial_{\hp} \wt{P} \Big]dt }\\
		\qquad \qquad \ \ \ \ \ \qquad   \ \ \   
		\displaystyle{+\sum_{n\geq 1}\Big[(\phi_{n}\cdot\nabla) v-\nabla_{\h}\wt{P}_n  +\gvn\Big] d\beta_t^n}, 
		&\text{on }\Dom,\\
		\displaystyle{d \T -\Delta \T\, dt=\Big[ -(v\cdot \nabla_{\h})\T- w\partial_3 \T+ \ft \Big]dt} \\
		\qquad \qquad\qquad\qquad\ \ \ \ \ \qquad
		\displaystyle{+\sum_{n\geq 1}\Big[(\psi_{n}\cdot\nabla) \T+\gtn \Big] d\beta_t^n}, 
		&\text{on }\Dom,\\
		\partial_{3} P+ \kone \T=0, &\text{on }\Dom,\\
		\partial_{3} \wt{P}_n=0, &\text{on }\Dom,\\
		\div_{\h} v+\partial_3 w=0,&\text{on }\Dom, \\ 
		v(\cdot,0)=v_0,\ \ \T(\cdot,0)=\T_0, &\text{on }\Dom.
	\end{cases}
\end{equation}
In the system \eqref{eq:primitive_full} the unknowns are: the velocity field $u=(v,w)\colon [0,\infty)\times \O\times \Dom\to \R^3$ where $v=(v^k)_{k=1}^2: [0,\infty)\times \O\times \Dom\to \R^2$ is the horizontal part of the velocity, the pressure $P\colon[0,\infty)\times \O\times  \Dom\to \R$, the components of the turbulent pressure  $\wt{P}_n\colon[0,\infty)\times \O\times  \Tor^2 \to \R$ and the temperature $\T\colon[0,\infty)\times \O\times \Dom\to \R$. 
The driving processes $(\beta_t^n\colon t\geq 0)_{n\geq 1}$ are given by a sequence of independent standard Brownian motions on some filtered probability space $(\O,\mathcal{A},(\F_t)_{t\geq 0},\P)$. 
Moreover, $\phi_{n}=(\phi^j_n)_{j=1}^3,\psi_{n}=(\psi_n^j)_{j=1}^3\colon[0,\infty)\times \O\times \Dom\to \R^3$, $\kone\colon [0,\infty)\times \O\times \Dom\to \R$  and $\hp_{n}=(\hp_{n}^{\ell,m})_{\ell,m=1}^2 \colon [0,\infty)\times \O\times \Dom\to \R^{2\times2}$ are given functions and 
 $$
 \partial_{\hp} \wt{P}:=\Big(\sum_{n\geq 1}\sum_{m=1}^2 \hp^{\ell,m}_n \partial_m \wt{P}_n\Big)_{\ell=1}^2
 $$ 
 describe the deterministic effect of the turbulent pressure.
Finally, $\fv,\ft,\gvn$ and $\gtn$ are given maps depending on $v,\T,\nabla v$ and $\nabla \T$ describing deterministic and stochastic forces also taking into account lower order effects like the Coriolis force. 
For a physical justification of the occurrence of the turbulent pressure $\wt{P}_n$, i.e., a pressure term within the stochastic integral and for the related deterministic contribution $\partial_{\hp} \wt{P}$, we refer to \cite{MR01} and \cite[Example 1]{MR04}.

The system \eqref{eq:primitive_full} is supplemented with  Neumann boundary conditions for the horizontal velocity $v$ and mixed Neumann-Robin boundary conditions for the temperature $\T$ on the top $\Tor^2\times\{0\}$ and the bottom $\Tor^2\times\{-h\}$ of the domain $\Dom$, i.e.,
\begin{equation}
	\label{eq:boundary_conditions}
	\begin{aligned}
		\partial_3 v (\cdot,-h)=\partial_3 v(\cdot,0)=0 \ \  \text{ on }\Tor^2,&\\
		\partial_3 \T(\cdot,-h)= \partial_3 \T(\cdot,0)+\alpha \T(\cdot,0)=0\ \  \text{ on }\Tor^2,&
	\end{aligned}
\end{equation}
where the parameter $\alpha\in \R$ is a given constant, and for the vertical velocity 
\begin{equation}
	\label{eq:boundary_conditions_w}
	\begin{aligned}
		w(\cdot,-h) =w(\cdot,0)=0\ \ \text{ on }\Tor^2.
	\end{aligned}
\end{equation}
In the horizontal directions periodicity is assumed. The results of the current paper also hold in case \eqref{eq:boundary_conditions} are replaced by periodic boundary conditions, see Remark \ref{r:global_primitive}\eqref{it:global_primitive_periodic}.
When modeling the ocean, the system \eqref{eq:primitive_full} is expanded by an equation for the salinity. This leads to terms resembling the ones for the temperature, and this does not lead to additional mathematical difficulties or more restrictive assumptions. Therefore we omit this coupling here to concentrate on the main features.

The mathematical analysis of the {\em deterministic} primitive equations (i.e., $\beta^n_t=0$ in \eqref{eq:primitive_full}) has been pioneered by J.L.~Lions, R.~Teman and S.~Wang  in a series of articles \cite{LiTeWa1, LiTeWa2, LiTeWa3}. There 
the existence of a global, weak solution to the primitive equations is proven for initial data in $v_0\in L^2(\Dom)$ and $\T_0\in L^2(\Dom)$.
The uniqueness of these weak solutions remains an open problem until today, and  only under additional regularity assumptions in the vertical direction they are known to be unique (see e.g.\ \cite{Ju17}). 

A landmark result on the global strong well-posedness of the {\em deterministic} primitive equations subject to homogeneous Neumann conditions on top and bottom for initial data in $H^1(\Dom)$ was shown first by C.~Cao and E.S.~Titi in \cite{CT07}, and independently by R.M.~Kobelkov \cite{Ko07}, via $L^\infty(0,T;H^1(\Dom))$ \textit{a priori} energy estimates. For mixed Dirchlet-Neumann conditions see also \cite{Kukavica_2007}.    
A different approach to the deterministic  primitive equations based on  evolution equations has been introduced in \cite{HK16,GGHHK20}. This approach is based on the analysis of the 
hydrostatic Stokes operator and the corresponding hydrostatic Stokes semigroup. 
For a survey on results concerning the deterministic primitive equations using  energy estimates, we refer to \cite{Li2016} and for a survey concerning the approach based on evolution equations to \cite{HH20_fluids_pressure}.

\emph{Stochastic} versions of the primitive equations have been studied by 
several authors. 
A global well-posedness result for pathwise strong solutions 
is established for multiplicative white noise in time by A.~Debussche, N.~Glatt-Holtz and R.~Temam in \cite{DEBUSSCHE20111123} and the same authors with M.~Ziane in \cite{Debussche_2012}.  There, a  Galerkin approach is used to first show the existence of 
martingale solutions, and then a pathwise uniqueness result is deduced which leads together with a Yamada-Watanabe type result to the 
existence of a local pathwise solutions. The global existences of solutions is then shown by energy estimates where the noise is seen as a perturbation of the linear system. There, one of the difficulties is the handling of the pressure when proving $L^p$-estimates for $p>2$. This is overcome by considering the corresponding Stokes problem with the noise term and then proving estimates for the difference of the  solution of the full non-linear problem and the solution of that Stokes problem. This difference solves a random partial differential equation where analytic tools can be used to estimate the pressure term. A disadvantage of this approach is that it requires the solution of the Stokes problem to be rather smooth and transport noise cannot be included for this reason.

In the recent work  \cite{BS21} by Z.~Brze\'{z}niak and J.~Slav\'{i}k a similar approach is used for the local existence, but instead of considering the Stokes problem, they impose conditions on the noise such that it does not act directly on the pressure when turning to the question of global existence. Hence, by using a hydrostatic version of the Helmholtz projection, they can apply deterministic estimates to the pressure. Transport noise acting on the full velocity field is therefore not included, only the vertical average of $v$ can be transported by the noise. 
By our approach we can overcome both drawbacks at once, we can handle full transport noise acting directly on the pressure.

Let us mention some further results on the stochastic primitive equations.
For additive noise there is a transformation such that the probabilistic 
dependence turns into a parameter for a deterministic system. For this case also the existence 
of a random pull-back attractor is known (see e.g.\ \cite{GH09}). Logarithmic moment bounds in $H^2(\Dom)$ are obtained 
in \cite{GHKVZ14} and used to prove the existence of ergodic invariant measures  
in $H^1(\Dom)$. A construction of weak-martingale solutions, that means martingale 
solutions the regularity of which in space and time is the one of a weak solution, by an implicit Euler scheme is given in 
\cite{GHTW17}. Large deviation principles are 
known for small multiplicative noise (see e.g.\ \cite{DONG2017}) and small times (see e.g.\ \cite{DongZhang18}), for 
an extension to  transport noise and moderate 
deviation principles see \cite{Slavik21}. The existence 
of a Markov selection is proven in \cite{DZ17} for additive noise.
For results in  two dimensions we refer to \cite{GHT11} and the references therein.

Aiming for noise as rough as possible we will first consider a strong-weak setting when investigating the system \eqref{eq:primitive_full}, meaning that the equations for $v$ hold in the strong PDE sense and the one for $\T$ in a weak sense, for the precise definitions of strong-weak solutions see Definition \ref{def:sol_strong_weak}.  Probabilistically, we are concerned with strong solutions. The reason for not considering both equations in the weak sense is that already in the deterministic case the uniqueness issue  for the weak velocity equation is unsolved. We also investigate the strong-strong setting, see Definition \ref{def:sol_strong_strong} for this notion of solution, since this setting is the one for which most deterministic results have been proven.
The main result is the global existence of solutions, see Theorem \ref{t:global_primitive} for the strong-weak setting  
and Theorem \ref{t:global_primitive_strong_strong} for the strong-strong setting. 
For the readers convenience we state here a simplified version. 
We write $\phi^j:=(\phi^j_n)_{n\geq 1}$,  $\psi^j:=(\psi^j_n)_{n\geq 1}$,  $\hp^{\ell,m}:=(\hp_{n}^{\ell,m})_{n\geq 1}$ and $\R_+=(0,\infty)$.

\begin{maintheorem}[Simplified version]
Let 
$\kone$ be constant, $\gvn^k=\gtn=0$,  $\ft=0$, and $\fv= k_0(v^2,-v^1)$ for $k_0\in\R$ be the Coriolis force.
For all $n\geq 1$ let the maps 
\begin{align*}
\phi_n,\psi_n\colon \R_+\times \O\times \Dom\to \R^3 \quad \hbox{and} \quad \hp_n\colon\R_+\times \O\times \Tor^2\to \R^{2\times 2}
\end{align*}
 be $\Progress\otimes \Borel$-measurable, and let for some $\delta>0$ and all $j\in \{1,2,3\}$, $\ell,m\in \{1,2\}$ be
\begin{align*}
\phi^j\in L^\infty(\R_+\times \O;H^{1,3+\delta}(\Dom;\ell^2)),\ \ \ \
  \psi^j\in L^\infty(\R_+\times \O\times \Dom;\ell^2),\\
\text{ and }\  \  \  \ 
\hp^{\ell,m}\in L^{\infty}(\R_+\times \O;L^{3+\delta}(\Tor^2;\ell^2)),&
\end{align*}	
where $\phi^1_n$ and $\phi^2_n$ are assumed to be independent of $x_3$.
Furthermore, assume that there exists $\ellip\in (0,2)$ such that, almost surely (a.s.) for all $t\in \R_+$, $x\in \Dom$ and $\xi\in \R^3$ the parabolicity conditions
		\begin{align*}
			\sum_{n\geq 1} \Big(\sum_{j=1}^3 \phi^j_n(t,x) \xi_j\Big)^2\leq \ellip |\xi|^2
			\ \ \text{ and }\ \ 
			\sum_{n\geq 1} \Big(\sum_{j=1}^3 \psi^j_n(t,x) \xi_j\Big)^2 \leq \ellip |\xi|^2
		\end{align*}
hold. Then for each
	$$
	v_0\in L^0_{\F_0}(\O;\Hs^1(\Dom)) \quad  \text{and}\quad \T_0\in L^0_{\F_0}(\O;L^2(\Dom))
	$$
	there exists a \emph{unique global} strong-weak solution $(v,\T)$ to \eqref{eq:primitive_full}-\eqref{eq:boundary_conditions_w},  in particular
	$$
	(v,\T)\in L^2_{\loc}([0,\infty);\Hs_{\n}^2(\Dom)\times H^1(\Dom))\cap C([0,\infty);\Hs^1(\Dom)\times L^2(\Dom)) 
	\text{ a.s.\ }
	$$
\end{maintheorem}
For the definition of  $\Progress\otimes \Borel$-measurable, $ L^0_{\F_0}(\O;X)$, and the notation for the function spaces see Section~\ref{sec:pre}. 
In the above, we have not specified the unknowns $w$, $P$ and $\wt{P}_n$  as they are uniquely determined by $v$ and $\T$ due to the divergence free condition and the hydrostatic Helmholtz projection. Moreover,
replacing the regularity assumption on $\psi^j$ by $\psi^j\in L^\infty(\R_+\times \O;H^{1,3+\delta}(\Dom;\ell^2))$ and considering $\T_0\in L^0_{\F_0}(\O;H^1(\Dom))$ we obtain the analogous result in the strong-strong setting.

Let us compare our result with the above mentioned Kraichanan's turbulence theory. 
There one usually assumes that, for some (typically small) $\gamma>0$,
\begin{equation}
\label{eq:Kraichnan_assumption}
\phi^j_n \in H^{3/2+\gamma}(\Dom) \  \text{ for all }j\in \{1,2,3\} \text{ and }n\geq 1,
\end{equation} 
cf.\ e.g.\ \cite[Equation (1.3)]{MR05}. Since $H^{3/2+\gamma}(\Dom)\hookrightarrow H^{1,3+\delta}(\Dom)$ 
for some $\delta>0$ by Sobolev embedding, our noise is consistent with the regularity of the reproducing kernel Hilbert space of the Kraichanan noise. However, taking into account the summability in $n\geq 1$ required in our main results,  we can cover only the case of \emph{regular} Krainchan noise. We refer to \cite[Section 5]{GY21_stabilization} and the references therein for the terminology.
For the relevance of Kraichnan's noise in the context of geophysics we refer to \cite{Delsole04}.

To prove our main results we take another point of view on the problem than  in \cite{BS21} and \cite{Debussche_2012}. Here, we interpret the transport part of the noise as a part of the linearized system, and we only need to impose conditions guaranteeing that this linearization is parabolic. Compared to \cite{BS21,Debussche_2012}, this makes it possible to consider noise that transports the full velocity field, and moreover this leads even to weaker assumptions than in \cite{BS21} and \cite{Debussche_2012} in the setting where the noise is such that their results  apply. The only smallness condition in our result is the parabolicity condition, which is optimal in the sense that when dropping it the system is not parabolic any more and thus loosens its smoothing properties. This condition origins already in the local existence theory and is by far weaker than the smallness conditions in \cite{BS21,Debussche_2012}, where the noise is handled as a nonlinear perturbation of the deterministic system. Also, to deduce the global existence of solutions in our case, no additional smallness has to be assumed compared to the local existence.

The proof of the local existence in Theorem \ref{t:local_primitive} is based on the theory of critical spaces for stochastic evolution equations developed by the M.C.~Veraar and the first author  in \cite{AV19_QSEE_1,AV19_QSEE_2}. To apply these results,  
we need to study the \emph{stochastic maximal $L^2$-regularity} estimates for the linearized problem 
elaborated in Section \ref{s:estimates_linearized_strong_weak}. The global existence result
Theorem  \ref{t:global_primitive} is then obtained from the blow-up criteria of Theorem \ref{t:local_primitive}\eqref{it:blow_up_criterium_strong_weak} and suitable \emph{energy estimates} obtained in the spirit of C.~Cao and E.S.~Titi \cite{CT07}. Here we actually follow the approach taken by the  T.~Kashiwabara and the second author in \cite{HK16} (see also \cite{HH20_fluids_pressure}) where the  $L^6$-estimates proven in \cite{CT07} are replaced by the (apparently) weaker $L^4$-estimates. The deterministic estimates are proven by splitting the velocity field into its vertical average $\overline{v}$ and the remainder $\wt{ v}=v- \overline{v}$. A crucial observation is that the deterministic part of the turbulent pressure $\partial_{\hp}\wt{P}$ does not appear in the equations for $\wt{v}$ since it is $x_3$-independent in case that $\hp_{n}^{\ell,m}$ is also $x_3$-independent. Otherwise, the $L^4$-estimate for $\wt{v}$ could not be shown in this way since $\partial_{\hp} \wt{P}$ is a \emph{non-local} operator in $v$.

Our noise and the corresponding stochastic integrals are in It\^{o}-form, but in fluid mechanics, and in particular for geophysical flows, also the Stratonovich formulation is relevant, and it is seen as a more realistic model see e.g.\ \cite{BiFla20, HL84,Franzke14,W_thesis} and the references therein,  and in \cite{FlaPa21,DP22_two_scale} transport noise of Stratonovich type in fluid dynamical models has been rigorously justified from additive noise by multiscale arguments. Also the modelling in \cite{MR01} and \cite{MR04} is based on a Stratonovich type of noise to describe the turbulent part of the velocity field which is then translated into an It\^{o} formulation. To include such types of noise directly, we will consider the primitive equations with Stratonovich noise, see system \eqref{eq:primitive_Stratonovich}. In a preparatory step we first extend our result on the It\^{o} system \eqref{eq:primitive_full} to the case of non-homogeneous viscosity and conductivity in Theorems \ref{t:local_primitive_strong_weak_2} and \ref{t:global_primitive_strong_weak_2}. Based on a Stratonovich to It\^{o} transformation we then can use these results to infer the local and global existence of the primitive equations with Stratonovich type noise in Theorems \ref{t:local_primitive_Stratonovich_strong_weak} and \ref{t:global_primitive_strong_strong_2}, respectively. 

\subsection*{Overview}
This paper is organized as follows. In Section~\ref{sec:pre} the notation is fixed and a reformulated version of the stochastic primitive equations is given. 
In Section \ref{s:strong_weak} we give the precise notion of solution in the strong-weak setting and present the main result for this case. In Section \ref{s:estimates_linearized_strong_weak} we consider a linearized system for the turbulent hydrostatic Stokes system with temperature, and show that it admits stochastic maximal $L^2$-regularity. The proofs of the theorems from Section \ref{s:strong_weak} are carried out in Section \ref{s:proofs_strong_weak}. The strong-strong setting is investigated in Section \ref{s:strong_strong}. In Section \ref{s:variable_viscosity} we generalize our results to the case of non-homogeneous viscosity and conductivity. Finally, in Section \ref{s:Stratonovich} we show how our results imply also  the well-posedness of the primitive equations with Stratonovich noise.

\subsection*{Acknowledgements}
The authors thank the anonymous referees for their helpful comments and suggestions.

\section{Preliminaries}\label{sec:pre}
\subsection{Notation and deterministic function spaces}
\label{ss:set_up}
Here we collect the main notation which will be used through the paper. We often use universal constants $C$, and we write $\lesssim_C$ or just $\lesssim$ for $\leq C$.
For any integer $k\geq 1$, and $p\in(1,\infty)$, $L^p(\Dom;\R^k)=(L^p(\Dom))^k$ denotes the usual Lebesgue space and    $H^{s,p}(\Dom;\R^k)$ the corresponding Sobolev space for $s\in (0,\infty)$. In the paper we also use the common abbreviation $H^{s}(\Dom;\R^k):=H^{s,2}(\Dom;\R^k)$. 

Since $\Dom=\Tor^2\times (-h,0)$, we employ the natural splitting $x\mapsto (x_{\h},x_3)$ where $x_{\h}=(x_1,x_2)\in \Tor^2$, $x_3\in (-h,0)$ and the subscript $\h$ stands for \emph{horizontal}. Similarly, we define 
$$
\div_{\h}:=\partial_{1} +\partial_{2} , \qquad \nabla_{\h}:=(\partial_{1},\partial_{2}), \qquad
\Delta_{\h}:=\div_{\h}\nabla_{\h}.
$$
We also use the standard notations
\begin{align*}
	(v\cdot\nabla_{\h}) v &:=\Big(\sum_{j=1}^2  v^j \partial_j v^k\Big)_{k=1}^2,  & 
	(\phi_{n}\cdot\nabla) v&:=\Big(\sum_{j=1}^3 \phi_n^j \partial_j v^k\Big)_{k=1}^2,\\
	(v\cdot\nabla_{\h}) \T &:=\sum_{j=1}^2 v^j \partial_j \T , &
	(\psi_{n}\cdot\nabla) \T&:=\sum_{j=1}^3 \psi_n^j \partial_j \T.
\end{align*}

Next we introduce the function spaces 
for the velocity field. As a first step we introduce the two-dimensional \emph{Helmholtz projection} 
denoted by $\pr$ acting on the horizontal variables $x_{\h}\in \Tor^2$. Let $f\in L^2(\Dom;\R^2)$ and 
set $\qr f:=\nabla_{\h} \Psi_f\in L^2(\Tor^2;\R^2)$ where $\Psi_f\in H^{1}(\Tor^2)$ is the unique solution to the 
problem
\begin{align*}
\Delta_{\h} \Psi_f=\div_{\h} f \quad \text{ on }\Tor^2 \quad \hbox{with} \quad \int_{\Tor^2} \Psi_f \,dx=0.
\end{align*}
Then the \emph{Helmholtz projection} is given by
$$
\pr f:= f-\qr f  \ \ \  \text{ for } \ \ \ f\in L^2(\Tor^2;\R^2).
$$
The \emph{hydrostatic Helmholtz projection} $\p:L^2(\Dom;\R^2)\to L^2(\Dom;\R^2)$ is defined as
\begin{equation}
\label{eq:Helmholtz_hydrostatic}
\p f:=f -\qr \Big[\frac{1}{h}\int_{-h}^0 f (\cdot,\zeta)\, d\zeta \Big]\quad \text{for all  }f\in L^2(\Dom;\R^2),
\end{equation}
and its complementary projection is given by 
$\q f:=\qr \Big[\frac{1}{h}\int_{-h}^0 f (\cdot,\zeta)\, d\zeta \Big]$.
One can check that $\p$ is an orthonormal projection on $L^2(\Dom;\R^2)$, and by construction, $\div_{\h} \int_{-h}^0 (\p f(\cdot,z))dz=0$ 
holds in the distributional sense for all $f\in L^2(\Dom;\R^2)$. 
Let 
\begin{align*}
\Ls^2(\Dom):=\Big\{f\in L^2(\Dom;\R^2) \,:\, \div_{\h} \Big(\int_{-h}^0 f(\cdot,z)dz\Big)=0\text{ on }\Tor^2 \Big\},
\end{align*}
be endowed with the norm $\|f\|_{\Ls^2(\Dom)}:=\|f\|_{L^2(\Dom;\R^2)}$ 
and for all $k\geq 1$ we set
$$
\Hs^{k}(\Dom):=H^{k}(\Dom;\R^2)\cap \Ls^{2}(\Dom), \qquad \|f\|_{\Hs^k(\Dom)}:=\|f\|_{H^k(\Dom;\R^2)}.
$$
If no confusion seems likely, we write simply $L^2$, $H^{k}$, $\Hs^{k}$, $ L^2(\ell^2)$ and $ H^k(\ell^2)$ instead of $L^2(\Dom;\R^m)$, $H^{k}(\Dom;\R^m)$, $\Hs^{k}(\Dom)$, $L^2(\Dom;\ell^2(\N;\R^m))$ and $ H^k(\Dom;\ell^2(\N;\R^m))$, 
where, we use the short hand notation $\ell^2$ for $\ell^2(\N;\R^m)$ or $l^2(\N)$. The dual space of $H^1(\Dom)$ is denoted by $(H^1(\Dom))^*$.

\subsection{Probabilistic notation and function spaces}\label{subsec:pre_prop}
Here we collect the main probabilistic notation. Throughout the paper we fix a filtered probability space 
\begin{align*}
(\O,\A,(\F_t)_{t\geq 0},\P) \quad \hbox{and we set} \quad \E[\cdot]:=\int_{\O}\cdot \, d\P.
\end{align*}
Moreover, $(\beta^n)_{n\geq 1}=(\beta^n_t\,:\,t\geq 0)_{n\geq 1}$ denotes a sequence of  standard independent Brownian motions on the above mentioned probability space. We will denote by $\Br_{\ell^2}$ the $\ell^2$-cylindrical Brownian motion uniquely induced by $(\beta_n)_{n\geq 1}$ 
via 
\begin{align}\label{eq:Bl2}
\Br_{\ell^2}(f):=\sum_{n\geq 1} \int_{\R_+} \langle f(t), e_n\rangle\, d\beta^n_t, \ \hbox{ with } \ e_n=(\delta_{jn})_{j\geq 1}, \,  f\in L^2(\R_+;\ell^2),
\end{align}
and Kronecker's $\delta_{jn}$, cf.\ e.g.\ \cite[Example 2.12]{AV19_QSEE_1}.
For a stopping time $\tau$, we set
$$
[0,\tau]\times \O:=\{(t,\om)\,:\, 0\leq \tau(\om)\leq t\}
$$
and use analogous definitions for $[0,\tau)\times \O$ etc. 

By $\Progress$ and $\Borel$ we denote the progressive and the Borel $\sigma$-algebra, respectively. Moreover, we say that a map $\Phi: \R_+\times \O\times \R^m\to \R$ is $\Progress\otimes \Borel$-measurable if $\Phi$ is $\Progress\otimes \Borel(\Dom)\otimes \Borel(\R^m)$-measurable, where $m\geq 1$ is an integer. By $L^0_{\F_0}(\O)$ we denote the space of $\F_0$-measureable functions and
by $L^2_{\Progress}$ the $L^2$-space with respect to the progressive $\sigma$-algebra. 

\subsection{Reformulation of the primitive equations}
\label{ss:reformulation}
As it is well-known the primitive equations can be formulated equivalently in terms of the unknown $v=(v^k)_{k=1}^2 :[0,\infty)\times \O\times \Dom\to \R^2$ which contains only the first two components of the unknown velocity field $u$. Indeed, the divergence-free condition and \eqref{eq:boundary_conditions_w} are equivalent to setting $w=w(v)$ where 
\begin{equation}
	\label{eq:def_w_v}
	\big(w(v)\big)(t,x):=-\int_{-h}^{x_3}\div_{\h} v(t,x_{\h},\zeta)\,d\zeta,
\end{equation}
a.s.\ 
for all $t\in \R_+$ and $x=(x_{\h},x_3)\in\Tor^2\times (-h,0) =\Dom$ and imposing
$$
\int_{-h}^{0}\div_{\h} v (t,x_{\h},\zeta)\,d\zeta=0,
$$
a.s.\ for all $t\in \R_+$ and $x_{\h}\in \Tor^2$. 
Moreover, we get by integrating the third equation in \eqref{eq:primitive_full}, a.s.\ for all $t\in \R_+$ and $x=(x_{\h},x_3)\in \Dom$,
\begin{align*}
	P(t,x)= p(t,x_{\h}) -\int_{-h}^{x_3} \kone(t,x_{\h},\zeta)\T(t,x_{\h},\zeta) d\zeta .
\end{align*}
Thus the pressure depends linearly on the temperature $\T$. In the physical literature $p$ is usually called the \emph{surface pressure}. Hence, \eqref{eq:primitive_full}-\eqref{eq:boundary_conditions_w} turns into
\begin{equation}
	\label{eq:primitive_intro_v}
	\begin{cases}
		\displaystyle{d v -\Delta v\, dt=\Big[  -(v\cdot \nabla_{\h})v- w(v)\partial_3 v -\nabla_{\h} p
		+\partial_{\hp} \wt{P}}\\
		\qquad \qquad\qquad \quad
		\displaystyle{+\nabla_{\h}\int_{-h}^{\cdot}  (\kone(\cdot,\zeta)\T(\cdot,\zeta))\,d\zeta+ \fv(\cdot,v,\T) \Big]dt }\\
		\qquad \qquad \ \ \ \ \ \ \    
		\displaystyle{+\sum_{n\geq 1}\Big[(\phi_{n}\cdot\nabla) v-\nabla_{\h} \wt{P}_n  +\gvn(\cdot,v,\T)\Big] d\beta_t^n}, 
		&\text{on }\Dom,\\
		\displaystyle{d \T -\Delta \T\, dt=\Big[ -(v\cdot \nabla_{\h})\T- w(v)\partial_3 \theta+ \ft(\cdot,v,\T ) \Big]dt} \\
		\qquad \qquad\qquad\qquad\quad\quad\
		\displaystyle{+\sum_{n\geq 1}\Big[(\psi_{n}\cdot\nabla) \T+\gtn(\cdot,v,\Temp)\Big] d\beta_t^n}, 
		&\text{on }\Dom,\\
		\partial_{3} p=\partial_{3} \wt{P}_n=0, &\text{on }\Dom,\\
		\displaystyle{\int_{-h}^{0}\div_{\h} v (\cdot,\zeta)\,d\zeta=0,}
		&\text{on }\Tor^2,
		\\
		v(\cdot,0)=v_0,\ \ \T(\cdot,0)=\T_0, &\text{on }\Dom,
	\end{cases}
\end{equation}
where $w(v)$ is given by \eqref{eq:def_w_v} and
complemented with the boundary conditions \eqref{eq:boundary_conditions}. \\

\section{Local and global existence in the strong-weak setting}
\label{s:strong_weak}
In this section we study 
the stochastic primitive equations in the \emph{strong-weak} setting, i.e.\ in case the equation for $v$ is understood in the \emph{strong} setting and the one for $\T$ in the \emph{weak} one. The latter means that equation for $\T$ will be formulated in its natural weak (analytic) form. In Section \ref{s:strong_strong}, we also consider the case where both equations are understood in the strong setting (referred here as the \emph{strong-strong} setting). 
Compared to the strong-strong setting, the choice made in this section has two basic advantage. Firstly, the energy estimates needed in our main global existence result are simpler, and secondly, we can allow a rougher noise in the equation for the temperature $\T$. 

We begin by reformulating the problem. Applying the hydrostatic Helmholtz projection $\p$ to the first equation in \eqref{eq:primitive_intro_v} it is, at least formally, equivalent to
\begin{equation}
\label{eq:primitive_weak_strong}
\begin{cases}
\displaystyle{d v -\Delta v\, dt=\p\Big[ -(v\cdot \nabla_{\h}) v- \w(v)\partial_3 v+\Lp (\cdot,v)} \\
\qquad\qquad \qquad \qquad\ \ \ \quad \displaystyle{
+\nabla_{\h}\int_{-h}^{x_3}  (\kone(\cdot,\zeta)\T(\cdot,\zeta))\,d\zeta + \fv (\cdot,v,\T,\nabla v) \Big]dt }\\ 
\ \ \qquad \qquad\qquad \qquad \qquad\ \ \ \qquad 
\displaystyle{+\sum_{n\geq 1}\p \Big[(\phi_{n}\cdot\nabla) v  +\gvn(\cdot,v)\Big] d\beta_t^n}, \\
\displaystyle{d \Temp -\Delta \Temp\, dt=\Big[ -(v\cdot\nabla_{\h}) \T -\w(v) \partial_3 \T+ \ft(\cdot,v,\T,\nabla v ) \Big]dt}
\\
\ \ \qquad \qquad\qquad \qquad \qquad\ \ 
\displaystyle{+\sum_{n\geq 1}\Big[(\psi_n\cdot \nabla) \Temp+\gtn(\cdot,v,\T,\nabla v)\Big] d\beta_t^n}, 
\\
v(\cdot,0)=v_0,\ \ \T(\cdot,0)=\T_0, 
\end{cases}
\end{equation}
on $\Dom=\Tor^2\times(-h,0)$ complemented with the following boundary conditions 
\begin{equation}
\label{eq:boundary_conditions_strong_weak}
\begin{aligned}
\partial_3 v (\cdot,-h)=\partial_3 v(\cdot,0)=0 \ \  \text{ on }\Tor^2,&\\
\partial_3 \T(\cdot,-h)= \partial_3 \T(\cdot,0)+\alpha \T(\cdot,0)=0\ \  \text{ on }\Tor^2.&
\end{aligned}
\end{equation}
Here $\alpha\in \R$ is given, $w(v)$ is as in \eqref{eq:def_w_v} and a.s.\ for all $t\in \R_+$,
\begin{align}
\label{eq:def_P_gamma}
\Lp (t,v)
&:= \Big(\sum_{n\geq 1} \sum_{m=1}^2 \hp_n^{\ell,m} (t,x)
\big(\q [(\phi_n\cdot\nabla) v + \gvn(\cdot,v)]\big)^m\Big)_{\ell=1}^2,
\end{align}
where $\big(\q [\cdot]\big)^m$ denotes the $m$-th component of the vector $\q [f]$. 
To see that $\Lp$ in \eqref{eq:def_P_gamma} coincides with $\partial_{\hp} \wt{P}$ in \eqref{eq:primitive_intro_v} it is enough to recall that, by the hydrostatic Helmholtz decomposition in \eqref{eq:Helmholtz_hydrostatic}, it follows that
$$
\nabla_{\h} \wt{P}_n=\q[(\phi_n\cdot\nabla) v + \gvn(\cdot,v)].
$$
Finally, let us note that in the stochastic part of the equation for the velocity field $v$, (in general) the operator $\p$ cannot be  removed since it may happen that $\div_{\h}\int_{-h}^0[(\phi_{n}\cdot\nabla) v  +\gvn(v)]\,d\zeta\neq 0$. For instance this is the case if $\phi_n$ is $x_3$-dependent and $\gvn\equiv 0$.

\subsection{Main assumptions and definitions}
\label{ss:assumptions_strong_weak}
We begin by listing the main assumptions which 
are in force in this section.

\begin{assumption} There exist $M,\delta>0$ for which the following hold.
\label{ass:well_posedness_primitive}
\begin{enumerate}[{\rm(1)}]
\item\label{it:well_posedness_measurability} For all $n\geq 1$ and $j\in \{1,2,3\}$, the maps $$\phi_n^j,\psi_n^j,\kone: \R_+\times \O\times \Dom\to \R$$ are $\Progress\otimes \Borel$-measurable;
\item\label{it:well_posedness_primitive_phi_smoothness}  a.s.\ for all $t\in \R_+$, $j,k\in \{1,2,3\}$ and $\ell,m\in \{1,2\}$,
\begin{align*}
\Big\|\Big(\sum_{n\geq 1}| \phi^j_n(t,\cdot)|^2\Big)^{1/2} \Big\|_{L^{3+\delta}(\Dom)}+
\Big\|\Big(\sum_{n\geq 1}|\partial_k \phi^j_n(t,\cdot)|^2\Big)^{1/2} \Big\|_{L^{3+\delta}(\Dom)} &\leq M,\\
\Big\|\Big(\sum_{n\geq 1}|\hp^{\ell,m}_n(t,\cdot)|^2\Big)^{1/2}\Big\|_{L^{3+\delta}(\Dom)}&\leq M;
\end{align*}
\item\label{it:well_posedness_primitive_L_infty_bound}  
a.s.\ for all $t\in \R_+$, $x\in \Dom$ and $j\in \{1,2,3\}$,
\begin{align*}
\Big(\sum_{n\geq 1} | \psi^j_n(t,x) |^2\Big)^{1/2} \leq M;
\end{align*}
\item\label{it:well_posedness_primitive_kone_smoothness}
a.s.\ for all $t\in \R_+$, $x_{\h}\in \Tor^2$, $j\in \{1,2,3\}$ and $i\in \{1,2\}$,
$$
\| \kone(t,x_{\h},\cdot) \|_{L^2(-h,0)} +\|\partial_i \kone(t,\cdot) \|_{L^{2+\delta}(\Tor^2;L^2(-h,0))} \leq M;
$$
\item\label{it:well_posedness_primitive_parabolicity} there exist $\ellip\in (0,2)$ such that, a.s.\ for all $t\in \R_+$, $x\in \Dom$ and $\xi\in \R^3$,
\begin{align*}
\sum_{n\geq 1} \Big(\sum_{j=1}^3 \phi^j_n(t,x) \xi_j\Big)^2\leq \ellip |\xi|^2,
\ \ \text{ and }\ \ 
\sum_{n\geq 1} \Big(\sum_{j=1}^3 \psi^j_n(t,x) \xi_j\Big)^2 \leq \ellip |\xi|^2;
\end{align*}
\item\label{it:nonlinearities_measurability}
for all $n\geq 1$, the maps 
\begin{align*}
&\fv\colon \R_+\times \O\times \R^2\times \R^6\times \R \to \R^2, \quad
\ft: \R_+\times \O\times \R^2\times \R^6\times \R \to \R,\\
&\gvn\colon\R_+\times \O\times \R\to \R^2, \quad\hbox{and} \quad
\gtn\colon\R_+\times \O\times \R^2\times \R^6\times \R\to \R
\end{align*}
are $\Progress\otimes \Borel$-measurable;

\item\label{it:nonlinearities_strong_weak} 
for all $T\in (0,\infty)$ and $i\in \{1,2\}$, 
\begin{align*}
\fv^i (\cdot,0),\ft(\cdot,0)&\in L^2((0,T)\times\O\times \Dom), \\
(\gvn^i(\cdot,0))_{n\geq 1}&\in  L^2((0,T)\times \O;H^1(\Dom; \ell^2)) \hbox{ and } \\
 (\gtn(\cdot,0))_{n\geq 1}&\in L^2((0,T)\times \O\times \Dom; \ell^2).
\end{align*}
%
Moreover, for all $n\geq 1$, $t\in \R_+$,  $x\in \Dom$, $y,y'\in \R^2$, $Y,Y'\in \R^6$ and $z,z'\in \R$,
\begin{align*}
&|\fv(t,x,y,z,Y)-\fv(t,x,y',z',Y')|+
|\ft(t,x,y,z,Y)-\ft(t,x,y',z',Y')|\\
&
+
\|(\gtn(t,x,y,z,Y)-\gtn(t,x,y',z',Y'))_{n\geq 1}\|_{\ell^2}\\
&\qquad\qquad
\lesssim (1+|y|^4+ |y'|^4)|y-y'|+
(1+|z|^{2/3}+|z'|^{2/3})|z-z'|\\
&\qquad\qquad
+(1+|Y|^{2/3}+|Y'|^{2/3})|Y-Y'|.
\end{align*}
Finally, a.s.\ for all $t\in\R_+$, $\Dom\times \R^2\ni (x,y)\mapsto \gvn(t,x,y)$ is continuously differentiable and for all $k\in\{0,1\}$, $j\in \{1,2,3\}$, $x\in \Dom$, and $y,y'\in \Dom$ a.s.
\begin{align*}
&\|(\partial_{x_j}^{k}\gvn(t,x,y)-\partial_{x_j}^{k}\gvn(t,x,y'))_{n\geq 1}\|_{\ell^2}
\lesssim (1+|y|^4+ |y'|^4)|y-y'|,\\
&\|(\partial_{y}\gvn(t,x,y)-\partial_{y}\gvn(t,x,y'))_{n\geq 1}\|_{\ell^2}\lesssim (1+|y|^2+|y'|^2)|y-y'|.
\end{align*}
\end{enumerate}
\end{assumption}

\begin{remark}\
\label{r:assump_local_existence_strong_weak}
\begin{enumerate}[{\rm(a)}]
\item\label{it:Holder_continuity_phi}
 In \eqref{it:well_posedness_primitive_phi_smoothness} the derivatives are taken in the distributional sense. 
The Sobolev embedding $H^{1,3+\delta}(\Dom;\ell^2)\embed C^{\alpha}(\Dom;\ell^2)$ where 
$\alpha=\frac{\delta}{3+\delta} \in (0,1)$ and \eqref{it:well_posedness_primitive_phi_smoothness} yield
\begin{equation*}
\|(\phi^j_n(t,\cdot))_{n\geq 1}\|_{C^{\alpha}(\Dom;\ell^2)} \lesssim_{\delta} M, \quad
\text{ a.s.\ for all }t\in \R_+.
\end{equation*}
\item Since $H^{3/2+\gamma}(\Dom)\embed H^{1,3+\delta}(\Dom)$ where $\delta=\frac{3\gamma}{1-\gamma}>0$ for all $\gamma\in (0,1)$, \eqref{it:well_posedness_primitive_phi_smoothness} fits the scaling of the Kraichnan's noise discussed in the introduction, cf.\ \eqref{eq:Kraichnan_assumption}.
\item\label{it:remark_parabolicity}
\eqref{it:well_posedness_primitive_parabolicity} is equivalent to the \emph{stochastic parabolicity}:  for all $t\in \R_+$, $x\in \Dom$, $\xi\in \R^3$ and a.s.
\begin{equation*}
 |\xi|^2-\frac{1}{2}\sum_{i,j=1}^3\sum_{n\geq 1}\phi_n^j(t,x) \phi_n^i(t,x) \xi_i\xi_j\geq \Big(1-\frac{\ellip}{2}\Big) |\xi|^2.
\end{equation*}
A similar reformulation holds for the condition on $\psi$. In particular, \eqref{it:well_posedness_primitive_parabolicity} is optimal in the parabolic setting.
\item \eqref{it:nonlinearities_strong_weak} contains the optimal growth assumptions on the nonlinearities which ensure existence and uniqueness of (local) solutions for data $(v_0,\T_0)\in \Hs^1(\Dom)\times L^2(\Dom)$, cf.\ the proof of Theorem \ref{t:local_primitive} in Subsection~\ref{ss:proof_local_strong_weak}.
\end{enumerate}
\end{remark}

To formulate \eqref{eq:primitive_weak_strong}-\eqref{eq:boundary_conditions_strong_weak} in the strong-weak setting, we regard the equation for $\T$ in its natural weak analytic formulation on the dual space $(H^{1}(\Dom))^*$.  To this end, the basic observation is given by the following formal integration by parts
\begin{equation}
\label{eq:formal_integration_by_parts}
\int_{\Dom}\Big( (v\cdot\nabla_{\h} v) \T +w(v)\partial_3 \T \Big) \varphi\,dx= -\int_{\Dom} 
\Big( v \T  \cdot \nabla_{\h} \varphi+ w(v)\T\cdot \partial_3 \varphi \Big)\,dx
\end{equation}
for all $\varphi\in H^1(\Dom)$. Note that the volume and boundary integrals disappear since  $\div_{\h} v+ \partial_3 w=0$ and  $w(\cdot,0)=w(\cdot,-h)=0$ on $\Tor^2$, respectively. The right hand side in \eqref{eq:formal_integration_by_parts} naturally defines an element in $(H^{1}(\Dom))^*$ by setting
\begin{equation}
\label{eq:T_map_definition}
H^1(\Dom)\ni \varphi \mapsto -\int_{\Dom} 
\Big( \T v  \cdot \nabla_{\h} \varphi+ \T w(v)\partial_3 \varphi \Big)\,dx=: \mathcal{T}_{\T}(\varphi).
\end{equation}
Below, we will use the more suggestive notation $\div_{\h}(v\T)+\partial_3 (w(v)\T)=\mathcal{T}_{\T}$. To complete the reformulation of the equation for $\T$ it remains to replace the Laplace operator $\Delta$ by its weak formulation $\Dr$ in case of Robin boundary conditions, i.e.
\begin{equation}
\label{eq:weak_Laplace_operator_robin}
\begin{aligned}
\Dr&: H^{1}(\Dom)\subseteq \big( H^{1}(\Dom))^* \to \big( H^{1}(\Dom))^*, \\
\l \varphi, \Dr \T\r& := 
-\int_{\Dom}\nabla \varphi\cdot \nabla \T\,dx - \alpha\int_{\Tor^2} \varphi(\cdot,0) \T(\cdot,0) \,dx_{\h},
\end{aligned}
\end{equation}
where $\l \cdot,\cdot\r$ denotes the duality pairing for  $H^1(\Dom)$ and $(H^1(\Dom))^*$. Note that the above definition is consistent with a formal integration by parts using the Robin boundary conditions for $\T$ in \eqref{eq:boundary_conditions_strong_weak}. Since the trace operator $f \mapsto f|_{\Tor^2 \times \{0\}}$ is bounded on $H^1(\Dom)$ with values in $L^2(\Tor^2)$, the previous definition in \eqref{eq:weak_Laplace_operator_robin}
 makes sense.

With these preparations, we are now in the position to define solutions to \eqref{eq:primitive_weak_strong}-\eqref{eq:boundary_conditions_strong_weak} in the strong-weak setting.  
 For notational convenience, we set
\begin{equation}
\label{eq:def_H_2_N_strong_weak}
\Hs_{\n}^2(\Dom):=\big\{v \in \Hs^{2}(\Dom)\,:\, \partial_3 v(\cdot,-h)=\partial_3v(\cdot,0)=0 \text{ on }\Tor^2\big\}.
\end{equation}
Recall that the embedding 
$L^2(\Dom)\embed (H^1(\Dom))^*$ is given by $\l \varphi,f\r:= \int_{\Dom} f \varphi \,dx$ where $\varphi\in H^1(\Dom)$ and $\Br_{\ell^2}$ is the $\ell^2$-cylindrical Brownian motion induced by the sequence $(\beta^n)_{n\geq 1}$, compare equation \eqref{eq:Bl2}.

\begin{definition}[$L^2$-strong-weak solutions] 
\label{def:sol_strong_weak}
Let Assumption \ref{ass:well_posedness_primitive} be satisfied.
\begin{enumerate}[{\rm(1)}]
\item Let $\tau$ be a stopping time, $v:[0,\tau)\times \O \to \Hs_{\n}^2(\Dom)$ and $\T:[0,\tau)\times \O \to H^1(\Dom)$
be stochastic processes. We say that $((v,\T),\tau)$ is 
an \emph{$L^2$-local strong-weak solution} to \eqref{eq:primitive_weak_strong}-\eqref{eq:boundary_conditions_strong_weak} if there exists a sequence of stopping times $(\tau_k)_{k\geq 1}$ for which the following hold:
\begin{itemize}
\item $\tau_k\leq \tau$ a.s.\ for all $k\geq 1$ and $\lim_{k\to \infty}\tau_k=\tau$ a.s.;
\item a.s.\ we have $(v,\T)\in L^2(0,\tau_k;\Hs_{\n}^2(\Dom)\times H^1(\Dom))$ and 
\begin{equation}
\begin{aligned}
\label{eq:integrability_strong_weak}
(v\cdot \nabla_{\h}) v+ \w(v)\partial_3 v +\fv (v,\T,\nabla v)+\Lp(\cdot,v)&\in L^2(0,\tau_k;L^2(\Dom;\R^2)),\\
-\div_{\h}(v \T) -\partial_{3}(\w(v) \T)&\in L^2(0,\tau_k;(H^1(\Dom))^*),\\
 \ft(v,\T,\nabla v )\ &\in L^2(0,\tau_k;L^2(\Dom)),\\
(\gvn(v))_{n\geq 1 }&\in L^2(0,\tau_k;H^1(\Dom;\ell^2(\N;\R^2))),\\
(\gtn(v,\T,\nabla v))_{n\geq 1 }&\in L^2(0,\tau_k;L^2(\Dom;\ell^2));
\end{aligned}
\end{equation}
\item a.s.\ for all $k\geq 1$ the following equality holds for all $t\in [0,\tau_k]$:
\begin{align*} 
v(t)-v_0
&=\int_0^t\Big( \Delta v(s)+ \p\Big[\nabla_{\h}\int_{-h}^{x_3} (\kone(\cdot,\zeta)\T(\cdot,\zeta))\,d\zeta\\
&\qquad
- (v\cdot \nabla_{\h}) v- \w(v)\partial_3 v + \fv (v,\T,\nabla v) +\Lp(\cdot,v)\Big]\Big)\,ds\\
&\qquad
+\int_0^t \Big(\one_{[0,\tau_k]}\p[ (\phi_{n}\cdot\nabla) v   +\gvn(v)] \Big)_{n\geq 1}\, d\Br_{\ell^2}(s),\\
\T(t)-\T_0
&=
\int_0^t  \Big[\Dr \T-\div_{\h}(v \T) -\partial_3 (\w(v) \T)+ \ft(v,\T,\nabla v )\Big]\,ds\\
&+
\int_0^t \Big(\one_{[0,\tau_k]}[ (\psi_{n}\cdot\nabla) \T   +\gtn(v,\T,\nabla v)] \Big)_{n\geq 1}\, d\Br_{\ell^2}(s);
\end{align*}
\end{itemize}
\item an $L^2$-local strong-weak solution $((v,\T),\tau)$ to \eqref{eq:primitive_weak_strong}-\eqref{eq:boundary_conditions_strong_weak} is said to be an \emph{$L^2$-maximal strong-weak solution to} \eqref{eq:primitive_weak_strong}-\eqref{eq:boundary_conditions_strong_weak}
if for any other local solution $((v',\T'),\tau')$ we have 
\begin{equation*}
\tau'\leq \tau \  \text{ a.s.\ \  and } \ \ (v,\T)=(v',\T') \ \text{ a.e.\ on }[0,\tau')\times \O.
\end{equation*}
\end{enumerate}
\end{definition}

Note that $L^2$-maximal strong-weak solution are \emph{unique} in the class of $L^2$-local strong-weak solutions by definition. By \eqref{eq:integrability_strong_weak}, the deterministic integrals and the 
stochastic integrals in the above definition are well-defined as Bochner and $L^2$-valued It\^{o} integrals, respectively.

\subsection{Statement of the main results}
\label{ss:main_results_strong_weak}
We begin this subsection by stating a local existence result for \eqref{eq:primitive_weak_strong}-\eqref{eq:boundary_conditions_strong_weak}. 
To economize the notation,
for $k\geq 0$, $m\geq 1$, $f:[0,t)\to H^{k+1}(\Dom;\R^m)$, we set
\begin{equation}
\label{eq:def_norm}
\norm_k(t;f):= \sup_{s\in [0,t)}\|f(s)\|_{H^{k}(\Dom;\R^m)}^2+\int_0^t \|f(s)\|_{H^{k+1}(\Dom;\R^m)}^2\,ds .
\end{equation}

\begin{theorem}[Local existence]
\label{t:local_primitive}
Let Assumption \ref{ass:well_posedness_primitive} be satisfied. Then for each 
$$
v_0\in L^0_{\F_0}(\O;\Hs^1(\Dom)), \ \ \text{ and }\ \ \T_0\in L^0_{\F_0}(\O;L^2(\Dom)),
$$
there exists an \emph{$L^2$-maximal strong-weak solution} $((v,\theta) ,\tau)$ to \eqref{eq:primitive_weak_strong}-\eqref{eq:boundary_conditions_strong_weak} where $\tau>0$ a.s. Moreover, we have
\begin{enumerate}[{\rm(1)}]
\item\label{it:path_regularity_strong_weak}{\em (Pathwise regularity)} 
 there exists a sequence of stopping times $(\tau_k)_{k\geq 1}$ such that a.s.\  for all $k\geq 1$ one has $0\leq \tau_k\leq \tau$ , $\lim_{k\to \infty}\tau_k=\tau$ and 
 \begin{align*}
(v,\T)\in L^2(0,\tau_k;\Hs_{\n}^2(\Dom)\times H^1(\Dom))\cap C([0,\tau_k];\Hs^1(\Dom)\times L^2(\Dom));
\end{align*}  
\item\label{it:blow_up_criterium_strong_weak}{\em (Blow-up criterion)} for all $T\in (0,\infty)$
$$
\P\Big(\tau<T,\,  \norm_{1}(\tau;v)+\norm_{0}(\tau;\T) <\infty\Big)=0.
$$
\end{enumerate}
\end{theorem}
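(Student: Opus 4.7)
The plan is to recast \eqref{eq:primitive_weak_strong}--\eqref{eq:boundary_conditions_strong_weak} as an abstract semilinear stochastic evolution equation
$$
du + \mathbb{A}u\,dt = F(\cdot,u)\,dt + (\mathbb{B}u+G(\cdot,u))\,d\Br_{\ell^2},\qquad u(0)=(v_0,\T_0),
$$
on the product space $Z_0:=L^2(\Dom;\R^2)\times(H^1(\Dom))^*$ with domain $Z_1:=\Hs_{\n}^2(\Dom)\times H^1(\Dom)$. Here $u=(v,\T)$; $\mathbb{A}$ is block-diagonal with entries $-\p\Delta$ (hydrostatic Neumann Laplacian on $v$) and $-\Dr$ (weak Robin Laplacian on $\T$); $\mathbb{B}u$ collects the transport parts $\p[(\phi_n\cdot\nabla)v]$ and $(\psi_n\cdot\nabla)\T$; and $F,G$ absorb the remaining deterministic and stochastic terms, namely the convection $(v\cdot\nabla_{\h})v+w(v)\partial_3 v$ in the $v$-slot and $\mathcal{T}_{\T}$ of \eqref{eq:T_map_definition} in the $\T$-slot, the buoyancy coupling $\p\nabla_{\h}\int \kone\T\,d\zeta$, the turbulent-pressure term $\Lp$ from \eqref{eq:def_P_gamma}, and the source maps $\fv,\ft,g_{v,n},g_{\T,n}$. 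Once written in this form, the plan is to apply the abstract local well-posedness theorem for critical-space stochastic evolution equations of \cite{AV19_QSEE_1,AV19_QSEE_2}.

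Two inputs feed the abstract theorem. The first is stochastic maximal $L^2$-regularity (SMR) for the pair $(\mathbb{A},\mathbb{B})$ on $Z_0$, which is precisely what Section~\ref{s:estimates_linearized_strong_weak} establishes under the regularity in \eqref{it:well_posedness_primitive_phi_smoothness}--\eqref{it:well_posedness_primitive_L_infty_bound} and the parabolicity in \eqref{it:well_posedness_primitive_parabolicity}, the trace space at $t=0$ containing $\Hs^1(\Dom)\times L^2(\Dom)$. The second is the verification of critical-space bilinear Lipschitz estimates for $F,G$. In three dimensions, using $H^1(\Dom)\hookrightarrow L^6(\Dom)$ together with anisotropic Ladyzhenskaya-type inequalities (and the non-local identity $w(v)=-\int_{-h}^{\cdot}\div_{\h} v\,d\zeta$, which allows to estimate $w(v)$ through $\nabla_{\h} v$), the velocity convection obeys
$$
\|(v\cdot\nabla_{\h})v+w(v)\partial_3 v\|_{L^2} \lesssim \|v\|_{\Hs^1}\,\|v\|_{\Hs_{\n}^2},
$$
which is the critical bilinear estimate distributing one factor on the trace space and one on the domain space. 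Dually, for the $\T$-convection one gets $\|\mathcal{T}_{\T}\|_{(H^1)^*}\lesssim \|v\|_{\Hs^1}\|\T\|_{L^3}\lesssim \|v\|_{\Hs^1}\|\T\|_{L^2}^{1/2}\|\T\|_{H^1}^{1/2}$, again a critical bilinear bound. The buoyancy coupling is linear in $\T$ and bounded via \eqref{it:well_posedness_primitive_kone_smoothness}; the turbulent-pressure term $\Lp$ is controlled using \eqref{it:well_posedness_primitive_phi_smoothness} and the boundedness of $\q$; and the source terms $\fv,\ft,g_{v,n},g_{\T,n}$ match the polynomial (sub)critical thresholds dictated by the growth exponents in \eqref{it:nonlinearities_strong_weak} via Sobolev embedding in 3D.

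Invoking the abstract theorem then yields a unique $L^2$-maximal strong-weak solution $((v,\T),\tau)$ with $\tau>0$ a.s. The pathwise regularity \eqref{it:path_regularity_strong_weak} follows from the classical trace embedding $L^2(0,T;Z_1)\cap H^{1/2}(0,T;Z_0)\hookrightarrow C([0,T];(Z_0,Z_1)_{1/2,2})$ with $(Z_0,Z_1)_{1/2,2}=\Hs^1\times L^2$, applied on each localization window $[0,\tau_k]$ from Definition~\ref{def:sol_strong_weak}. The blow-up criterion \eqref{it:blow_up_criterium_strong_weak} is the specialization of the abstract blow-up criterion of \cite{AV19_QSEE_1,AV19_QSEE_2} to the scale-invariant quantity $\norm_1(\tau;v)+\norm_0(\tau;\T)$ from \eqref{eq:def_norm}, which is exactly the norm of $(v,\T)$ in the maximal-regularity space $L^2(0,\tau;Z_1)\cap C([0,\tau];\Hs^1\times L^2)$. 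The main obstacle is the SMR step for the fully coupled linearization: the transport noise has to be kept on the linear side (rather than treated as a perturbation), so the proof in Section~\ref{s:estimates_linearized_strong_weak} must leverage stochastic parabolicity \eqref{it:well_posedness_primitive_parabolicity} together with the $L^{3+\delta}$-regularity of the coefficients in \eqref{it:well_posedness_primitive_phi_smoothness}; a second delicate point is the sharp anisotropic bilinear estimate for $w(v)\partial_3 v$, where the vertical integration in $w(v)$ forces a careful horizontal--vertical split of the regularity between the trace factor and the domain factor.
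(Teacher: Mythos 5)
The architecture of your proof matches the paper exactly: the abstract reformulation on $X_0=\Ls^2\times (H^1)^*$, $X_1=\Hs^2_\n\times H^1$, the reliance on stochastic maximal $L^2$-regularity for the coupled linearization (Proposition~\ref{prop:SMR_2}), and the reduction of the blow-up criterion to the abstract one of \cite{AV19_QSEE_2}. The gap is in the form of the bilinear estimate you feed into the abstract theorem. You state
$$
\|(v\cdot\nabla_{\h})v+w(v)\partial_3 v\|_{L^2}\lesssim \|v\|_{\Hs^1}\,\|v\|_{\Hs^2_\n},
$$
``distributing one factor on the trace space and one on the domain space.'' This is the Fujita--Kato split, but the hypothesis (HF) in \cite{AV19_QSEE_1} requires the estimate to hold with \emph{both} factors in strictly intermediate spaces $X_{\beta_j},X_{\varphi_j}$ with $\varphi_j<1$; the case $\varphi_j=1$ is excluded because the maximal-regularity gain is then exactly zero and the fixed-point argument does not close. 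Your stated inequality, while true, is actually \emph{weaker} than what is needed (by interpolation $\|v\|_{H^{3/2}}^2\leq \|v\|_{H^1}\|v\|_{H^2}$, so the paper's RHS is smaller) and does not satisfy the framework's requirements. The paper instead proves the admissible version
$$
\|(v\cdot\nabla_{\h})v+w(v)\partial_3 v\|_{L^2}\lesssim \|v\|_{H^{3/2}}^2,
$$
i.e.\ places both factors in $X_{3/4}\hookrightarrow H^{3/2}\times H^{1/2}$, with the anisotropic estimates $\|w(v)\|_{L^\infty_z L^4_{x_\h}}\lesssim\|v\|_{H^{3/2}}$ and $\|\partial_3 v\|_{L^2_z L^4_{x_\h}}\lesssim\|v\|_{H^{3/2}}$ carrying the load. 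The same remark applies to your $\T$-convection estimate $\|\mathcal{T}_{\T}\|_{(H^1)^*}\lesssim\|v\|_{\Hs^1}\|\T\|_{L^2}^{1/2}\|\T\|_{H^1}^{1/2}$: the paper proves $\|\mathcal{T}_{\T}\|_{(H^1)^*}\lesssim\|v\|_{H^{3/2}}(\|v\|_{H^{3/2}}+\|\T\|_{H^{1/2}})$, keeping everything in $X_{3/4}$. So the missing step is to replace the trace/domain split by the balanced $X_{3/4}$ estimate; once you do that, the rest of your argument coincides with the paper's.
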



Next we state our main result of this section concerning \emph{global existence} of solutions to \eqref{eq:primitive_weak_strong}-\eqref{eq:boundary_conditions_strong_weak}. To this end, we also need the following assumptions.

\begin{assumption} Let Assumption \ref{ass:well_posedness_primitive} be satisfied and assume the following
\label{ass:global_primitive}
\begin{enumerate}[{\rm(1)}]
\item\label{it:independence_z_variable} For all $n\geq 1$, $x=(x_{\h},x_3)\in \Tor^2\times (-h,0)=\Dom$, $t\in \R_+$, $j,k\in \{1,2\}$ and a.s.
\begin{align*}
\phi_n^j(t,x)\text{ and }\hp^{j,k}_n(t,x) \text{ are independent of $x_3$};
\end{align*}
%
\item\label{it:sublinearity_Gforce}
there exist $C>0$ and $\y\in L^0_{\Progress}(\O;L^2_{\loc}([0,\infty);L^2(\Dom)))$ such that, a.s.\ for all $t\in \R_+$, $j\in \{1,2,3\}$, $x\in \Dom$, $y\in \R^2$, $z\in \R$ and $Y\in \R^{6}$, 
\begin{align*}
|\fv(t,x,y,z,Y)|
&\leq C(\y(t,x)+|y|+|z|+|Y|),\\
|\ft(t,x,y,z,Y)|
&\leq C(\y(t,x)+|y|+|z|+|Y|),\\
\|(\gvn (t,x,y))_{n\geq 1}\|_{\ell^2}
+\|(\partial_{x_j}\gvn (t,x,y))_{n\geq 1}\|_{\ell^2}&\leq C(\y(t,x) + |y|), \\
\|(\partial_{y}\gvn(t,x,y))_{n\geq 1}\|_{\ell^2}
&\leq C, \\
\|(\gtn(t,x,y,z,Y))_{n\geq 1}\|_{\ell^2}
&\leq C(\y(t,x)+|y|+|z|+|Y|).
\end{align*}
\end{enumerate}
\end{assumption}


\begin{theorem}[Global existence]
\label{t:global_primitive}
Let Assumption 
\ref{ass:global_primitive} 
be satisfied, and let  
$$
v_0\in L^0_{\F_0}(\O;\Hs^1(\Dom)), \ \ \text{ and }\ \ \T_0\in L^0_{\F_0}(\O;L^2(\Dom)).
$$
Then the $L^2$-maximal strong-weak solution $((v,\T),\tau)$ to \eqref{eq:primitive_weak_strong}-\eqref{eq:boundary_conditions_strong_weak} provided by Theorem \ref{t:local_primitive} is \emph{global in time}, i.e.\ $\tau=\infty$ a.s. In particular
$$
(v,\T)\in L^2_{\loc}([0,\infty);\Hs_{\n}^2(\Dom)\times H^1(\Dom))\cap C([0,\infty);\Hs^1(\Dom)\times L^2(\Dom)) 
\text{ a.s.\ }
$$
\end{theorem}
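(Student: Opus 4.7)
The plan is to invoke the blow-up criterion of Theorem \ref{t:local_primitive}\eqref{it:blow_up_criterium_strong_weak}: to prove that $\tau=\infty$ a.s.\ it suffices to show, for every $T\in(0,\infty)$, the a priori bound
\begin{equation*}
\P\bigl(\tau<T,\ \norm_1(\tau;v)+\norm_0(\tau;\T)<\infty\bigr)=\P(\tau<T),
\end{equation*}
so that the blow-up criterion then forces $\P(\tau<T)=0$. By a standard localization using the stopping times $(\tau_k)$ from Theorem \ref{t:local_primitive}\eqref{it:path_regularity_strong_weak}, I would derive $\norm_1(\tau_k\wedge T;v)+\norm_0(\tau_k\wedge T;\T)<\infty$ a.s.\ with a bound independent of $k$, and pass to the limit.

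First I would establish the basic $L^2$--$H^1$ energy balance by applying It\^o's formula to $\|v(t)\|_{L^2}^2$ and $\|\T(t)\|_{L^2}^2$. The convective terms vanish upon integration by parts thanks to $\div_{\h}v+\partial_3 w(v)=0$ and the boundary conditions $w(v)|_{x_3\in\{-h,0\}}=0$. The It\^o correction $\sum_n\|(\phi_n\cdot\nabla)v\|_{L^2}^2$ from the transport noise is absorbed by the dissipation $2\|\nabla v\|_{L^2}^2$ precisely because of the stochastic parabolicity Assumption \ref{ass:well_posedness_primitive}\eqref{it:well_posedness_primitive_parabolicity} (see Remark \ref{r:assump_local_existence_strong_weak}\eqref{it:remark_parabolicity}), and analogously for $\T$. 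The turbulent pressure term $\Lp$ is controlled in $L^2$ via Assumption \ref{ass:well_posedness_primitive}\eqref{it:well_posedness_primitive_phi_smoothness} and the boundedness of $\q$, while the temperature--pressure coupling $\nabla_{\h}\int_{-h}^{x_3}\kone\T\,d\zeta$ is bounded in $L^2(\Dom)$ by a multiple of $\|\T\|_{L^2}$ thanks to Assumption \ref{ass:well_posedness_primitive}\eqref{it:well_posedness_primitive_kone_smoothness}. The sublinear growth of $\fv,\ft,\gvn,\gtn$ from Assumption \ref{ass:global_primitive}\eqref{it:sublinearity_Gforce}, combined with Burkholder--Davis--Gundy and the stochastic Gronwall lemma, yields an a.s.\ bound on $\sup_{t\le \tau_k\wedge T}(\|v\|_{L^2}^2+\|\T\|_{L^2}^2)+\int_0^{\tau_k\wedge T}(\|\nabla v\|_{L^2}^2+\|\nabla \T\|_{L^2}^2)\,ds$.

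The central step is the $L^4$-estimate for the baroclinic component, in the spirit of Cao--Titi \cite{CT07} in the variant of Hieber--Kashiwabara \cite{HK16}. Split $v=\overline{v}+\wt v$ with $\overline{v}(x_{\h}):=\frac{1}{h}\int_{-h}^{0}v(x_{\h},\zeta)\,d\zeta$ and $\wt v:=v-\overline{v}$. Under Assumption \ref{ass:global_primitive}\eqref{it:independence_z_variable}, both $\phi_n^{1,2}$ and $\hp_n^{\ell,m}$ are $x_3$-independent, so vertical averaging of the $v$-equation yields a two-dimensional stochastic Navier--Stokes--type equation for $\overline v$ on $\Tor^2$, driven by horizontal transport noise, a vertically averaged temperature forcing, and by $\Lp$; the corresponding two-dimensional $H^1$-energy argument (again closed by stochastic parabolicity) provides a.s.\ bounds on $\sup_t\|\overline v\|_{H^1}^2+\int\|\overline v\|_{H^2}^2\,ds$. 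Subtracting, the fluctuation $\wt v$ solves an equation from which the nonlocal term $\Lp$ has \emph{disappeared} (this is the crucial point), and to which I would apply It\^o's formula to $\|\wt v\|_{L^4}^4$. The deterministic trilinear terms are treated as in \cite{HK16}, while the It\^o correction from the transport noise, tested against $|\wt v|^2\wt v$, produces a non-negative quadratic form that is absorbed by the dissipation $\int|\wt v|^2|\nabla \wt v|^2\,dx$ under the single parabolicity threshold $\ellip<2$, with no further smallness.

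Once bounds on $\sup_t\|\wt v\|_{L^4}^4$ and $\sup_t\|\overline v\|_{H^1}^2$ are in hand, I would close the $H^1$-estimate for $v$ via an It\^o expansion of $\|\nabla v\|_{L^2}^2$: the critical trilinear contributions $\bigl((v\cdot\nabla_{\h})v,-\Delta v\bigr)_{L^2}$ and $\bigl(w(v)\partial_3 v,-\Delta v\bigr)_{L^2}$ are controlled by anisotropic Ladyzhenskaya interpolation using the $L^4$-bound on $\wt v$, the surface pressure $\nabla_{\h}p$ is handled by $\p$, and the temperature coupling is absorbed using the already-derived bound on $\|\nabla\T\|_{L^2_t L^2_x}$. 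Combined with the preliminary $L^2$-bound on $\T$, this gives $\norm_1(\tau_k\wedge T;v)+\norm_0(\tau_k\wedge T;\T)<\infty$ a.s.\ uniformly in $k$, and hence $\tau=\infty$ a.s. The main obstacle is precisely the $L^4$-estimate on $\wt v$: one must verify that the It\^o correction of the transport noise, when tested against $|\wt v|^2\wt v$, algebraically combines with the dissipative term in such a way that $\ellip<2$ alone suffices to close the bound, and that the splitting $v=\overline v+\wt v$ is genuinely compatible with the hydrostatic Helmholtz projection $\p$ and with the stochastic transport — which is exactly why $\phi_n^{1,2}$ and $\hp_n^{\ell,m}$ are required to be $x_3$-independent.
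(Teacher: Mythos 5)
Your overall strategy matches the paper's: localize, establish a preliminary $L^2$ energy balance, decompose $v=\overline v+\wt v$, prove higher-order intermediate bounds, then close the $H^1$-estimate for $v$ and invoke the blow-up criterion. You also correctly identify the crucial structural point that $\Lp$ drops out of the $\wt v$-equation because $\phi_n^{1,2}$ and $\hp_n^{\ell,m}$ are $x_3$-independent (cf.\ \eqref{eq:Lpp_independent_of_x_3}), and the organizational difference you propose --- a direct It\^o expansion of $\|\nabla v\|_{L^2}^2$ instead of the paper's invocation of the stochastic maximal $L^2$-regularity Proposition~\ref{prop:SMR_2} --- is immaterial, since Step~2 of that proposition is precisely such an expansion.

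However, there is a genuine gap: you track only two of the three intermediate quantities that the Cao--Titi / Hieber--Kashiwabara machinery requires. Lemma~\ref{l:intermediate_estimate} establishes a \emph{coupled} bound on
$$
X_t=\|\overline v(t)\|_{H^1(\Tor^2)}^2+\|\partial_3 v(t)\|_{L^2}^2+\|\wt v(t)\|_{L^4}^4,
\qquad
Y_t=\|\overline v(t)\|_{H^2(\Tor^2)}^2+\|\nabla\partial_3 v(t)\|_{L^2}^2+\big\||\wt v(t)||\nabla\wt v(t)|\big\|_{L^2}^2,
$$
and the $L^\infty_tL^2_x\cap L^2_tH^1_x$-estimate for $\partial_3 v$ (Step~2 of that lemma) is indispensable, not an optional refinement. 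It is needed three times. First, the vertically averaged equation for $\overline v$ retains the term $\overline{\phi^3_n\partial_3 v}$, so the $H^1/H^2$-estimate for $\overline v$ has a residual $\Ib_3$ controlled by $\E\|\nabla\partial_3 v\|_{L^2(\eta,\xi;L^2)}^2$ (see \eqref{eq:Ib_3_estimate}), which must be absorbed using the Step~2 bound. Second, the $\wt v$-equation likewise carries $\overline{\phi^3_n\partial_3 v}$; testing against $|\wt v|^2\wt v$, the It\^o correction produces the term $J_{2,2}$ whose estimate \eqref{eq:new_term_estimate} generates $\varepsilon\int\|\nabla\partial_3 v\|_{L^2}^2\,ds$, again requiring the $\partial_3 v$-bound. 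Third, and independently of the transport noise, the trilinear contribution $w(v)\partial_3 v$ in the final $H^1$-closure for $v$ is controlled by anisotropic interpolation as in \eqref{eq:w_v_estimate}, producing the factor $\|\partial_3 v\|_{L^2}\|\partial_3 v\|_{H^1}$: the $L^4$-bound on $\wt v$ alone does not reach this term. Without Step~2, neither the $\wt v$-$L^4$-estimate nor the final $H^1$-closure can be closed; and even with all three steps, one must still be careful (as the paper is, in Step~4 of the proof of Lemma~\ref{l:intermediate_estimate}) to combine the three estimates with appropriate multiplicative weights so that the cross-terms are absorbed. You should therefore add the third It\^o expansion, for $\|\partial_3 v\|_{L^2}^2$, alongside the two you already propose, and explain how the coupled residuals are absorbed before the Gronwall argument is applied.
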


The proof of Theorems \ref{t:local_primitive} and \ref{t:global_primitive} will be given in Subsections \ref{ss:proof_local_strong_weak} and \ref{ss:proof_global_strong_weak}, respectively.
As a key tool in the proofs, we need suitable estimates for the linearized problem of \eqref{eq:primitive_weak_strong}-\eqref{eq:boundary_conditions_strong_weak} which will be proven in Section \ref{s:estimates_linearized_strong_weak}.
Before giving the proofs, below we collect some comments on Assumption \ref{ass:global_primitive}.

\begin{remark}\label{r:global_primitive} \
\begin{enumerate}[{\rm(a)}]
\item Assumption \ref{ass:global_primitive}\eqref{it:independence_z_variable} contains additional assumptions on $\phi_n^1,\phi^2_n$ but \emph{not} on $\phi^3_n$ as compared to Assumption~\ref{ass:well_posedness_primitive}. Roughly speaking, this means that the transport noise can be very rough in the vertical direction while the horizontal part is two-dimensional.
\item \label{it:ellipticity_overline_v} Taking $\xi=(\xi_1,\xi_2,0)$ in Assumption~\ref{ass:well_posedness_primitive}\eqref{it:well_posedness_primitive_parabolicity} we also have that there exists $\ellip\in (0,2)$ such that, a.s. for all $x\in \Dom$, $t\in \R_+$ and $\xi\in \R^2$,
\begin{equation*}
\sum_{n\geq 1} \Big(\sum_{j=1}^2 \phi^j_n(t,x) \xi_j\Big)^2\leq \ellip|\xi|^2.
\end{equation*}
This implies that we also have parabolicity for the subsystem \eqref{eq:primitive_bar} below obtained from the first equation in \eqref{eq:primitive_weak_strong} after averaging in the $x_3$-variable.
\item\label{it:global_primitive_periodic}
(\emph{Periodic boundary conditions}).
The results of Theorems \ref{t:local_primitive} and \ref{t:global_primitive} also hold in case the boundary conditions \eqref{eq:boundary_conditions_strong_weak} are replaced by the periodic ones. To see this it is enough to ignore the boundary terms appearing in the integration by part arguments in the proofs below. The same applies to the results of Sections \ref{s:strong_strong}-\ref{s:Stratonovich}. For brevity, we do not repeat this observation in the following.

\end{enumerate}
\end{remark}

\section{$L^2$--estimates for the linearized problem}
\label{s:estimates_linearized_strong_weak}
\subsection{$L^2$-stochastic maximal regularity}
In this section we deduces an $L^2$--estimate for the linear part of the problem \eqref{eq:primitive_weak_strong}, which is central to our approach. Here we consider the following \emph{turbulent hydrostatic Stokes system with temperature}
\begin{equation}
\label{eq:SMR_v_T_strong_weak}
\begin{cases}
\vspace{0.1cm}
\displaystyle{dv -\Big[\Delta v+\p [\Lpp v +\op_{\kone} \T]\Big]dt =f_v dt 
+ \sum_{n\geq 1} \Big[\p [(\phi_n\cdot \nabla )v]+ g_{n,v}\Big] d\beta_t^n} &\text{on }\Dom,\\
\displaystyle{d\Temp -\Dr \Temp dt =f_{\T} dt + \sum_{n\geq 1} 
\Big[(\psi_n\cdot \nabla )\Temp+ g_{n,\T}\Big]d\beta_t^n}
&\text{on }\Dom,\\
\partial_3 v(\cdot,0)=\partial_3 v(\cdot,-h)=0 &\text{on }\Tor^2,\\
v(0)=0,\qquad \Temp(0)=0& \text{on }\Dom,
\end{cases}
\end{equation}
where 
$\Dr$ is the weak Laplacian with Robin boundary conditions (see \eqref{eq:weak_Laplace_operator_robin}), and for all $t\in \R_+$, $x=(x_{\h},x_3)\in \Dom$ and $\theta\in H^1(\Dom)$,
\begin{align}
\label{eq:Lpp_operator}
(\Lpp v)(t,x)&:=
\Big(\sum_{n\geq 1} \sum_{m=1}^2 \hp_n^{\ell,m} (t,x)
\big(\q[(\phi_n\cdot\nabla) v]\big)^m\Big)_{\ell=1}^2,\\
\label{eq:integral_operator_temperature}
(\op_{\kone} \T)(t,x) &:= \nabla_{\h} \int_{-h}^{x_3}   ( \kone(t,x_{\h},\zeta)\T(x_{\h},\zeta))d\zeta. 
\end{align}
Let $\tau\colon \Omega \rightarrow [0,T]$ be a stopping time and let 
\begin{equation}
\label{eq:f_v_etc_integrability_smr}
\begin{aligned}
(f_v,f_{\T})&\in L^2_{\Progress}((0,T)\times \Omega;\Ls^2\times (H^1)^*),\\
((g_{n,v})_{n\geq 1}, (g_{n,\T})_{n\geq 1})&\in 
L^2_{\Progress}((0,T)\times \Omega;\Hs^1(\ell^2)\times L^2(\ell^2)).
\end{aligned}
\end{equation}
Recall that $\Hs_{\n}^2$ is defined in \eqref{eq:def_H_2_N_strong_weak} and $\Br_{\ell^2}$ in \eqref{eq:Bl2}.
We say that $$(v,\T)\in L^2_{\Progress}((0,\tau)\times \O;\Hs_{\n}^2\times H^1)$$ is an \emph{$L^2$-strong-weak solution} to \eqref{eq:SMR_v_T_strong_weak} on $[0,\tau]\times \O$ if a.s.\ for all $t\in [0,\tau]$
\begin{align*} 
v(t) 
&=\int_0^t \Big[\Delta v(s)+ \p\big[ \Lpp v+\op_{\kone} \T] +f_v  \Big]\,ds\\
&\qquad +
\int_0^t \Big(\one_{[0,\tau]} \Big[\p[ (\phi_{n}\cdot\nabla) v]   +g_{v,n}\Big]\Big)_{n\geq 1}\, d\Br_{\ell^2}(s),\\
\T(t)
&=
\int_0^t  \Big[\Dr \T+ f_{\T}\Big]\,ds+
\int_0^t \Big(\one_{[0,\tau]}[(\psi_{n}\cdot\nabla) \T +g_{n,\T}] \Big)_{n\geq 1}\, d\Br_{\ell^2}(s).
\end{align*}


\begin{proposition}[Stochastic maximal $L^2$-regularity]
\label{prop:SMR_2}
Let $T\in (0,\infty)$ and Assumption \ref{ass:well_posedness_primitive}\eqref{it:well_posedness_measurability}--\eqref{it:well_posedness_primitive_parabolicity} be satisfied.
Assume that $f_v,f_{\T},g_{v,n},g_{\T,n}$ satisfy \eqref{eq:f_v_etc_integrability_smr}. Then for any stopping time $\tau:\O\to [0,T]$ there exists a unique $L^2$-strong-weak solution to \eqref{eq:SMR_v_T_strong_weak} on $[0,\tau]\times \O$ such that 
$$
(v,\T)
\in L^2((0,\tau)\times \O;\Hs_{\n}^2\times H^1)\cap  L^2(\O;C([0,\tau];\Hs^1\times L^2)),
$$ 
and moreover for any $L^2$-strong-weak solution $(v,\T)$ to \eqref{eq:SMR_v_T_strong_weak} on $[0,\tau]\times \O$ we have
\begin{equation}
\label{eq:SMR_2_estimate_strong_weak}
\begin{aligned}
&\|(v,\T)\|_{L^2((0,\tau)\times \O;\Hs^2\times H^1)}+
\|(v,\T)\|_{L^2(\O;C([0,\tau];\Hs^1\times L^2))}\\
&\qquad \qquad \qquad
\lesssim \|(f_{v},f_{\T})\|_{L^2((0, \tau)\times \O;\Ls^2\times (H^1)^*)}\\
&\qquad \qquad \qquad
+\|((g_{n,v})_{n\geq 1},(g_{n,\T})_{n\geq 1})\|_{L^2((0, \tau)\times \O;\Hs^1(\ell^2)\times L^2(\ell^2))}
\end{aligned}
\end{equation}
where the implicit constant is independent of $f_{v},f_{\T}, (g_{n,v})_{n\geq 1},(g_{n,\T})_{n\geq 1}$ and $\tau$. 
\end{proposition}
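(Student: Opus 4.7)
The plan rests on the observation that system \eqref{eq:SMR_v_T_strong_weak} has a triangular structure: the equation for $\T$ does not involve $v$, so the two equations can be solved sequentially. Uniqueness in the class of $L^2$-strong-weak solutions then follows from the linearity, and the estimate \eqref{eq:SMR_2_estimate_strong_weak} is obtained by applying maximal regularity results to each component in turn.

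\textbf{Step 1 (Temperature equation).} I work in the Gelfand triple $H^{1}(\Dom)\hookrightarrow L^2(\Dom)\hookrightarrow (H^{1}(\Dom))^*$. The operator $-\Dr$ is bounded and, after absorbing the Robin boundary contribution via the trace inequality $\|\T(\cdot,0)\|_{L^2(\Tor^2)}^2\leq \varepsilon\|\nabla \T\|_{L^2}^2+C_\varepsilon\|\T\|_{L^2}^2$, coercive. The noise operator $B\T:=((\psi_n\cdot\nabla)\T)_{n\geq 1}\colon H^1\to L^2(\ell^2)$ is bounded by Assumption~\ref{ass:well_posedness_primitive}\eqref{it:well_posedness_primitive_L_infty_bound}, and the stochastic parabolicity \eqref{it:well_posedness_primitive_parabolicity} gives
\begin{equation*}
2\langle -\Dr \T,\T\rangle-\|B\T\|_{L^2(\ell^2)}^2\geq (2-\ellip)\|\nabla \T\|_{L^2}^2 - C\|\T\|_{L^2}^2.
\end{equation*}
The classical variational (Krylov--Rozovskii) theory, equivalently the framework of \cite{AV19_QSEE_1,AV19_QSEE_2}, then furnishes existence, uniqueness, and the desired estimate for $\T$ in $L^2((0,\tau)\times\O;H^1)\cap L^2(\O;C([0,\tau];L^2))$.

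\textbf{Step 2 (Coupling term).} I show that $\op_\kone$ is bounded from $H^1(\Dom)$ to $L^2(\Dom;\R^2)$. Writing
\begin{equation*}
\nabla_\h\int_{-h}^{x_3}\kone\T\,d\zeta=\int_{-h}^{x_3}(\nabla_\h \kone)\T\,d\zeta+\int_{-h}^{x_3}\kone\nabla_\h \T\,d\zeta,
\end{equation*}
Cauchy--Schwarz in $\zeta$ together with Assumption~\ref{ass:well_posedness_primitive}\eqref{it:well_posedness_primitive_kone_smoothness} and the two-dimensional Sobolev embedding $H^1(\Tor^2)\hookrightarrow L^{2(2+\delta)/\delta}(\Tor^2)$ yield $\|\op_\kone\T\|_{L^2(\Dom)}\lesssim_M \|\T\|_{H^1(\Dom)}$. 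Hence $\p[\op_\kone\T]$ is a bona fide forcing in $L^2((0,\tau)\times\O;\Ls^2)$ controlled by the Step~1 estimate.

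\textbf{Step 3 (Velocity equation).} With $\tilde f_v:=f_v+\p[\op_\kone\T]$ regarded as data, the $v$-equation becomes a linear SPDE with drift $\Delta v+\p[\Lpp v]$ and noise $\p[(\phi_n\cdot\nabla)v+g_{n,v}]$. The deterministic hydrostatic Stokes operator $-\p\Delta$ on $\Ls^2(\Dom)$ with domain $\Hs^2_{\n}(\Dom)$ admits a bounded $H^\infty$-calculus by \cite{GGHHK20}, and the parabolicity \eqref{it:well_posedness_primitive_parabolicity} survives the projection because $\p$ is an orthogonal projection:
\begin{equation*}
\sum_{n\geq 1}\|\p[(\phi_n\cdot\nabla)v]\|_{L^2}^2\leq \sum_{n\geq 1}\|(\phi_n\cdot\nabla)v\|_{L^2}^2\leq \ellip\|\nabla v\|_{L^2}^2.
\end{equation*}
Combining these with the general framework of stochastic maximal $L^2$-regularity from \cite{AV19_QSEE_1} yields the estimate for the pair $(-\p\Delta,\p[(\phi_n\cdot\nabla)\cdot])$. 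The turbulent pressure operator $\p\Lpp$ is absorbed as a lower-order perturbation: using $\hp_n\in L^{3+\delta}(\ell^2)$, $\phi_n\in H^{1,3+\delta}(\ell^2)$, H\"older's inequality, and a Sobolev embedding $H^{2-\sigma}(\Dom)\hookrightarrow L^q(\Dom)$ for $q$ large enough and some small $\sigma>0$, one estimates
\begin{equation*}
\|\p\Lpp v\|_{L^2}\lesssim \|v\|_{H^{2-\sigma}}\lesssim \varepsilon\|v\|_{H^2}+C_\varepsilon\|v\|_{L^2},
\end{equation*}
so $\p\Lpp$ does not destroy stochastic maximal regularity and is handled by a standard perturbation argument.

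The delicate point I expect to be the main obstacle is the interplay between the hydrostatic Helmholtz projection, the transport noise $(\phi_n\cdot\nabla)v$ and the non-local turbulent pressure contribution $\p\Lpp v$: one has to verify that the parabolicity on the full coefficients $\phi_n$ (rather than on $\p[(\phi_n\cdot\nabla)\cdot]$) is enough to close the energy estimate, while simultaneously keeping track of the $\ell^2$-summability and spatial regularity of both $\phi_n$ and $\hp_n$ so that the perturbation argument for $\p\Lpp$ does not consume the gain from parabolicity.
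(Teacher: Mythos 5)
Your Steps 1 and 2 are sound and essentially parallel the paper: the paper proves the $\T$-estimate by hand via It\^o's formula with a resolvent approximation rather than quoting variational theory, but for this component the two routes are equivalent, and your bound for $\op_{\kone}$ is exactly the paper's estimate. The genuine gap is in Step 3. The inequality you display,
\begin{equation*}
\sum_{n\geq 1}\|\p[(\phi_n\cdot\nabla)v]\|_{L^2}^2\leq \ellip\|\nabla v\|_{L^2}^2,
\end{equation*}
is a zeroth-order coercivity statement: it suffices only for the $L^\infty_t(L^2_x)\cap L^2_t(H^1_x)$ bound on $v$, not for the $L^2_t(H^2_x)$ bound that \eqref{eq:SMR_2_estimate_strong_weak} asserts. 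To obtain the $H^2$-estimate one applies It\^o's formula to $\|\nabla v\|_{L^2}^2$, and the It\^o correction then involves the \emph{gradient} of the noise; one must prove
\begin{equation*}
\sum_{n\geq 1}\|\nabla\p[(\phi_n\cdot\nabla)v]\|_{L^2}^2\leq \ellip''\|\Delta v\|_{L^2}^2+C\|v\|_{L^2}^2\quad\text{for some }\ellip''<2,
\end{equation*}
and this is not a consequence of orthogonality of $\p$ alone. It requires (i) the specific structure of the hydrostatic projection, namely $\partial_3\p f=\partial_3 f$ and $\partial_i\p f=\p\,\partial_i f$ for $i\in\{1,2\}$, which gives $\|\partial_j\p f\|_{L^2}\leq\|\partial_j f\|_{L^2}$; (ii) the splitting $\partial_j[(\phi_n\cdot\nabla)v]=\sum_k\big(\phi_n^k\partial^2_{k,j}v+\partial_j\phi_n^k\,\partial_k v\big)$, with the parabolicity condition applied to the second-order part and the $\nabla\phi_n$-part controlled through the $H^{1,3+\delta}$ assumption, H\"older, Sobolev embedding and interpolation; and (iii) Kadlec's formula under the Neumann boundary conditions to convert $\sum_{i,j}\|\partial^2_{i,j}v\|_{L^2}^2$ into $\|\Delta v\|_{L^2}^2$ up to lower order, so that the It\^o correction can actually be absorbed by the drift contribution $2\|\Delta v\|_{L^2}^2$. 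None of this is in your proposal, and without it the argument does not produce the $H^2$-component of the estimate.

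A second, related omission: invoking the general stochastic maximal regularity framework does not by itself yield \emph{existence} for the velocity equation with non-small transport noise; an a priori estimate has to be converted into solvability. The paper does this by the method of continuity, deforming from the base case $\big((-\Delta_{\n},-\Dr),0\big)\in\mathcal{SMR}^{\bullet}_{2}(T)$ (self-adjointness, no noise) to the full operator with a $\lambda$-uniform estimate; alternatively a Galerkin or variational construction would work, but again only once the gradient-level coercivity above is in place. Your treatment of $\Lpp$ as a lower-order perturbation does match the paper's argument and is fine.
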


The proof of Proposition \ref{prop:SMR_2} will be given in Section \ref{ss:proof_SMR_2} below. 

\begin{remark}\
\label{r:SMR_2}
\begin{enumerate}[{\rm(a)}]
\item\label{it:choice_AB_strong_weak}
For all $U=(v,\T)\in \Hs^2_{\n}\times H^1$, we set 
\begin{equation*}
A (\cdot)U:=
\begin{bmatrix}
-\Delta v - \p \big[\Lpp v+\op_{\kone} \T \big]\\
-\Dr \T
\end{bmatrix},
\ \ 
\text{ and } \ \ 
B_{n}(\cdot) U:=
\begin{bmatrix}
\p\big[(\phi_n(\cdot)\cdot \nabla )v\big]\\
(\psi_n(\cdot) \cdot \nabla )\T
\end{bmatrix},
\end{equation*} 
a.e. on $\R_+\times\Omega$. Then using the notation introduced in \cite[Section 3 in particular Definition 3.5]{AV19_QSEE_1}, Proposition \ref{prop:SMR_2} shows that  $(A,(B_n)_{n\geq 1})\in \mathcal{SMR}^{\bullet}_{2}(T)$.
\item\label{it:SMR_random_initial_time} By \cite[Proposition 3.9 and 3.12]{AV19_QSEE_2}, Proposition \ref{prop:SMR_2} also yields stochastic maximal $L^2$-estimates where in \eqref{eq:SMR_v_T_strong_weak} the starting time $0$ is replaced by any stopping time $\tau: \O\to [0,T]$ and non-trivial initial data from the space $(v_0,\T_0)\in L^0_{\F_{\tau}}(\O;\Hs^1\times L^2)$ where $\F_{\tau}$ is the $\sigma$-algebra of the $\tau$-past.
\end{enumerate}
\end{remark}

\subsection{Proof of Proposition \ref{prop:SMR_2}}
\label{ss:proof_SMR_2}
Here we prove Proposition \ref{prop:SMR_2}. To focus on the main difficulties, we only discuss the case $\hp^{\ell,m}_n\equiv 0$ since the operator $\Lpp v$ can be shown to be of lower order type provided Assumption \ref{ass:well_posedness_primitive}\eqref{it:well_posedness_primitive_phi_smoothness} holds. For details, we refer to Remark \ref{r:Lpp_lower_order} below.

\begin{proof}[Proof of Proposition \ref{prop:SMR_2} -- Case $\hp_{n}^{\ell,m}\equiv 0$]
Let us set 
$$
X_0=L^2\times (H^1)^*
\qquad \text{ and }\qquad 
X_1=\Hs^{2}_{\n}\times H^1.
$$
To prove the claim, we employ the method of continuity as in \cite[Proposition 3.13]{AV19_QSEE_2}. 
Let us denote the strong Neumann Laplacian by
\begin{equation}
\label{eq:def_strong_Neumann}
\Delta_{\n}\colon\Hs^2_{\n}(\Dom)\subseteq \Ls^2\to \Ls^2,  \quad \hbox{where}\quad\Delta_{\n} v=\Delta v. 
\end{equation}
Here we use that $\p\Delta_{\n}=\Delta_{\n}\p=\Delta_{\n}$.
It is well-known that $\Delta_{\n}$ is self-adjoint. Similarly, by form methods, one can check that the weak Robin Laplacian $\Dr$ defined in \eqref{eq:weak_Laplace_operator_robin} is self-adjoint as well. Thus, it is well-established that $((-\Delta_{\n},-\Dr),0)\in \mathcal{SMR}^{\bullet}_2(T)$, see e.g.\ \cite[Theorem 6.14]{DPZ} which  applies up to a shift, and compare also with \cite[Section 3.2]{AV19_QSEE_1}.

For all $\lambda\in [0,1]$ and $U=(v,\T)\in X_1$, we set
\begin{equation}
\label{eq:A_B_temperature_maximal_L_2_regularity}
A_{\lambda}U:=
\begin{bmatrix}
-\Delta_{\n} v -\lambda\p \big[\op_{\kone} \T \big]\\
-\Dr \T
\end{bmatrix},
\ \ 
\text{ and } \ \ 
B_{n,\lambda} U:=\lambda
\begin{bmatrix}
\p\big[(\phi_n\cdot \nabla )v\big]\\
(\psi_n \cdot \nabla )\T
\end{bmatrix}.
\end{equation}

Let $\calL_2(\ell^2,X_{1/2})$ be the space of all Hilbert-Schmidt operators endowed with its natural norm.
By the previous considerations and the method of continuity in \cite[Proposition 3.13 and Remark 3.14]{AV19_QSEE_2}, it remains to prove the existence of $C>0$ such that, for each stopping time $\tau:\O\to [0,T]$, each 
\begin{align*}
f&=(f_v,f_{\T})\in L^2_{\Progress}((0,T)\times \O;X_0), \\ g&=(g_{v},g_{\T})=((g_{n,v})_{n\geq 1},(g_{n,\T})_{n\geq 1})\in L^2_{\Progress}((0,T)\times \O;\calL_2(\ell^2,X_{1/2})),
\end{align*}
 and each  $L^2$-strong-weak solution $$(v,\T)\in L^2_{\Progress}((0,\tau)\times \O;X_1)\cap L^2_{\Progress}(\O;C([0,\tau];X_{1/2}))$$ 
 on $[0,\tau]$ to 
\begin{equation}
\label{eq:a_priori_estimates_v_theta}
\begin{cases}
\displaystyle{dv -\Big[\Delta v-\lambda\p [ \op_{\kone} \T]\Big]dt =f_v dt 
+ \sum_{n\geq 1} \Big[\lambda\p [(\phi_n\cdot \nabla )v]+ g_{n,v}\Big] d\beta_t^n}, & \text{on }\Dom,\\
\displaystyle{d\Temp -\Delta \Temp dt =f_{\T} dt + \sum_{n\geq 1} 
\Big[\lambda(\psi_n\cdot \nabla )\Temp+ g_{n,\T}\Big]d\beta_t^n},
& \text{on }\Dom,\\
\partial_3 v(\cdot,0)=\partial_3 v(\cdot,-h)=0, &\text{on }\Tor^2,\\
v(0)=0,\qquad \Temp(0)=0,& \text{on }\Dom,
\end{cases}
\end{equation}
we have
\begin{equation}
\label{eq:claimed_a_priori_estimate}
\begin{aligned}
\|v\|_{L^2((0,\tau)\times\O;H^2)}
&+ \|\Temp\|_{L^2((0,\tau)\times\O;H^1)}\\		
&\leq C\|f\|_{L^2((0,\tau)\times \O;X_0)}+ C\|g\|_{L^2((0,\tau)\times \O;\calL_2(\ell^2,X_{1/2}))}.
\end{aligned}
\end{equation}

We split the proof of \eqref{eq:claimed_a_priori_estimate} into two steps. The key observation is that $v$ does not appear in the equation for $\Temp$. Thus, first we prove an estimate for $\Temp$ and then we use the latter to obtain the estimate    for $v$.

\emph{Step 1: Estimate on $\T$}. To show the maximal regularity estimate 
$$ 
\|\Temp\|_{L^2((0,\tau)\times\O;H^1)}\leq 
C\big(\|f_{\T}\|_{L^2((0,\tau)\times \O;(H^{1})^*)}+ \|g_{\T}\|_{L^2((0,\tau)\times \O;L^2(\ell^2))}\big),
$$ 
the idea is to apply It\^{o}'s formula to $\T\mapsto\|\T\|_{L^2}^2$. To this end we use an approximation argument. 
Recall that, by definition of $L^2$-strong-weak solution to \eqref{eq:a_priori_estimates_v_theta}, $\T$ satisfies, a.s.\ for all $t\in [0,\tau]$,
\begin{equation}
\label{eq:equation_theta_integrated}
\begin{aligned}
\T(t)
=\int_0^t \one_{[0,\tau]} \Dr \T (s)\,ds
&+\int_0^{t}\one_{[0,\tau]} f_{\T}(s)ds\\
&
+\sum_{n\geq 1}\int_0^t \one_{[0,\tau]}\Big[\lambda(\psi_n(s)\cdot \nabla) \theta(s)+ g_{n,\theta}(s)\Big]d\beta_s^n.
\end{aligned}
\end{equation}
For technical reasons, it is convenient to work with processes defined on $[0,T]$ rather than on the stochastic interval $[0,\tau]$. Thus we set $\Ttwo(t):=\T(t\wedge \tau)$ for $t\in [0,T]$. Note that $\Ttwo=\T$ a.e.\ on $[0,\tau]$, and for each $t\in[0,T]$, $\Ttwo(t)$ is equal to the right hand side in \eqref{eq:equation_theta_integrated}. 

Since $1+\Dr$ is a sectorial operator (see e.g.\ \cite[Definition 10.1.1]{Analysis2} for the definition of this notion), 
\begin{equation}
\label{eq:approximation_Dr}
\lim_{t\to \infty} t (t+1+\Dr)^{-1}= I \ \ \text{ strongly in } (H^1)^*,
\end{equation}
where $I$ is the identity operator. Since $\Do(\Dr)=H^1$ and $\Do((\Dr)^{1/2})=L^{2}$, \eqref{eq:approximation_Dr} also holds with  $(H^1)^*$ replaced by either $H^1$ or $L^{2}$.
For each $k\geq 1$, let
\begin{equation}
\label{eq:def_e_k}
\e_k :=k(k+1+\Dr)^{-1}, \quad \Ttwo_k:=\e_k\Ttwo, \quad \hbox{and}\quad \theta_k:=\e_k\theta.
\end{equation}
Note that $\e_k$ and $\Dr$ commute on $(H^{1})^*$. Thus, applying $\e_k$ to \eqref{eq:equation_theta_integrated} we have, a.s.\ for all $t\in [0,\tau]$, 
\begin{equation}
\label{eq:equation_theta_integrated_k}
\begin{aligned}
&\Ttwo_k(t)-\int_0^t \one_{[0,\tau]} \Dr \T_k (s)\,ds
=\int_0^{t}\one_{[0,\tau]} \e_k f_{\T}(s)ds\\
&\qquad \qquad
+\sum_{n\geq 1}\int_0^t \one_{[0,\tau]}\e_k\Big[(\psi_n(s)\cdot \nabla) \theta(s)+ g_{n,\theta}(s)\Big]d\beta_s^n.
\end{aligned}
\end{equation}
Since $\theta_k\in L^2((0,\tau)\times \O ;H^2)$ by the regularity of the strong Robin Laplacian, we have $ \Dr \T_k=\Delta \T_k$ in the strong sense and we may apply It\^{o}'s formula to compute $\|\theta_k\|_{L^2}^2$. Hence,  a.s.\ for all $t\in [0,T]$,
\begin{align*}
&\|\Ttwo_k(t)\|_{L^2}^2 +2 \int_{0}^t\one_{[0,\tau]} \|\nabla \Temp_k\|_{L^2}^2 ds 
+2\alpha\int_0^{t} \one_{[0,\tau]} \|\T_k(\cdot,0)\|_{L^2(\Tor^2)}^2\,ds 
\\
&\qquad
=\int_{0}^t \one_{[0,\tau]} 2(\e_k f_{\T}(s),\Temp_k(s))_{L^2}\,ds \\
&\qquad
+\int_{0}^t \one_{[0,\tau]}\sum_{n\geq 1} \Big(\big\|\big(\e_k[\lambda(\psi_n\cdot \nabla )\Temp+ g_{n,\T}]\big)_{n\geq 1}\big\|_{L^2}^2\Big) ds\\
&\qquad
+ 2\sum_{n\geq 1} \int_0^t\one_{[0,\tau]} \Big[\Big(\e_k[\lambda(\psi_n(s)\cdot \nabla) \Temp(s) +g_{\theta,n}(s)], \Temp_k(s)\Big)_{L^2}\Big]d\beta_s^n,
\end{align*}
where we also integrated by parts and used that $\partial_3 \T_k(\cdot,-h)=0$ and $\partial_3 \T_k(\cdot,0)=-\alpha \T_k(\cdot,0)$ on $\Tor^2$.

Recall that $\T\in L^2(\O;C([0,\tau];L^2))$ and hence $\Ttwo \in L^2(\O; C([0,T];L^2))$.
Moreover, the trace map $H^{1/2+r}\ni f \mapsto f(\cdot,x_3=0)\in L^2(\Tor^2)$ is bounded for all $r>0$ and therefore $\T(\cdot,x_3=0)\in L^2((0,\tau)\times \O;L^2(\Tor^2))$.
Thus, we may take $k\to\infty$ in the previous identity, and obtain a.s.\ for all $t\in [0,T]$,
\begin{equation}
\label{eq:Ito_formula_theta_L2}
\begin{aligned}
\|\Ttwo(t)\|_{L^2}^2 
&+2 \int_{0}^t\one_{[0,\tau]} \|\nabla \Temp\|_{L^2}^2 ds 
+2\alpha\int_0^{t} \one_{[0,\tau]} \|\T(\cdot,0)\|_{L^2(\Tor^2)}^2\,ds 
\\
&=2\int_{0}^t \one_{[0,\tau]} \l f_{\T},\Temp\r\, ds
+
\int_{0}^t \one_{[0,\tau]}\sum_{n\geq 1} \Big(\big\| \lambda(\psi_n\cdot \nabla )\Temp+ g_{n,\T}\big\|_{L^2}^2\Big) ds\\
&
+ 2\sum_{n\geq 1} \int_0^t\one_{[0,\tau]} \Big(\lambda(\psi_n(s)\cdot \nabla) \Temp(s) +g_{\theta,n}(s), \Temp(s)\Big)_{L^2}\,d\beta_s^n\\
&
=:I_t+II_t+III_t,
\end{aligned}
\end{equation}
where, as above, $\l \cdot,\cdot\r$ denotes the duality pairing for $H^1$ and $ (H^1)^*$.

Let $0\leq s\leq t\leq T$. Note that for all $r\in (0,\frac{1}{2})$ by the continuity of the trace map, we have
\begin{equation}
\begin{aligned}
\label{eq:estimate_I_temp_L_2_estimate}
\E\int_0^t \one_{[0,\tau]}\|\T(\cdot,0)\|_{L^2(\Tor^2)}^2\,ds 
&\lesssim \E\int_0^t \one_{[0,\tau]}\|\T\|_{H^{\frac{1}{2}+r}}^2\,ds,\\
\E\Big[\sup_{s\in [0,t]}|I_s|\Big]
&\leq 2 (\E\|f_{\T}\|_{L^2(0,\tau;(H^{1})^*)}^2)^{1/2}(\E\|\theta\|_{L^2(0,\tau;H^{1})}^2)^{1/2}.
\end{aligned}
\end{equation}
Let $\ellip$ be as in Assumption \ref{ass:well_posedness_primitive} and fix $\ellip' \in (\nu,2)$. Thus, for some $c_\nu>0$,
\begin{align*}
&\E\Big[\sup_{s\in [0,t]}|II_s|\Big]\\
&\leq \E \sum_{n\geq 1} \int_0^t \one_{[0,\tau]}\big\|\big(\lambda(\psi_n\cdot \nabla )\Temp+ g_{n,\T}\big)_{n\geq 1}\big\|_{L^2}^2 ds\\
&\leq \frac{\ellip'}{\ellip}\E \sum_{n\geq 1} \int_0^t \one_{[0,\tau]}\big\|\big((\psi_n\cdot \nabla )\Temp\big)_{n\geq 1}\big\|_{L^2}^2 ds
+c_{\ellip} \E\int_0^t \one_{[0,\tau]}\|(g_{n,\T})_{n\geq 1}\|_{L^2(\ell^2)}^2ds \\
&= \frac{\ellip'}{\ellip}\E \sum_{n\geq 1} \int_0^t \one_{[0,\tau]}\int_{\Dom}\Big(\sum_{j=1}^3\psi_n^j  \partial_j \T\Big)^2 dx ds
+c_{\ellip} \E\int_0^t \one_{[0,\tau]}\|(g_{n,\T})_{n\geq 1}\|_{L^2(\ell^2)}^2ds \\
&\leq \ellip'\E\int_0^t \one_{[0,\tau]}\|\nabla\T\|_{L^2}^2 ds
+c_{\ellip} \E\int_0^t \one_{[0,\tau]}\|(g_{n,\T})_{n\geq 1}\|_{L^2(\ell^2)}^2 ds.
\end{align*}
Since $\ellip'<2$ and $\E[III_T]=0$, taking the expectation in \eqref{eq:Ito_formula_theta_L2} and using standard interpolation inequalities, one has
\begin{equation}
\label{eq:first_step_estimate_temperature_L_2}
\begin{aligned}
 \E\int_{0}^{\tau} \| \nabla \T\|^2_{L^2}\,ds 
&\leq C_0 \E\|\theta\|_{L^2(0,\tau;L^2)}^2\\
&+ 
 C_0\E\|f_{\T}\|_{L^2(0,\tau;(H^{1})^*)}^2 + C_{0}\E \|(g_{n,\T})_{n\geq 1}\|_{L^2(0,\tau;L^2(\ell^2))}^2
\end{aligned}
\end{equation}
where $C_0>0$ is independent of $\tau,f_{\theta},g_{\theta}$ and $\lambda$.

The Burkholder-Davis-Gundy inequality implies
\begin{align*}
&\E\Big[\sup_{s\in [0,t]}| III_t|\Big] \\
&\lesssim \E \Big[
  \int_0^t\one_{[0,\tau]} \sum_{n\geq 1}\Big|  \Big(\lambda(\psi_n \cdot \nabla) \Temp +g_{\theta,n}, \Temp\Big)_{L^2}\Big|^{2} ds\Big]^{1/2}\\
 &\lesssim \E \Big[\Big(\sup_{s\in [0,t\wedge \tau]} \|\T(s)\|_{L^2}^2 \Big)^{1/2}
  \Big( \int_0^t\one_{[0,\tau]} \sum_{n\geq 1}\Big\|\lambda(\psi_n\cdot \nabla) \Temp+g_{\theta,n}\Big\|_{L^2}^2 ds\Big)^{1/2}\Big]\\
&\leq \frac{1}{2} 
 \E \Big[\sup_{s\in [0,t\wedge \tau]} \|\T(s)\|_{L^2}^2 \Big]
+ C\E \int_0^t\one_{[0,\tau]} \Big(\|\nabla \T\|_{L^2}^2 +\|(g_{\theta,n})_{n\geq 1}\|_{L^2(\ell^2)}^2\Big) ds\\
 &\stackrel{\eqref{eq:first_step_estimate_temperature_L_2}}{\leq} \frac{1}{2}
  \E \Big[\sup_{s\in [0,t\wedge \tau]} \|\T(s)\|_{L^2}^2 \Big]
+ C_1
\E \int_0^t\one_{[0,\tau]} \Big(\| \Temp\|_{L^2}^2 +\|(g_{\theta,n})_{n\geq 1}\|_{L^2(\ell^2)}^2\Big) ds.
\end{align*}

Note that $\E\big[\sup_{s\in [0,t\wedge \tau]} \|\T(s)\|_{L^2}^2 \big]\leq \E\big[\sup_{s\in [0,t]} \|\Ttwo(s)\|_{L^2}^2 \big]$ since $\Ttwo=\T$ a.e.\ on $[0,\tau]$. Thus, taking $\E\big[\sup_{s\in [0,t]} \cdot \big]$ in \eqref{eq:Ito_formula_theta_L2} and using the above estimates on $I_t,II_t,III_t$ and \eqref{eq:first_step_estimate_temperature_L_2} to estimate $\E\|\nabla \T\|_{L^2(0,\tau;L^2)}^2$, one gets 
\begin{equation}
\label{eq:estimate_step_before_Gronwall}
\E\Big[\sup_{s\in [0,t\wedge \tau]}\|\T(s)\|_{L^2}^2 \Big]\lesssim
 \E\int_0^{t}\one_{[0,\tau]} \|\theta(s)\|_{L^2}^2 \,ds+  
N_{f,g}^{\T}(t). 
\end{equation}
where 
$
N_{f,g}^{\T}(t):=\E \|f_{\theta}\|_{L^2(0,t\wedge \tau;(H^1)^*)}^2+
\E \|g_{\theta}\|_{L^2(0,t\wedge \tau;L^2(\ell^2))}^2
$
and the implicit constant is independent of $\lambda,\T,f_{\T}$ and $g_{\theta}$. Set $y(t):=\E\big[\sup_{s\in [0,t\wedge \tau]}\|\T(s)\|_{L^2}^2 \big]$ for $t\in [0,T]$.  
By \eqref{eq:estimate_step_before_Gronwall} we have $y(t)\lesssim \int_0^t y(s)ds + N_{f,g}(t)$. Thus by Gronwall's inequality, we have 
$$
\E\Big[\sup_{s\in [0, \tau]}\|\T(s)\|_{L^2(\Dom)}^2 \Big]\lesssim N^{\T}_{f,g}(T).
$$
Combining the previous and \eqref{eq:first_step_estimate_temperature_L_2} one obtains the claimed estimate in Step 1.

\emph{Step 2: Estimate on $v$}. To prove  of \eqref{eq:claimed_a_priori_estimate}, let us begin by collecting some useful facts.
Firstly, for all $f\in H^{1}(\Dom;\R^2)$,
\begin{equation}
\label{eq:contractivity_of_hydrostatic}
\|\partial_j \p  f\|_{L^2}\leq \|\partial_j f\|_{L^2}\ \ \ \text{ for all }j\in \{1,2,3\}.
\end{equation}
Since  $\partial_3 \p f=\partial_3 f $,  \eqref{eq:contractivity_of_hydrostatic} follows trivially in case $j=3$. For $i\in \{1,2\}$, note that $\partial_i(\p f) =\p (\partial_i f)$. Since $\p$ is an orthogonal projection on $L^2(\Dom;\R^2)$, \eqref{eq:contractivity_of_hydrostatic} for $j\in \{1,2\}$ follows from the previous identities.

Let $r\in (1,\infty)$ be such that $\frac{1}{2+\delta}+\frac{1}{r}=\frac{1}{2}$, where $\delta$ is as in Assumption~\ref{ass:well_posedness_primitive}. Note that by Assumption \ref{ass:well_posedness_primitive}\eqref{it:well_posedness_primitive_kone_smoothness}, a.s.\ for all $t\in\R_+$ and $\Tthree\in H^{1}$, by Cauchy-Schwartz inequality,
\begin{equation}
\label{eq:estimate_theta_term_in_v_variable}
\begin{aligned}
&\|\p[\op_{\kone} \Tthree ]\|_{L^2}
\lesssim  
\|\op_{\kone} \Tthree \|_{L^2}\\
&\lesssim_{M}\sup_{i\in \{1,2\}}
\|\partial_i \Tthree  \|_{L^2}+
\Big\|x_{\h}\mapsto 
\|\partial_i\kone(t,x_{\h},\cdot)\|_{L^2(-h,0)}
\|\Tthree (x_{\h},\cdot)\|_{L^2(-h,0)} \Big\|_{L^{2}(\Tor^2)}\\
&\lesssim_{M,\delta}
\| \Tthree\|_{H^1}+\| \Tthree\|_{L^{r}(\Tor^2;L^2(-h,0))}\stackrel{(*)}{\lesssim}_{\delta} \| \Tthree\|_{H^1}
\end{aligned}
\end{equation}
where in $(*)$ we used that $H^1=H^1(\Dom)\embed H^1(\Tor^2;L^2(-h,0))\embed L^r(\Tor^2;L^2(-h,0))$.

The previous estimate and Step 1 yield
\begin{equation}
\label{eq:estimate_temperature_step_2_SMR}
\E\|\p [\op_{\kone} \T]\|_{L^2(0,\tau;L^2)}^2\lesssim \E\|f_{\T}\|_{L^2(0,\tau;(H^{1})^*)}^2
+\E\|g_{\T}\|_{L^2(0,\tau;L^2(\ell^2))}^2
\end{equation}
with implicit constant independent of $\lambda,\T,f_{\T}$ and $g_{\T}$.

Applying It\^{o}'s formula to $v\mapsto \|v\|_{L^2}^2$ (since $v\in L^2(0,\tau;H^2)$ a.s., there is no need for an approximation argument), an integration by parts and using the argument performed in Step 1 and \eqref{eq:estimate_temperature_step_2_SMR}, one can check that,
\begin{equation}
\label{eq:L_2_estimate_v_step_2_smr}
\E \Big[\sup_{t\in [0,\tau)}\|v(t)\|_{L^2}^2\Big]+\E \int_0^{\tau} \|\nabla v(t)\|_{L^2}^2\,dt 
\lesssim N_{f,g},
\end{equation}
where 
\begin{align*}
N_{f,g}
&:=\E\|f_{\T}\|_{L^2(0, \tau;(H^{1})^*)}^2+\E\|g_{\T}\|_{L^2(0, \tau;L^2(\ell^2))}^2\\
&\ +\E\|f_{v}\|_{L^2(0, \tau;L^2)}^2+\E\|g_{v}\|_{L^2(0, \tau;H^{1}(\ell^2))}^2,
\end{align*}
and the implicit constant in \eqref{eq:L_2_estimate_v_step_2_smr} is independent of $\lambda,\T,f_{v},f_{\T}, g_{v}$ and $g_{\T}$.

To complete the proof of this step, it remains to show 
\begin{equation}
\label{eq:claim_Delta_v_smr}
\E \int_{0}^{\tau}\|\Delta v\|_{L^2}^2 \,ds\lesssim N_{f,g},
\end{equation}
where as above, the implicit constant in \eqref{eq:L_2_estimate_v_step_2_smr} is independent of $\lambda,\T,f_{v},f_{\T}, g_{v}$ and $g_{\T}$. To this end, we apply It\^{o}'s formula to $v\mapsto \|\nabla v\|_{L^2}^2$. Set $\vtau(t):=v(t\wedge \tau)$. Using an approximation argument similar to Step 1 and an integration by parts, one has a.s.\ for all $t\in [0,T]$,
\begin{equation*}
\begin{aligned}
\|\nabla \vtau(t)\|_{L^2}^2 
&+2 \int_{0}^t\one_{[0,\tau]} \|\Delta v\|_{L^2}^2\, ds 
=-2 \int_{0}^t \one_{[0,\tau]} ( f_{v}+ \lambda \p[\op_{\kone} \T],\Delta v)_{L^2}\,ds\\
&
+ \int_{0}^t \one_{[0,\tau]}\sum_{n\geq 1} \big\|\lambda\nabla \p[(\phi_n\cdot \nabla )v]+ \nabla g_{n,v}\big\|_{L^2}^2\, ds\\
&
+ 2\sum_{n\geq 1} \int_0^t\one_{[0,\tau]} \big(\lambda \nabla \p[(\phi_n\cdot \nabla) v] +\nabla g_{v,n}, \nabla v\big)_{L^2}\, d\beta_s^n\\
&
=:IV_t+V_t+VI_t.
\end{aligned}
\end{equation*}
Note that $\E[VI_T]=\E[VI_0]=0$. Thus, taking $t=T$ and the expected value in the previous formula we have 
\begin{equation}
\label{eq:step_2_final_claim}
2\E \int_{0}^{\tau} \|\Delta v\|_{L^2}^2 ds \leq \E[IV_T]+\E[V_T].
\end{equation}
By \eqref{eq:estimate_temperature_step_2_SMR} and the Cauchy-Schwartz inequality we have, for all $\varepsilon>0$,
\begin{equation}
\label{eq:estimate_IV_T_smr_2_proof_strong_weak_revision}
\begin{aligned}
\E[IV_T]
&\leq 
\varepsilon \E\int_0^{\tau} \|\Delta v\|_{L^2}^2\,ds 
+ C_{\varepsilon}\E \int_{0}^{\tau}\big(\|f_v\|_{L^2}^2+ \|\lambda \p[\op_{\kone} \T]\big\|_{L^2}^2\big)\,ds\\
&\leq 
\varepsilon \E\int_0^{\tau} \|\Delta v\|_{L^2}^2\,ds + C_{\varepsilon} N_{f,g}
\end{aligned}
\end{equation}
where $C_{\varepsilon}$ is independent of $\lambda,f_{v},f_{\T},g_{v},g_{\T}$ and $N_{f,g}$ is as below \eqref{eq:L_2_estimate_v_step_2_smr}.

We claim that, for some $c<2$ and $C>0$ (both independent of $\lambda,f_{v},f_{\T},g_{v},g_{\T}$),
\begin{equation}
\label{eq:claim_V_T_smr_proof}
\E[V_T]\leq c \E\int_0^{\tau}\|\Delta v\|_{L^2}^2 \,ds+C \E\int_0^{\tau} \big(\|v\|_{L^2}^2+\|g_{v}\|_{H^{1}(\ell^2)}^2)\,ds.
\end{equation}
It is easy to see that, if \eqref{eq:claim_V_T_smr_proof} holds, then \eqref{eq:claim_Delta_v_smr} follows by combining \eqref{eq:L_2_estimate_v_step_2_smr}, \eqref{eq:step_2_final_claim} and \eqref{eq:estimate_IV_T_smr_2_proof_strong_weak_revision} with $\varepsilon\in (0,2-c)$.

To prove \eqref{eq:claim_V_T_smr_proof}, we begin with a pointwise bound. Fix $\ellip<\ellip'<\ellip''<2$ and let $\varrho\in (2,6)$ be such that $\frac{1}{3+\delta}+\frac{1}{\varrho}=\frac{1}{2}$. Here $\delta$ and $\nu$ are as in Assumption \ref{ass:well_posedness_primitive}\eqref{it:well_posedness_primitive_phi_smoothness} and \eqref{it:well_posedness_primitive_parabolicity}. Since 
$
\partial_j [(\phi_n \cdot\nabla)v]=\sum_{k=1}^3\big( \phi_n^k\, \partial_{k,j}^2 v+  \partial_k v \, \partial_j \phi_n^k \big)$, one has 
\begin{align*}
&\sum_{n\geq 1}\| \nabla \p[(\phi_n\cdot \nabla )v]\|_{L^2}^2
= \sum_{j=1}^3\sum_{n\geq 1}\| \partial_j \p[(\phi_n\cdot \nabla )v]\|_{L^2}^2\\
&\qquad 
\stackrel{\eqref{eq:contractivity_of_hydrostatic}}{\leq }\sum_{j=1}^3 
\sum_{n\geq 1}\| \partial_j [(\phi_n\cdot \nabla )v]\|_{L^2}^2 \\
&\qquad  
\leq \sum_{j=1}^3 \sum_{h=1}^2  \sum_{n\geq 1} 
\Big( \frac{\ellip'}{\ellip} \int_{\Dom}
  \Big|\sum_{k=1}^3\phi_n^k\partial_{j,k}v^h  \Big|^2 \,dx+c_{\ellip} \int_{\Dom}
  \Big|\partial_k v^h \, \sum_{k=1}^3\partial_j \phi_n^k \Big|^2  \,dx\Big)
\\
&\qquad \leq 
  \ellip'\Big(\sum_{h=1}^2\sum_{k,j=1}^3 \int_{\Dom}
|\partial_{j,k}v^h|^2 \,dx\Big)
+c_{\ellip}' \max_{h,k}\Big(\big\|(\partial_h\phi_n^k)_{n\geq 1}\big\|_{L^{3+\alpha}(\ell^2)}^2 \|\nabla v\|_{L^{\varrho}}^2\Big)\\
&\qquad \stackrel{(i)}{=}
  \ellip'\|\Delta v\|_{L^2}^2
+c_{\ellip} M \|\nabla v\|_{L^{\varrho}}^2
\end{align*} 
where $M$ is as in Assumption \ref{ass:well_posedness_primitive} and in $(i)$ we used the Kadlec’s formula (see Lemma \ref{l:Kadlec_formula} with $\beta=0$) as well as the boundary conditions in \eqref{eq:a_priori_estimates_v_theta}.

Since $\varrho<6$, by the Sobolev embedding $H^{1}\embed L^6$ and standard interpolation theory, there exists $\theta\in (0,1)$ such that $\|\nabla v\|_{L^{\varrho}}\lesssim (\|\Delta v\|_{L^2}+\|v\|_{L^2})^{1-\theta}\|v\|_{L^2}^{\theta}$. In particular, $
\|\nabla v\|_{L^{\varrho}}^2
\leq (\ellip''-\ellip')\|\Delta v\|_{L^2}^2 + C_{\ellip',\ellip''}\|v\|_{L^2}^2$ and therefore
\begin{equation}
\label{eq:estimate_Ito_correction_v_smr}
\sum_{n\geq 1}\| \nabla \p[(\phi_n\cdot \nabla )v]\|_{L^2}^2	\leq 
 \ellip''\|\Delta v\|_{L^2}^2
+c_{\ellip'}C_{\ellip',\ellip''}M\| v\|_{L^{2}}^2.
\end{equation}
By \eqref{eq:estimate_Ito_correction_v_smr}, $\lambda\in [0,1]$ and the Young inequality, one can readily check that the pointwise estimate \eqref{eq:estimate_Ito_correction_v_smr} implies \eqref{eq:claim_V_T_smr_proof}. Thus as explained above, the latter yields \eqref{eq:claim_Delta_v_smr} as desired. 
\end{proof}

\begin{remark}[Proof of Proposition \ref{prop:SMR_2} -- Case $\hp_{n}^{\ell,m}\neq 0$]
\label{r:Lpp_lower_order}
If $\hp_{n}^{\ell,m}\neq 0$, then one can repeat the argument in Step 2  of the above proofs recalling that, for $\ell,m\in \{1,2\}$,
\begin{equation}
\label{eq:estimate_to_control_hp}
\|(\hp_{n}^{\ell,m} f)_{n\geq 1}\|_{L^2(\ell^2)}
\leq  \|(\hp_{n}^{\ell,m})_{n\geq 1}\|_{L^{3+\delta}(\ell^2)} \|f\|_{L^{\varrho}}
\leq \varepsilon \|\nabla f\|_{L^2} + C_{\varepsilon,M} \|f\|_{L^2},
\end{equation}
where $\frac{1}{\varrho}+\frac{1}{3+\delta}=\frac{1}{2}$ and we used the interpolation argument which also yield \eqref{eq:estimate_Ito_correction_v_smr}. 

Let us note that \eqref{eq:estimate_to_control_hp} has to be used twice. First to show that
$$
\Big| \int_{\Dom}\Lpp  v\, v\,  dx  \Big|\lesssim \|\nabla v\|_{L^2} \|(\hp_n^{\ell,m} v)_{n\geq 1}\|_{L^2(\ell^2)}
\leq \varepsilon \|\nabla v\|_{L^2}^2+ C_{\varepsilon,M} \|v\|_{L^2}^2,
$$
for $ \ell,m\in \{1,2\}$,
which provides \eqref{eq:L_2_estimate_v_step_2_smr} and the second one to show that 
\begin{align*}
\|\Lpp v\|_{L^2}
&\lesssim \varepsilon \max_{1\leq i,j\leq 3} \|\partial_{i,j}^2 v\|_{L^2}^2+ C_{\varepsilon,M} \|v\|_{L^2}^2\\
&\eqsim \varepsilon \max_{1\leq i,j\leq 3} \|\partial_{i,j}^2 v\|_{L^2}^2+ C_{\varepsilon,M} \|v\|_{L^2}^2,
\end{align*}
which, in combination with the Kadlec formula of Lemma \ref{l:Kadlec_formula}, yields \eqref{eq:claim_V_T_smr_proof}.
In both cases, one chooses $\varepsilon>0$ small enough to absorb the leading terms in the LHS of the corresponding estimate.
\end{remark}

\section{Proof of the main results in the strong-weak setting}
\label{s:proofs_strong_weak}
In this section we have collected the proofs of our main results in the strong-weak setting. Namely, the proofs of Theorems \ref{t:local_primitive} and \ref{t:global_primitive}
are given in Subsection \ref{ss:proof_local_strong_weak} and \ref{ss:proof_global_strong_weak}, respectively.

\subsection{Proof of Theorem \ref{t:local_primitive}}
\label{ss:proof_local_strong_weak}
To prove Theorem \ref{t:local_primitive} we employ the results in  \cite{AV19_QSEE_1, AV19_QSEE_2}, and therefore we reformulate \eqref{eq:primitive_weak_strong} as a semilinear stochastic evolution equation. 
To this end, let 
\begin{align*}
X_0=\Ls^2\times (H^1)^* \quad \hbox{and}  \quad X_1=\Hs^2_{\n}\times H^1,
\end{align*}
the linear operators $(A,B)=(A,(B_n)_{n\geq 1})$ be as in Remark \ref{r:SMR_2}\eqref{it:choice_AB_strong_weak} and for all $U:=(v,\T)\in X_1$ we set for the non-linearities
\begin{align*}
F(\cdot,U)
&:=\begin{bmatrix}
\p[ (v\cdot\nabla_{\h}) v + w(v)\cdot \partial_3 v +\fv(\cdot,v,\T,\nabla v)+\Lpg (\cdot,v)]\\
 \div_{\h}(v \T)+\partial_3 (w(v)\T)  \ft(\cdot,v,\T,\nabla v)
\end{bmatrix},\\
G(\cdot,U)
&:=\begin{bmatrix}
 (\p[\gvn(\cdot,v)])_{n\geq 1}\\
  (\gtn(\cdot,v,\T))_{n\geq 1}
\end{bmatrix},
\end{align*}
where $w(v)$ is as in \eqref{eq:def_w_v} and 
\begin{equation*}
\Lpg(\cdot,v):=\Big(\sum_{n\geq 1} \sum_{m=1}^2 \hp_n^{\ell,m} (t,x)
\big(\q[\gvn(\cdot,v)]\big)^m\Big)_{\ell=1}^2,
\end{equation*}
cf.\ 
\eqref{eq:def_P_gamma}.
With the above notation, \eqref{eq:primitive_weak_strong}-\eqref{eq:boundary_conditions_strong_weak} can be reformulated as a stochastic evolution equation on $X_0$
of the form
\begin{equation}
\label{eq:abstract_formulation_strong_weak}
\begin{cases}
d U+ A (\cdot)U\,dt=F(\cdot,U)dt + (B (\cdot)U +G(\cdot,U))d\Br_{\ell^2}(t),\\
U(0)=(v_0,\T_0),
\end{cases}
\end{equation}
where $\Br_{\ell^2}$ is the $\ell^2$-cylindrical Brownian motion induced by $(\beta^n)_{n\geq 1}$, see \eqref{eq:Bl2}.
It is straightforward  to see that $((v,\T),\tau)$ is an $L^2$-maximal (resp.\ -local) solution to \eqref{eq:primitive_weak_strong}-\eqref{eq:boundary_conditions_strong_weak} in the sense of Definition \ref{def:sol_strong_weak} if and only if  $U=(v,\T)$ is an $L^2$-maximal (resp.\ -local) solution to \eqref{eq:abstract_formulation_strong_weak} in the sense of \cite[Definition 4.4]{AV19_QSEE_1}.
With this preparation we can prove Theorem \ref{t:local_primitive}.

\begin{proof}[Proof of Theorem \ref{t:local_primitive}] 
Let us begin by proving the existence of maximal $L^2$-strong-weak solutions and Theorem \ref{t:local_primitive}\eqref{it:path_regularity_strong_weak}. To this end,
by the above discussion and Proposition \ref{prop:SMR_2}, the existence of an $L^2$-maximal solution to \eqref{eq:primitive_weak_strong}-\eqref{eq:boundary_conditions_strong_weak} satisfying \eqref{it:path_regularity_strong_weak} follows from \cite[Theorem 4.8]{AV19_QSEE_1} provided assumptions (HF) and (HG) in \cite[Section 4]{AV19_QSEE_1} hold. To check these assumptions, we need to estimate the nonlinearities $F$ and $G$. 

For notational convenience, for $U=(v,\T)\in X_1$, we let
\begin{align*}
&F_1(\cdot,U):=\begin{bmatrix}
\p\big[ (v\cdot\nabla_{\h}) v + w(v) \partial_3 v\big]\\
 \div_{\h}(v \T)+\partial_3 (w(v)\T) 
\end{bmatrix},
\qquad 
F_2(\cdot,U):=\begin{bmatrix}
\p[ \fv(\cdot,v,\T,\nabla v)]\\
  \ft(\cdot,v,\T,\nabla v)
\end{bmatrix}, 
\end{align*}
and
$F_{3}(\cdot,U):=
\big[
\Lpg(\cdot,v), \
0
\big]^{{\rm T}}$. Finally, set $X_{\beta}:=[X_0,X_1]_{\beta}$ where $\beta\in (0,1)$. Here $[\cdot,\cdot]_{\beta}$ denotes the complex interpolation functor.

\emph{Step 1: For all $U=(v,\T),\, U'=(v',\T') \in X_1$,}
\begin{equation}
\label{eq:estimate_F_1}
\|F_1(\cdot,U)-F_1(\cdot,U')\|_{X_0}\lesssim 
\big(\|(v,\T)\|_{X_{3/4}}+\|(v',\T')\|_{X_{3/4}}\big)\|(v,\T)-(v',\T')\|_{X_{3/4}}.
\end{equation} 
Let us begin by noticing that $F_1$ is a bilinear map, and therefore to prove \eqref{eq:estimate_F_1} it is enough to consider the case $(v',\T')=0$. To this end, we note that
\begin{equation}
\label{eq:X_3_4_proof_local}
X_{3/4}\embed [X_0,H^2\times H^1]_{3/4}\embed H^{3/2}\times H^{1/2}.
\end{equation}
Moreover,
\begin{align*}
\Big\|
\begin{bmatrix}
\p[(v\cdot \nabla_{\h}) v]\\
\div_{\h}(v \T)
\end{bmatrix}
 \Big\|_{X_0}
 &\lesssim 
 \|(v\cdot \nabla_{\h}) v\|_{L^2}
 +\|v \T\|_{L^2}\\
 &\leq \|v\|_{L^6} (\|v\|_{W^{1,3}}+\|\T\|_{L^3})\\
& \lesssim \|v\|_{H^{3/2}}(\|v\|_{H^{3/2}}+\|\T\|_{H^{1/2}}),
\end{align*}
where in the last step we used Sobolev embeddings.
The remaining terms in $F_1$ can be estimated as in  \cite[Lemma 5.1]{HK16} for $p=2$. For the reader's convenience we include some details. Note that, for all $v\in H^{2}$, 
\begin{equation}
\label{eq:estimate_w_strong_weak}
\begin{aligned}
\|\w(v)\|_{L^{\infty}(-h,0;L^{4}(\Tor^2))}
&\lesssim \|\w(v)\|_{W^{1,2}(-h,0; L^{4}(\Tor^2))}\\
&\lesssim \|\div_{\h} v\|_{L^2(-h,0; L^4(\Tor^2))}\\
&\lesssim \|v\|_{L^2(-h,0; W^{1,4}(\Tor^2))}\\
&\lesssim \|v\|_{L^2(-h,0; H^{3/2}(\Tor^2))}\lesssim \|v\|_{H^{3/2}}
\end{aligned}
\end{equation}
and
\begin{align*}
\|\partial_3 v\|_{L^2(-h,0;L^4(\Tor^2))}
&\lesssim 
\|\partial_3 v\|_{L^2(-h,0;H^{1/2}(\Tor^2))}\\
& \lesssim
\| v\|_{L^2(-h,0;H^{3/2}(\Tor^2))} \lesssim
\|v\|_{H^{3/2}}.
\end{align*}
Analogously, one can check that 
$
\|\T\|_{L^2(-h,0;L^4(\Tor^2))}\lesssim 
\|\T\|_{H^{1/2}}
$
for all $\T\in H^1$.
Thus, using the previous estimates, we get
\begin{align*}
\Big\|
\begin{bmatrix}
\p[w(v)\partial_3 v]\\
\partial_{3}(w(v)\T)
\end{bmatrix}
 \Big\|_{X_0}
 &\lesssim
 \|w(v)\partial_3 v\|_{L^2}
 +\|w(v) \T\|_{L^2}\\
 &\leq \|w(v)\|_{L^{\infty}(-h,0;L^4(\Tor^2))} 
 \big[\|\partial_3 v\|_{L^{2}(-h,0;L^4(\Tor^2))}+\|\T\|_{L^{2}(-h,0;L^4(\Tor^2))}\big]\\
 &\lesssim \|v\|_{H^{3/2}}(\|v\|_{H^{3/2}}+\|\T\|_{H^{1/2}}).
\end{align*}
By \eqref{eq:X_3_4_proof_local}, the previous estimates yield \eqref{eq:estimate_F_1}.

\emph{Step 2: For all $U=(v,\T),\, U'=(v',\T') \in X_1$,}
\begin{align*}
\|F_2(\cdot,U)-F_2(\cdot,U')\|_{X_0}
&\lesssim 
(1+\|(v,\T)\|_{X_{3/5}}^4+\|(v',\T')\|_{X_{3/5}}^4)\|(v,\T)-(v',\T')\|_{X_{3/5}}\\
&+(1+\|(v,\T)\|_{X_{4/5}}^{2/3}+\|(v',\T')\|_{X_{4/5}}^{2/3})\|(v,\T)-(v',\T')\|_{X_{4/5}}.
\end{align*} 
By Assumption \ref{ass:well_posedness_primitive}\eqref{it:nonlinearities_strong_weak} and the Cauchy-Schwartz inequality, for all $(v,\T),(v',\T')\in X_1$,
$\|F_2(\cdot,U)-F_2(\cdot,U')\|_{X_0}$ can be estimated by 
\begin{align*}
&(1+\|v\|_{L^{10}}^4+ \|v'\|^4_{L^{10}})\|v-v'\|_{L^{10}}+
(1+\|z\|_{L^{10/3}}^{2/3}+\|\T'\|_{L^{10/3}}^{2/3})\|\T-\T'\|_{L^{10/3}}\\
&
+(1+\|v\|_{W^{1,10/3}}^{2/3}+\|v'\|_{W^{1,10/3}}^{2/3})\|v-v'\|_{W^{1,10/3}}^{2/3}\\
&\lesssim 
(1+\|v\|_{H^{6/5}}^4+ \|v'\|^4_{H^{6/5}})\|v-v'\|_{H^{6/5}}+
(1+\|z\|_{H^{3/5}}^{2/3}+\|\T'\|_{H^{3/5}}^{2/3})\|\T-\T'\|_{H^{3/5}}\\
&
+(1+\|v\|_{H^{8/5}}^{2/3}+\|v'\|_{H^{8/5}}^{2/3})\|v-v'\|_{H^{8/5}}^{2/3}
\end{align*}
where in the last equality we have used Sobolev embeddings.
To obtain the claimed estimate it is enough to note that 
\begin{equation*}
X_{3/5}\embed H^{6/5}\times H^{1/5} \ \ \text{ and }\ \  
X_{4/5}\embed H^{8/5}\times H^{3/5}.
\end{equation*}

\emph{Step 3: For all $U=(v,\T),\, U'=(v',\T') \in X_1$,}
\begin{align*}
\|F_3(\cdot,U)-F_3(\cdot,U')\|_{X_{0}}&+
\|G(\cdot,U)-G(\cdot,U')\|_{X_{1/2}(\ell^2)}\\
&\lesssim 
(1+\|(v,\T)\|_{X_{3/5}}^4+\|(v',\T')\|_{X_{3/5}}^4)\|(v,\T)-(v',\T')\|_{X_{3/5}}\\
&+(1+\|(v,\T)\|_{X_{2/3}}^{2}+\|(v',\T')\|_{X_{2/3}}^{2})\|(v,\T)-(v',\T')\|_{X_{2/3}}
\end{align*} 
\emph{where $X_{1/2}(\ell^2):=H^{1}(\ell^2)\times L^2(\ell^2)$.}

Note that, by Assumption \ref{ass:well_posedness_primitive}\eqref{it:well_posedness_primitive_phi_smoothness} and the H\"{o}lder inequality,
\begin{align*}
\|F_3(\cdot,U)-F_3(\cdot,U')\|_{X_{0}}
&\lesssim \max_{\ell,m\in \{1,2\}} 
\Big\|\sum_{n\geq 1}\hp_{n}^{\ell,m}\big(\q[\gvn(\cdot,v)-\gvn(\cdot,v')]\big)^m\Big\|_{L^2}\\
&\lesssim_{M} \|(\gvn(\cdot,v)-\gvn(\cdot,v'))_{n\geq 1}\|_{L^{6}(\ell^2)}.
\end{align*}
By the Sobolev embedding $H^1(\ell^2)\embed L^6(\ell^2)$, it is enough to prove the estimate for $
\|G(\cdot,U)-G(\cdot,U')\|_{X_{1/2}(\ell^2)}$ which will be given below.

Since $\|(\gtn(\cdot,v,\T)-\gtn(\cdot,v',\T'))_{\geq 1}\|_{L^2(\ell^2)}$ can be estimated as in Step 2, we only consider the $\gvn$-term. By the chain rule 
\begin{align*}
\partial_{j} (\gvn(\cdot,v))=\partial_{x_j} \gvn(\cdot,v)+\partial_y \gvn(\cdot,v)\partial_{j} v \quad \hbox{for all }j\in \{1,2,3\}.
\end{align*}
Arguing as in Step 2, one can check that to estimate 
$\|(\gvn(\cdot,v)-\gvn(\cdot,v'))_{\geq 1}\|_{H^{1}(\ell^2)}$ it is enough to bound the term $$\|(\partial_y \gvn(\cdot,v)\partial_{j} v -\partial_y \gvn(\cdot,v')\partial_{j} v' )_{n\geq 1}\|_{L^2(\ell^2)}=:I(v,v').$$ By Assumption \ref{ass:well_posedness_primitive}\eqref{it:nonlinearities_strong_weak} and the H\"{o}lder inequality, for all $v,v'\in H^2$, $I(v,v')$ is less or equal than
\begin{align*}
&\|(\partial_y \gvn(\cdot,v)(\partial_{j} v -\partial_{j} v' )_{n\geq 1}\|_{L^2(\ell^2)}
+\|(\partial_y \gvn(\cdot,v)-\partial_y \gvn(\cdot,v'))\partial_{j} v' )_{n\geq 1}\|_{L^2(\ell^2)}\\
&\lesssim(1+\||v-v'|^2\|_{L^{9}} )\|\partial_j v\|_{L^{18/7}}+(1+\||v'|^2\|_{L^9}) \|\partial_j v'\|_{L^{18/7}}\\
&\lesssim(1+\|v-v'\|_{H^{4/3}}^2 )\|v\|_{H^{4/3}}+(1+\|v'\|_{H^{4/3}}^2) \| v'\|_{H^{4/3}};
\end{align*}    	
where in the last estimate we have used Sobolev embeddings. Since $X_{2/3}\embed H^{4/3}\times H^{1/3}$, the claim of this step follows.

\emph{Step 4: Conclusion}. Due to Steps 1-3 and Proposition \ref{prop:SMR_2}, the existence of an $L^2$-maximal strong-weak solution to \eqref{eq:primitive_weak_strong} and Theorem~\ref{t:local_primitive}\eqref{it:path_regularity_strong_weak} follow from \cite[Theorem 4.8]{AV19_QSEE_1}, where we set $m_F=3$ and $m_G=2$ which correspond the to number of different terms on the right hand side when estimating  $F$ and $G$, respectively, as done in Steps 1-3.
Moreover, each of these five terms involves numbers $\rho_j$ describing the power in the estimates of the non-linearities, and $\beta_j, \varphi_j$  indicating the order in the estimates of the non-linearities in terms of interpolation spaces $X_{\beta_j}$ and $X_{\varphi_j}$, respectively. Here, by Steps 1-3 we can chose in \cite[Theorem 4.8]{AV19_QSEE_1}
\begin{align*}
\rho_1&=1, \rho_2=\rho_4=4, \rho_3=2/3, \rho_5=2, \hbox{ and } \\ \beta_1&=\varphi_1=3/4,  \varphi_2=\varphi_4=\beta_2=\beta_4=3/5, \varphi_3=\beta_3=4/5, \varphi_5=\beta_5=2/3,
\end{align*}
where one also uses that
$$
\rho_j\Big(\varphi_j-1+\frac{1}{2}\Big)+\beta_j=1\quad \text{ for all }\quad j\in \{1,\dots,5\}.
$$

Theorem~\ref{t:local_primitive}\eqref{it:blow_up_criterium_strong_weak} follows from \cite[Theorem 4.11]{AV19_QSEE_2} and Proposition \ref{prop:SMR_2}.
\end{proof}

\subsection{Proof of Theorem \ref{t:global_primitive}}
\label{ss:proof_global_strong_weak}
The key ingredient in the proof is the following energy estimate. Recall that $\norm_k$ has been defined in \eqref{eq:def_norm}.

\begin{proposition}[Energy estimate]
\label{prop:energy_estimate_primitive}
Let Assumption 
\ref{ass:global_primitive} be satisfied, and $T\in (0,\infty)$. Assume that $v_0\in L^{\infty}_{\F_0}(\O;\Hs^1)$ and $ \T_0\in L^{\infty}_{\F_0}(\O;L^2)$, and
let $((v,\T),\tau)$ be the $L^2$-maximal strong-weak solution to \eqref{eq:primitive_weak_strong}-\eqref{eq:boundary_conditions_strong_weak} provided by Theorem \ref{t:local_primitive}.  Then there exists a sequence of stopping times $(\mu_k)_{k\geq 1}$ with values in $[0,T]$ such that $\mu_k\leq \tau$ a.s.\ for all $k\geq 1$ and the following hold:
\begin{enumerate}[{\rm(1)}]
\item $\P(\mu_k=\tau\wedge T)\to 1$ as $k\to \infty$;
\item for each $k\geq 1$ there exists $C_{k,T}>0$ (possibly depending on $v_0,v$) such that 
$$
\E[ \norm_{1}(\mu_k;v)] + 
\E[\norm_0(\mu_k;\T) ]\leq C_{k,T}\Big(1+\E\|v_0\|_{H^1}^2+\E\|\T_0\|_{L^2}^2\Big).
$$
\end{enumerate}
\end{proposition}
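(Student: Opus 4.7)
The plan is to adapt the deterministic energy cascade of Cao--Titi / Hieber--Kashiwabara (cf.\ the discussion after Theorem~\ref{t:global_primitive}) to the stochastic setting, with the parabolicity threshold $\ellip<2$ from Assumption~\ref{ass:well_posedness_primitive}\eqref{it:well_posedness_primitive_parabolicity} playing the role of absorbing the It\^o corrections. First I would introduce the decomposition $v=\overline{v}+\widetilde{v}$, where $\overline{v}(t,x_{\h}):=\tfrac{1}{h}\int_{-h}^{0}v(t,x_{\h},\zeta)\,d\zeta$, and derive the equations for $\overline{v}$ and $\widetilde{v}$ separately. By Assumption~\ref{ass:global_primitive}\eqref{it:independence_z_variable} the coefficients $\phi_{n}^{1},\phi_{n}^{2},\hp_{n}^{\ell,m}$ are $x_3$-independent, so the $\overline{v}$-equation is a closed 2D SPDE on $\Tor^2$, while in the $\widetilde{v}$-equation the nonlocal deterministic pressure $\partial_{\hp}\widetilde{P}$ vanishes. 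For localization I would set $\mu_{k}$ to be the stopping times $\tau_{k}$ of Theorem~\ref{t:local_primitive}\eqref{it:path_regularity_strong_weak} further intersected with $T$ and with $\inf\{t\in[0,\tau)\,:\,\|v(t)\|_{\Hs^{1}}^{2}+\|\T(t)\|_{L^{2}}^{2}\geq k\}$, so that $\mu_{k}\nearrow \tau\wedge T$ a.s.

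Next I would apply \Ito\ to $\|\T\|_{L^{2}}^{2}$. The transport terms $\int_{\Dom}[(v\cdot\nabla_{\h})\T+\w(v)\partial_{3}\T]\T\,dx$ vanish via $\div_{\h}v+\partial_{3}\w=0$ and $\w|_{x_{3}\in\{-h,0\}}=0$; the It\^o correction $\sum_{n}\|(\psi_{n}\cdot\nabla)\T+g_{n,\T}\|_{L^{2}}^{2}$ is bounded, after integration by parts and the sublinearity of $g_{n,\T}$ from Assumption~\ref{ass:global_primitive}\eqref{it:sublinearity_Gforce}, by $\ellip'\|\nabla\T\|_{L^{2}}^{2}+C(1+\|\T\|_{L^{2}}^{2}+\|v\|_{L^{2}}^{2}+\|\y\|_{L^{2}}^{2})$ with some $\ellip'\in(\ellip,2)$. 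BDG and Gronwall then yield the bound on $\E[\norm_{0}(\mu_{k};\T)]$ in terms of $\E\|v\|_{L^{2}(0,\mu_{k};L^{2})}^{2}$. The same scheme applied to $\|v\|_{L^{2}}^{2}$, where the coupling term $\p[\nabla_{\h}\int_{-h}^{x_{3}}\kone\T\,d\zeta]$ is controlled by the preceding $\|\nabla\T\|_{L^{2}}$-bound together with Assumption~\ref{ass:well_posedness_primitive}\eqref{it:well_posedness_primitive_kone_smoothness} (as in \eqref{eq:estimate_theta_term_in_v_variable}), closes an $L^{2}$-level bound on $v$ and $\nabla v$.

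The hard part is the $L^{4}$-estimate on $\widetilde{v}$. I would apply \Ito\ to $\|\widetilde{v}\|_{L^{4}}^{4}$ and track three competing contributions: (i) the deterministic trilinear terms, which, after exploiting that $\partial_{\hp}\widetilde{P}=0$ in the $\widetilde{v}$-equation and the splitting $v=\overline{v}+\widetilde{v}$, are bounded by the anisotropic H\"older / Ladyzhenskaya estimates of \cite{CT07,HK16} in terms of $\int_{\Dom}|\widetilde{v}|^{2}|\nabla\widetilde{v}|^{2}\,dx$, $\|\nabla\overline{v}\|_{L^{2}}^{2}$ and $\|\widetilde{v}\|_{L^{4}}^{4}$; (ii) the temperature coupling, handled via the Step~2 bound on $\nabla\T$ and Assumption~\ref{ass:well_posedness_primitive}\eqref{it:well_posedness_primitive_kone_smoothness}; (iii) the quadratic It\^o correction of the form $\sum_{n}\int_{\Dom}|\widetilde{v}|^{2}|\p[(\phi_{n}\cdot\nabla)\widetilde{v}]|^{2}\,dx+\text{l.o.t.}$, controlled as in the proof of Proposition~\ref{prop:SMR_2} (see the estimate leading to \eqref{eq:estimate_Ito_correction_v_smr} and the Kadlec-type Lemma~\ref{l:Kadlec_formula}) by $\ellip''\int_{\Dom}|\widetilde{v}|^{2}|\nabla\widetilde{v}|^{2}\,dx+\text{l.o.t.}$ with $\ellip''<2$. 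Since the dissipation on the left generates $2\int|\widetilde{v}|^{2}|\nabla\widetilde{v}|^{2}\,dx$, the sharp threshold $\ellip<2$ is exactly what allows absorption; a BDG/Gronwall argument then delivers an a priori bound on $\sup_{t\leq\mu_{k}}\|\widetilde{v}(t)\|_{L^{4}}^{4}$ together with $\E\int_{0}^{\mu_{k}}\!\!\int_{\Dom}|\widetilde{v}|^{2}|\nabla\widetilde{v}|^{2}\,dx\,dt$.

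Finally, since $\overline{v}$ solves a 2D SPDE on $\Tor^{2}$, I would run \Ito\ on $\|\nabla\overline{v}\|_{L^{2}}^{2}$, combining the 2D Ladyzhenskaya inequality with the $L^{4}$-bound on $\widetilde{v}$ from the previous step to absorb the averaged cross terms $\overline{(\widetilde{v}\cdot\nabla_{\h})\widetilde{v}+\w(\widetilde{v})\partial_{3}\widetilde{v}}$; this yields the $H^{1}$-bound on $\overline{v}$. A further \Ito\ formula for $\|\nabla\widetilde{v}\|_{L^{2}}^{2}$, again using $\ellip<2$ to absorb the It\^o correction and using the bounds just obtained to keep the right-hand side integrable, closes the $H^{1}$-estimate on $\widetilde{v}$ and hence on $v=\overline{v}+\widetilde{v}$. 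Combining with the $L^{2}$-estimate on $\T$ from Step~2 gives the proposition, the constant $C_{k,T}$ carrying the (possibly rapidly growing) dependence on the truncation level through the various Gronwall constants.
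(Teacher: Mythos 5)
Your overall cascade ($v=\overline{v}+\wt{v}$, $L^2$-level bounds, $L^4$-bound for $\wt{v}$, then $H^1$) is the right family of ideas, but there are two genuine gaps. The first is the localization: you define $\mu_k$ as (essentially) the first time $\|v(t)\|_{\Hs^1}^2+\|\T(t)\|_{L^2}^2$ reaches level $k$ and assert $\mu_k\nearrow\tau\wedge T$ a.s. That assertion is exactly equivalent to the pathwise boundedness of the $H^1\times L^2$-norm up to $\tau\wedge T$, which is the very statement the proposition (via the blow-up criterion of Theorem \ref{t:local_primitive}\eqref{it:blow_up_criterium_strong_weak}) is designed to deliver; on the event $\{\tau<T\}$ the blow-up criterion forces these norms to explode, so with your choice $\P(\mu_k=\tau\wedge T)\to 1$ cannot be proved at this stage and item (1) of the proposition is left unestablished. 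The paper avoids this circularity by first proving an $L^2$-energy estimate valid up to $\tau$ itself (Lemma \ref{l:L_2_estimate_I}), localizing by the $L^2$-level quantities in \eqref{eq:def_ell_n} (which gives $\P(\ell_k=\tau)\to 1$), then proving the intermediate estimate of Lemma \ref{l:intermediate_estimate} for $X_t=\|\overline{v}\|_{H^1(\Tor^2)}^2+\|\partial_3 v\|_{L^2}^2+\|\wt{v}\|_{L^4}^4$ and its dissipation $Y_t$ on $[0,\ell_k]$, and only then defining $\mu_k$ as the hitting time of $X_t+\int_0^t Y_s\,ds$ in \eqref{eq:definition_mu_k}, so that (1) follows from the estimate itself by Chebyshev.

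The second gap is in how you propose to close the $H^1$-estimate. You skip the separate $L^\infty_t(L^2)$/$L^2_t(H^1)$ estimate for $\partial_3 v$ and instead run It\^o on $\|\nabla\overline{v}\|_{L^2}^2$ and $\|\nabla\wt{v}\|_{L^2}^2$. But the averaged equations carry the noise term $\overline{\phi_n^3\partial_3 v}$, and its It\^o correction at the $\|\nabla\wt{v}\|_{L^2}^2$ (or $\|\nabla\overline{v}\|_{L^2}^2$) level produces $\|\nabla\partial_3 v\|_{L^2}^2$-contributions which, added to the correction coming from $(\phi_n\cdot\nabla)\nabla\wt{v}$, can exceed the margin $2-\ellip$ left by the dissipation when $\ellip$ is not small; Assumption \ref{ass:well_posedness_primitive}\eqref{it:well_posedness_primitive_parabolicity} only gives $\ellip<2$, so absorption within a single It\^o identity is not available. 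The paper resolves this by (a) estimating $\vz=\partial_3 v$ separately (Step 2 of Lemma \ref{l:intermediate_estimate}), where testing against $\partial_3\vz$ kills the $x_3$-independent pressure/averaged terms and the correction is purely of transport type, (b) treating the $\overline{v}$-equation and the final $H^1$-bound on the full $v$ by stochastic maximal $L^2$-regularity (Proposition \ref{prop:SMR_2} and \eqref{eq:smr_bar_operator}), so that the transport noise sits inside the linear operator and needs no further absorption, and (c) combining \eqref{eq:estimate_Step_1}, \eqref{eq:estimate_Step_2} and \eqref{eq:estimate_Step_3} with carefully chosen small weights before applying the stochastic Gronwall lemma, which is how the $\E\|\nabla\partial_3 v\|_{L^2}^2$- and $\E\||\wt{v}||\nabla\wt{v}|\|_{L^2}^2$-terms generated across the three estimates are reabsorbed. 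Without the $\partial_3 v$ step and this weighted bookkeeping (or an equivalent device), your final absorption argument does not go through under the stated hypotheses; moreover the Gronwall coefficient in the final $v$-estimate contains $\|\partial_3 v\|_{L^2}^2\|\partial_3 v\|_{H^1}^2$ from the anisotropic bound on $w(v)\partial_3 v$, which is only controllable because $\partial_3 v$ appears in $X_t,Y_t$.
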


Let us first show how Proposition \ref{prop:energy_estimate_primitive} yields Theorem \ref{t:global_primitive}.

\begin{proof}[Proof of Theorem \ref{t:global_primitive}]
By \cite[Proposition 4.13]{AV19_QSEE_2} it is enough to consider $v_0\in L^{\infty}(\O;\Hs^{1})$ and $\T_0\in L^{\infty}(\O;L^2)$. Fix $T\in (0,\infty)$ and let $(\mu_k)_{k\geq 1}$ be as in Proposition \ref{prop:energy_estimate_primitive}.
In particular,
\begin{equation}
\label{eq:a_priori_estimate_global_existence}
\norm_1(\mu_k;v)+\norm_0(\mu_k;\T)<\infty \quad  \text{ a.s.\ for all }k\geq 1 .
\end{equation}
Since $\lim_{k\to \infty}\P(\mu_k=\tau\wedge T)=1$,
\begin{align*}
\P(\tau<T)
&\ \  =\lim_{k\to \infty} \P(\{\tau<T\}\cap \{\mu_k=\tau\})\\
&\stackrel{\eqref{eq:a_priori_estimate_global_existence}}{=}
\lim_{k\to \infty}\P\Big(\{\tau<T\}\cap \{\mu_k=\tau\}\cap \Big\{ \norm_1(\tau;v)+\norm_0(\tau;\T)<\infty\Big\}\Big)\\
& \ \ \leq   \P\Big(\tau<T,\, \norm_1(\tau;v)+\norm_0(\tau;\T)<\infty\Big)=0
\end{align*}
where the last equality follows from Theorem \ref{t:local_primitive}\eqref{it:blow_up_criterium_strong_weak}. The arbitrariness of $T$ yields $\P(\tau<\infty)=0$. Hence $\tau=\infty$ a.s.\ as desired.
\end{proof}

The proof of Proposition \ref{prop:energy_estimate_primitive} will be divided into two parts. Firstly, in Subsection \ref{sss:L_2_estimate_I} we prove a standard $L^2$-energy estimate for $L^2$-maximal strong-weak solutions to \eqref{eq:primitive_weak_strong}-\eqref{eq:boundary_conditions_strong_weak} and in Subsection \ref{sss:proof_proposition_energy_estimate_I}  we prove Proposition \ref{prop:energy_estimate_primitive}.

\subsubsection{An $L^2$-energy estimate}
\label{sss:L_2_estimate_I}
The aim of this subsection is to prove the following result. Recall that $\norm_k$ is as in \eqref{eq:def_norm}.

\begin{lemma}[$L^2$-energy estimate]
\label{l:L_2_estimate_I}
Let the assumptions of Proposition \ref{prop:energy_estimate_primitive} be satisfied. Let $((v,\T),\tau)$ be the $L^2$-maximal strong-weak solution to \eqref{eq:primitive_weak_strong}-\eqref{eq:boundary_conditions_strong_weak} provided by Theorem \ref{t:local_primitive}. Then for each $T\in (0,\infty)$ there exists $c_T>0$ independent of $v_0,v,\T_0,\T$ such that 
\begin{equation}
\label{eq:L_2_estimate_weak_strong}
\E[ \norm_{0}(\tau\wedge T;v)] + 
\E[\norm_0(\tau\wedge T;\T) ] 
 \leq c_T
\Big(1+\E\|v_0\|_{L^2}^2+\E\|\T_0\|_{L^2}^2\Big).
\end{equation}
\end{lemma}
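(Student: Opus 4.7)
The plan is to apply It\^o's formula to $\|v(t)\|_{L^2}^2$ and $\|\T(t)\|_{L^2}^2$ separately, exploit the cancellation of the quadratic transport terms, and close a Gr\"onwall estimate using the stochastic parabolicity $\ellip<2$. Theorem~\ref{t:local_primitive}\eqref{it:path_regularity_strong_weak} provides a localizing sequence $(\tau_k)_{k\geq 1}$ on which $v\in L^2(0,\tau_k;\Hs^2_{\n})\cap C([0,\tau_k];\Hs^1)$ and $\T\in L^2(0,\tau_k;H^1)\cap C([0,\tau_k];L^2)$, so all estimates are first obtained on $[0,\tau_k]$ and then extended to $\tau$ by monotone convergence. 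A further localization in $\om$ allows us to assume $v_0,\T_0$ uniformly bounded.

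I would begin with the temperature equation. Since $\Dr\T$ only takes values in $(H^1)^*$, a direct It\^o formula is not legitimate; following the approximation used in the proof of Proposition~\ref{prop:SMR_2}, I would apply It\^o to $\|\e_j\T\|_{L^2}^2$ with $\e_j=j(j+1+\Dr)^{-1}$ and pass $j\to\infty$. The crucial cancellation is
\[
\langle\T,\,\div_{\h}(v\T)+\partial_3(w(v)\T)\rangle_{L^2}=\tfrac{1}{2}\!\int_{\Dom}(\div_{\h} v+\partial_3 w(v))\,\T^2\,dx=0,
\]
using $\div_{\h} v+\partial_3 w(v)=0$ pointwise and $w(v)|_{x_3\in\{-h,0\}}=0$. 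Combining this with the parabolicity bound $\sum_n\|(\psi_n\cdot\nabla)\T\|_{L^2}^2\leq\ellip\|\nabla\T\|_{L^2}^2$ and the linear growth of $\ft,\gtn$ in Assumption~\ref{ass:global_primitive}\eqref{it:sublinearity_Gforce}, Young's inequality yields an inequality of the form
\[
\|\T(t)\|_{L^2}^2 + c\!\int_0^t\!\|\nabla\T\|_{L^2}^2\,ds \leq \|\T_0\|_{L^2}^2 + C\!\int_0^t\!(\|\T\|_{L^2}^2+\|v\|_{L^2}^2+\|\y\|_{L^2}^2)\,ds + \varepsilon\!\int_0^t\!\|\nabla v\|_{L^2}^2\,ds + M_\T(t),
\]
on $[0,\tau_k]$, where $c=2-\ellip-\varepsilon>0$ and $M_\T$ collects the stochastic integrals.

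Next I apply It\^o to $\|v(t)\|_{L^2}^2$; this step is direct since $v\in L^2(0,\tau_k;\Hs^2_{\n})$ a.s. Because $v\in\Ls^2$ and $\p$ is self-adjoint on $L^2(\Dom;\R^2)$, the same divergence identity gives $\langle v,\,\p[(v\cdot\nabla_{\h})v+w(v)\partial_3 v]\rangle=0$. The coupling $\langle v,\,\p\nabla_{\h}\!\int_{-h}^{x_3}\!\kone\T\,d\zeta\rangle$ is controlled by $C\|v\|_{L^2}\|\T\|_{L^2}$ via Assumption~\ref{ass:well_posedness_primitive}\eqref{it:well_posedness_primitive_kone_smoothness}, exactly as in~\eqref{eq:estimate_theta_term_in_v_variable}. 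The forcing $\fv$ and the turbulent-pressure contribution $\Lpg$ are handled by Assumption~\ref{ass:global_primitive}\eqref{it:sublinearity_Gforce} combined with a Sobolev interpolation/Young argument as in Remark~\ref{r:Lpp_lower_order} to absorb any $\varepsilon\|\nabla v\|_{L^2}^2$. For the It\^o correction, $\|\p f\|_{L^2}\leq\|f\|_{L^2}$ and stochastic parabolicity yield $\sum_n\|\p[(\phi_n\cdot\nabla)v]\|_{L^2}^2\leq\ellip\|\nabla v\|_{L^2}^2$. The net coefficient of $\|\nabla v\|_{L^2}^2$ in the $v$-inequality is then $-(2-\ellip-\varepsilon)<0$, which is enough to absorb the $\varepsilon\|\nabla v\|_{L^2}^2$ produced by the $\T$-estimate.

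Adding the two inequalities, taking $\E\sup_{s\le t\wedge\tau_k}$, applying the Burkholder--Davis--Gundy inequality to the martingale parts (with the standard absorption $\E\sup|M|\leq\tfrac{1}{2}\E\sup\|\cdot\|_{L^2}^2+C\!\int\!\cdots\,ds$), and then Gr\"onwall's inequality on the function $t\mapsto\E\sup_{s\le t\wedge\tau_k}(\|v\|_{L^2}^2+\|\T\|_{L^2}^2)$, delivers the required bound with $c_T$ independent of $v_0,v,\T_0,\T$; sending $k\to\infty$ by Fatou/monotone convergence concludes the proof. The principal obstacle is the bookkeeping of the two competing gradient terms: because $\ft,\gtn$ depend on $\nabla v$, the $\T$-estimate unavoidably produces a $\|\nabla v\|_{L^2}^2$ contribution which must be absorbed by the $(2-\ellip)\|\nabla v\|_{L^2}^2$ coercivity from the $v$-equation, forcing the Young constants to be chosen sufficiently small relative to $2-\ellip$.
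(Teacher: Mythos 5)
Your overall strategy (It\^o for $\|\T\|_{L^2}^2$ with the resolvent approximation, It\^o for $\|v\|_{L^2}^2$, the incompressibility cancellations, stochastic parabolicity, BDG, Gronwall, Fatou) is the same as the paper's, but the step where you combine the two inequalities contains a genuine gap. You claim the $\T$-estimate produces only an $\varepsilon\!\int_0^t\|\nabla v\|_{L^2}^2\,ds$ term, with $\varepsilon$ tunable ``small relative to $2-\ellip$'' via Young. That is not achievable: the It\^o correction $\sum_{n\geq1}\|(\psi_n\cdot\nabla)\T+\gtn\|_{L^2}^2$ and the BDG bound for the martingale part contain $\|(\gtn(v,\T,\nabla v))_{n\geq1}\|_{L^2(\ell^2)}^2$, which by Assumption \ref{ass:global_primitive}\eqref{it:sublinearity_Gforce} is bounded below and above only by a \emph{fixed} constant times $\y^2+\|v\|_{L^2}^2+\|\nabla v\|_{L^2}^2+\|\T\|_{L^2}^2$; these terms are already quadratic, so there is no product to split by Young and no free small parameter. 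Hence the coefficient $C_{T,\T}$ of $\E\int\|\nabla v\|_{L^2}^2$ on the right of the $\T$-estimate is in general much larger than the coercivity $2-\ellip'$ available from the $v$-equation, and adding the two inequalities with unit weights does not close. The paper resolves this differently: it first proves the $v$-estimate \eqref{eq:energy_estimate_v_L2} whose right-hand side contains \emph{no} gradient terms at all (this uses that $\gvn$ depends only on $v$ and that $\fv$, $\op_{\kone}\T$, $\Lp$ are absorbed into the $v$-equation's own coercivity), and then forms the weighted combination $\tfrac{1}{2C_{T,\T}}\times$\eqref{eq:energy_estimate_T_L2}$+$\eqref{eq:energy_estimate_v_L2}, so that the temperature side contributes only $\tfrac12\E\int\|\nabla v\|_{L^2}^2$, which is absorbed into the full-strength $\E\int\|\nabla v\|_{L^2}^2$ on the left coming from the $v$-estimate, no matter how large $C_{T,\T}$ is.

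A second, related point: you localize with the sequence $(\tau_k)$ from Theorem \ref{t:local_primitive}\eqref{it:path_regularity_strong_weak}, on which the regularity holds only almost surely, not uniformly in $\O$; then quantities such as $\E\big[\sup_{s\le t\wedge\tau_k}\|v(s)\|_{L^2}^2\big]$ and $\E\int_0^{\tau_k}\|\nabla v\|_{L^2}^2\,ds$ are not known to be finite, so the absorption steps (moving $\tfrac12\E\sup\|\cdot\|_{L^2}^2$ from the BDG bound, or $\tfrac12\E\int\|\nabla v\|_{L^2}^2$ as above, to the left) are not justified. The paper instead uses hitting-time stopping times as in \eqref{eq:def_tau_n_energy_L_2_estimate}, which make the relevant norms bounded by $k$ uniformly in $\O$ on $[0,\tau_k]$, guaranteeing finiteness and legitimizing exactly these subtractions. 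This is fixable, but as written your proof does not close at either of these two points.
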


\begin{proof}
For the reader's convenience we split the proof into several steps. Below $T\in (0,\infty)$ is fixed and $\l \cdot,\cdot\r$ denotes the duality pairing for $H^{1}$ and $(H^1)^*$. Recall that $L^2\embed (H^1)^*$ and the embedding is given by $\l \varphi,f\r:=\int_{\Dom}f\varphi\,dx$ for $\varphi\in H^1$.

For each $k\geq 1$, let us set
\begin{equation}
\label{eq:def_tau_n_energy_L_2_estimate}
\begin{aligned}
\tau_k:=\inf\big\{t\in [0,\tau)&\colon\|v(t)\|_{H^1}+\| v\|_{L^2(0,t;H^2)} \\
&+\|\T(t)\|_{L^2}+\|\nabla \T\|_{L^2(0,t;L^2)}\geq k\big\}\wedge T,
\end{aligned}
\end{equation}
where $\inf\emptyset:=\tau$.
By progressive measurability of $(v,\T)$ (see Definition \ref{def:sol_strong_weak}) and Theorem \ref{t:local_primitive}\eqref{it:path_regularity_strong_weak}, for each $k\geq 1$, $\tau_k$ is a stopping time and $\lim_{k\to \infty}\P(\tau_k=\tau)=1$. Therefore, by Fatou's lemma, it is enough to prove \eqref{eq:L_2_estimate_weak_strong} for $\tau$ replaced by $\tau_k$ provided $c_T$ is
independent of $k\geq 1$. Note that 
\begin{align*}
v|_{[ 0,\tau_k]\times \O}\in C([0,\tau_k];\Hs^1)\cap L^2(0,\tau_k;\Hs^2) \hbox{ and } \T|_{[ 0,\tau_k]\times \O}\in C([0,\tau_k];L^2)\cap L^2(0,\tau_k;H^1)
\end{align*}
uniformly in $\O$. In particular, all the integrals appearing below are finite.
%

By Gronwall's and Fatou's lemma, it is enough to prove the existence of $c_T$ independent of $k\geq 1$ such that, for all $t\in [0,T]$,
\begin{equation}
\begin{aligned}
\label{eq:L_2_estimate_gronwall_claim}
&\E\Big[\sup_{s\in [0,\tau_k\wedge t)} (\|v(s)\|_{L^2}^2+\|\T(s)\|_{L^2}^2)\Big]
+ \E \int_0^{\tau_k\wedge t} (\|\nabla  v(s)\|_{L^2}^2+\|\nabla \T(s)\|_{L^2}^2)\,ds \\
&\qquad
\leq c_T
\Big(1+\E\|v_0\|_{L^2}^2+\E\|\T_0\|_{L^2}^2\Big)+ c_T \E\int_0^{\tau_k\wedge t} (\|v(s)\|^2_{L^2}+\|\T(s)\|^2_{L^2})\,ds.
\end{aligned}
\end{equation}
To shorten the notation, in the following steps, we set $\sigma:=\tau_k$. 

\emph{Step 1: $L^{\infty}(L^2)$- and $L^2(H^1)$-estimate for $\T$, see \eqref{eq:energy_estimate_T_L2} below}. 
The idea is to apply It\^{o}'s formula to $\T\mapsto \|\T\|_{L^2}^2 $ and use an argument similar to the one used in the proof of Proposition \ref{prop:SMR_2}. Recall that, by integration by parts, one has for all $v\in \Hs_{\n}^2$ and $\T\in H^1$ (cf.\ \eqref{eq:T_map_definition} and the text below it)
\begin{equation}
\label{eq:cancellation_L_2_estimate}
\l \div_{\h}(v\, \T)+\partial_3 (w(v) \,\T), \T\r =\int_{\Dom} \T(v\cdot \nabla_{\h} \T+w(v)\partial_3 \T) dx = 0
\end{equation} 
since $\div_{\h}v+\partial_3 [w(v)]=0$ a.e.\ on $[ 0,\sigma]\times \O$. 

Reasoning as in Step 1 of Proposition \ref{prop:SMR_2}, we set $\T^{\sigma}:=\T(\cdot\wedge \sigma)$ and applying It\^{o}'s formula and \eqref{eq:cancellation_L_2_estimate} we get, a.s.\ for all $t\in [0,T]$,
\begin{align*}
\|\T^{\sigma}(t)\|_{L^2}^2
& +2\int_0^{t}\one_{[ 0,\sigma]}\|\nabla \T(s)\|_{L^2}^2\,ds-\|\T_0\|_{L^2}^2\\
&
=2\int_0^{t}\one_{[ 0,\sigma]} \int_{\Dom}  \ft(v,\T) \T\,dx ds +
\int_0^{t}\one_{[ 0,\sigma] } 
\sum_{n\geq 1}\|(\psi_n\cdot \nabla) \T+\gtn(v)\|_{L^2}^2\,ds\\
& 
+2\sum_{n\geq 1} \int_0^t \one_{[0,\sigma] } \Big((\psi_n\cdot \nabla) v+\gtn(v,\T),\T\Big)_{L^2}\,d\beta_s^n,
\end{align*}
where, for brevity, we set $\ft(v,\T)=\ft(\cdot,v,\T,\nabla v)$, $\gtn(v,\T)=\gtn(\cdot,v,\T,\nabla v)$ and $(\cdot,\cdot)_{L^2}$ denotes the scalar product in $L^2=L^2(\Dom)$.

By Assumption \ref{ass:global_primitive}\eqref{it:sublinearity_Gforce}, we have a.e.\ on  $[ 0,\sigma]\times \O$  
\begin{align*}
\Big|\int_{\Dom} \ft(v,\T) \T\,dx\Big| &\leq C
(\y^2+\|v\|_{L^2}^2+\|\nabla v\|_{L^2}^2+\|\T\|_{L^2}^2+ \|\T\|_{L^2}^2),\\
\|(\gtn(v,\T))_{n\geq 1}\|_{L^2(\ell^2)}^2 &\leq C(\y^2+\|v\|_{L^2}^2+\|\nabla v\|_{L^2}^2+\|\T\|_{L^2}^2).
\end{align*}
By using the argument in Step 1 of Proposition \ref{prop:SMR_2}, one can check that the above estimates ensure that, for all $t\in [0,T]$,
\begin{equation}
\label{eq:energy_estimate_T_L2}
\begin{aligned}
\E \Big[\sup_{s\in [0,\sigma\wedge t]}\|\T(s)\|_{L^2}^2 \Big]
&+\E \int_0^{\sigma\wedge t} \|\nabla \T(s)\|_{L^2}^2 \,ds
\leq C_{T,\T}\E\|\T_0\|_{L^2}^2\\
&+ C_{T,\T}\E\int_0^{\sigma\wedge t} \big(\|\T(s)\|_{L^2}^2 + \|v(s)\|_{L^2}^2+  \|\nabla v(s)\|_{L^2}^2\big) ds
\end{aligned}
\end{equation}
where $C_{T,\T}>0$ is independent of $\T,\T_0,v,v_0$ and $k\geq 1$.

\emph{Step 2: $L^{\infty}(L^2)$- and $L^2(H^1)$-estimate for $v$, see \eqref{eq:energy_estimate_v_L2}}. As in Step 1, the idea is to apply It\^{o}'s formula to $v\mapsto \|v\|_{L^2}^2 $ and use an argument similar to the one used in Step 1 of Proposition \ref{prop:SMR_2}. 
As in \eqref{eq:cancellation_L_2_estimate} we have the following cancellation
\begin{equation}
\label{eq:cancellation_L_2_estimate_v}
\int_{\Dom} (v\cdot \nabla_{\h} v+w(v)\partial_3 v)\cdot v\,dx = 0, \ \ \ \text{ for all }
v\in \Hs^2_{\n},
\end{equation} 
since $\div_{\h} v+\partial_3 w(v)=0$ a.e.\ on $[0,\sigma]\times \O$.
 Reasoning as in Step 1, we set $v^{\sigma}:=v(\cdot\wedge \sigma)$ and apply It\^{o}'s formula and \eqref{eq:cancellation_L_2_estimate_v} to get, a.s.\ for all $t\in [0,T]$,
\begin{align*}
\|v^{\sigma}(t)\|_{L^2}^2 
&+2\int_0^{t}\one_{[0,\sigma]} \|\nabla v(s)\|_{L^2}^2\,ds-\|v_0\|_{L^2}^2\\
& 
\leq 2\int_0^{t}\one_{[ 0,\sigma]} \int_{\Dom}  \Big(\fv(v,\T,\nabla v)+ \Lp (\cdot,v) -\op_{\kone} \T\Big)\cdot v\, dxds \\
&+
\int_0^{t}\one_{[ 0,\sigma] } 
\sum_{n\geq 1}\|(\phi_n\cdot \nabla) v+\gtn(v)\|_{L^2}^2\,ds\\
&
+2 \sum_{n\geq 1}\int_0^t \one_{[ 0,\sigma] } \Big((\phi_n\cdot \nabla) v+\gtn(v),v\Big)_{L^2}\,d\beta_s^n,
\end{align*}
where $\op_{\kone}$ is as in \eqref{eq:integral_operator_temperature}, and 
where we use that the hydrostatic Helmholtz projection $\p$ is orthogonal on $L^2(\Dom;\R^2)$, in particular  $\|\p\|_{\calL(L^2)}=1$.

Integrating by parts, we have a.e.\ on $[0,\sigma]\times \O$ for all $\varepsilon\in (0,1)$ and some $C_{\varepsilon}>0$
\begin{align*}
\Big| \int_{\Dom} \op_{\kone} \T\cdot v\, dx\Big| 
&=
\Big|\int_{\Dom} \Big[\int_{-h}^{x_3}(\kone(\cdot,x_{\h},\zeta) \T(\cdot,x_{\h},\zeta))d\zeta\Big] \div_{\h} v\, dx\Big|\\
&\leq \varepsilon\|\nabla v\|_{L^2}^2 + C_{\varepsilon} \|\T\|_{L^2}^2.
\end{align*}

Next we consider the $\Lp (\cdot,v)$-part. To this end, recall that 
$$
\nabla\wt{P}_n=\q[(\phi_n\cdot \nabla) v+ \gvn(\cdot,v)].
$$
Let $\delta>0$ be as in 
Assumption \ref{ass:well_posedness_primitive}\eqref{it:well_posedness_primitive_phi_smoothness} and let $\varrho\in (1,6)$ such that $\frac{1}{\varrho}+\frac{1}{3+\delta}=\frac{1}{2}$. 
Note that, by \eqref{eq:def_P_gamma} we have, a.e.\  on $[0,\sigma]\times \O$,
\begin{align*}
\Big| \int_{\Dom}  \Lp (\cdot,v)\cdot v\, dx\Big|
&\leq  \sum_{m,\ell= 1}^2 \sum_{n\geq 1} \int_{\Dom} |\partial_{m}\wt{P}_n| |\hp_n^{\ell,m}| | v |\,dx\\
&\leq \sum_{m,\ell= 1}^2 \|(\partial_{m}\wt{P}_n)_{n\geq 1}\|_{L^2(\ell^2)} 
\Big\| \|(\hp_n^{\ell,m})_{n\geq 1} \|_{\ell^2} |v|\Big\|_{L^2}\\
&\stackrel{(i)}{\lesssim}_{M} \max_{m\in \{1,2\}} 
 \|(\partial_{m}\wt{P}_n)_{n\geq 1}\|_{L^2}\|v\|_{L^{\varrho}}\\
&
\stackrel{(ii)}{\lesssim} 
(\y+\|v \|_{L^2}+\|\nabla v\|_{L^2})\|v\|_{L^{\varrho}}
\stackrel{(iii)}{\leq}
 \varepsilon \|\nabla v\|_{L^2}^2 + C_{\varepsilon} (\|v\|_{L^2}^2+ \y),
\end{align*}
where in $(i)$ we applied the H\"{o}lder inequality, in $(ii)$ the boundeedness of $\q$ and in $(iii)$ the Young's and standard interpolation inequalities.

The remaining terms can be estimated as in Step 1. Thus, choosing $\varepsilon$ small enough, one can check that Assumption \ref{ass:global_primitive}\eqref{it:sublinearity_Gforce} yields, for all $t\in [0,T]$,
\begin{equation}
\label{eq:energy_estimate_v_L2}
\begin{aligned}
\E \Big[\sup_{s\in [0,\sigma\wedge t]}\|v(s)\|_{L^2}^2 \Big]
&+\E \int_0^{\sigma\wedge t} \|\nabla v(s)\|_{L^2}^2 \,ds\\
&\leq C_{T,v}\Big(\E\|v_0\|_{L^2}^2
 + \E\int_0^{\sigma\wedge t} (\|\T(s)\|_{L^2}^2 + \|v(s)\|_{L^2}^2)\, ds\Big).
\end{aligned}
\end{equation}
where $C_{T,v}>0$ is independent of $\T,\T_0,v,v_0$ and $k\geq 1$.

\emph{Step 3: Proof of \eqref{eq:L_2_estimate_gronwall_claim}}. 
Let $C_{T,\T}$ and $C_{T,v}$ be as in \eqref{eq:energy_estimate_T_L2} and \eqref{eq:energy_estimate_v_L2}, respectively.
Without loss of generality we may assume that $C_{T,\T}, C_{T,v}\geq 1$.
The claimed inequality follows by noticing that 
$$
\frac{\eqref{eq:energy_estimate_T_L2}}{2 C_{T,\T}}+ 
\eqref{eq:energy_estimate_v_L2}\quad \Longrightarrow \quad
\eqref{eq:L_2_estimate_gronwall_claim}.
$$
More precisely, the above means that \eqref{eq:L_2_estimate_gronwall_claim} follows by multiplying \eqref{eq:energy_estimate_T_L2} by $(2C_{T,\T})^{-1}$ and then adding the estimate with \eqref{eq:energy_estimate_v_L2}.
On the RHS of the resulting estimate the term $\frac{1}{2} 
 \E\int_0^{\sigma\wedge t}  \|\nabla v(s)\|_{L^2}^2 ds$ appears and can be adsorbed into the LHS since $\sigma=\tau_k$ and therefore $\E\int_0^{\sigma\wedge t}  \|\nabla v(s)\|_{L^2}^2 ds\leq k$ a.s.\ by \eqref{eq:def_tau_n_energy_L_2_estimate}.
\end{proof}

\subsubsection{Higher order energy estimates and proof of Proposition \ref{prop:energy_estimate_primitive}}
\label{sss:proof_proposition_energy_estimate_I}
Through this subsection, we assume that the assumptions of Proposition \ref{prop:energy_estimate_primitive} holds, and in particular $T\in (0,\infty)$ is fixed. 
%
%

Let $((v,\T),\tau)$ be the $L^2$-maximal strong-weak solution to \eqref{eq:primitive_weak_strong}-\eqref{eq:boundary_conditions_strong_weak} provided by Theorem \ref{t:local_primitive}. For each $k\geq 1$ we set
\begin{equation}
\label{eq:def_ell_n}
\begin{aligned}
\ell_k:=\inf\big\{t\in [0,\tau)&\,:\,\|v(t)\|_{L^2}+\| v\|_{L^2(0,t;H^1)} \\
&+\|\T(t)\|_{L^2}+\| \T\|_{L^2(0,t;H^1)} +\|\y\|_{L^2(0,t:L^2)}\geq k\big\}\wedge T, 
\end{aligned}
\end{equation}
where $\inf\emptyset:=\tau$ and $\y$ is as in Assumption \ref{ass:global_primitive}\eqref{it:sublinearity_Gforce}.

By Lemma \ref{l:L_2_estimate_I} and $\y\in L^2_{\loc}([0,\infty);L^2)$ a.s.\ we have 
$
\lim_{k\to \infty}
\P(\ell_k=\tau)=1.
$
Note that 
\begin{equation}
\label{eq:boundedness_on_xi_n}
\norm_0(\ell_k;\T)+\norm_0(\ell_k;v)\leq k,\quad \text{ a.s.\ for all }k\geq 1,
\end{equation}
where $\norm_0$ is as in \eqref{eq:def_norm}.
To prove Proposition \ref{prop:energy_estimate_primitive}, it remains to find stopping times $(\mu_k)_{k\geq 1}$ such that, a.s.\ for all $k\geq 1$, one has  $\mu_k\leq \ell_k$, $\lim_{k\to \infty} \P(\mu_k=\ell_k)=1$ and 
\begin{equation}
\label{eq:claim_in_terms_v}
\E[ \norm_{1}(\mu_k;v)] \leq C_{k,T}\big(1+\E\|v_0\|_{H^1}^2\big),
\end{equation}
where $(C_{k,T})_{k\geq 1}$ are constants possibly depending on $v_0,v$ and $k\geq 1$.

Let us recall that, for all $k\geq 1$, $(v,\ell_k)$ is a $L^2$-local solution to 
\begin{equation}
\label{eq:primitive_v_proof_global}
\begin{cases}
\displaystyle{d v -\Delta v\, dt=\Big(\p\Big[-(v\cdot \nabla_{\h})v-\w(v)\partial_3 v+ \fvt+ \Lpp \Big] \Big)dt} \\
\qquad \qquad\qquad \qquad\qquad
\displaystyle{
+\sum_{n\geq 1}\p\Big[(\phi_{n}\cdot\nabla) v + \gvtn \Big] d{\beta}_t^n,} &\text{on }\Dom, \\
\partial_3 v(\cdot,-h)=\partial_3 v(\cdot,0)=0, &\text{on }\Tor^2,\\
v(\cdot,0)=v_0,& \text{on }\Dom,
\end{cases}
\end{equation}
where, for notational convenience, we set on $[0,\tau)\times \O$, 
\begin{equation}
\begin{aligned}
\label{eq:def_f_g_proof_global} 
\gvtn:=\gvn(\cdot,v), \text{ for }n\geq 1, \qquad
\Lpp :=\sum_{n\geq 1} \sum_{m=1}^2\hp^{\cdot,m}_n \big(\q[(\phi_n\cdot \nabla) v]\big)^m,&\\
\fvt
:=\nabla_{\h} \int_{-h}^{\cdot} (\kone(\cdot,\zeta) \T(\cdot,\zeta))\,d\zeta+ \fv(\cdot,v,\T,\nabla v)+ 
\sum_{n\geq 1} \sum_{m=1}^2\hp^{\cdot,m}_n \big(\q [\gvtn]\big)^m,&\\
\end{aligned}
\end{equation} 
where $\q$ is as in Subsection \ref{ss:set_up}. Finally, we set
\begin{equation}
\label{eq:def_N_v_T}
N_{v,\T}(t):=\|\fvt\|_{L^2(0,t \wedge \tau;L^2)}^2+
\|(\gvtn)_{n\geq 1}\|_{L^2(0,t \wedge \tau;H^1(\ell^2))}^2\text{ a.s.\ for all }t\in [0,\tau).
\end{equation}
 
Let us first show that $N_{v,\T}$ is bounded on the stochastic interval $[0,\ell_k]\times \O$ for all $k\geq 1$. To this end, note that, by Assumption \ref{ass:global_primitive}\eqref{it:sublinearity_Gforce} and \eqref{eq:boundedness_on_xi_n}, we have
$
\|(\gvtn)_{n\geq 1}\|_{L^2(0,\ell_k;H^1(\ell^2))}^2\leq C_{g,k}
$ a.s.\ for some $C_{g,k}>0$ independent of $v_0,v$. 
The previous estimate and Assumption \ref{ass:well_posedness_primitive}\eqref{it:well_posedness_primitive_phi_smoothness} yield a.s.
$$
\Big\|\sum_{n\geq 1} \sum_{m=1}^2 \hp^{\cdot,m}_n (\q [\gvtn])^m\Big\|_{L^2(0,\ell_k;L^2)}^2
\stackrel{(i)}{\leq}C_h M \|(\gvtn)_{n\geq 1 }\|_{L^2(0,\ell_k;L^6(\ell^2))}^2
\stackrel{(ii)}{\leq} C_h M k,
$$
where in $(i)$ we used the H\"{o}lder inequality, in $(ii)$ the embedding $H^1(\ell^2)\embed L^6(\ell^2)$
and \eqref{eq:boundedness_on_xi_n}.
Thus the previous estimates, Assumption \ref{ass:global_primitive}\eqref{it:sublinearity_Gforce} and \eqref{eq:def_ell_n} ensure that, 
for some $C_k$ independent of $v_0,v$, 
\begin{equation}
\label{eq:bound_f_g_expectations}
N_{v,\T}(\ell_k)=
\|\fvt\|_{L^2(0,\ell_k;L^2)}^2+
\|(\gvtn)_{n\geq 1}\|_{L^2(0,\ell_k;H^1(\ell^2))}^2\leq C_k \text{ a.s.\ }
\end{equation}

Following \cite{CT07}, we derive from \eqref{eq:primitive_weak_strong} a coupled system of SPDEs for the unknowns
\begin{equation}
\overline{v}(t,x_{\h}):=\frac{1}{h}\int_{-h}^0 v(t,x_{\h},\zeta)d\zeta,
 \ \  \text{ and }\ \  
\wt{v}(t,x):=v(t,x)-\overline{v}(t,x_{\h}),
\end{equation}
where $x=(x_{\h},x_3)\in \Tor^2 \times (-h,0)=\Dom$ and $t\in\R_+$. 

Recall that $\pr$ denotes the Helmholtz projection on $L^2(\Tor^2;\R^2)$ which acts on the horizontal variable $x_{\h}\in \Tor^2$ where $x=(x_{\h},x_3)\in \Dom$, see Subsection \ref{ss:set_up}. Since $\overline{\p v}= \pr \overline{v}$, applying the vertical avarage in \eqref{eq:primitive_weak_strong} and using Assumption \ref{ass:global_primitive}\eqref{it:independence_z_variable}, for all $k\geq 1$, $(\overline{v}, \ell_k)$ is an $L^2$-local strong solution to 
\begin{equation}
\label{eq:primitive_bar}
\begin{cases}
\displaystyle{d \overline{v} -\Delta_{\h} \overline{v}\, dt=\Big(\pr\Big[-(\overline{v}\cdot \nabla_{\h})\overline{v}- \force(\wt{v}) + \overline{\fvt}+ \Lpp\Big]\Big)dt} \\
\qquad \qquad  \qquad \qquad
\displaystyle{
+\sum_{n\geq 1}\pr \Big[(\phi_{n,\h}\cdot\nabla_{\h}) \overline{v}+\overline{\phi^3_n \partial_3 v }  +\overline{\gvtn}\Big] d{\beta}_t^n,} &\text{on }\Tor^2, \\
\displaystyle{\force(\wt{v})=\frac{1}{h}\int_{-h}^0 \Big[(\wt{v}\cdot \nabla_{\h}) \wt{v}+\wt{v} (\div_{\h} \wt{v})\Big]d\zeta,}& \text{on }\Tor^2, \\
\overline{v}(\cdot,0)=\overline{v_0},& \text{on }\Tor^2,
\end{cases}
\end{equation}
where $\phi_{n,\h}:=(\phi^1_n,\phi^2_n)$. Here we also used that
\begin{align*}
\overline{(v\cdot \nabla_{\h})v+ \w(v)\partial_3 v} = 
(\overline{v }\cdot\nabla_{\h} ) \overline{v}+
\overline{(\wt{v}\cdot\nabla_{\h}) \wt{v}+(\div_{\h} \wt{v})\, \wt{v} }
\end{align*}
which follows from  $\overline{\wt{v}}=0$, \eqref{eq:def_w_v} and an integration by parts, and by Assumption \ref{ass:global_primitive}\eqref{it:independence_z_variable},
\begin{align}
\label{eq:Lpp_independent_of_x_3}
\overline{\Lpp}
&=\sum_{n\geq 1}\sum_{m=1}^2 \overline{\hp_n^{\cdot,m} \Big(\q_{\h}\Big[\overline{(\phi_n\cdot \nabla) v}\Big]\Big)^m}\\
\nonumber
&=
\sum_{n\geq 1}\sum_{m=1}^2  \hp_n^{\cdot,m} \Big(\q_{\h}\Big[\overline{(\phi_n\cdot \nabla) v}\Big]\Big)^m=\Lpp.
\end{align}
Let us also note that the first equation in \eqref{eq:primitive_bar} and $v_0\in \Hs^1$ imply
$\div_{\h} \overline{v}=0$. 

%
Here, by $L^2$-local strong solution to \eqref{eq:primitive_bar} we understand that $(\overline{v},\ell_k)$ solves \eqref{eq:primitive_bar} in its natural integral form, cf.\ Definition \ref{def:sol_strong_weak}.

Analogously, noticing that $\p z-\p_{\h}\overline{z}=z-\overline{z} $ for all $z\in L^2$, one can readily check that $(\wt{v}, \ell_k)$ is an $L^2$-local strong solution to
\begin{equation}
\label{eq:primitive_tilde}
\begin{cases}
\displaystyle{d \wt{v} -\Delta \wt{v}\, dt=\Big[-(\wt{v}\cdot \nabla_{\h})\wt{v}+\forcetwo(\wt{v},\overline{v}) + \wt{\fvt} \,\Big]dt} \\
\qquad \qquad\qquad \qquad\qquad
\displaystyle{+\sum_{n\geq 1}\Big[(\phi_{n}\cdot\nabla) \wt{v}-\overline{\phi^3_n \partial_3 v } +\wt{\gvtn}\,\Big] d{\beta}_t^n, } &\text{ on }\Dom,\\
\displaystyle{\forcetwo(\wt{v},\overline{v})=- \w(v) \partial_3 \wt{v} -(\wt{v}\cdot \nabla_{\h}) \overline{v} -( \overline{v}\cdot \nabla_{\h} )\wt{v} +\force(\wt{v}),} &\text{on }\Dom,\\
\partial_3 \wt{v}(\cdot,-h)=\partial_3 \wt{v}(\cdot,0)=0,&\text{on }\ \Tor^2,\\
\wt{v}(\cdot,0)=\wt{v}_0:=v_0-\overline{v_0},&\text{on }\Dom.
\end{cases}
\end{equation}
Here we used that $\partial_3 v=\partial_3 \wt{v}$. Note also that $w(v)=w(\wt{v})$ since $\div_{\h} \overline{v}=0$. This fact will be used frequently in the following.

With this preparation we can prove an intermediate estimate which is the key ingredient in the proof of Proposition \ref{prop:energy_estimate_primitive}. 

\begin{lemma}[An intermediate estimate]
\label{l:intermediate_estimate}
Let the assumptions of Proposition \ref{prop:energy_estimate_primitive} be satisfied. Let $\ell_k$ be as in \eqref{eq:def_ell_n} and let $((v,\T),\tau)$ be the $L^2$-maximal strong-weak solution to \eqref{eq:primitive_weak_strong}-\eqref{eq:boundary_conditions_strong_weak}. 
Then there exists a sequence of constants $(C_k)_{k\geq 1}$ such that
\begin{equation}
\label{eq:X_Y_estimate}
\E\Big[\sup_{t\in [0,\ell_k)} X_t\Big]
+\E\int_0^{\ell_k} Y_t \,dt
\leq C_k \big(1+\E\|v_0\|_{H^1}^4\big)
\end{equation}
where, for each $t\in [0,\tau)$,
\begin{equation}
\label{eq:def_X_Y}
\begin{aligned}
X_{t}&:=\| \overline{v}(t)\|_{H^1(\Tor^2)}^2+ \|\partial_3 v(t)\|_{L^2}^2 + \|\wt{v}(t)\|_{L^4}^4,\\
Y_t&:=\|\overline{v}(t)\|_{H^2(\Tor^2)}^2+ \|\nabla \partial_3 v(t)\|_{L^2}^2 + \Big\||\wt{v}(t)| |\nabla \wt{v}(t)|\Big\|^2_{L^2}.
\end{aligned}
\end{equation} 
 \end{lemma}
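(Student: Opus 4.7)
\emph{Approach.} The plan is to derive three coupled energy estimates, one for each component of $X_t$, by applying It\^o's formula on the stochastic interval $[0,\ell_k]$ to the functionals $\overline{v}\mapsto\|\nabla_{\h}\overline{v}\|_{L^2(\Tor^2)}^2$, $v\mapsto\|\partial_3 v\|_{L^2}^2$, and $\wt{v}\mapsto\|\wt{v}\|_{L^4}^4$, respectively, working from the reduced systems \eqref{eq:primitive_bar}, the $x_3$-derivative of \eqref{eq:primitive_v_proof_global}, and \eqref{eq:primitive_tilde}. The cutoff \eqref{eq:def_ell_n} together with \eqref{eq:boundedness_on_xi_n} and \eqref{eq:bound_f_g_expectations} ensures that $\|v\|_{L^2(0,\ell_k;H^1)}$, $\|\T\|_{L^2(0,\ell_k;H^1)}$, $\|\y\|_{L^2(0,\ell_k;L^2)}$, $\|\fvt\|_{L^2(0,\ell_k;L^2)}$, and $\|(\gvtn)_{n\geq 1}\|_{L^2(0,\ell_k;H^1(\ell^2))}$ are almost surely bounded by quantities depending only on $k$, so that Lemma~\ref{l:L_2_estimate_I} supplies the lower-order control needed to close a stochastic Gronwall argument at the end.

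\emph{Estimates on $\overline{v}$ and $\partial_3 v$.} For $\overline{v}$, It\^o's formula applied to $\|\nabla_{\h}\overline{v}\|_{L^2(\Tor^2)}^2$ in \eqref{eq:primitive_bar} produces dissipation $2\|\Delta_{\h}\overline{v}\|_{L^2}^2$; by Remark~\ref{r:global_primitive}\eqref{it:ellipticity_overline_v} the reduced parabolicity constant still lies in $(0,2)$, which strictly absorbs the It\^o correction, while the nonlinear terms $(\overline{v}\cdot\nabla_{\h})\overline{v}$ and $\force(\wt{v})$ are controlled by the two-dimensional Ladyzhenskaya inequality $\|f\|_{L^4(\Tor^2)}^2\lesssim\|f\|_{L^2}\|\nabla_{\h} f\|_{L^2}$, producing cross terms absorbable into the $X_tY_t$ structure. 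For $\partial_3 v$, differentiating \eqref{eq:primitive_v_proof_global} in $x_3$ produces a clean equation: by Assumption~\ref{ass:global_primitive}\eqref{it:independence_z_variable} and the $x_3$-independence of $\q$ we have $\partial_3\Lpp=0$ (cf.\ \eqref{eq:Lpp_independent_of_x_3}), so no nonlocal pressure remainder appears, and the Neumann conditions $\partial_3 v(\cdot,0)=\partial_3 v(\cdot,-h)=0$ make It\^o's formula for $\|\partial_3 v\|_{L^2}^2$ straightforward. Assumption~\ref{ass:well_posedness_primitive}\eqref{it:well_posedness_primitive_parabolicity} absorbs the stochastic correction, and convective terms such as $\partial_3(\w(v)\partial_3 v)$ are handled by H\"older and Sobolev embeddings.

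\emph{The $L^4$-estimate for $\wt{v}$.} This is the core technical step. Applying It\^o's formula to $\|\wt{v}\|_{L^4}^4$ in \eqref{eq:primitive_tilde} and integrating by parts using $\partial_3\wt{v}(\cdot,0)=\partial_3\wt{v}(\cdot,-h)=0$ yields the deterministic dissipation
\[
-4\int_{\Dom}|\wt{v}|^2|\nabla\wt{v}|^2\,dx-2\int_{\Dom}|\nabla(|\wt{v}|^2)|^2\,dx.
\]
Using the pointwise identity $\wt{v}\cdot(\phi_n\cdot\nabla)\wt{v}=\tfrac{1}{2}\phi_n\cdot\nabla(|\wt{v}|^2)$, the It\^o correction decomposes as
\[
\sum_{n\geq 1}\int_{\Dom}\Big(2|\wt{v}|^2|(\phi_n\cdot\nabla)\wt{v}|^2+(\phi_n\cdot\nabla(|\wt{v}|^2))^2\Big)\,dx,
\]
which by Assumption~\ref{ass:well_posedness_primitive}\eqref{it:well_posedness_primitive_parabolicity} is bounded pointwise by $2\ellip|\wt{v}|^2|\nabla\wt{v}|^2+\ellip|\nabla(|\wt{v}|^2)|^2$; since $\ellip<2$, both stochastic contributions are strictly absorbed term by term. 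It is essential that $\Lpp$ is $x_3$-independent by Assumption~\ref{ass:global_primitive}\eqref{it:independence_z_variable} (see \eqref{eq:Lpp_independent_of_x_3}) and therefore does not appear in \eqref{eq:primitive_tilde}; this nonlocal-in-$v$ operator would otherwise be unmanageable at the $L^4$-level, as already emphasised in the introduction. The coupling term $\forcetwo(\wt{v},\overline{v})$ is controlled by Gagliardo--Nirenberg interpolation, yielding expressions such as $\|\overline{v}\|_{H^1(\Tor^2)}\|\wt{v}\|_{L^4}^2\||\wt{v}||\nabla\wt{v}|\|_{L^2}$, absorbable by Young's inequality after suitable weighting.

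\emph{Closing the argument.} Summing the three estimates with weights chosen to absorb the cross-coupling terms, taking expectations, and applying Burkholder--Davis--Gundy to the martingale parts produces an inequality of the form
\[
\E\sup_{s\in[0,t\wedge\ell_k]}X_s+\E\int_0^{t\wedge\ell_k}Y_s\,ds\leq C_k\big(1+\E\|v_0\|_{H^1}^4\big)+\int_0^t\Phi(s)\,\E\sup_{r\in[0,s\wedge\ell_k]}(1+X_r)\,ds,
\]
where $\Phi$ depends only on lower-order norms bounded on $[0,\ell_k]$ via Lemma~\ref{l:L_2_estimate_I}, \eqref{eq:def_ell_n}, and \eqref{eq:bound_f_g_expectations}. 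A stochastic Gronwall lemma then yields \eqref{eq:X_Y_estimate}. The principal obstacle is the sharp matching of the It\^o correction to the deterministic dissipation in the $L^4$-step, which hinges on the gradient identity $\wt{v}\cdot(\phi_n\cdot\nabla)\wt{v}=\tfrac{1}{2}\phi_n\cdot\nabla(|\wt{v}|^2)$, on the strict parabolicity $\ellip<2$, and on the structural cancellation of $\Lpp$ in the $\wt{v}$-equation.
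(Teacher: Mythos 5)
Your plan coincides with the paper's proof in its two central steps: It\^o's formula for $\|\partial_3 v\|_{L^2}^2$ (using $\partial_3\Lpp=0$ and the Neumann conditions, exactly as in \eqref{eq:Ito_Step_2_primitive}) and for $\|\wt v\|_{L^4}^4$ (the identity $\wt v\cdot[(\phi_n\cdot\nabla)\wt v]=\tfrac12(\phi_n\cdot\nabla)|\wt v|^2$, absorption by $\ellip<2$, and the structural absence of $\Lpp$ in \eqref{eq:primitive_tilde}), followed by a weighted summation and a stochastic Gronwall argument. The one genuinely different ingredient is your treatment of $\overline v$: you propose a direct energy estimate for $\|\nabla_{\h}\overline v\|_{L^2(\Tor^2)}^2$ with the reduced parabolicity of Remark \ref{r:global_primitive}\eqref{it:ellipticity_overline_v} and Ladyzhenskaya's inequality, whereas the paper invokes stochastic maximal $L^2$-regularity for $(-\Delta_{\h},\pr[(\phi_{n,\h}\cdot\nabla_{\h})\cdot])$ on $\Tor^2$, see \eqref{eq:smr_bar_operator}, and then estimates the inhomogeneities $\Ib_1$--$\Ib_5$. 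Your variant is more elementary and avoids re-proving an SMR statement on the torus; the paper's route reuses Proposition \ref{prop:SMR_2} and keeps the bookkeeping of the cross terms (in particular $\overline{\phi^3_n\partial_3 v}$, which forces the $\E\|\nabla\partial_3 v\|_{L^2}^2$ contribution of \eqref{eq:Ib_3_estimate}) explicit. Either way, note that this coupling term appears with a constant that is \emph{not} small, and it reappears in the $L^4$-step through the noise $\overline{\phi^3_n\partial_3 v}$ in \eqref{eq:primitive_tilde} (the paper's $J_{2,2}$), so the final weights must be calibrated against the dissipation of the $\partial_3 v$-step as in the paper's Step 4; your phrase ``weights chosen to absorb the cross-coupling terms'' is the right idea but this is where the actual work lies.

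There is, however, a genuine gap in your closing argument. First, on $[0,\ell_k]$ the cutoff \eqref{eq:def_ell_n} only controls $\|v(t)\|_{L^2}$, $\|\T(t)\|_{L^2}$ pointwise and $\|v\|_{L^2(0,t;H^1)}$, $\|\T\|_{L^2(0,t;H^1)}$, $\|\y\|_{L^2(0,t;L^2)}$ integrally; it gives no a priori finiteness of $\E\big[\sup_{[0,\ell_k]}X_t\big]$. Consequently the standard Burkholder--Davis--Gundy step, in which one absorbs $\tfrac12\E\big[\sup\|\wt v\|_{L^4}^4\big]$ (and similarly $\tfrac12\E\big[\sup\|\partial_3 v\|_{L^2}^2\big]$) into the left-hand side, is not justified as written: one could be subtracting $\infty$ from $\infty$. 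The paper removes this obstruction with a second localization $\tau_j$ as in \eqref{eq:tau_j_a_priori_estimates_strong_weak}, exploiting the pathwise regularity $v\in C([0,\tau_j];\Hs^1)\cap L^2(0,\tau_j;\Hs^2)$ from Theorem \ref{t:local_primitive}, proving the estimate \eqref{eq:energy_estimate_proof_claim} on arbitrary stochastic intervals $[\eta,\xi]\subset[0,\tau_j\wedge\ell_k]$ with constants independent of $j$, and only then letting $j\to\infty$; this localization is also what lets the leftover terms \eqref{eq:resulting_estimate_additional_terms} be absorbed. Second, your final Gronwall inequality is not of the correct form: the coefficient multiplying $X_s$ necessarily contains $\|v(s)\|_{H^1}^2$ (and $N_{v,\T}$), which is random and only integrable in time on $[0,\ell_k]$, not pointwise bounded, so it cannot be written as a deterministic $\Phi(s)$ outside the expectation acting on $\E\sup_{r\le s\wedge\ell_k}(1+X_r)$. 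One must keep the pathwise structure $\int\one_{[\eta,\xi]}(1+\|v\|_{L^2}^2)(1+N_{v,\T}+\|v\|_{H^1}^2)X_s\,ds$ and apply the stochastic Gronwall lemma of \cite{GHZ_09} on the localized interval, using that the integrated coefficient is bounded a.s.\ by a constant depending only on $k$ and $T$. With these two repairs your argument closes and reproduces \eqref{eq:X_Y_estimate}.
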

 
In \eqref{eq:def_X_Y}, with a slight abuse of notation, we wrote $H^k(\Tor^2)$ instead of $H^k(\Tor^2;\R^2)$.
We will use the same notation also below if no confusion seems likely.

\begin{proof}[Proof of Lemma \ref{l:intermediate_estimate}]
We begin by collecting some useful facts.
By Definition \ref{def:sol_strong_weak}, for each $j\geq 1$, the following is a stopping time
\begin{equation}
\label{eq:tau_j_a_priori_estimates_strong_weak}
\tau_j:=\inf\big\{t\in [0,\tau) \,:\, \|v\|_{L^2(0,t;H^2)}+ \|v(t)\|_{H^1}\geq j\big\}\wedge T, \text{ where }
\inf\emptyset :=\tau.
\end{equation}
Note that, by Definition \ref{def:sol_strong_weak} and the definition of the $\tau_j$'s, 
\begin{equation}
\label{eq:properties_tau_j_localizing_sequence}
\|v\|_{L^2(0,\tau_j;H^2)}+\sup_{s\in [0,\tau_j]}\|v(s)\|_{H^1}\leq j \quad \text{ and }\lim_{j\to \infty}\tau_j=\tau \text{ a.s. }
\end{equation}
To prove \eqref{eq:X_Y_estimate}, it is enough to show that for each $k\geq 1$ there exists $C_{0,k}>0$ independent of $j$ and $v,v_0$ such that, for each $j\geq 1$ and any stopping times $0\leq \eta\leq \xi\leq  \tau_j\wedge \ell_k$,
\begin{equation}
\label{eq:energy_estimate_proof_claim}
\begin{aligned}
\E\Big[\sup_{s\in [\eta,\xi]} X_s \Big]
&+ \E\int_{0}^{T}\one_{[\eta,\xi]} Y_s \,ds 
\leq C_{0,k}+C_{0,k}\E [X_{\eta}]\\
&+C_{0,k} \E\Big[\int_{0}^T\one_{[\eta,\xi]} (1+\|v\|_{L^2}^2 )(1+ N_{v,\T}+\|v\|_{H^1}^2) X_s\, ds\Big],
\end{aligned}
\end{equation}
where $N_{v,\T}$ is as in \eqref{eq:def_N_v_T}.
Recall the Sobolev embedding $H^1\embed L^6$. Thus all the integrals in \eqref{eq:energy_estimate_proof_claim} are finite due to $\xi\leq \tau_j$ and \eqref{eq:properties_tau_j_localizing_sequence}. 

Let us first prove the sufficiency of \eqref{eq:energy_estimate_proof_claim}. Due to \eqref{eq:boundedness_on_xi_n}, for each $j,k\geq 1$,
\begin{align*}
&\int_0^{\ell_k\wedge \tau_j }(1+\|v\|_{L^2}^2 )(1+N_{v,\T}+\|v\|_{H^1}^2)\,ds\\
&\leq 
\Big(1+\sup_{s\in [0,\ell_k] }\|v(s)\|_{L^2}^2 \Big)
\int_0^{\ell_k}(1+N_{v,\T}+\|v\|_{H^1}^2)\,ds \leq  (k+T)(k+T+C_k).
\end{align*}
Therefore the stochastic Gronwall's lemma in \cite[Lemma 5.3]{GHZ_09} with $\tau=\ell_k\wedge \tau_j$ applies to \eqref{eq:energy_estimate_proof_claim}, and it ensures the existence of $C(k,T,C_{0,k})>0$ independent of $j,v,v_0$ such that  
\begin{equation}
\label{eq:conclusion_intermediate_estimate_proof}
\E\Big[\sup_{s\in [0,\ell_k\wedge \tau_j]} X_s \Big]
+ \E\int_{0}^{t}\one_{[0,\ell_k\wedge \tau_j]} Y_s \,ds \leq C\big(1+\E\|v_0\|_{H^1}^4\big),
\end{equation}
where we also used that $\E [X_0]\lesssim 1+\E\|v_0\|_{H^1}^4$ by Sobolev embeddings. 
Recall that $\ell_k\leq \tau$ a.s.\ for all $k\geq 1$. Thus the claimed estimate follows by taking $j\to \infty$ in \eqref{eq:conclusion_intermediate_estimate_proof} using that $C$ in \eqref{eq:conclusion_intermediate_estimate_proof} is independent of $j\geq 1$ and the second in \eqref{eq:properties_tau_j_localizing_sequence}.

The proof of \eqref{eq:energy_estimate_proof_claim} will be divided into several steps. The argument is an extension of the one in \cite[Subsection 1.4.3]{HH20_fluids_pressure} for the deterministic case. 
Recall that $\eta,\xi$ are stopping times such that $0\leq \eta\leq \xi\leq \tau_j\wedge \ell_k$ a.s.\ for some $j,k\geq 1$.

\emph{Step 1: $L^{\infty}_t(H^1_x)$- and $L^2_t(H^2_x)$-estimates for $\overline{v}$, see \eqref{eq:estimate_Step_1} below}. 
By repeating the argument of Step 2 of Proposition \ref{prop:SMR_2} where one uses the parabolicity condition in Remark \ref{r:global_primitive}\eqref{it:ellipticity_overline_v}, one can show that 
\begin{equation}
\label{eq:smr_bar_operator}
\Big(-\Delta_{\h}, \pr[(\phi_{n,\h}\cdot \nabla_{\h}) ])_{n\geq 1} \Big)\in \mathcal{SMR}^{\bullet}_2(T),
\end{equation}
with $X_0=\Ls_{\h}^2(\Tor^2)$ and $X_1=\Hs_{\h}^2(\Tor^2)$, where $\Ls_{\h}^2(\Tor^2)$ is the space of divergence free vector field on $\Tor^2$ and $\Hs_{\h}^2(\Tor^2):=\Ls_{\h}^2(\Tor^2)\cap H^2(\Tor^2;\R^2)$. Here $\mathcal{SMR}^{\bullet}_2(T)$ denotes for the set of couples having stochastic maximal $L^2$-regularity on $(X_0,X_1)$, cf.\ Remark \ref{r:SMR_2}\eqref{it:choice_AB_strong_weak} and \cite[Definitions 3.5-3.6]{AV19_QSEE_1}.
Since $(\overline{v},\xi)$ is a $L^2$-strong solution to \eqref{eq:primitive_bar}, by \eqref{eq:smr_bar_operator} there exists $\overline{C}>0$ independent of $v,v_0,\eta,\xi,j,k$ such that
\begin{equation}
\label{eq:estimate_overline_v_smr_2}
\E\Big[\sup_{s\in [\eta, \xi]} \|\overline{v}(s)\|_{H^1(\Tor^2)}^2\Big]
+\E \int_{\eta}^{\xi}\| \overline{v}\|_{H^2(\Tor^2)}^2\,ds
\leq \overline{C} \Big(\E\|\overline{v}(\eta)\|_{H^1}^2+ \sum_{j=1}^4 \Ib_j\Big),
\end{equation}
where 
\begin{align*}
\Ib_1&:= 
\E\|\overline{\fvt}\|_{L^2(\eta,\xi;L^2(\Tor^2))}^2 + 
\E \|(\overline{\gvtn})_{n\geq 1}\|_{L^2(\eta,\xi;H^1(\Tor^2;\ell^2))}^2 ,\\
\Ib_2&:=\E\|\force (\wt{v})\|_{L^2(\eta,\xi;L^2(\Tor^2))}^2 ,\\
\Ib_3&:=\E\|(\overline{\phi^3_n \partial_3 v})_{n\geq 1}\|_{L^2(\eta,\xi;H^1(\Tor^2;\ell^2))}^2,\\
\Ib_4&:=\E\|\Lpp\|_{L^2(\eta,\xi;L^2(\Tor^2))},\\
\Ib_5&:=\E\|\overline{v}\cdot\nabla_{\h} \overline{v}\|_{L^2(\eta,\xi;L^2(\Tor^2))}^2,
\end{align*}
and $\Lpp$ and $\force$ are as in \eqref{eq:def_P_gamma} and \eqref{eq:primitive_bar}, respectively.

Below we consider each term separately. 
By H\"{o}lder inequality and \eqref{eq:bound_f_g_expectations},
\begin{equation}
\label{eq:Ib_1_estimate}
\Ib_1\leq C_h C_{k,T},
\end{equation}
where $C_h$ depends only on $h$. Similarly, 
\begin{equation}
\label{eq:Ib_2_estimate}
\Ib_2\leq C_h \E\Big\||\wt{v}||\nabla \wt{v}|\Big\|^2_{L^2(\eta,\xi;L^2)}.
\end{equation}

Next we estimate $\Ib_3$. To this end, note that  $
\Ib_3\leq \Ib_{3,0}+\Ib_{3,1}+\Ib_{3,2}$ where 
\begin{align*}
\Ib_{3,0}&:= \E\big\|(\overline{\phi^3_n \partial_3 v})_{n\geq 1}\big\|_{L^2(\eta,\xi;L^2(\Tor^2;\ell^2))}^2,\\
\Ib_{3,1}&:= \max_{j\in \{1,2\}}
\E\big\|(\overline{\partial_j \phi^3_n \partial_3 v})_{n\geq 1}\big\|_{L^2(\eta,\xi;L^2(\Tor^2;\ell^2))}^2,\\
\Ib_{3,2}&:= \max_{j\in \{1,2\}}
\E\big\|(\overline{\phi^3_n \partial_j\partial_3 v})_{n\geq 1}\big\|_{L^2(\eta,\xi;L^2(\Tor^2;\ell^2))}^2.
\end{align*}
By Assumption \ref{ass:well_posedness_primitive}\eqref{it:well_posedness_primitive_phi_smoothness} we have $\|(\phi_n(t,x))_{n\geq 1}\|_{\ell^2}\leq C_{\delta} M$ a.s.\ for all $t\in \R_+$ and $x\in \Dom$ by Sobolev embeddings (cf.\ Remark \ref{r:assump_local_existence_strong_weak}\eqref{it:Holder_continuity_phi}) and therefore $\Ib_{3,0}\leq C_k$  by \eqref{eq:boundedness_on_xi_n} and $\xi\leq \ell_k$. Since $(\partial_j \phi_n)_{n\geq 1}\in L^{3+\delta}(\Dom;\ell^2)$ also by Assumption \ref{ass:well_posedness_primitive}\eqref{it:well_posedness_primitive_phi_smoothness}, by a standard interpolation inequality we get
$$
\Ib_{3,1}+\Ib_{3,2}\leq C\big(\E \|\nabla \partial_3 v\|_{L^2(\eta,\xi;L^2)}^2 
+ \E \| \partial_3 v\|_{L^2(\eta,\xi;L^2)}^2\big) \stackrel{\eqref{eq:boundedness_on_xi_n}}{\leq }
C\E \|\nabla \partial_3 v\|_{L^2(\eta,\xi;L^2)}^2+ C_k.
$$ 
In turn, we have proved
\begin{equation}
\label{eq:Ib_3_estimate}
\Ib_3\leq C\E \|\nabla \partial_3 v\|_{L^2(\eta,\xi;L^2)}^2+ C_k.
\end{equation}

To estimate $\Ib_4$, note that by Assumption \ref{ass:global_primitive}\eqref{it:independence_z_variable},
\begin{equation*}
\q[(\phi_n\cdot \nabla ) v]=
\q_{\h} \Big[\overline{(\phi_n\cdot \nabla ) v}\Big]
=
\q_{\h} \Big[(\phi_{n,\h}\cdot \nabla ) \overline{v} \Big]+ \q_{\h} \Big[\overline{\phi_n^3 \partial_3v} \Big].
\end{equation*} 
Since $(\hp_n^{i,j}(t,\om,\cdot))_{n\geq 1}\in L^{3}(\Dom;\ell^2)$ uniformly w.r.t.\ $(t,\om)$ by Assumption \ref{ass:well_posedness_primitive}\eqref{it:well_posedness_primitive_phi_smoothness}, the H\"{o}lder inequality yields
\begin{align*}
\Ib_4
&\lesssim
\E\|\big((\phi_{n,\h}\cdot \nabla ) \overline{v}\big)_{n\geq 1}\|_{L^2(\eta,\xi;L^6(\ell^2))}^2
+
\E\|\big((\overline{\phi^3_n \partial_3v}\big)_{n\geq 1}\|_{L^2(\eta,\xi;L^6(\ell^2))}^2\\
&\lesssim_M
\E\|\nabla \overline{v}\|_{L^2(\eta,\xi;L^6(\Tor^2))}^2
+
\E\|\partial_3 v\|_{L^2(\eta,\xi;L^6)}^2\\
&\leq \frac{1}{4\overline{C}}
\E\|\overline{v}\|_{L^2(\eta,\xi;H^2(\Tor^2))}^2
+
C_M \Big( \E\|\overline{v}\|_{L^2(\eta,\xi;H^1(\Tor^2))}^2
+
\E\|\partial_3 v\|_{L^2(\eta,\xi;L^6)}^2\Big)
\end{align*}
where $\overline{C}$ is as in \eqref{eq:estimate_overline_v_smr_2} and in the last inequality we used that $\overline{v}$ is two-dimensional and $\|f\|_{L^6(\Tor^2)}\leq \varepsilon \|f\|_{H^1(\Tor^2)}+C_{\varepsilon}\|f\|_{L^2(\Tor^2)}$ by Young's and standard interpolation inequalities. 
By \eqref{eq:boundedness_on_xi_n}, the above displayed estimate and the Sobolev embedding $H^1\embed L^6$, we get
\begin{equation}
\begin{aligned}
\label{eq:Ib_4_estimate}
\Ib_4
&\leq \frac{1}{4\overline{C}}
\E\|\overline{v}\|_{L^2(\eta,\xi;H^2(\Tor^2))}^2
+
C_k
+ C_M
\E\|\partial_3 v\|_{L^2(\eta,\xi;L^6(\ell^2))}^2\\
&\leq \frac{1}{4\overline{C}}
\E\|\overline{v}\|_{L^2(\eta,\xi;H^2(\Tor^2))}^2
+
C_k
+ C_M
\E\|\nabla \partial_3 v\|_{L^2(\eta,\xi;L^2(\ell^2))}^2.
\end{aligned}
\end{equation}
It remains to estimate $\Ib_5$. Recall that, by standard interpolation inequalities and Sobolev embeddings,
\begin{equation}
\label{eq:interp_inequality_2D}
\|f\|_{L^4(\Tor^2)}\lesssim \|f\|_{H^{1/2}(\Tor^2)}
\lesssim \|f\|_{H^1(\Tor^2)}^{1/2}\|f\|_{H^1(\Tor^2)}^{1/2}
\end{equation}
where $t>0$. Applying \eqref{eq:interp_inequality_2D} to $\overline{v}$ and $\nabla_{\h}\overline{v}$, we get
\begin{align*}
\E\Big\||\overline{v}||\nabla_{\h} \overline{v}|\Big\|_{L^2(\eta,\xi;L^2(\Tor^2))}^2
&\leq  
\E\int_{\eta}^{\xi} 
\Big[\|\overline{v}\|_{L^4(\Tor^2)}^2\|\nabla_{\h} \overline{v}\|_{L^4(\Tor^2)}^2\Big]\,ds 
\\
&\stackrel{\eqref{eq:boundedness_on_xi_n}}{\lesssim} \E\int_{\eta}^{\xi}\Big[
k\|\overline{v}\|_{H^1(\Tor^2)}^2\|\overline{v}\|_{H^2(\Tor^2)}\Big]\,ds.
\end{align*}
Since $\|\overline{v}\|_{H^1(\Tor^2)}\lesssim_h\|v\|_{H^1}$, the Young's inequality yields
\begin{equation}
\begin{aligned}
\label{eq:product_overline_v_estimate}
\E\Big\||\overline{v}||\nabla \overline{v}|\Big\|_{L^2(\eta,\xi;L^2(\Tor^2))}^2
\leq C_{k} \E\int_{\eta}^{\xi}\|v\|_{H^1}^2\| \overline{v}\|_{H^1(\Tor^2))}^2\,ds 
+\frac{1}{4\overline{C}}\, \E\|\overline{v}\|_{L^2(\eta,\xi;H^2(\Tor^2))}^2.
\end{aligned}
\end{equation}
Using \eqref{eq:Ib_1_estimate}, \eqref{eq:Ib_2_estimate}, \eqref{eq:Ib_3_estimate}, \eqref{eq:Ib_4_estimate} and \eqref{eq:product_overline_v_estimate} in \eqref{eq:estimate_overline_v_smr_2} we get
\begin{equation}
\label{eq:estimate_Step_1}
\begin{aligned}
&\E\Big[\sup_{s\in [\eta,\xi]} \|\overline{v}(s)\|_{H^1(\Tor^2)}^2\Big]
+\E \int_{\eta}^{\xi}\| \overline{v}\|_{H^2(\Tor^2)}^2\,ds\\
& \qquad
\leq   C_{k,T}^{(1)}\Big[1+\|\overline{v}(\eta)\|_{H^1(\Tor^2)}^2 +
\E\int_{\eta}^{\xi} (1+\|v\|_{H^1}^2)\|\overline{v}\|_{H^1(\Tor^2)}^2\,ds\\
&\qquad \qquad
 + \E\int_{\eta}^{\xi}  \big\| |\wt{v}| |\nabla \wt{v}|\big\|_{L^2}^2\,ds +\E\int_{\eta}^{\xi} \|\nabla \partial_3 v\|_{L^2}^2\,ds\Big]
\end{aligned}
\end{equation}
where $C_{k,T}^{(1)}$ is a constant independent of $v,v_0,\eta,\xi$ and $j\geq 1$. Note that the term $\frac{1}{2}\E \| \bar{v}\|_{L^2(\eta,\xi;H^2)}^2$ on the right hand side of \eqref{eq:product_overline_v_estimate} has been absorbed in the left hand side of \eqref{eq:estimate_Step_3}. This is possible since $\xi\leq \tau_j$ and therefore $\|\overline{v}\|_{L^2(\eta,\xi;H^2)}\lesssim_h\|v\|_{L^2(\eta,\xi;H^2)}\leq j$ a.s.\

\emph{Step 2: $L^{\infty}_t(L^2_x)$- and $L^2_t(H^1_x)$-estimates for $\vz:=\partial_3 v$, see \eqref{eq:estimate_Step_2} below}. Let us set $\vz^{\eta,\xi}:=\vz((\cdot\vee \eta)\wedge \xi)$. Note that $\vz^{\eta,\xi}\in C([0,T];L^2)$ a.s.\ and $\|\vz^{\eta,\xi}\|_{C([0,T];L^2)}\leq j $ a.s.\ since $\xi\leq \tau_j$ by assumption.
Using an approximation argument similar to the one used in Step 2 of Proposition \ref{prop:SMR_2}, we may apply the It\^{o}'s formula applied to $v\mapsto  \| \vz\|_{L^2}^2$ and an integration by parts argument yield, a.s.\ for all $t\in [0,T]$,
\begin{equation}
\label{eq:Ito_Step_2_primitive}
\|\vz^{\eta,\xi}(t)\|_{L^2}^2 -\|\vz(\eta)\|_{L^2}^2
+ 2\int_{0}^t\one_{[\eta,\xi]} \int_{\Dom}|\nabla \vz|^2 \,dx ds =  \sum_{j=1}^4 I_j(t) ,
\end{equation}
where 
\begin{align*}
I_1(t)&:= 2\int_{0}^t\one_{[\eta,\xi]} 
\Big( -(v\cdot \nabla_{\h}) v- \w(v)\partial_3 v, \partial_3 \vz\Big)_{L^2} ds,\\
I_2(t)&:=2\int_{0}^t\one_{[\eta,\xi]}(\fvt, \partial_3 \vz)_{L^2}ds,
\\
I_3(t)&:=\sum_{n\geq 1}\int_{0}^t\one_{[\eta,\xi]}
	\Big\|\partial_3 [(\phi_n\cdot \nabla ) v] + \partial_3 \gvtn]\Big\|_{L^2}^2\,ds,\\
I_4(t)&:=2 \sum_{n\geq 1}\int_{0}^t \one_{[\eta,\xi]} 
 \big(\partial_3 [(\phi_n\cdot \nabla ) v] +\partial_3 \gvtn,\partial_3 \vz\big)_{L^2} d\beta_s^n,
\end{align*}
and where we used that $\partial_3 \p f= f$ and $(\p f,\partial_3 \vz)_{L^2}=(f,\p (\partial_3 \vz))_{L^2}=(f,\partial_3 \vz)_{L^2}$ and $(\Lpp , \partial_3 v_3)=0$ for all $f\in L^2$ since $\vz(\cdot,-h)=\vz(\cdot,0)=0$ on $\Tor^2$ due to \eqref{eq:boundary_conditions_strong_weak}  and to the fact that $\Lpp$ is $x_3$-independent (cf.\ \eqref{eq:Lpp_independent_of_x_3}).

Fix $\ellip_1\in (0,2-\ellip)$ where $\ellip<2$ is as in Assumption \ref{ass:well_posedness_primitive}\eqref{it:well_posedness_primitive_parabolicity}. By repeating the argument in \cite[Step 2, p. 24]{HH20_fluids_pressure} one can check that, a.s.\ for all $t\in [0,T]$,
\begin{equation}
\label{eq:bound_I_det_s_step_2}
\begin{aligned}
\E\Big[\sup_{t\in [0,T]}|I_{1}(t)|\Big]
&\leq C \E\int_{\eta}^{\xi}\Big[(1+\|v\|_{H^1}^2)\|\vz\|_{L^2}^2
+ \big\||\wt{v}(s)| |\nabla \wt{v}(s)|\big\|_{L^2}^2\Big]\, ds\\
&+\ellip_1 \E\int_{\eta}^{\xi}\|\nabla \vz\|_{L^2}^2\,ds.
\end{aligned}
\end{equation}
Moreover, by \eqref{eq:bound_f_g_expectations}, for all $\ellip_2\in (0,2-\ellip-\ellip_1)$ we have a.s.
\begin{equation}
\label{eq:I_2_estimate_step_2_vz}
\E \Big[\sup_{t\in [0,T]}|I_2(t)|\Big] \leq C_k+ \ellip_2 \E\|\nabla\vz\|_{L^2(\eta,\xi;L^2)}^2.
\end{equation}

Next we estimate $I_3(t)$. To this end, let us fix $\ellip_2'\in (\ellip,2-\ellip_1-\ellip_2)$. Note that such choice is always possible since $\ellip<2-\ellip_1-\ellip_2$. As in the proof of \eqref{eq:estimate_Ito_correction_v_smr}, using Assumption \ref{ass:well_posedness_primitive}\eqref{it:well_posedness_primitive_phi_smoothness} and \eqref{it:well_posedness_primitive_parabolicity} one gets
\begin{equation*}
\sup_{t\in [0,T]}\sum_{n\geq 1}\int_{\eta}^{\xi}
	\Big\|\partial_3 [(\phi_n\cdot \nabla ) v]\Big\|_{L^2}^2\,ds
	\leq \ellip_2' \|\nabla \vz\|_{L^2}^2 + c_{\ellip'}\|v\|_{L^2}^2.
\end{equation*}
By \eqref{eq:bound_f_g_expectations} and the Young's inequality we have, for all $\ellip_3\in (\ellip_2',2-\ellip_1-\ellip_2)$, 
\begin{equation}
\label{eq:I_3_estimate_step_2_vz}
\E\Big[\sup_{t\in [0,T]}|I_3(t)|\Big]
\leq \ellip_3 \E \|\nabla\vz\|_{L^2(\eta,\xi;L^2)}^2+  C_{k,\ellip_2,\ellip_3}.
\end{equation}

Taking $\E$ in \eqref{eq:Ito_Step_2_primitive}, using that $\E[I_4(t)]=0$ and $2-\sum_{j=1}^3\ellip_j>0$, one infers
\begin{equation}
\label{eq:estimate_step_2_primitive_intermediate}
\begin{aligned}
\E \int_{\eta}^{\xi} \|\nabla \vz(s)\|^2_{L^2} \,ds
 &\leq \E\|\vz(\eta)\|_{L^2}^2+ C_k'\\
&+C \E \int_{\eta}^{\xi}\Big[\big\||\wt{v}||\nabla\wt{v}|\big\|_{L^2}^2+ \big(1+\|v\|_{H^1}^2\big) \|\vz\|_{L^2}^2\Big]\,ds 
\end{aligned}
\end{equation}
where $C_k'$ and $C$ are independent of $\eta,\xi$ and $j\geq 1$.

Next we apply $\E\big[\sup_{s\in [0,T]} |\cdot| \big]$ to \eqref{eq:Ito_Step_2_primitive}. To this end, it remains to estimate $I_4$. For notational convenience, we write $I_4=I_{4,1}+I_{4,2}$ where
\begin{align*}
I_{4,1}(t)&:=
\sum_{n\geq 1}\int_{0}^t \one_{[\eta,\xi]} 
 \big(\partial_3 [(\phi_n\cdot \nabla ) v], \partial_3\vz\big)_{L^2} d\beta_s^n,\\
 I_{4,2}(t)&:=
\sum_{n\geq 1}\int_{0}^t \one_{[\eta,\xi]} 
 \big(\partial_3 [\gvn(v)],\partial_3 \vz\big)_{L^2} d\beta_s^n.
\end{align*}
As above, $\E\int_{\eta}^{\xi}\|(\gvtn )_{n\geq 1}\|_{L^2}^2 \lesssim k$ by \eqref{eq:bound_f_g_expectations}. Thus, the estimate \eqref{eq:estimate_step_2_primitive_intermediate} and Burkholder-Gundy-Davis inequality yield
$$
\E\Big[\sup_{t\in [0,T]}|I_{4,2}(t)|\Big]\leq C_k'
+ C\E \int_{\eta}^{\xi}\Big[\big\||\wt{v}||\nabla\wt{v}|\big\|_{L^2}^2+ \big(1+\|v\|_{H^1}^2\big) \|\vz\|_{L^2}^2\Big]\,ds.
$$

Again, by the Burkholder-Gundy-Davis inequality,
\begin{align*}
&\E\Big[\sup_{s\in [0,T]}\big|I_{4,1}(t)\big|  \Big]\\
&\leq C 
\E\Big[
\int_{\eta}^{\xi} \Big|
\int_{\Dom} \sum_{n\geq 1} \big[\partial_3 ((\phi_n\cdot \nabla ) v) \big]\cdot \vz\,dx\Big|^2 \,ds \Big]^{1/2}
\\
&\leq C \E
\Big[
\int_{\eta}^{\xi}(1+\|\nabla \vz(s)\|_{L^2}^2) \| \vz(s)\|_{L^2}^2 \,ds 
\Big]^{1/2}\\
&\leq  C \E
\Big[\Big(\sup_{s\in [\eta,\xi]}\|\vz(s)\|_{L^2}^2\Big)^{1/2}
\Big(
\int_{\eta}^{\xi}
(1+\|\nabla \vz(s)\|_{L^2}^2)  \,ds 
\Big)^{1/2}\Big]\\
&\stackrel{(i)}{\leq} \frac{1}{2}\E\Big[\sup_{s\in [0,T]}\|\vz^{\eta,\xi}(s)\|_{L^2}^2\Big]
+C\E\int_{\eta}^{\xi}(1+
\|\nabla \vz(s)\|_{L^2}^2)  \,ds\\
&\stackrel{(ii)}{\leq} \frac{1}{2}\E\Big[\sup_{s\in [0,T]}\|\vz^{\eta,\xi}(s)\|_{L^2}^2\Big]
+ C_k'+ \E \int_{\eta}^{\xi}\Big[\big\||\wt{v}||\nabla\wt{v}|\big\|_{L^2}^2+C \big(1+\|v\|_{H^1}^2\big) \|\vz\|_{L^2}^2\Big]\,ds 
\end{align*}
where in $(i)$ we used $\vz^{\eta,\xi}=\vz((\cdot\vee \eta)\wedge \xi)$ and in $(ii)$ \eqref{eq:estimate_step_2_primitive_intermediate}.

Applying
$\E\big[\sup_{s\in [0,T]} |\cdot| \big]$ to \eqref{eq:Ito_Step_2_primitive} and using that \eqref{eq:bound_I_det_s_step_2}, \eqref{eq:I_2_estimate_step_2_vz}, \eqref{eq:I_3_estimate_step_2_vz}, \eqref{eq:estimate_step_2_primitive_intermediate} as well as the previous on $I_{4,1}$ and $I_{4,2}$, we have 
\begin{equation}
\label{eq:estimate_Step_2}
\begin{aligned}
\E \Big[\sup_{s\in [\eta, \xi]} \|\vz(t)\|^2_{L^2}\Big]
&+\E\int_{\eta}^{\xi}\|\nabla \vz\|_{L^2}^2 \,ds
\leq  C_{k,T}^{(2)}\Big[ 1+ \E\| \vz(\eta)\|_{L^2}^2 \\
&
+\E\int_{\eta}^{\xi} \Big((1+\|v\|_{H^1}^2) \|\vz\|_{L^2}^2
+ \big\||\wt{v}||\nabla\wt{v}|\big\|_{L^2}^2 \Big)\,ds\Big]
\end{aligned}
\end{equation}
where $C^{(2)}_{k,T}$ is a constant independent of $\eta,\xi$ and $j\geq 1$. Let us remark that the term $\frac{1}{2}\E[\sup_{s\in [0,T]}\|\vz^{\eta,\xi}(s)\|_{L^2}^2]$ appearing in the estimate of $I_{4,1}$ has been absorbed in the left hand side of the previous estimate.

\emph{Step 3: An $L^{\infty}_t(L^4_x)$--estimate for $\wt{ v}$, see \eqref{eq:estimate_Step_3} below}. As in the previous step we set $\wt{v}^{\eta,\xi}:=\wt{v}((\cdot\vee \eta)\wedge \xi)$. By the Sobolev embedding $H^1\embed L^4$ and $\xi\leq \tau_j$, we have $\wt{v}^{\eta,\xi}\in C([0,T];L^4)$ a.s.\ and $\|\wt{v}^{\eta,\xi}\|_{C([0,T];L^4)}\leq j$ a.s.

The It\^{o}'s formula applied to $\wt{v}\mapsto  \| \wt{v}\|_{L^4}^4$, gives a.s.\ for all $t\in [0,T]$,
\begin{equation}
\label{eq:Ito_identity_L_4_norm}
\begin{aligned}
\|\wt{v}^{\eta,\xi}(t)\|_{L^4}^4 -\|\wt{v}(\eta)\|_{L^4}^4 &
+2\int_0^t \one_{[\eta,\xi]}\Big\|\nabla[ |\wt{v}|^2 ]\Big\|_{L^2}^2\,ds 
\\
&+ 4\int_0^t \one_{[\eta,\xi]}
\Big\| |\wt{v}||\nabla \wt{v}| \Big\|_{L^2}^2 \,ds = \sum_{j=1}^5 J_j(t),
\end{aligned}
\end{equation}
where
\begin{align*}
J_1 (t)&:=- 4 \int_0^t \one_{[\eta,\xi]}\int_{\Dom} [(\wt{v}\cdot\nabla_{\h} \overline{v})] \cdot\wt{v}  |\wt{v}|^2 \,dx ds\\
&-\frac{4}{h}
 \int_0^t \one_{[\eta,\xi]}\int_{\Dom} \Big(\int_{-h}^0 \big[(\wt{v}\cdot\nabla_{\h} \wt{v}) + \wt{v}(\div_{\h} \wt{v})\big]\,d\zeta\Big) \cdot \wt{v} |\wt{v}|^2 \,dx ds\\
 &+4
 \int_0^t \one_{[\eta,\xi]}\int_{\Dom} \wt{\fvt} \cdot \wt{v} |\wt{v}|^2 \,dx ds,\\
 J_2(t)&:=2 \int_0^t \one_{[\eta,\xi]} \int_{\Dom} |\wt{v}|^2 
\sum_{n\geq 1} \Big|(\phi_n\cdot \nabla )\wt{v}+\overline{\phi^3_n \partial_3 v }+\wt{\gvtn} \Big|^2\,dxds\\
&\qquad \qquad + 4\int_0^t \one_{[\eta,\xi]}  \int_{\Dom}\sum_{n\geq 1} \Big|\wt{v}\cdot \Big[
(\phi_n\cdot \nabla )\wt{v}+\overline{\phi^3_n \partial_3 v }+\wt{\gvtn}\Big]\Big|^2\,dxds,
\\ 
J_3(t)&:=
4\int_{0}^t \one_{[\eta,\xi]} \int_{\Dom}\sum_{n\geq 1} |\wt{v}|^2 
\wt{v}\cdot \big[(\phi_n\cdot \nabla ) \wt{v}+\overline{\phi^3_n \partial_3 v }+ \wt{\gvtn}\big]\,dx d\beta_s^n,
\end{align*}
and we used that integrating by parts (cf.\ \cite[Lemma 2a), p.\ 21]{HH20_fluids_pressure})
\begin{align*}
\int_{\Dom} \big[(\overline{v}\cdot \nabla_{\h}) \wt{v}\big]\cdot \wt{v} |\wt{v}|^2\,dx 
=\int_{\Dom} \big[(\wt{v}\cdot \nabla_{\h}) \wt{v} +w(v)\partial_3 \wt{v}\big]\cdot \wt{v} |\wt{v}|^2\,dx=0,
\end{align*}
since $\div_{\h} \wt{v}+\partial_3 (w(v))=\div_{\h}\overline{v}=0$ a.e.\ on $[\eta,\xi]\times \O$.

Let us remark that, to justify the above identity, one needs a standard approximation argument. More precisely to prove \eqref{eq:Ito_identity_L_4_norm}, one applies the It\^{o}'s formula to $\wt{v}\mapsto \int_{\Dom}\M(\mathcal{R}_m(\wt{v}))\,dx$ where
$$
\M(y):=
\begin{cases}
|y|^4,&\text{ on }|y|\leq m,\\
m^{2}\big( 6 |y|^2- 8 m |y|+3 m^2), &\text{ on }|y|>m,
\end{cases}  \quad \text{ for all } y\in \R, m\geq 1,
$$
and $\mathcal{R}_m:=m(m+1+\Delta_{\n})^{-1}$ (here $\Delta_\n$ denotes the Neumann Laplacian on $L^2$) and
 then taking the limit as $m\to \infty$ in the obtained equality. Since $\M\in C^2_{\rm b}(\R^2)$, $\M$ has quadratic growth at infinity, $\mathcal{R}_m\to I$ strongly in $H^k$ for $k\in\{0,1\}$ and $\Do(\Delta_{\n})\embed H^2\embed L^{\infty}$, the It\^{o}'s formula can be applied. By \eqref{eq:tau_j_a_priori_estimates_strong_weak} and the fact that $\xi\leq \tau_j$ a.s., the limit as $m\to \infty$ can be justified by recalling that $H^1\embed L^6$ and noticing that, for all $y=(y_1,y_2)\in \R^2$, $i,j\in \{1,2\}$, 
\begin{align*}
\M(y)\to |y|^4, \ \partial_{y_j}\M(y)\to 4 y_j |y|^3, \ \partial_{y_i,y_j}\M(y)\to 4|y|^2 \delta_{i,j} + 8 y_j y_i,  \text{ as }m\to \infty,&\\
|\M(y)|\leq C |y|^4, \  |\partial_{y_j} \M(y)|\leq C|y|^3, \ |\partial_{y_i,y_j}\M(y)| \leq C |y|^2, \text{ for all }m\geq 1,&
\end{align*}
where $C>0$ is independent of $m\geq 1$ and $\delta_{i,j}$ is the Kronecker's delta.

Let us turn to the proof of the main estimate. Fix $\ellip'\in (\ellip,2)$. Reasoning as in \cite[Step 3, p.\ 26]{HH20_fluids_pressure}, we have 
\begin{equation}
\label{eq:estimate_J_1}
\sup_{s\in [0,t]} |J_{1}(t)|
\leq C_k
\Big[1+
\int_{\eta}^{\xi}\big(1+N_{v,\T}+
\|v\|_{H^1}^2 \big) \|\wt{v}\|_{L^4}^4 \,ds\Big]
+
\frac{2-\ellip'}{2}\int_{\eta}^{\xi} \Big\|\nabla[ | \wt{v}|^2]\Big\|_{L^2}^2\,ds.
\end{equation}
Next we analyse $J_2$. Note that, a.s.\ for all $t\in [0,T]$,
\begin{equation}
\label{eq:J_2_decomposition_estimate}
J_2(t)
\leq \frac{\ellip'}{\ellip} J_{2,1}(t)+ C_{\ellip,\ellip'} (J_{2,2}(t)+J_{2,3}(t)),
\end{equation}
where
\begin{align*}
J_{2,1}(t)
&:=\sum_{n\geq 1} \int_0^t \one_{[\eta,\xi]} \int_{\Dom}\Big[ 2  |\wt{v}|^2 
\big|(\phi_n\cdot \nabla )\wt{v}\big|^2 +
4 \big|\wt{v}\cdot \big[ (\phi_n\cdot \nabla )\wt{v}\big]\big|^2\Big]\,dxds,
\\ 
J_{2,2}(t)
&:=
 \int_0^t \one_{[\eta,\xi]} \int_{\Dom} |\wt{v}|^2 
\sum_{n\geq 1} \big|\overline{\phi^3_n \partial_3 v} \big|^2\,dxds,\\
J_{2,3}(t)
&:=
 \int_0^t \one_{[\eta,\xi]} \int_{\Dom} |\wt{v}|^2 
\sum_{n\geq 1} |\wt{\gvtn} |^2\,dxds.
\end{align*}
Next we estimate $J_{2,1}$, $J_{2,2}$ and $J_{2,3}$, separately. Let us begin by looking at $J_{2,1}$. Note that 
$$
\wt{v}\cdot [(\phi_n\cdot \nabla) \wt{v}]=\frac{1}{2}  (\phi_n\cdot \nabla )|\wt{v}|^2.
$$
Thus, by Assumption \ref{ass:well_posedness_primitive}\eqref{it:well_posedness_primitive_parabolicity} applied twice, a.s.\ for all $t\in [0,T]$,
\begin{align*}
J_{2,1}(t)
&= \sum_{n\geq 1}   \int_0^t \one_{[\eta,\xi]} \int_{\Dom} \Big[ 2 |\wt{v}|^2 
|(\phi_n\cdot \nabla )\wt{v}|^2+ \big|
(\phi_n\cdot \nabla )|\wt{v}|^2\big|^2\Big]\,dxds\\
&\leq 
\ellip  \int_0^t \one_{[\eta,\xi]} \int_{\Dom} \Big[ 2 |\wt{v}|^2 
|\nabla \wt{v}|^2+ \big| \nabla |\wt{v}|^2\big|^2\Big]\,dxds.
\end{align*}
Next we estimate $J_{2,2}$. Set $\overline{F}:=(\sum_{n\geq 1} 
|\overline{\phi^3_n \partial_3 v}|^2)^{1/2}$. By Cauchy-Schwartz inequality, a.s.\ for all $t\in [\eta,\xi]$, 
\begin{equation}
\label{eq:new_term_estimate_1}
\int_{\Dom} |\wt{v}|^2 
\sum_{n\geq 1} \big|\overline{\phi^3_n \partial_3 v} \big|^2\,dx
\leq \big\||\overline{F}|^2\big\|_{L^2}\big\||\wt{v}|^2\big\|_{L^2}
= \|\overline{F}\|_{L^4}^2\|\wt{v}\|_{L^4}^2.
\end{equation}
Since $\overline{F}$ is $x_3$-independent, $\|\overline{F}\|_{L^4}^2\lesssim_h\|\overline{F}\|_{L^4(\Tor^2)}^2$. Thus, reasoning as in the proof of \eqref{eq:Ib_3_estimate}, by Assumption \ref{ass:well_posedness_primitive}\eqref{it:well_posedness_primitive_phi_smoothness} and \eqref{eq:interp_inequality_2D}, we get
\begin{equation}
\label{eq:new_term_estimate_2}
\|\overline{F}\|_{L^4(\Tor^2)}^2\lesssim \|\overline{F}\|_{L^2(\Tor^2)} \|\overline{F}\|_{H^1(\Tor^2)}
\lesssim_{M,\delta} \|v\|_{H^1}^2+\|v\|_{H^{1}}\|\nabla \partial_3 v\|_{L^2}.
\end{equation}
Let $\varepsilon>0$ be chosen later. By \eqref{eq:new_term_estimate_1}-\eqref{eq:new_term_estimate_2} we get, a.s.\
\begin{equation}
\label{eq:new_term_estimate}
\begin{aligned}
\sup_{t\in [0,T]} |J_{2,2}(t)| 
&\lesssim 
\int_{\eta}^{\xi}
(\|v\|_{H^1}^2+\|v\|_{H^{1}}\|\nabla \partial_3 v\|_{L^2})\|v\|_{L^4}^2\,ds\\
&\leq \varepsilon \int_{\eta}^{\xi}\|\nabla \partial_3 v\|_{L^2}^2\,ds + C_{\varepsilon}
\int_{\eta}^{\xi}
(1+\|v\|_{H^1}^2)\|v\|_{L^4}^4\,ds.
\end{aligned}
\end{equation}
Similarly we estimate $J_{2,3}$. Set $\wt{G}:=(\sum_{n\geq 1} |\wt{g_{n}}|^2)^{1/2}$. By Sobolev embeddings and \eqref{eq:bound_f_g_expectations}, we have $\|\wt{G}\|_{L^2(\eta,\xi;L^4)}\lesssim_h \|g\|_{L^2(0,\ell_k;H^1(\ell^2))}$ a.s. Thus, a.s.\
\begin{equation}
\label{eq:Gforce_L_4_estimate}
\begin{aligned}
\sup_{t\in [0,T]}|J_{2,3}(t) |
&\lesssim 
\int_{\eta}^{\xi}\big\|\wt{G}^2\big\|_{L^2}\big\||\wt{v}|^2\big\|_{L^{2}}\,ds \\
&
\lesssim_h 
\int_{\eta}^{\xi}\|g\|_{L^2(0,\ell_k;H^1(\ell^2))}^2 \|\wt{v}\|_{L^{4}}^2 \,ds
\stackrel{}{\leq} C_k+
\int_{\eta}^{\xi}  N_{v,\T}\|\wt{v}\|_{L^4}^4\,ds 
\end{aligned}
\end{equation}
where in the last inequality we used \eqref{eq:def_N_v_T}, \eqref{eq:bound_f_g_expectations} and $\xi\leq \ell_k$ a.s.

Using the previous estimates in \eqref{eq:J_2_decomposition_estimate}, we have a.s.\ 
\begin{multline}
\label{eq:estimate_J_2}
\sup_{t\in[0,T]}|J_2(t)|
\leq 
C_{\varepsilon}\Big[1+
 \int_{\eta}^{\xi}(1+N_{v,\T}+\|v\|_{H^1}^2)
   \|\wt{v}\|^4_{L^4}\,ds\Big] \\
  + \varepsilon \int_{\eta}^{\xi}\| \nabla \partial_3 v\|_{L^2}^2 \,ds 
   +
2 \ellip' \int_{\eta}^{\xi} \Big\||\wt{v}||\nabla \wt{v}|\Big\|_{L^2}^2\,ds +
\ellip' \int_{\eta}^{\xi}\|\nabla \partial_3 v\|_{L^2}^2\,ds .
\end{multline}
Recall that $\ellip'<2$. Take expectations in \eqref{eq:Ito_identity_L_4_norm}, using that \eqref{eq:estimate_J_1}, \eqref{eq:estimate_J_2} and using that  $\E[J_3(T)]=0$,
\begin{equation}
\label{eq:step_3_estimate_after_expectations}
\begin{aligned}
&\int_{\eta}^{\xi}\big\| |\wt{v}||\nabla \wt{v}|\big\|_{L^2}^2 \,ds
\leq C\big[1+ \E\|\wt{v}(\eta)\|_{L^4}^4\big] \\
& \qquad \qquad 
+ C_{\varepsilon}\E \int_{\eta}^{\xi}(1+N_{v,\T}+\|v\|_{H^1}^2)\|\wt{v}\|_{L^4}^4\,ds
+ \varepsilon \E\int_{\eta}^{\xi}\|\nabla |\wt{v}|^2\|_{L^2}\,ds.
\end{aligned}
\end{equation}

Let $\overline{F}=(\sum_{n\geq 1}|\overline{\phi^3_n \partial_3 v}_n|^2)^{1/2}$ and $\wt{G}=(\sum_{n\geq 1}|\wt{g}_n|^2)^{1/2}$ be as above.
Reasoning as in the previous step, we use the Burkholder-Davis-Gundy inequality to handle the martingale part $J_3$:
\begin{align*}
&\E\Big[\sup_{t\in [0,T]}|J_3(t)|\Big]
\lesssim 
\E 
\Big[\int_{\eta}^{ \xi}\Big|\int_{\Dom} |\wt{v}|^2 \sum_{n\geq 1} 
\big[(\phi_n\cdot \nabla ) \wt{v}+\overline{F}+\wt{\gvtn}\big]\cdot \wt{v}\,dx \Big|^2 ds\Big]^{1/2}\\
&\lesssim 
\E 
\Big[\int_{\eta}^{ \xi} \Big|\int_{\Dom} |\wt{v}|^3( |\nabla \wt{v}|+|\overline{F}|+|\wt{G}|)\,dx \Big|^{2}\,ds\Big]^{1/2}\\
&\lesssim\E
\Big[\Big(\sup_{s\in [\eta, \xi]}\|\wt{v}(s)\|_{L^4}^4\Big)^{1/2}
\Big( \int_{\eta}^{ \xi}	\int_{\Dom} |\wt{v}|^2 \big(|\nabla \wt{v}|^2+|\overline{F}|^2+|\wt{G}|^2\big) \,dxds\Big)^{1/2}\Big]\\
&\leq \frac{1}{2}
\E\Big[\sup_{s\in [\eta,\xi]}\|\wt{v}(s)\|_{L^4}^4\Big]+C 
\E\int_{\eta}^{ \xi} \Big(\Big\| |\wt{v}| |\nabla \wt{v}|\Big\|_{L^2}^2+  \Big\||\wt{v}||\overline{F} |\Big\|_{L^2}^{2} + \Big\||\wt{v}||\wt{G} |\Big\|_{L^2}^{2}\Big)\,ds\\
&\leq 
\frac{1}{2}\E\Big[\sup_{s\in [\eta,\xi]}\|\wt{v}^{\eta,\xi}(s)\|_{L^4}^4\Big]
+  C_{J_3}(1+\E\|\wt{v}_0\|_{L^4}^4) 
\\
&+ C_{J_3}
\E\int_{\eta}^{\xi}\Big[\big(1+N_{v,\T}+
\|v\|_{H^1}^2  \big) \|\wt{v}\|_{L^4}^4 \Big]\,ds
+C_{J_3}\varepsilon\int_{\eta}^{\xi} \|\nabla \partial_3v \|_{L^2}^2 \,ds
\end{align*}
where in the last inequality we used \eqref{eq:new_term_estimate}, \eqref{eq:Gforce_L_4_estimate} and \eqref{eq:step_3_estimate_after_expectations}. Note that $C_{J_3}\geq 1$ is a constant independent of $\eta,\xi$ and $j\geq 1$.

For future convenience, we choose $\varepsilon=1/(16 (1+C_{J_3}) (C_{k,T}^{(2)}\vee 1))$ where $C_{k,T}^{(2)}$ is as in \eqref{eq:estimate_Step_2}.
Then by taking $\E\big[\sup_{s\in [0,T]}|\cdot|\big]$ in \eqref{eq:Ito_identity_L_4_norm} and using the above inequalities we have
\begin{equation}
\label{eq:estimate_Step_3}
\begin{aligned}
&\E\Big[\sup_{t\in [\eta,\xi]}\|\wt{v}\|_{L^4}^4\Big] 
+ \E\int_{\eta}^{ \xi} \Big\||\wt{v}||\nabla \wt{v}|\Big\|_{L^2}^2\,ds 
\leq C_{k,T}^{(3)}\Big[1+ \E\|\wt{v}_0\|_{L^4}^4\big]\\
&+C_{k,T}^{(3)} \E \int_{\eta}^{\xi}(1+N_{v,\T}+\|v\|_{H^1}^2)\|\wt{v}\|_{L^4}^4\,ds
+\frac{1}{16 (C_{k,T}^{(2)}\vee 1)} \E\int_{\eta}^{\xi} \|\nabla \partial_3 v\|_{L^2}^2\,ds
\end{aligned}
\end{equation}
where $C_{k,T}^{(3)}>0$ is independent of $\eta,\xi$ and $j\geq 1$.

\emph{Step 4: Proof of \eqref{eq:energy_estimate_proof_claim}}. 
Let $C^{(1)}_{k,T}$, $C_{k,T}^{(2)}$ and $C_{k,T}^{(3)}$ be as in \eqref{eq:estimate_Step_1}, \eqref{eq:estimate_Step_2} and \eqref{eq:estimate_Step_3}, respectively. 
Without loss of generality we may assume that $C_{k,T}^{(i)}\geq 1$ for $i\in \{1,2,3\}$.
This claim of this step follows by noticing that 
\begin{equation*}
\frac{\eqref{eq:estimate_Step_1}}{8 C^{(1)}_{k,T}C^{(2)}_{k,T}}
+\frac{\eqref{eq:estimate_Step_2}}{4 C^{(2)}_{k,T} } + 
 \eqref{eq:estimate_Step_3} \ \ \ \Longrightarrow  \ \  \
 \eqref{eq:energy_estimate_proof_claim}.
\end{equation*}
More precisely, the above means that \eqref{eq:energy_estimate_proof_claim} follows by  multiplying \eqref{eq:estimate_Step_1} with $(8 C^{(1)}_{k,T}C^{(2)}_{k,T})^{-1}$,  \eqref{eq:estimate_Step_2} with $(4 C^{(2)}_{k,T} )^{-1}$ and then adding the resulting estimates with \eqref{eq:estimate_Step_3}. 

Note that, on the RHS of the resulting estimate, the following terms appear: 
\begin{equation}
\label{eq:resulting_estimate_additional_terms}
\Big(\frac{1}{8 C_{k,T}^{(2)} }+\frac{1}{16 	C_{k,T}^{(2)}}\Big)\E\|\nabla \partial_3 v\|_{L^2(\eta,\xi;L^2)}^2, 
  \ \ \text{ and }\ \ 
\Big(\frac{1}{8 C^{(2)}_{k,T}}+ \frac{1}{4}\Big)
\E\Big\||\wt{v}||\nabla \wt{v}|\Big\|_{L^2(\eta,\xi;L^2)}^2 .
\end{equation}
The first term follows from the RHS of \eqref{eq:estimate_Step_1} and the RHS of \eqref{eq:estimate_Step_3} and the second one from the RHS of \eqref{eq:estimate_Step_1} and the  RHS of \eqref{eq:estimate_Step_2}. However, in the LHS of the resulting estimate we get $\frac{1}{4 C_{k,T}^{(2)} }\E\|\nabla \partial_3 v\|_{L^2(\eta,\xi;L^2)}^2$ by $\frac{\eqref{eq:estimate_Step_2}}{4 C^{(2)}_{k,T}}$ and 
$\E\big\||\wt{v}||\nabla \wt{v}|\big\|_{L^2(\eta,\xi;L^2)}^2$ by  \eqref{eq:estimate_Step_3}. Since $C^{(i)}_{k,T}\geq 1$,  the terms in \eqref{eq:resulting_estimate_additional_terms} can be adsorbed in the LHS of the resulting estimate since $\xi\leq \tau_j$ and therefore a.s.
\begin{align*}
\E\|\nabla v_3\|_{L^2(\eta,\xi;L^2)}^2
&\leq\E\int_{\eta}^{ \xi}\|v\|_{H^2}^2\,ds\leq j,\\
\E\Big\||\wt{v}| |\nabla \wt{v}| \Big\|_{L^2(\eta,\xi;L^2)}^2
&\leq \E\Big[\|\wt{v}\|_{L^{\infty}(\eta,\xi;L^6)}^2 \| \wt{v}\|_{L^2(\eta,\xi;H^{1,3})}^2\Big]\lesssim j^2,
\end{align*} 
by \eqref{eq:tau_j_a_priori_estimates_strong_weak} and Sobolev embeddings.
\end{proof}

We are now in the  position to prove Proposition \ref{prop:energy_estimate_primitive}.

\begin{proof}[Proof of Proposition \ref{prop:energy_estimate_primitive}]
Let $\ell_k$ and $X_t,Y_t$ be as in \eqref{eq:def_ell_n} and \eqref{eq:def_X_Y}, respectively. Recall that $\lim_{k\to \infty} \P(\ell_k=\tau)=1$. 
By Lemma \ref{l:intermediate_estimate}, for each $k\geq 1$ there exists $R_k>0$ for which the stopping times 
\begin{equation}
\label{eq:definition_mu_k}
\mu_k :=\inf\Big\{t\in [0,\ell_k)\,:\, X_t+\int_0^t Y_s\,ds \geq R_k\Big\}, \ \ \text{ where }\ \ \inf\emptyset:=\ell_k,
\end{equation}
satisfy
$$
\lim_{k\to \infty}\P(\mu_k=\tau)=1.
$$
As explained at the beginning of this subsection it is sufficient to prove \eqref{eq:claim_in_terms_v}. 
The idea is to use the stochastic Gronwall's lemma as in the proof of Lemma \ref{l:intermediate_estimate}. To this end, we need a localization argument. For any $j\geq 1$, let $\tau_j $ be the stopping time defined as
$$
\tau_j :=\inf\big\{t\in [0,\tau)\,:\, \|v(t)\|_{H^1}+ \|v\|_{L^2(0,t;H^2)}\geq j\big\}\wedge T, \ \text{ where }\ \inf\emptyset:=\tau.
$$
Reasoning as in the proof of Lemma \ref{l:intermediate_estimate}, by the stochastic Gronwall's lemma \cite[Lemma 5.3]{GHZ_09} and \eqref{eq:definition_mu_k}, it is enough to show the existence of $C_{0,k}>0$ such that for each $j\geq 1$ and each stopping times $0\leq \eta\leq \xi\leq \tau_j\wedge \mu_k$,
\begin{equation}
\label{eq:claim_global_strong_weak_final_part}
\begin{aligned}
&\E\Big[\sup_{t\in [\eta,\xi]} \| v(t)\|_{L^2}^2\Big] 
+\E\int_{\eta}^{\xi} \| v(s)\|_{H^2}^2\,ds
\leq
C_{0,k}[1+\E\|v(\eta)\|_{H^1}^2]\\
& \qquad 
+C_{0,k}\Big[\E\int_{\eta}^{\xi } (1+\|v(s)\|_{L^2}^2+X_s) (1+\|v(s)\|_{H^1}^2+Y_s) \| v(s)\|_{L^2}^2\,ds\Big].
\end{aligned}
\end{equation}

Let $\eta,\xi$ be stopping time such that $0\leq \eta\leq \xi\leq \tau_j\wedge \mu_k$. Recall that $(v,\xi)$ is an $L^2$-local solution to \eqref{eq:primitive_v_proof_global} (see also the text below it). 
Thus, by Proposition \ref{prop:SMR_2} (cf.\ Remark \ref{r:SMR_2}\eqref{it:SMR_random_initial_time}), there is a constant $C_0>0$ independent of $\eta,\xi,j,k$ such that 
\begin{equation}
\label{eq:Gronwall_type_estimate_step_2}
\begin{aligned}
&\E\Big[\sup_{t\in [\eta,\xi]} \| v(t)\|_{L^2}^2\Big] 
+\E\int_{\eta}^{\xi} \| v\|_{H^2}^2\,ds\\
&\leq C_0\Big( \E\|v(\eta)\|_{H^1}^2+ \E\|(v\cdot \nabla_{\h})v\|_{L^2(\eta,\xi;L^2)}^2+
\E\|\w(v)\partial_3 v\|_{L^2(\eta,\xi;L^2)}^2\Big)\\
&+C_0\Big( \E\|\fvt\|_{L^2(\eta,\xi;L^2)}^2+\E\|(\gvtn)_{n\geq1 }\|_{L^2(\eta,\xi;H^1(\ell^2))}^2\Big)
\end{aligned}
\end{equation}
where $\fvt$ and $\gvtn$ are as in \eqref{eq:def_f_g_proof_global}. Note that the last two terms on the right hand side of \eqref{eq:Gronwall_type_estimate_step_2}  are finite due to \eqref{eq:bound_f_g_expectations}.
It remains to estimate the second and the third term on the RHS of \eqref{eq:Gronwall_type_estimate_step_2}.
 
Writing $v=\wt{v}+\overline{v}$, we have
\begin{equation}
\label{eq:v_nabla_h_v_decomposition}
(v\cdot \nabla_{\h})v=
(\overline{v}\cdot \nabla_{\h})\overline{v}
+(\wt{v}\cdot \nabla_{\h})\overline{v}
+(\overline{v}\cdot \nabla_{\h})\wt{v}+(\wt{v}\cdot \nabla_{\h})\wt{v}.
\end{equation}
Reasoning as in Step 1 of Lemma \ref{l:intermediate_estimate} (see \eqref{eq:product_overline_v_estimate}), one has
$
\E\|(\overline{v}\cdot \nabla_{\h})\overline{v}\|_{L^2(\eta,\xi;L^2(\Tor^2))}^2\leq R_k^2.
$
Moreover, by Sobolev embeddings,
\begin{align*}
\E\|(\wt{v}\cdot \nabla_{\h})\overline{v}\|_{L^2(\eta,\xi;L^2)}^2 
&\leq \E\Big[\Big(\sup_{t\in [\eta,\xi]} \|\wt{v}(t)\|_{L^4}^2\Big) \Big(\|\nabla_{\h}\overline{v}\|_{L^2(\eta,\xi;L^4(\Tor^2))}^2\Big)\Big]\\
&\leq R_k \E\Big[\|\nabla_{\h}\overline{v}\|_{L^2(\eta,\xi;L^4(\Tor^2))}^2\Big]
\lesssim R_k^2,
\end{align*}
and 
\begin{align*}
\E\|(\overline{v}\cdot \nabla_{\h})\wt{v}\|_{L^2(\eta,\xi;L^2)}^2 
&\leq \E\Big[\Big(\sup_{t\in [\eta,\xi]} \|\overline{v}(t)\|_{L^6}^2\Big) 
\Big(\|\nabla_{\h}\wt{v}\|_{L^2(\eta,\xi;L^3)}^2\Big)\Big]\\
&\stackrel{\eqref{eq:definition_mu_k}}{\leq} R_k \E
\|\nabla_{\h}v\|_{L^2(\eta,\xi;L^3)}^2\\
&\stackrel{(i)}{\leq} \frac{1}{4C_0}\E\|v\|_{L^2(\eta,\xi;H^2)}^2+ C_{k}\E\|v\|_{L^{2}(\eta,\xi;H^1)}^2,
\end{align*}
where in $(i)$ we used the interpolation inequality $\|f\|_{L^3}\lesssim \|f\|_{L^2}^{1/2}\| f\|_{H^1}^{1/2}$ and Young's inequality.
 
Since $\E\|(\wt{v}\cdot \nabla_{\h})\wt{v}\|_{L^2(\eta,\xi;L^2)}^2\leq R_k$ by definition of $Y_t$ in \eqref{eq:def_X_Y}, \eqref{eq:v_nabla_h_v_decomposition} and the previous estimates yield
\begin{equation}
\label{eq:estimate_v_nabla_v_sum}
\E\|(v\cdot \nabla_{\h})v\|_{L^2(\eta,\xi;L^2)}^2\leq C_{k} +\frac{1}{4C_0}\E\|v\|_{L^2(\eta,\xi;H^2)}^2+ C_{k}\E\|v\|_{L^{2}(\eta,\xi;H^1)}^2.
\end{equation} 
It remains to estimate $w(v)\partial_3 v$. Applying the interpolation inequality $\|f\|_{L^4(\Tor^2)}\lesssim \|f\|_{L^2(\Tor^2)}^{1/2}\| f\|_{H^1(\Tor^2)}^{1/2}$ twice, we have, a.e. on $[\eta,\xi]\times \O$, 
\begin{align*}
\|w(v)\partial_3 v\|_{L^2}^2
&\lesssim \|w(v)\|_{L^{\infty}(-h,0;L^4(\Tor^2))}^2
 \|\partial_3 v\|_{L^{2}(-h,0;L^4(\Tor^2))}^2\\
&\lesssim \|\div_{\h} v\|_{L^{2}(-h,0;L^4(\Tor^2))}^2
 \|\partial_3 v\|_{L^{2}(-h,0;L^4(\Tor^2))}^2\\
&\lesssim \| v\|_{H^{1}}\| v\|_{H^2}\|\partial_3 v\|_{L^2}\| \partial_3 v\|_{H^1}\\
&\leq C  \| v\|_{H^1}^2\|\partial_3 v\|_{L^2}^2\| \partial_3 v\|_{H^1}^2 + \frac{1}{4C_0}\| v\|_{H^2}^2.
\end{align*}
In particular, 
\begin{equation}
\label{eq:w_v_estimate}
\E
\|w(v)\partial_3 v\|_{L^2(\eta,\xi;L^2)}^2
\leq 
C\E\int_{\eta}^\xi  \|\partial_3 v\|_{L^2}^2\| \partial_3 v\|_{H^1}^2\| v\|_{H^1}^2 \,ds+ 
\frac{1}{4C_0}\E\| v\|_{L^2(\eta,\xi;H^2)}^2.
\end{equation}

Using the estimates \eqref{eq:estimate_v_nabla_v_sum} and \eqref{eq:w_v_estimate} in \eqref{eq:Gronwall_type_estimate_step_2}, one obtains \eqref{eq:claim_global_strong_weak_final_part} as desired. Let us remark that the term $\frac{1}{2C_0}\| v\|_{L^2(\eta,\xi;H^2)}^2$ can be absorbed in the left hand side of the resulting estimate since $\xi\leq \tau_j$ and therefore $\| v\|_{L^2(\eta,\xi;H^2)}^2\leq j$ a.s. 
\end{proof}

\section{Local and global existence in the strong-strong setting}
\label{s:strong_strong}
In this section we study \eqref{eq:primitive_intro_v} in the \emph{strong-strong}, i.e.\ in case the equations for $v$ and $\T$ both are understood in the \emph{strong} setting. Compared to the strong-weak setting analyzed in Section \ref{s:strong_weak}, we need additional assumptions on $\psi$. However, we can allow $\fv$ and $\gvn$ to depend on $\nabla \T$ and $\T$, respectively.

As in Section \ref{s:strong_weak}, we begin by reformulating the problem \eqref{eq:primitive_intro_v}. To this end, we apply the hydrostatic Helmholtz projection $\p$ to \eqref{eq:primitive_intro_v}, and at least formally, \eqref{eq:primitive_intro_v} is equivalent to
\begin{equation}
\label{eq:primitive_strong_strong}
\begin{cases}
\displaystyle{d v -\Delta v\, dt=\p\Big[ -(v\cdot \nabla_{\h}) v- \w(v)\partial_3 v + \Lp(\cdot,v,\T)}\\
\qquad \qquad\quad
\displaystyle{
+\nabla_{\h}\int_{-h}^{\cdot} (\kone(\cdot,\zeta)\T(\cdot,\zeta))\,d\zeta + \fv (v,\T,\nabla v,\nabla \T) \Big]dt }\\ 
\ \ \qquad \qquad\qquad \qquad \qquad\
\displaystyle{+\sum_{n\geq 1}\p \Big[(\phi_{n}\cdot\nabla) v  +\gvn(v,\T)\Big] d\beta_t^n}, &\text{on }\Dom,\\
\displaystyle{d \Temp -\Delta \Temp\, dt=\Big[ -(v\cdot\nabla_{\h}) \T -\w(v) \partial_3 \T+ \ft(v,\T,\nabla v,\nabla \T ) \Big]dt}
\\
\ \ \qquad \qquad\qquad \qquad \quad \ \ \ \ 
\displaystyle{+\sum_{n\geq 1}\Big[(\psi_n\cdot \nabla) \Temp+\gtn(v,\T)\Big] d\beta_t^n}, &\text{on }\Dom,\\
\\
v(\cdot,0)=v_0,\ \ \T(\cdot,0)=\T_0, &\text{on }\Dom,
\end{cases}
\end{equation}
complemented with the following boundary conditions 
\begin{equation}
\label{eq:boundary_conditions_strong_strong}
\begin{aligned}
\partial_3 v (\cdot,-h)=\partial_3 v(\cdot,0)=0 \ \  \text{ on }\Tor^2,&\\
\partial_3 \T(\cdot,-h)= \partial_3 \T(\cdot,0)+\alpha \T(\cdot,0)=0\ \  \text{ on }\Tor^2.&
\end{aligned}
\end{equation}
Here $\alpha\in \R$ is given, the subscript $\h$ stands for the horizontal part (see Subsection \ref{ss:set_up}) and a.s.\ for all $t\in \R_+$ and $x=(x_{\h},x_3)\in\Tor^2\times (-h,0)= \Dom$
\begin{align}
\label{eq:def_P_gamma_strong_strong}
\Lp (\cdot,v,\T)
&:= \Big(\sum_{n\geq 1} \sum_{m=1}^2 \hp_n^{\ell,m} (t,x)
\big(\q[(\phi_n\cdot\nabla) v + \gvn(\cdot,v,\T)]\big)^m\Big)_{\ell=1}^2,\\
\label{eq:def_w_v_strong_strong}
w(v)
&:=-\int_{-h}^{x_3}\div_{\h} v(t,x_{\h},\zeta)\,d\zeta.
\end{align}
As in Section \ref{s:strong_weak}, in the stochastic part of the equation for the velocity field $v$, the operator $\p$ cannot be (in general) removed and the term $\Lp$ defined in \eqref{eq:def_P_gamma_strong_strong} coincides with $\partial_{\hp}\wt{P}_n$ in \eqref{eq:primitive_intro_v} since $\q[(\phi_n\cdot\nabla) v + \gvn(\cdot,v,\T)]=\nabla\wt{P}_n$.

\subsection{Main assumptions and definitions}
\label{ss:strong_strong_statement}
We begin by listing the main assumption of this section. Below we employ the notation introduced in Subsection \ref{ss:set_up}.
 
\begin{assumption} There exist $M,\delta>0$ for which the following hold.
\label{ass:well_posedness_primitive_double_strong}
\begin{enumerate}[{\rm(1)}]
\item\label{it:well_posedness_measurability_strong_strong}
For all $j\in \{1,2,3\}$ and $n \geq 1$, the maps
$$\phi_n^j,\psi_n^j,\kone: \R_+\times \O\times \Dom\to \R$$ are $\Progress\otimes \mathscr{B}$-measurable;
\item\label{it:well_posedness_primitive_phi_psi_smoothness} 
a.s.\ for all $t\in \R_+$, $j,k\in \{1,2,3\}$, $\ell,m\in \{1,2\}$,
\begin{align*}
\Big\|\Big(\sum_{n\geq 1}| \phi^j_n(t,\cdot)|^2\Big)^{1/2} \Big\|_{L^{3+\delta}(\Dom)}+
\Big\|\Big(\sum_{n\geq 1}|\partial_k \phi^j_n(t,\cdot)|^2\Big)^{1/2} \Big\|_{L^{3+\delta}(\Dom)} \leq M,&\\
\Big\|\Big(\sum_{n\geq 1}| \psi^j_n(t,\cdot)|^2\Big)^{1/2} \Big\|_{L^{3+ \delta}(\Dom)}+
\Big\|\Big(\sum_{n\geq 1}|\partial_k \psi^j_n(t,\cdot)|^2\Big)^{1/2} \Big\|_{L^{3+ \delta}(\Dom)} \leq M,&\\
\Big\|\Big(\sum_{n\geq 1}|\hp^{\ell,m}_n(t,\cdot)|^2\Big)^{1/2}\Big\|_{L^{3+\delta}(\Dom)}\leq M;&
\end{align*}
\item\label{it:well_posedness_primitive_kone_smoothness_strong_strong}
a.s.\ for all $t\in \R_+$, $x_{\h}\in \Tor^2$, $j\in \{1,2,3\}$ and $i\in \{1,2\}$,
$$
\| \kone(t,x_{\h},\cdot) \|_{L^2(-h,0)} +\|\partial_i \kone(t,\cdot) \|_{L^{2+\delta}(\Tor^2;L^2(-h,0))} \leq M;
$$
\item\label{it:well_posedness_primitive_parabolicity_strong_strong} there exists $\ellip\in (0,2)$ such that, a.s.\ for all $t\in \R_+$, $x\in \Dom$ and $\xi\in \R^d$,
\begin{align*}
\sum_{n\geq 1} \Big(\sum_{j=1}^3 \phi^j_n(t,x) \xi_j\Big)^2\leq \ellip|\xi|^2,
\ \ \text{ and }\ \ 
\sum_{n\geq 1} \Big(\sum_{j=1}^3 \psi^j_n(t,x) \xi_j\Big)^2 \leq \ellip |\xi|^2;
\end{align*}
\item\label{it:nonlinearities_measurability_strong_strong}
for all $n\geq 1$, the maps 
\begin{align*}
&\fv\colon \R_+\times \O\times \R^2\times \R^6\times \R \to \R^2, \quad
\ft\colon \R_+\times \O\times \R^2\times \R^6\times \R \to \R, \\
&\gvn\colon\R_+\times \O\times \R\to \R^2, \quad \hbox{and}\quad
\gtn\colon\R_+\times \O\times \R^2\times \R^6\times \R\to \R
\end{align*}
are $\Progress\otimes \Borel$-measurable; 
\item\label{it:nonlinearities_strong_strong} for all $T\in (0,\infty)$ and $i\in \{1,2\}$,
\begin{align*}
\fv^i(\cdot,0), 
\ft (\cdot,0)&\in L^2((0,T)\times\O\times \Dom), \\
(\gvn^i(\cdot,0))_{n\geq 1}, (\gtn(\cdot,0))_{n\geq 1} &\in  L^2((0,T)\times \O;H^1(\Dom; \ell^2)).
\end{align*}
Moreover, for all $n\geq 1$, $t\in \R_+$,  $x\in \Dom$, $y,y'\in \R^2$, $Y,Y'\in \R^6$ and $z,z'\in \R$,
\begin{align*}
&|F_{v}(t,x,y,z,Y,Z)-F_{v}(t,x,y',z',Y',Z')|\\
&+|F_{\T}(t,x,y,z,Y,Z)-F_{\T}(t,x,y',z',Y',Z')|\\
&\quad\qquad
\lesssim (1+|y|^4+ |y'|^4)|y-y'|+
(1+|z|^{4}+|z'|^{4})|z-z'|\\
&\quad\qquad
+(1+|Y|^{2/3}+|Y'|^{2/3})|Y-Y'|+(1+|Y|^{2/3}+|Y'|^{2/3})|Y-Y'|.
\end{align*}
Finally, a.s.\ for all $t\in\R_+$, $n\geq 1$, the mapping $$\Dom\times \R^2\times \R\ni (x,y,z)\mapsto (\gvn(t,x,y,z),\gtn(t,x,y,z))$$ is continuously differentiable, and a.s.\ for all $k\in\{0,1\}$, $j\in \{1,2,3\}$, $x\in \Dom$, $y,y'\in \R^2$ and $z,z'\in\R$,
\begin{align*}
&\|(\partial_{x_j}^{k}\gvn(t,x,y,z)-\partial_{x_j}^{k}\gvn(t,x,y',z'))_{n\geq 1}\|_{\ell^2}\\
&+\|(\partial_{x_j}^{k}\gtn(t,x,y,z)-\partial_{x_j}^{k}\gtn(t,x,y',z'))_{n\geq 1}\|_{\ell^2}\\
&\qquad \qquad \qquad\qquad
\lesssim (1+|y|^4+ |y'|^4)|y-y'| + (1+|z|^4+ |z'|^4)|z-z'|,\\
&\|(\partial_{y}\gvn(t,x,y)-\partial_{y}\gvn(t,x,y'))_{n\geq 1}\|_{\ell^2}\\
&+\|(\partial_{y}\gtn(t,x,y)-\partial_{y}\gtn(t,x,y'))_{n\geq 1}\|_{\ell^2}\\
&\qquad \qquad \qquad\qquad
\lesssim (1+|y|^2+|y'|^2)|y-y'|
+ (1+|z|^2+|z'|^2)|z-z'|.
\end{align*}
\end{enumerate}
\end{assumption}

Some remarks on Assumption \ref{ass:well_posedness_primitive_double_strong} may be in order. 

\begin{remark}\
\begin{enumerate}
\item As in Remark \ref{r:assump_local_existence_strong_weak}\eqref{it:Holder_continuity_phi}, \eqref{it:well_posedness_primitive_phi_psi_smoothness} and Sobolev embeddings it follows that $(\phi^j_n(t,\cdot))_{n\geq 1}$, $(\psi^j_n(t,\cdot))_{n\geq 1} \in C^{\delta/(3+\delta)}(\Dom;\ell^2)$ uniformly w.r.t.\ $(t,\om)$.
\item 
As in Remark \ref{r:assump_local_existence_strong_weak}\eqref{it:remark_parabolicity}, \eqref{it:well_posedness_primitive_parabolicity_strong_strong} 
is equivalent to the usual \emph{stochastic parabolicity} and therefore \eqref{it:well_posedness_primitive_parabolicity_strong_strong} is optimal in the parabolic setting. 
\item  \eqref{it:nonlinearities_strong_strong} contains the optimal growth assumptions on the nonlinearities which allows to prove local existence for data in $\Hs^1(\Dom)\times H^1(\Dom)$, cf.\ the proof of Theorem \ref{t:local_primitive_strong_strong} below.
\end{enumerate}
\end{remark}

We are in position to define $L^2$-strong-strong solutions to \eqref{eq:primitive_strong_strong}-\eqref{eq:boundary_conditions_strong_strong}. Recall that $\Br_{\ell^2}$
is as in Subsection \ref{ss:set_up}. For notational convenience, we set
\begin{equation}
\label{eq:def_Hn_Hr}
\begin{aligned}
\Hs^{2}_{\n}(\Dom)
&:=\Big\{v \in H^{2}(\Dom;\R^2)\cap \Ls^2(\Dom)\,:\, \partial_3 v(\cdot,-h)=\partial_3 v(\cdot, 0)=0\text{ on }\Tor^2\Big\},\\
\Hr^{2}(\Dom)
&:=\Big\{ \T \in H^{2}( \Dom) \,:\, \partial_3 \T(\cdot,0)+\alpha \T(\cdot,0)= \partial_3 \T (\cdot,-h)=0\text{ on }\Tor^2
\Big\}.
\end{aligned}
\end{equation}

\begin{definition}[$L^2$-strong-strong solutions]
\label{def:sol_strong_strong}
Let Assumption \ref{ass:well_posedness_primitive_double_strong} be satisfied.
\begin{enumerate}[{\rm(1)}]
\item Let $\tau$ be a stopping time, $v:[0,\tau)\times \O \to \Hs_{\n}^2(\Dom)$ and $\T:[0,\tau)\times \O \to \Hr^2(\Dom)$
be stochastic processes. We say that $((v,\T),\tau)$ is called an \emph{$L^2$-local strong-strong solution} to \eqref{eq:primitive_strong_strong}-\eqref{eq:boundary_conditions_strong_strong} if there exists a sequence of stopping times $(\tau_k)_{k\geq 1}$ for which the following hold.
\begin{itemize}
\item $\tau_k\leq \tau$ a.s.\ for all $k\geq 1$ and $\lim_{k\to \infty}\tau_k=\tau$ a.s.;
\item a.s.\ we have $(v,\T)\in L^2(0,\tau_k;\Hs_{\n}^2(\Dom)\times \Hr^2(\Dom))$ and 
\begin{equation}
\begin{aligned}
\label{eq:integrability_strong_strong}
(v\cdot \nabla_{\h}) v+ \w(v)\partial_3 v +\fv (v,\T,\nabla v,\nabla \T)+\Lp (\cdot,v,\T)&\in L^2(0,\tau_k;L^2(\Dom;\R^2)),\\
(v\cdot \nabla_{\h}) \T+ \w(v)\partial_3 \T +\ft (v,\T,\nabla v,\nabla \T)&\in L^2(0,\tau_k;L^2(\Dom)),\\
(\gvn(v,\T))_{n\geq 1 }&\in L^2(0,\tau_k;H^1(\Dom;\ell^2(\N;\R^2))),\\
(\gtn(v,\T))_{n\geq 1 }&\in L^2(0,\tau_k;H^1(\Dom;\ell^2)).
\end{aligned}
\end{equation}
\item a.s.\ for all $k\geq 1$ the following equality holds for all $t\in [0,\tau_k]$:
\begin{align*} 
v(t)-v_0
&=\int_0^t \Delta v(s)+ \p\Big[\nabla_{\h}\int_{-h}^{x_3}(\kone(\cdot,\zeta) \T(\cdot,\zeta))\,d\zeta\\
&\qquad
- (v\cdot \nabla_{\h}) v- \w(v)\partial_3 v  + \fv (v,\T,\nabla v,\nabla \T) +\Lp(\cdot,v,\T)\Big]\,ds\\
&\qquad
+\int_0^t \Big(\one_{[0,\tau_k]}\p[ (\phi_{n}\cdot\nabla) v   +\gvn(v,\T)] \Big)_{n\geq 1}\, d\Br_{\ell^2}(s),\\
\T(t)-\T_0
&=
\int_0^t  \Big[\Delta \T-(v\cdot \nabla_{\h})\T -w(v)\partial_3 \T+ \ft(v,\T,\nabla v,\nabla \T )\Big]\,ds\\
&+
\int_0^t \Big(\one_{[0,\tau_k]}[ (\psi_{n}\cdot\nabla) \T   +\gtn(v,\T )] \Big)_{n\geq 1}\, d\Br_{\ell^2}(s).
\end{align*}
\end{itemize}
\item An $L^2$-local strong-strong solution $((v,\T),\tau)$ to \eqref{eq:primitive_strong_strong}-\eqref{eq:boundary_conditions_strong_strong} is said to be an \emph{$L^2$-maximal strong-strong solution to} \eqref{eq:primitive_strong_strong}-\eqref{eq:boundary_conditions_strong_strong}
if for any other local solution $((v',\T'),\tau')$ we have 
\begin{equation*}
\tau'\leq \tau \  \text{ a.s.\ \  and } \ \ (v,\T)=(v',\T') \ \text{ a.e.\ on }[0,\tau')\times \O.
\end{equation*}
\end{enumerate}
\end{definition}

Note that $L^2$-maximal strong-strong solution are \emph{unique} in the class of $L^2$-local strong-strong solutions by definition. By \eqref{eq:integrability_strong_strong}, the deterministic integrals and the 
stochastic integrals in the above definition are well-defined as Bochner and $H^1$-valued It\^{o}'s integrals, respectively.

\subsection{Statement of the main results}
We begin this subsection by stating our local existence result for \eqref{eq:primitive_strong_strong}-\eqref{eq:boundary_conditions_strong_strong}. To economize the notation, for $k\geq 0$, $m\geq 1$ and a map $f:[0,t)\to H^{k+1}(\Dom;\R^m)$, we set
\begin{equation}
\label{eq:norm_k_strong_strong}
\norm_k(t;f):=\sup_{s\in [0,t)}\|f(s)\|_{H^{k}(\Dom;\R^m)}^2+\int_0^t \|f(s)\|_{H^{k+1}(\Dom;\R^m)}^2\,ds .
\end{equation}

\begin{theorem}[Local existence]
\label{t:local_primitive_strong_strong}
Let Assumption \ref{ass:well_posedness_primitive_double_strong} be satisfied. Then for each 
$$
v_0\in L^0_{\F_0}(\O;\Hs^1(\Dom)), \ \ \text{ and }\ \ \T_0\in L^0_{\F_0}(\O;H^1(\Dom)),
$$
there exist an \emph{$L^2$-maximal strong-strong solution} $((v,\theta) ,\tau)$ to \eqref{eq:primitive_strong_strong}-\eqref{eq:boundary_conditions_strong_strong} such that $\tau>0$ a.s. Moreover the following hold.
\begin{enumerate}[{\rm(1)}]
\item\label{it:path_reg_strong_strong}{\em (Pathwise regularity)} 
There exists a sequence of  stopping times $(\tau_k)_{k\geq 1}$ such that, a.s.\ for all $k\geq 1$, one has $0\leq \tau_k\leq \tau$, $\lim_{k\to \infty}\tau_k=\tau$ and
\begin{align*}
(v,\T)\in L^2(0,\tau_k;\Hs_{\n}^2(\Dom)\times \Hr^2(\Dom))\cap C([0,\tau_k];\Hs^1(\Dom)\times H^1(\Dom)).
\end{align*}  
\item\label{it:blow_up_criterium_strong_strong}{\em (Blow-up criterion)} For all $T\in (0,\infty)$, 
$$
\P\Big(\tau<T,\,  \norm_{1}(\tau;v)+\norm_{1}(\tau;\T) <\infty\Big)=0.
$$
\end{enumerate}
\end{theorem}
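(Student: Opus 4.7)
The plan is to follow the same architecture used for Theorem \ref{t:local_primitive} in Subsection \ref{ss:proof_local_strong_weak}: reformulate \eqref{eq:primitive_strong_strong}--\eqref{eq:boundary_conditions_strong_strong} as an abstract semilinear stochastic evolution equation of the form \eqref{eq:abstract_formulation_strong_weak} and apply \cite[Theorem 4.8]{AV19_QSEE_1} together with the blow-up criterion \cite[Theorem 4.11]{AV19_QSEE_2}. The natural choice here is
$$
X_0=\Ls^2\times L^2,\qquad X_1=\Hs^2_{\n}\times \Hr^2,
$$
so that $X_{1/2}\hookrightarrow \Hs^1\times H^1$ accommodates the initial data. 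The linear part $(A,(B_n)_{n\geq 1})$ is defined as in Remark \ref{r:SMR_2}\eqref{it:choice_AB_strong_weak} with $\Dr$ replaced by the strong Robin Laplacian on $L^2(\Dom)$ and the $\p[\op_{\kone}\T]$-coupling kept; the nonlinearities $F,G$ are assembled as in Subsection \ref{ss:proof_local_strong_weak}, with the temperature transport nonlinearity $(v\cdot\nabla_{\h})\T+w(v)\partial_3 \T$ now written in divergence-free-exploited form \emph{inside} $L^2$ (using $\div_{\h}v+\partial_3 w(v)=0$).

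The first main step is the analogue of Proposition \ref{prop:SMR_2}: stochastic maximal $L^2$-regularity for the linearized coupled system
$$
\bigl(A,(B_n)_{n\geq 1}\bigr)\in \mathcal{SMR}^{\bullet}_2(T)\quad \text{with } X_0,X_1 \text{ as above.}
$$
The proof proceeds by the method of continuity exactly as in Subsection \ref{ss:proof_SMR_2}, decoupling via $\lambda\in[0,1]$. The estimate on $\T$ is now carried out in $H^1$ by applying \Ito's formula to $\T\mapsto \|\nabla\T\|_{L^2}^2+\alpha\|\T(\cdot,0)\|_{L^2(\Tor^2)}^2$ (in an approximation procedure $\T_k=k(k-\Dr)^{-1}\T$ analogous to \eqref{eq:def_e_k}); the crucial ingredient is that Assumption \ref{ass:well_posedness_primitive_double_strong}\eqref{it:well_posedness_primitive_phi_psi_smoothness}--\eqref{it:well_posedness_primitive_parabolicity_strong_strong} on $\psi$ is \emph{identical} to the one on $\phi$, so Kadlec's formula (Lemma \ref{l:Kadlec_formula} with $\beta=0$) and the interpolation estimate \eqref{eq:estimate_Ito_correction_v_smr} apply verbatim to $(\psi_n\cdot\nabla)\T$. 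The coupling term $\p[\op_{\kone}\T]$ is of lower order by \eqref{eq:estimate_theta_term_in_v_variable}, and $\Lpp$ is absorbed as in Remark \ref{r:Lpp_lower_order}; the $v$-estimate then follows verbatim from Step~2 of the proof of Proposition \ref{prop:SMR_2}.

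The second step consists of the nonlinear estimates. Since $F$ now maps into $X_0=\Ls^2\times L^2$ rather than $\Ls^2\times (H^1)^*$, the temperature transport nonlinearity must be controlled directly in $L^2$. The decisive bounds are of the form
$$
\|(v\cdot\nabla_{\h})\T\|_{L^2}\lesssim \|v\|_{L^6}\|\nabla_{\h}\T\|_{L^3}\lesssim \|v\|_{H^1}\|\T\|_{H^{3/2}},
$$
and (using \eqref{eq:estimate_w_strong_weak})
$$
\|w(v)\partial_3\T\|_{L^2}\lesssim \|w(v)\|_{L^\infty(-h,0;L^4(\Tor^2))}\|\partial_3\T\|_{L^2(-h,0;L^4(\Tor^2))}\lesssim \|v\|_{H^{3/2}}\|\T\|_{H^{3/2}},
$$
so the bilinear $F_1$ satisfies an estimate of the type of Step~1 in Subsection \ref{ss:proof_local_strong_weak} with $X_{3/4}\hookrightarrow H^{3/2}\times H^{3/2}$. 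The Lipschitz/growth estimates on $F_2$ and $G$ follow analogously to Steps 2--3 there, now using Assumption~\ref{ass:well_posedness_primitive_double_strong}\eqref{it:nonlinearities_strong_strong} to accommodate the dependence on $\nabla\T$ and $\T$; by Sobolev embedding the required interpolation orders remain below $1$, with the critical exponent identity $\rho_j(\varphi_j-1+\tfrac12)+\beta_j=1$ holding with the obvious variants of the exponents used in Subsection \ref{ss:proof_local_strong_weak}.

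The hardest point is the first step, the stochastic maximal $L^2$-regularity for the strong-strong linearization: in the strong-weak setting one worked with $\Dr\colon H^1\to (H^1)^*$ and thus the stochastic parabolicity for $\psi$ produced only $\ell^2$-norms of $(\psi_n\cdot\nabla)\T\in L^2$, whereas here one needs to estimate $\|\nabla[(\psi_n\cdot\nabla)\T]\|_{L^2(\ell^2)}$ and absorb it into $\|\Delta\T\|_{L^2}$, which is precisely what the strengthened assumption on $\psi$ delivers. Once SMR is established, parts \eqref{it:path_reg_strong_strong} and \eqref{it:blow_up_criterium_strong_strong} follow from \cite[Theorem 4.8]{AV19_QSEE_1} and \cite[Theorem 4.11]{AV19_QSEE_2}, respectively, with the choices of $(\rho_j,\beta_j,\varphi_j)$ adapted to the new $X_0$.
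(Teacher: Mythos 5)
Your proposal is correct and takes essentially the same route as the paper: the abstract reformulation on $X_0=\Ls^2\times L^2$, $X_1=\Hs^2_{\n}\times \Hr^2$, the strong-strong stochastic maximal $L^2$-regularity (Proposition \ref{prop:SMR_2_strong}) obtained by the method of continuity, estimating $\T$ first via \Ito's formula applied to $\Ff_{\alpha}(\varphi)=\|\nabla \varphi\|_{L^2}^2+\alpha\|\varphi(\cdot,0)\|_{L^2(\Tor^2)}^2$ (using the strengthened assumption on $\psi$ to absorb $\|\nabla[(\psi_n\cdot\nabla)\T]\|_{L^2(\ell^2)}$ into $\|\Delta\T\|_{L^2}$) and then $v$, followed by the nonlinear estimates as in Steps 1--3 of Theorem \ref{t:local_primitive} and the application of \cite[Theorem 4.8]{AV19_QSEE_1} and \cite[Theorem 4.11]{AV19_QSEE_2}. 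The only minor correction is that, since $\T$ obeys Robin boundary conditions, Kadlec's formula has to be used with $\beta=\alpha$ rather than $\beta=0$ (as in \eqref{eq:estimate_robin_similar_neumann}); since the interpolation consequence of Lemma \ref{l:Kadlec_formula} holds for every $\beta$, this does not affect your argument.
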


Finally, we turn our attention to the existence of \emph{global} strong-strong solutions to \eqref{eq:primitive_strong_strong}-\eqref{eq:boundary_conditions_strong_strong}. To formulate our global existence result, we need the following

\begin{assumption}\
\label{ass:global_primitive_strong_strong}
Let Assumption \ref{ass:well_posedness_primitive_double_strong} be satisfied.
\begin{enumerate}[{\rm(1)}] 
\item\label{it:phi_conditions_strong_strong} For all $n\geq 1$, $x=(x_{\h},x_3)\in \Tor^2\times (-h,0)= \Dom$, 
$t\in \R_+$, $j,k\in \{1,2\}$ and a.s. 
\begin{align*}
\phi_n^j(t,x)\text{ and } \hp^{j,k}_n(t,x)\text{ are independent of $x_3$}.
\end{align*}
\item\label{it:sublinearity_Gforce_strong_strong}
There exist $C>0$ and $\y\in L^0_{\Progress}(\O;L^2_{\loc}([0,\infty);L^2(\Dom)))$ such that, a.s.\ for all $t\in \R_+$, $j\in \{1,2,3\}$, $x\in \Dom$, $y\in \R^2$, $z\in \R$, $Y\in \R^{6}$ and $Z\in \R^3$, 
\begin{align*}
|\fv(t,x,y,z,Y,Z)|+|\ft(t,x,y,z,Y,Z)| &\leq C(\y(t,x)+|y|+|z|\\
&\quad \qquad \quad \ \ +|Y|+|Z|),\\
\|(\gvn (t,x,y,z))_{n\geq 1}\|_{\ell^2}
+\|(\partial_{x_j}\gvn (t,x,y,z))_{n\geq 1}\|_{\ell^2}&\leq C(\y(t,x) + |y|+|z|), \\
\|(\gtn (t,x,y,z))_{n\geq 1}\|_{\ell^2}
+\|(\partial_{x_j}\gtn (t,x,y,z))_{n\geq 1}\|_{\ell^2}&\leq C(\y(t,x) + |y|+|z|), \\
\|(\partial_{y}\gvn(t,x,y,z))_{n\geq 1}\|_{\ell^2}+
\|(\partial_y\gtn(t,x,y,z))_{n\geq 1}\|_{\ell^2}
&\leq C.
\end{align*}
\end{enumerate}
\end{assumption}
\begin{remark}
Assumption \ref{ass:global_primitive_strong_strong} should be compared with Assumption \ref{ass:global_primitive}. 
Note that \eqref{it:phi_conditions_strong_strong}
has already been discussed in Remark \ref{r:global_primitive} below Assumption \ref{ass:global_primitive} and that \eqref{it:sublinearity_Gforce_strong_strong} is symmetric w.r.t.\ $v$ and $\T$. As before Assumption \ref{ass:well_posedness_primitive_double_strong}\eqref{it:well_posedness_primitive_parabolicity_strong_strong} implies the parabolicity condition from Remark \ref{r:global_primitive}\eqref{it:ellipticity_overline_v}. 
\end{remark}
Next we state our main result on global existence to \eqref{eq:primitive_strong_strong}-\eqref{eq:boundary_conditions_strong_strong} in the strong-strong setting.
Recall that $\Hs^2_{\n}$ and $\Hr^2$ have been defined in \eqref{eq:def_Hn_Hr}.

\begin{theorem}[Global existence]
\label{t:global_primitive_strong_strong}
Let Assumption  
\ref{ass:global_primitive_strong_strong} be satisfied, and let
$$
v_0\in L^0_{\F_0}(\O;\Hs^1(\Dom)), \ \ \text{ and }\ \ \T_0\in L^0_{\F_0}(\O;H^1(\Dom)).
$$
Then the $L^2$-maximal strong-strong solution $((v,\T),\tau)$ to \eqref{eq:primitive_strong_strong}-\eqref{eq:boundary_conditions_strong_strong} provided by Theorem \ref{t:local_primitive_strong_strong} is \emph{global in time}, i.e.\ $\tau=\infty$ a.s. In particular
$$
(v,\T)\in L^2_{\loc}([0,\infty);\Hs_{\n}^2(\Dom)\times \Hr^2(\Dom))\cap C([0,\infty);\Hs^1(\Dom)\times H^1(\Dom))\text{ a.s. }
$$
\end{theorem}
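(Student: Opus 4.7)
The plan is to follow the same blueprint as the proof of Theorem~\ref{t:global_primitive}: combine the blow-up criterion of Theorem~\ref{t:local_primitive_strong_strong}\eqref{it:blow_up_criterium_strong_strong} with suitable a priori bounds on $\norm_1(\tau;v)+\norm_1(\tau;\T)$. By a truncation argument as in \cite[Proposition 4.13]{AV19_QSEE_2} we may reduce to the case $v_0\in L^{\infty}_{\F_0}(\O;\Hs^1)$ and $\T_0\in L^{\infty}_{\F_0}(\O;H^1)$, and fix $T\in(0,\infty)$.

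First, I would observe that any $L^2$-local strong-strong solution $((v,\T),\tau)$ is automatically an $L^2$-local strong-weak solution and that Assumption~\ref{ass:global_primitive_strong_strong} supports the entire argument of Proposition~\ref{prop:energy_estimate_primitive} once one accommodates the extra $\T$-dependence of $\gvn$. Concretely, in Lemma~\ref{l:L_2_estimate_I} and Lemma~\ref{l:intermediate_estimate} the only use of Assumption~\ref{ass:global_primitive}\eqref{it:sublinearity_Gforce} for $\gvn$ is through the pointwise bound $\|(\gvn)_{n\geq 1}\|_{\ell^2}^2+\|(\partial_{x_j}\gvn)_{n\geq 1}\|_{\ell^2}^2\lesssim \y^2+|v|^2+\text{(gradient terms)}$, and the analogous bound with an added $|\T|^2$ contribution provided by Assumption~\ref{ass:global_primitive_strong_strong}\eqref{it:sublinearity_Gforce_strong_strong} is absorbed by the $L^2$-control of $\T$ already established in Step~1 of Lemma~\ref{l:L_2_estimate_I}. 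This yields a sequence of stopping times $(\mu_k)_{k\geq 1}$ with $\P(\mu_k=\tau\wedge T)\to 1$ and
\[
\E[\norm_1(\mu_k;v)]+\E[\norm_0(\mu_k;\T)]\leq C_{k,T}(1+\E\|v_0\|_{H^1}^2+\E\|\T_0\|_{L^2}^2).
\]

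Second, I would upgrade the control on $\T$ from $\norm_0$ to $\norm_1$ by applying stochastic maximal $L^2$-regularity to the temperature equation alone. On $[0,\mu_k]$, the $\T$-equation is a linear SPDE
\[
d\T-\Delta\T\,dt=H(v,\T)\,dt+\sum_{n\geq 1}\bigl[(\psi_n\cdot\nabla)\T+\gtn(v,\T)\bigr]d\beta_t^n
\]
with Robin--Neumann boundary conditions, where $H(v,\T):=-(v\cdot\nabla_{\h})\T-w(v)\partial_3\T+\ft(v,\T,\nabla v,\nabla\T)$. Thanks to Assumption~\ref{ass:well_posedness_primitive_double_strong}\eqref{it:well_posedness_primitive_phi_psi_smoothness}-\eqref{it:well_posedness_primitive_parabolicity_strong_strong}, the very same argument as in Step~2 of the proof of Proposition~\ref{prop:SMR_2}---with $v$ replaced by $\T$, $\phi_n$ by $\psi_n$ and the hydrostatic Helmholtz projection removed---shows that $(-\Delta_{\mathrm{R}},((\psi_n\cdot\nabla))_{n\geq 1})\in\mathcal{SMR}^\bullet_2(T)$ with phase space $L^2(\Dom)\to \Hr^2(\Dom)$. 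Feeding in the bounds from the previous step and using Sobolev embeddings and interpolation (as in Steps~1--3 of Subsection~\ref{ss:proof_local_strong_weak}) one estimates $H(v,\T)$ in $L^2_t(L^2_x)$ and $(\gtn(v,\T))_{n\geq 1}$ in $L^2_t(H^1_x(\ell^2))$ in terms of the already-controlled $\norm_1(\mu_k;v)$ and a linear function of $\norm_1(\mu_k;\T)$. A stochastic Gronwall argument (as in \cite[Lemma~5.3]{GHZ_09}), carried out after stopping $\mu_k$ further if necessary, then delivers $\E[\norm_1(\mu_k;\T)]<\infty$; combined with Step~1 and Theorem~\ref{t:local_primitive_strong_strong}\eqref{it:blow_up_criterium_strong_strong} exactly as in the proof of Theorem~\ref{t:global_primitive}, this gives $\P(\tau<T)=0$ for every $T>0$, hence $\tau=\infty$ a.s.

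The main obstacle will be closing the coupled $H^1$-estimate for $(v,\T)$ without creating a circular dependence between $\|v\|_{H^2}$ and $\|\T\|_{H^2}$. The term $(v\cdot\nabla_{\h})\T$ involves both unknowns at top regularity and must be split as $\|v\|_{L^q}\|\nabla\T\|_{L^r}$ using interpolation inequalities analogous to \eqref{eq:interp_inequality_2D}; similarly, $w(v)\partial_3\T$ and the stochastic integrand $(\psi_n\cdot\nabla)\T$ must be balanced against the parabolicity constant $\ellip<2$ from Assumption~\ref{ass:well_posedness_primitive_double_strong}\eqref{it:well_posedness_primitive_parabolicity_strong_strong}, so that the resulting dissipation absorbs the leading-order noise correction. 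The strategy of first locking in the global bound on $v$ via the strong-weak theory, and only afterwards running SMR for $\T$ with $v$ treated as a \emph{given} datum, avoids the circularity and reduces the remaining work to the type of Gronwall argument already implemented in Section~\ref{s:proofs_strong_weak}.
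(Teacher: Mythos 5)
Your proposal is correct and follows essentially the same route as the paper: first extend the $L^2$/$H^1$ energy estimates of Lemma~\ref{l:L_2_estimate_I} and Proposition~\ref{prop:energy_estimate_primitive} to the strong-strong setting to lock in $\E[\norm_1(\mu_k;v)]+\E[\norm_0(\mu_k;\T)]$, then treat the temperature equation as a linear SPDE with $v$ as given data, apply stochastic maximal $L^2$-regularity together with the interpolation splittings of $(v\cdot\nabla_{\h})\T$ and $w(v)\partial_3\T$, and close via the stochastic Gronwall lemma and the blow-up criterion of Theorem~\ref{t:local_primitive_strong_strong}\eqref{it:blow_up_criterium_strong_strong}. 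The only cosmetic point is that the SMR for the Robin--Laplacian temperature block need not be re-derived from Step~2 of Proposition~\ref{prop:SMR_2} (the Robin boundary term requires the modified argument with the functional $\Ff_{\alpha}$); it is exactly the temperature part of Proposition~\ref{prop:SMR_2_strong}, which can be invoked directly.
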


The proofs of Theorems \ref{t:local_primitive_strong_strong} and \ref{t:global_primitive_strong_strong} follow
the strategy used in the proof of Theorems \ref{t:local_primitive} and \ref{t:global_primitive}. As in the proof of Theorem \ref{t:local_primitive}, to show Theorem \ref{t:local_primitive_strong_strong} we employ the results in \cite{AV19_QSEE_1,AV19_QSEE_2}. In particular, we need to prove \emph{stochastic maximal $L^2$-regularity} estimates for the linearization of \eqref{eq:primitive_strong_strong}-\eqref{eq:boundary_conditions_strong_strong}.  
Such estimates will be proven in Subsection \ref{ss:smr_2_strong_strong}. The proof of Theorem \ref{t:global_primitive_strong_strong} is similar to the one of Theorem \ref{t:global_primitive} where we have followed the arguments in \cite{CT07,HK16}. In the present case, we need to prove also $L^2_t(H^2_x)$- and $L^{\infty}_t(H^1_x)$-estimates for the temperature $\T$ to apply the blow-up criteria of Theorem \ref{t:local_primitive_strong_strong}\eqref{it:blow_up_criterium_strong_strong}.

\subsection{Stochastic maximal $L^2$-regularity}
\label{ss:smr_2_strong_strong}

In this subsection we study maximal $L^2$-regularity estimates for the linearized problem of \eqref{eq:primitive_strong_strong}-\eqref{eq:boundary_conditions_strong_strong}, see \cite[Section 3]{AV19_QSEE_1} and the references therein for the general theory of stochastic maximal $L^p$-regularity. 

Here, we study maximal $L^2$-regularity estimates for
\begin{equation}
\label{eq:SMR_v_T_strong_strong}
\begin{cases}
\vspace{0.1cm}
\displaystyle{dv -\Big[\Delta v+ \p [\Lpp v+\op_{\kone} \T]\Big]dt =f_v dt 
+ \sum_{n\geq 1} \Big[\p [(\phi_n\cdot \nabla )v]+ g_{n,v}\Big] d\beta_t^n}, & \text{on }\Dom,\\
\displaystyle{d\Temp -\Delta\Temp dt =f_{\T} dt + \sum_{n\geq 1} 
\Big[(\psi_n\cdot \nabla )\Temp+ g_{n,\T}\Big]d\beta_t^n},
& \text{on }\Dom,\\
\partial_3 v(\cdot,0)=\partial_3 v(\cdot,-h)=0, &\text{on }\Tor^2,\\
\partial_3 \T(\cdot,0)+\alpha \T(\cdot,0)=\partial_3 \T(\cdot,-h)=0, &\text{on }\Tor^2,\\
v(0)=0,\qquad \Temp(0)=0,& \text{on }\Dom,
\end{cases}
\end{equation}
where $\Lpp$ and $\op_{\kone}$ are as in \eqref{eq:Lpp_operator} and \eqref{eq:integral_operator_temperature}, respectively. Moreover 
\begin{equation}
\label{eq:f_v_etc_integrability_smr_strong_strong}
\begin{aligned}
(f_v,f_{\T})\in L^2_{\Progress}((0,T)\times \O;\Ls^2\times L^2),\\
((g_{n,v})_{n\geq 1}, (g_{n,\T})_{n\geq 1})\in 
L^2_{\Progress}((0,T)\times \O;\Hs^1(\ell^2)\times H^1(\ell^2)).
\end{aligned}
\end{equation}
Let $\tau$ be a stopping time. Recall that $\Hs_{\n}^2$ and $ \Hr^2$ are as in \eqref{eq:def_Hn_Hr}.
We say that $(v,\T)\in L^2_{\Progress}((0,\tau)\times \O;\Hs_{\n}^2\times \Hr^2)$ is an \emph{$L^2$-strong solution} to \eqref{eq:SMR_v_T_strong_strong} on $[0,\tau]\times \O$ if 
\begin{align*} 
v(t) 
&=\int_0^t \Big[\Delta v(s)+\p\big[\Lpp v -\op_{\kone} \T] +f_v  \Big]\,ds\\
&\qquad
+
\int_0^t \Big(\one_{[0,\tau]} \Big[\p[ (\phi_{n}\cdot\nabla) v]   +g_{v,n}\Big]\Big)_{n\geq 1}\, d\Br_{\ell^2}(s),\\
\T(t)
&=
\int_0^t  \Big[\Delta \T+ f_{\T}\Big]\,ds+
\int_0^t \Big(\one_{[0,\tau]}[(\psi_{n}\cdot\nabla) \T +g_{n,\T}] \Big)_{n\geq 1}\, d\Br_{\ell^2}(s),
\end{align*}
a.s.\ for all $t\in [0,\tau]$. Here $\Br_{\ell^2}$ is as in equation \eqref{eq:Bl2}.

\begin{proposition}[Stochastic maximal $L^2$-regularity]
\label{prop:SMR_2_strong}
Let Assumption \ref{ass:well_posedness_primitive_double_strong}\eqref{it:well_posedness_measurability}--\eqref{it:well_posedness_primitive_parabolicity_strong_strong} be satisfied and let $T\in (0,\infty)$.
Let $f_v,f_{\T},g_{v,n},g_{\T,n}$ be as in \eqref{eq:f_v_etc_integrability_smr_strong_strong}. Then for any stopping time $\tau:\O\to [0,T]$ there exists a unique $L^2$-strong solution to \eqref{eq:SMR_v_T_strong_strong} on $[0,\tau]\times \O$ such that 
$$
(v,\T)
\in L^2_{\Progress}((0,\tau)\times \O;\Hs_{\n}^2\times \Hr^2)\cap  L^2_{\Progress}(\O;C([0,\tau];\Hs^1\times H^1))
$$ 
and moreover for any $L^2$-strong solution $(v,\T)$ to \eqref{eq:SMR_v_T_strong_weak} on $[0,\tau]\times \O$ we have
\begin{equation}
\label{eq:SMR_2_estimate_strong_strong}
\begin{aligned}
&\|(v,\T)\|_{L^2((0,\tau)\times \O;\Hs^2\times H^2)}+
\|(v,\T)\|_{L^2(\O;C([0,\tau];\Hs^1\times H^1))}\\
&\qquad \qquad \qquad
\lesssim \|(f_{v},f_{\T})\|_{L^2((0,\tau)\times \O;\Ls^2\times L^2)}\\
&\qquad \qquad \qquad
+\|((g_{n,v})_{n\geq 1},(g_{n,\T})_{n\geq 1})\|_{L^2((0,\tau)\times \O;\Hs^1(\ell^2)\times H^1(\ell^2))}
\end{aligned}
\end{equation}
where the implicit constant is independent of $f_{v},f_{\T}, (g_{n,v})_{n\geq 1},(g_{n,\T})_{n\geq 1}$ and $\tau$. 
\end{proposition}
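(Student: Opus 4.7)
The proof will parallel that of Proposition~\ref{prop:SMR_2}, with the key new ingredient being that the temperature equation now requires an $L^2_t(\Hr^2)$--estimate (not merely $L^2_t(H^1)$); this is facilitated by the strengthened smoothness assumption on $\psi$ in Assumption~\ref{ass:well_posedness_primitive_double_strong}\eqref{it:well_posedness_primitive_phi_psi_smoothness} together with the parabolicity in \eqref{it:well_posedness_primitive_parabolicity_strong_strong}.

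My plan is to start with the method of continuity of \cite[Proposition 3.13]{AV19_QSEE_2}, setting $X_0 = \Ls^2 \times L^2$, $X_1 = \Hs^2_{\n}\times\Hr^2$, and introducing the one-parameter family
\begin{equation*}
A_{\lambda}(v,\T) =
\begin{bmatrix}
-\Delta v - \lambda\p[\Lpp v + \op_{\kone}\T]\\
-\Delta \T
\end{bmatrix},
\quad
B_{n,\lambda}(v,\T) = \lambda
\begin{bmatrix}
\p[(\phi_n\cdot\nabla)v]\\
(\psi_n\cdot\nabla)\T
\end{bmatrix}.
\end{equation*}
At $\lambda=0$ the system is decoupled and reduces to the pair $(-\Delta_{\n},-\Delta_{\rm R}^{\rm str})$ with domain $\Hs^2_{\n}\times \Hr^2$; both are selfadjoint on $L^2$ (the strong Robin Laplacian via form methods), so $((-\Delta_{\n},-\Delta_{\rm R}^{\rm str}),0)\in\mathcal{SMR}^{\bullet}_2(T)$ by \cite[Theorem 6.14]{DPZ}. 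It then suffices to prove a priori estimates of the form \eqref{eq:SMR_2_estimate_strong_strong} uniformly in $\lambda\in[0,1]$.

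Since the temperature equation is independent of $v$, I would treat it first, in two substeps. The $L^{\infty}(L^2)\cap L^2(H^1)$--bound on $\T$ is obtained exactly as in Step~1 of Proposition~\ref{prop:SMR_2}, via It\^o's formula applied to $\T\mapsto \|\T\|_{L^2}^2$ after regularizing with $\varepsilon_k = k(k+1+\Dr)^{-1}$; the parabolicity on $\psi$ absorbs the quadratic variation term. To upgrade to the $L^2(\Hr^2)$ bound, I would then apply It\^o's formula (again with a sectorial approximation) to $\T\mapsto \|(-\Delta_{\rm R}^{\rm str})^{1/2}\T\|_{L^2}^2 = \|\nabla\T\|_{L^2}^2 + \alpha\|\T(\cdot,0)\|_{L^2(\Tor^2)}^2$, which, by the selfadjointness of $\Delta_{\rm R}^{\rm str}$, produces $2\int_0^t \|\Delta\T\|_{L^2}^2\,ds$ on the left-hand side and, on the right, an It\^o correction
\begin{equation*}
\sum_{n\geq 1}\|\nabla[(\psi_n\cdot\nabla)\T + g_{n,\T}]\|_{L^2}^2.
\end{equation*}
By Kadlec's formula (Lemma~\ref{l:Kadlec_formula}) together with Assumption~\ref{ass:well_posedness_primitive_double_strong}\eqref{it:well_posedness_primitive_phi_psi_smoothness}--\eqref{it:well_posedness_primitive_parabolicity_strong_strong}, and arguing verbatim as for \eqref{eq:estimate_Ito_correction_v_smr}, this is bounded by $\ellip''\|\Delta\T\|_{L^2}^2 + C(\|\T\|_{L^2}^2 + \|g_{n,\T}\|_{H^1(\ell^2)}^2)$ for some $\ellip''<2$, so the $\|\Delta\T\|^2$--term is absorbed and the claimed $\T$--bound follows.

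Having controlled $\T$ in $L^2(\Hr^2)\cap L^{\infty}(H^1)$, the coupling term $\p[\op_{\kone}\T]$ is bounded in $L^2(\Ls^2)$ by $\|\T\|_{L^2(H^1)}$ via \eqref{eq:estimate_theta_term_in_v_variable}. The $v$--estimate is then obtained exactly as in Step~2 of the proof of Proposition~\ref{prop:SMR_2}: apply It\^o's formula to $\|v\|_{L^2}^2$ and then to $\|\nabla v\|_{L^2}^2$, using \eqref{eq:estimate_Ito_correction_v_smr} for the quadratic variation and Remark~\ref{r:Lpp_lower_order} for the $\hp$--terms. The main new obstacle is the $L^2(\Hr^2)$ estimate on $\T$: in contrast with the Neumann case for $v$, integration by parts against $-\Delta\T$ produces boundary contributions from the Robin condition; these are handled cleanly by working with the equivalent form norm $\|(-\Delta_{\rm R}^{\rm str})^{1/2}\T\|_{L^2}^2$, so that the boundary terms appear with the correct sign and the residual $L^2(\Tor^2)$ trace terms are absorbed via the continuity of the trace $H^{1/2+\varepsilon}\embed L^2(\Tor^2)$, as in \eqref{eq:estimate_I_temp_L_2_estimate}.
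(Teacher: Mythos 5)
Your proposal follows essentially the same route as the paper's proof: method of continuity, the $L^{\infty}(L^2)\cap L^2(H^1)$ bound for $\T$ from Step 1 of Proposition \ref{prop:SMR_2}, then It\^o's formula applied (after a resolvent approximation) to the Robin form functional $\|\nabla\T\|_{L^2}^2+\alpha\|\T(\cdot,0)\|_{L^2(\Tor^2)}^2$ — exactly the functional $\Ff_\alpha$ used in the paper — with the It\^o correction absorbed via Kadlec's formula (Lemma \ref{l:Kadlec_formula} with $\beta=\alpha$) and the trace estimate, and finally the $v$-estimate as in Step 2 of Proposition \ref{prop:SMR_2} using \eqref{eq:estimate_theta_term_in_v_variable}. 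The only cosmetic caveat is that for $\alpha<0$ the boundary term does not come "with the correct sign" and must be absorbed (as you and the paper do) by choosing $\varepsilon$ small relative to $2-\ellip$ and $|\alpha|$; otherwise the argument matches the paper's.
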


By \cite[Proposition 3.9 and 3.12]{AV19_QSEE_2}, Proposition \ref{prop:SMR_2_strong} also proves maximal $L^2$-estimates with non-trivial initial data and also the starting time $0$ can be replaced by any stopping time $\tau$ with values in $[0,T]$ where $T\in (0,\infty)$.

For all $U=(v,\T)\in \Hs^2_{\n}\times \Hr^2$, we set a.s.\ for all $t\in \R_+$,
\begin{equation}
\label{eq:choice_AB_strong_strong}
A (t)U:=
\begin{bmatrix}
-\Delta v -\p \big[\Lpp v +\op_{\kone} \T \big]\\
-\Delta \T
\end{bmatrix},
\ \ 
\text{ and } \ \ 
B_{n}(t) U:=
\begin{bmatrix}
\p\big[(\phi_n(t)\cdot \nabla )v\big]\\
(\psi_n(t) \cdot \nabla )\T
\end{bmatrix}.
\end{equation}
Then using the notation introduced in \cite[Section 3]{AV19_QSEE_1}, Proposition \ref{prop:SMR_2} shows $(A,(B_n)_{n\geq 1})\in \mathcal{SMR}^{\bullet}_{2}(T)$.

\begin{proof}[Proof of Proposition \ref{prop:SMR_2_strong}]
The proof is similar to the one of Proposition \ref{prop:SMR_2}. As in the latter case we only consider the case $\hp_{n}^{\ell,m}\equiv 0$. Thus, it is enough to prove a priori estimates (uniform in $\lambda\in [0,1]$) for strong solutions of the following problem
\begin{equation*}
\begin{cases}
\displaystyle{dv -\Big[\Delta v+\lambda \p[ \op_{\kone} \T]\Big]dt =f_v dt 
+ \sum_{n\geq 1} \p\Big[\lambda (\phi_n\cdot \nabla )v+ g_{n,v}\Big] d\beta_n}, & \text{on }\Dom,\\
\displaystyle{d\Temp -\Delta \Temp dt =f_{\T} dt + \sum_{n\geq 1} 
\Big[\lambda(\psi_n\cdot \nabla )\Temp+ g_{n,\T}\Big]d\beta_t^n},
& \text{on }\Dom,\\
v(0)=0,\qquad \Temp(0)=0,& \text{on }\Dom.
\end{cases}
\end{equation*}
As in the proof of Proposition \ref{prop:SMR_2}, we first estimate the temperature $\T$ and then we use this estimate for estimating $v$.
Using also \eqref{eq:estimate_theta_term_in_v_variable} one can see that the argument in Step 2 of Proposition \ref{prop:SMR_2} can be reproduced almost verbatim to get an estimate for $v$. 
Thus it remains to prove the estimate for the temperature. Let us remark that, if $\alpha=0$ in \eqref{eq:boundary_conditions_strong_strong} (i.e.\ $\T$ also satisfied Neumann boundary conditions), then the estimate for the temperature can be performed again as in Step 2 Proposition \ref{prop:SMR_2}. In the case $\alpha\neq0$, we slightly modify that argument.

Let $\tau:\O\to [0,T]$ be a stopping time where $T\in (0,\infty)$. It remains to show the existence of $C_0>0$ independent of $(\lambda,f_{\T},g_{\T},\tau)$ such that any strong solution $\T\in L^2_{\Progress}((0,\tau)\times \O;\Hr^2)\cap L^2_{\Progress}(\O; C([0,\tau];H^1))$ to 
\begin{equation}
\label{eq:temperature_equation_SMR_strong_strong}
\begin{cases}
\displaystyle{
d\Temp -\Delta \Temp dt =f_{\T} dt + \sum_{n\geq 1} 
\Big[\lambda(\psi_n\cdot \nabla )\Temp+ g_{n,\T}\Big]d\beta_t^n,}&  \text{ on }\Dom,\\
\T(0)=0, &\text{ on }\Dom,
\end{cases}
\end{equation}
satisfies the estimate
\begin{equation}
\label{eq:estimate_temperature_step_2_SMR_strong}
\E\| \T\|_{L^2(0,\tau;H^2)}^2
\leq C_0 \E\|f_{\T}\|_{L^2(0,\tau;L^2)}^2+C_0 \E\|g_{\T}\|_{L^2(0,\tau;H^1(\ell^2))}^2.
\end{equation}
Let us begin by noticing that, due to Step 1 of Proposition \ref{prop:SMR_2}, there exists $C_0'>0$ independent of $(\lambda,f,g,\tau)$ such that 
\begin{equation}
\begin{aligned}
\label{eq:estimate_temperature_step_2_SMR_weak}
\E\Big[\sup_{t\in [0,\tau]}\|\T(t)\|_{L^2}^2\Big]
&+\E\| \T\|_{L^2(0,\tau;H^1)}^2\\
&\leq C_0' \E\|f_{\T}\|_{L^2(0,\tau;L^2)}^2+C_0 \E\|g_{\T}\|_{L^2(0,\tau;H^1(\ell^2))}^2.
\end{aligned}
\end{equation}
 Following Step 1 of Proposition \ref{prop:SMR_2} we set $\T^{\tau}:=\T(\tau\wedge \cdot)$ and we need an approximation argument to estimate the gradient of $\T$. For $k\geq 1$, we set $\e_k :=k(1+k+\Delta_{{\rm R}})^{-1}$ as in  \eqref{eq:def_e_k} where $\Delta_{{\rm R}}: \Hr^{2}\subseteq L^2\to L^2$ is the Laplace operator with Robin boundary conditions. Set $\T_k:=\e_k\T$ and $\T^{\tau}_k:=\e_k \T^{\tau}$. By \eqref{eq:temperature_equation_SMR_strong_strong}, $\T^{\tau}_k$ is given by
\begin{align*}
\Temp_k^{\tau}(t)=\int_0^{ t}\one_{[0,\tau]}(\Delta \Temp_k +\e_k f_{\T}) ds + \sum_{n\geq 1} \int_0^{t}\one_{[0,\tau]}
\e_k\Big[\lambda(\psi_n\cdot \nabla )\Temp+ g_{n,\T}\Big]d\beta_s^n,
\end{align*}
a.s.\ for all $t\in [0,T]$. Here we also used that $\e_k$ and $\Delta$ commute on $\Hr^2(\Dom)$.

Set $f_k:=\e_k f_{\T}$ and $g_{n,k}:=\e_k g_{\T,n}$. The idea is to apply  It\^{o}'s formula to the functional $\T_k\mapsto \Ff_{\alpha}(\varphi):= \|\nabla \varphi\|^2_{L^2(\Dom)}+\alpha\|\varphi(\cdot,0)\|^2_{L^2(\Tor^2)}$. Note that 
$$
\Ff_{\alpha}'(\varphi)\psi=2\int_{\Dom} \nabla \varphi \cdot \nabla \psi \, dx+2\alpha \int_{\Tor^2} \varphi(\cdot,0) \psi(\cdot,0)\,dx, \ \ \text{ for all }u,v\in H^{1}.
$$
By It\^{o}'s formula applied to $\Ff_{\alpha}$, we have a.s.\ for all $t\in [0,T]$ and $k\geq 1$,
\begin{equation}
\label{eq:identity_F_alpha}
\begin{aligned}
&\Ff_{\alpha}(\T^{\tau}_k(t))
=2\int_0^{t}\one_{[0,\tau]} \Ff_{\alpha}'(\Delta \T_k+ f_k)\T_k\,ds \\
&+\sum_{n\geq 1} \int_{0}^t \one_{[0,\tau]}  \Ff_{\alpha}(\e_k [\lambda(\psi\cdot \nabla) \T]+g_{n,k}) \,ds \\
&+2\sum_{n\geq 1}\int_0^{t}\one_{[0,\tau]} \Ff_{\alpha}'(\e_k [\lambda(\psi\cdot \nabla) \T]+g_{n,k})\T_k\, dw_s^n
=:I_t^k+II_t^k+III_t^k. 
\end{aligned}
\end{equation}
Next, we want to take $k\to \infty$ in the previous identity. To this end, let us recall that the trace operator $H^{1/2+s}\ni f\mapsto f(\cdot,0)\in L^2(\Tor^2)$ is bounded for all $s>0$. Since $\T\in L^2((0,\tau)\times \O;H^2)$ and $\e_k\to I $ strongly in $H^{1}=\Do(\Delta_{{\rm R}}^{1/2})$, it follows that, as $k\to \infty$,
\begin{align*}
II_t^k \to \sum_{n\geq 1} \int_{0}^t \one_{[0,\tau]}  \Ff_{\alpha}(\lambda(\psi\cdot \nabla) \T+g_{n}) \,ds=:II_t,&\\
III_t^k\to 2\sum_{n\geq 1}\int_0^{t}\one_{[0,\tau]} \Ff_{\alpha}'(\lambda(\psi\cdot \nabla) \T+g_{n})\T\, dw_s^n=:III_t,&
\end{align*}
where both limits take place in $L^1(\O;C([0,T]))$.
To analyze $I_t^k$, note that $\Delta \T_k+f_k$ and $\T_k\in \Hr^2$. In particular $\partial_3\T_k(\cdot,0)=-\alpha \T_k(\cdot,0)$ and $\partial_3 \T_k (\cdot,-h)=0$ both on $\Tor^2$. Integrating by parts, one gets a.s.\ for all $t\in [0,T]$,
\begin{align*}
I_t^k= -2\int_0^{t} \one_{[0,\tau]}\int_{\Dom} (\Delta \T_k+ f_k)\Delta \T_k \,dx ds.
\end{align*}
Since $\T\in L^2((0,\tau)\times \O;H^2)$ and $f\in L^2((0,T)\times \O;L^2)$, as $k\to \infty$
$$
I_t^k \to -2\int_0^{t} \one_{[0,\tau]}\int_{\Dom}\Big( |\Delta \T|^2+ f\Delta \T \Big)\,dx ds=:I_t \quad \text{ in }L^1(\O;C([0,T])).
$$
Taking $k\to \infty$ and afterwards expectations in \eqref{eq:identity_F_alpha}, we have
\begin{equation}
\begin{aligned}
\label{eq:L_2_inequality_Robin_conditions}
&\E \|\nabla \T^{\tau}(T,\cdot)\|_{L^2}^2+ 2\E\int_0^{ \tau}  \|\Delta \T\|_{L^2}^2\,ds \\
&\quad \leq 2 \E \int_0^{ \tau}  \int_{\Dom}|\Delta \T f|\, dx ds+|\alpha| \E\|\T^{\tau}(T,\cdot,0)\|_{L^2(\Tor^2)}^2\\
&\quad + \sum_{n\geq 1}
\E\int_{0}^{\tau}  \|\lambda(\psi_n\cdot \nabla) \T+g_{n}\|_{L^2}^2 \,ds\\
&\quad +|\alpha|\sum_{n\geq 1}\E
\int_{0}^{\tau}  \Big\|\lambda[(\psi_n\cdot \nabla) \T](\cdot,0)+g_{n}(\cdot,0)\Big\|_{L^2(\Tor^2)}^2 \,ds
\end{aligned}
\end{equation}
where we used that $\tau\leq T$ a.s.\ and $\E[III_T]=\E[III_0]=0$.

Let us estimate each term on the right hand side of \eqref{eq:L_2_inequality_Robin_conditions} separately. Fix $\ellip<\ellip'<2$ and let $\varepsilon>0$ to be chosen later. Here $\ellip $ is as in Assumption \ref{ass:well_posedness_primitive_double_strong}\eqref{it:well_posedness_primitive_parabolicity}. By Cauchy-Schwartz and Young's inequality
$$
 \E \int_0^{ \tau}  \int_{\Dom}|\Delta \T f|\, dx ds\leq \varepsilon  \E\int_0^{ \tau}  \|\Delta \T\|_{L^2}^2\,ds
+C_{\varepsilon} \E\int_0^{ \tau}  \|f\|_{L^2}^2\,ds.
$$
Repeating the argument in \eqref{eq:estimate_Ito_correction_v_smr}, one can check that, for all $\lambda\in [0,1]$, 
\begin{equation}
\label{eq:estimate_robin_similar_neumann}
\E\int_0^{\tau}\sum_{n\geq 1}\|\nabla [\lambda(\psi_n\cdot \nabla) \T+g_{n}]\|_{L^2}^2 \,ds
\leq \E\int_0^{\tau} \Big( \ellip' \|\Delta \T\|_{L^2}^2+C_{\ellip} \|g_n\|^2_{H^1(\ell^2)}\Big)\,ds,
\end{equation}
where one also uses Lemma \ref{l:Kadlec_formula} with $\beta=\alpha$.  Thus, by the above mentioned boundedness of the trace operator $H^{1/2+s}\ni f\mapsto f(\cdot,0)\in L^2(\Tor^2)$ for $s>0$ and \eqref{eq:estimate_robin_similar_neumann}, 
\begin{align*}
&\sum_{n\geq 1}\E
\int_{0}^{\tau}  \Big\|\lambda[(\psi_n\cdot \nabla) \T](\cdot,0)+g_{n}(\cdot,0)\Big\|_{L^2(\Tor^2)}^2 \,ds\\
&\leq 
\varepsilon \E\int_0^{ \tau}  \|\Delta \T\|_{L^2}^2\,ds
+C_{\varepsilon,\ellip'} \E\int_0^{ \tau}  (\|g\|_{H^1(\ell^2)}^2+ \|\T\|_{L^2}^2)\,ds.
\end{align*}

Analogously, $\E\|\T^{\tau}(T,\cdot,0)\|_{L^2(\Tor^2)}^2\leq \varepsilon \E\|\nabla \T^{\tau}(T,\cdot)\|_{L^2}^2+ C_{\varepsilon}\E\|\T^{\tau}(T,\cdot)\|_{L^2}^2$.

Let $\varepsilon>0$ be such that $\ellip+2\varepsilon(1+|\alpha|)<2$.
Using the previous estimates  in \eqref{eq:L_2_inequality_Robin_conditions},
\begin{equation*}
\E\int_0^{ \tau}  \|\Delta \T\|_{L^2}^2\,ds\lesssim \E\int_0^{ \tau} 
 (\|f\|_{L^2}^2+\|g\|_{H^1(\ell^2)}^2)\,ds+ \E\Big[\sup_{t\in [0,\tau]}\|\T(t)\|_{L^2}^2\Big]
\end{equation*}
where we used that $\T^{\tau}=\T(\cdot\wedge\tau)$ and the implicit constant is independent of $(\lambda,f,g,\tau)$. By \eqref{eq:estimate_temperature_step_2_SMR_weak}, the previous inequality yields \eqref{eq:estimate_temperature_step_2_SMR_strong} as desired.
\end{proof}

\subsection{Proof of Theorem \ref{t:local_primitive_strong_strong}}
As in Subsection \ref{ss:proof_local_strong_weak}, the proof is based on the results in \cite{AV19_QSEE_1,AV19_QSEE_2}, and we reformulate \eqref{eq:primitive_strong_strong}-\eqref{eq:boundary_conditions_strong_strong} as a stochastic evolution equation on $X_0$
of the form
\begin{equation}
\label{eq:abstract_formulation_strong_strong}
\begin{cases}
d U+ A (\cdot)U\,dt=F(\cdot,U)dt + (B (\cdot)U +G(\cdot,U))d\Br_{\ell^2}(t),\\
U(0)=(v_0,\T_0).
\end{cases}
\end{equation}
Here $X_0=\Ls^2\times L^2$, $X_1=\Hs^{2}_{\n}\times \Hr^2$ (see \eqref{eq:def_Hn_Hr}), $\Br_{\ell^2}$ is the $\ell^2$-cylindrical Brownian motion associated to $(\beta^n)_{n\geq 1}$ (see equation \eqref{eq:Bl2}), $(A,B)$ are as in \eqref{eq:choice_AB_strong_strong} and for $U=(v,\T)\in X_1$
\begin{align*}
F(\cdot,U)
&:=\begin{bmatrix}
\p[ (v\cdot\nabla_{\h}) v + w(v)\cdot \partial_3 v +\fv(\cdot,v,\T,\nabla v,\nabla\T)+\Lpg(\cdot,v,\T)]\\
 (v\cdot\nabla_{\h})\T+w(v)\partial_3 \T + \ft(\cdot,v,\T)
\end{bmatrix},\\
G(\cdot,U)
&:=\begin{bmatrix}
 (\p[\gvn(\cdot,v)])_{n\geq 1}\\
  (\gtn(\cdot,v,\T))_{n\geq 1}
\end{bmatrix},
\end{align*}
where $w(v)$ is as in \eqref{eq:def_w_v_strong_strong} and
\begin{equation*}
\Lpg(\cdot,v):=\Big(\sum_{n\geq 1} \sum_{m=1}^2 \hp_n^{\ell,m} (t,x)
\big(\q[\gvn(\cdot,v,\T)]\big)^m\Big)_{\ell=1}^2.
\end{equation*}

It is easy to see that $((v,\T),\tau)$ is an $L^2$-maximal (resp.\ -local) solution to \eqref{eq:primitive_strong_strong}-\eqref{eq:boundary_conditions_strong_strong} (see Definition \ref{def:sol_strong_strong}) if and only if $(U,\tau)$ where $U:=(v,\T)$ is an $L^2$-maximal (resp.\ -local) solution to \eqref{eq:abstract_formulation_strong_strong} in the sense of \cite[Definition 4.4]{AV19_QSEE_1}.

Theorem \ref{t:local_primitive_strong_strong} can be proven as Theorem \ref{t:local_primitive} and therefore we only give a sketch of its proof.

\begin{proof}[Proof of Theorem \ref{t:local_primitive_strong_strong} -- Sketch]
By the above discussion and Proposition \ref{prop:SMR_2_strong}, the existence of an $L^2$-maximal solution to \eqref{eq:primitive_strong_strong}-\eqref{eq:boundary_conditions_strong_strong} satisfying \eqref{it:path_reg_strong_strong} follows from \cite[Theorem 4.8]{AV19_QSEE_1} provided assumptions (HF) and (HG) in \cite[Section 4]{AV19_QSEE_1} hold. To check those assumptions, one can argue as in Steps 1--3 of Theorem \ref{t:local_primitive}. 

Finally, \eqref{it:blow_up_criterium_strong_strong} 
 follows from \cite[Theorem 4.11]{AV19_QSEE_2} and Proposition \ref{prop:SMR_2_strong}.
\end{proof}

\subsection{Proof of Theorem \ref{t:global_primitive_strong_strong}}
The strategy of the proof is similar to the one used for Theorem \ref{t:global_primitive}. As in the proof of Theorem \ref{t:global_primitive}, the global existence result of Theorem \ref{t:global_primitive_strong_strong} is a consequence of the blow-up criterion in Theorem \ref{t:local_primitive_strong_strong}\eqref{it:blow_up_criterium_strong_strong} and the following energy estimate.
Recall that $\norm_k$ has been defined in \eqref{eq:norm_k_strong_strong}.

\begin{proposition}[Energy estimate]
\label{prop:energy_estimate_primitive_strong_strong}
Let Assumptions \ref{ass:well_posedness_primitive_double_strong} and \ref{ass:global_primitive_strong_strong} be satisfied. Let $T\in (0,\infty)$.
Assume that $v_0\in L^{\infty}_{\F_0}(\O;\Hs^1)$ and $\T_0\in L^{\infty}_{\F_0}(\O;H^1)$. Let $((v,\T),\tau)$ be the $L^2$-maximal strong-strong solution to \eqref{eq:primitive_strong_strong} provided by Theorem \ref{t:local_primitive_strong_strong}.
Then there exists a sequence of stopping times $(\mu_k)_{k\geq 1}$ with values in $[0,T]$ such that $\mu_n\leq \tau$ a.s.\ for all $k\geq 1$ and for which the following hold.
\begin{enumerate}[{\rm(1)}]
\item $\P(\mu_k=\tau\wedge T)\to 1$ as $k\to \infty$;
\item For each $k\geq 1$ there exists $C_{k,T}>0$ (possibly depending on $v_0,v,\T_0,\T$) such that 
$$
\E[ \norm_{1}(\mu_k;v)] + 
\E[\norm_1(\mu_k;\T) ]\leq C_{k,T}\Big(1+\E\|v_0\|_{H^1}^2+\E\|\T_0\|_{H^1}^2\Big).
$$
\end{enumerate}
\end{proposition}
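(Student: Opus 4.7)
The strategy is to mimic the scheme of Section \ref{s:proofs_strong_weak}: first obtain the bounds on $v$ and the lower-order norms of $\T$ exactly as in Proposition \ref{prop:energy_estimate_primitive}, and then bootstrap $\T$ to the $L^\infty_t H^1 \cap L^2_t H^2$ class by applying the stochastic maximal $L^2$-regularity of Proposition \ref{prop:SMR_2_strong} to the temperature equation in its linearized form, treating the transport and source terms as data.

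\textbf{Step 1 (lower-order bounds and localization).} First I would verify that the proofs of Lemma \ref{l:L_2_estimate_I}, Lemma \ref{l:intermediate_estimate} and Proposition \ref{prop:energy_estimate_primitive} carry over under Assumption \ref{ass:global_primitive_strong_strong}. The only structural novelty is that $\ft$ now depends also on $\nabla \T$: this affects only Step 1 of Lemma \ref{l:L_2_estimate_I}, where the contribution $\int_{\Dom}\ft(\cdots)\T\,dx$ produces an extra term $\leq \varepsilon \|\nabla\T\|_{L^2}^2 + C_\varepsilon(\y^2+\|v\|_{H^1}^2+\|\T\|_{L^2}^2)$ which is absorbed in the parabolic dissipation on the temperature side. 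All other estimates involve $\T$ only through $\op_{\kone}\T$, which is already controlled by $\|\T\|_{H^1}$ via \eqref{eq:estimate_theta_term_in_v_variable}. This yields stopping times $(\mu_k^{(1)})_{k\geq 1}$ with $\mu_k^{(1)}\leq \tau$, $\lim_k \P(\mu_k^{(1)}=\tau\wedge T)=1$ and
\[
\E[\norm_1(\mu_k^{(1)};v)]+\E[\norm_0(\mu_k^{(1)};\T)]\leq C_{k,T}^{(1)}(1+\E\|v_0\|_{H^1}^2+\E\|\T_0\|_{L^2}^2).
\]
I would then define
\[
\mu_k := \mu_k^{(1)}\wedge \inf\Big\{t\in[0,\tau)\,:\, \norm_1(t;v)+\norm_0(t;\T)+\|\y\|_{L^2(0,t;L^2)}^2\geq k\Big\},
\]
so that $\lim_k \P(\mu_k=\tau\wedge T)=1$ and the integrand quantities are bounded by $k$ pointwise on $[0,\mu_k]$.

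\textbf{Step 2 (SMR bootstrap on $\T$).} On $[0,\mu_k]$ the pair $(\T,\mu_k)$ is an $L^2$-local strong solution to the linear problem
\[
d\T - \Delta\T\,dt = \widetilde{f}_\T\,dt + \sum_{n\geq 1}\bigl[(\psi_n\cdot\nabla)\T+\gtn(v,\T)\bigr]d\beta_t^n, \quad \T(0)=\T_0,
\]
with $\widetilde{f}_\T := -(v\cdot\nabla_\h)\T - w(v)\partial_3\T + \ft(\cdot,v,\T,\nabla v,\nabla\T)$. Applying Proposition \ref{prop:SMR_2_strong} (adapted to non-trivial initial data as in Remark \ref{r:SMR_2}\eqref{it:SMR_random_initial_time}) gives
\[
\E[\norm_1(\mu_k;\T)] \lesssim \E\|\T_0\|_{H^1}^2 + \E\|\widetilde{f}_\T\|_{L^2(0,\mu_k;L^2)}^2 + \E\|(\gtn(v,\T))_{n\geq 1}\|_{L^2(0,\mu_k;H^1(\ell^2))}^2.
\]
Assumption \ref{ass:global_primitive_strong_strong}\eqref{it:sublinearity_Gforce_strong_strong} together with the chain-rule argument used in Step 3 of the proof of Theorem \ref{t:local_primitive} bounds the $\gtn$-term by $C_k + C\E\int_0^{\mu_k}\|\T\|_{H^1}^2\,ds$, which is finite by Step 1. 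The sublinearity of $\ft$ provides $\|\ft(\cdots)\|_{L^2}^2 \lesssim \y^2 + \|v\|_{H^1}^2 + \|\T\|_{H^1}^2$. The transport terms are the delicate ones: by Sobolev embeddings and interpolation in $\Dom$, using \eqref{eq:estimate_w_strong_weak} to bound $\|w(v)\|_{L^\infty_{x_3} L^4_{x_\h}}\lesssim \|v\|_{H^{3/2}}$,
\[
\|(v\cdot\nabla_\h)\T + w(v)\partial_3\T\|_{L^2}^2 \leq C\|v\|_{H^1}\|v\|_{H^2}\|\T\|_{H^1}\|\T\|_{H^2} \leq \varepsilon \|\T\|_{H^2}^2 + C_\varepsilon \|v\|_{H^1}^2\|v\|_{H^2}^2\|\T\|_{H^1}^2.
\]
Integrating on $[0,\mu_k]$ and using $\sup_{s}\|v(s)\|_{H^1}^2 \leq k$ together with $\int_0^{\mu_k}\|v\|_{H^2}^2\,ds \leq k$ a.s., the last term becomes $\leq C k^2 \sup_{s\in[0,\mu_k]}\|\T(s)\|_{H^1}^2$, which is integrable against any small prefactor after taking expectations. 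Choosing $\varepsilon$ small enough to absorb the $\|\T\|_{H^2}^2$ contribution into the left-hand side of the SMR estimate, and invoking Step 1 to bound $\E\int_0^{\mu_k}\|\T\|_{H^1}^2\,ds$, yields $\E[\norm_1(\mu_k;\T)]\leq C_{k,T}(1+\E\|v_0\|_{H^1}^2+\E\|\T_0\|_{H^1}^2)$, which combined with Step 1 is the claim.

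\textbf{Main obstacle.} The genuinely new difficulty compared to Proposition \ref{prop:energy_estimate_primitive} is that the transport nonlinearity in $\widetilde{f}_\T$ couples the two top-order norms $\|v\|_{H^2}$ and $\|\T\|_{H^2}$ that we are simultaneously estimating. The resolution is the choice of localization in Step 1: on $[0,\mu_k]$ both $\sup_t\|v\|_{H^1}^2$ and $\int_0^{\mu_k}\|v\|_{H^2}^2\,ds$ are bounded by $k$ a.s., which converts the interpolation inequality into a bound of the form $\varepsilon\|\T\|_{H^2}^2$ plus a bounded-coefficient multiple of $\|\T\|_{H^1}^2$, and this is precisely what allows Young's inequality to close the argument.
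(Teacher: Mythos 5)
Your overall strategy coincides with the paper's: Step 1 (reusing the strong-weak energy estimates of Lemma \ref{l:L_2_estimate_I}, Lemma \ref{l:intermediate_estimate} and Proposition \ref{prop:energy_estimate_primitive}, with the extra $\nabla\T$-dependence of $\ft$ absorbed in the dissipation) and the interpolation bounds for $w(v)\partial_3\T$ and $(v\cdot\nabla_{\h})\T$ in Step 2 are exactly what the paper does (cf.\ \eqref{eq:T_estimate_forcing_2}--\eqref{eq:T_estimate_forcing_3}). However, the way you close Step 2 has a genuine gap. After integrating your interpolation estimate you arrive at a term of the form $\E\int_0^{\mu_k}(1+\|v\|_{H^2}^2)\|\T\|_{H^1}^2\,ds$, and you claim that since $\int_0^{\mu_k}\|v\|_{H^2}^2\,ds\leq k$ a.s.\ this becomes $Ck^2\,\E\big[\sup_{s\leq\mu_k}\|\T(s)\|_{H^1}^2\big]$ which can be ``integrated against any small prefactor.'' It cannot: the constant $Ck^2$ is large, while the same quantity $\E\big[\sup_{s\leq\mu_k}\|\T\|_{H^1}^2\big]$ appears on the left-hand side of the SMR estimate with constant one, so no absorption by Young's inequality is possible. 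The coefficient $1+\|v\|_{H^2}^2$ is only pathwise bounded in $L^1_t$, not in $L^\infty_t$, and moreover it is random, so it cannot be pulled out of the expectation. This is precisely the situation for which the paper invokes the stochastic Gronwall lemma \cite[Lemma 5.3]{GHZ_09}: one keeps the estimate in the form \eqref{eq:claim_T_estimate_strong}, over stochastic intervals $[\eta,\xi]$, and the pathwise bound $\int_0^{\mu_k}\|v\|_{H^2}^2\,ds\leq R_k$ is what makes that lemma applicable -- it replaces, rather than reduces to, an absorption argument.

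A second, related omission: you apply Proposition \ref{prop:SMR_2_strong} on $[0,\mu_k]$ directly, but the data $\widetilde f_\T$ involve $\|\T\|_{H^2}$ through the interpolation, and a priori $\T\in L^2(0,\mu_k;H^2)$ only almost surely, not in $L^2(\O)$; likewise the $\varepsilon\,\E\|\T\|^2_{L^2(H^2)}$ term can only be absorbed once it is known to be finite. The paper handles both points by introducing the additional localizing sequence $\tau_j:=\inf\{t:\|\T(t)\|_{H^1}+\|\T\|_{L^2(0,t;H^2)}\geq j\}\wedge T$, proving the estimate on $[\eta,\xi]$ with $\xi\leq\tau_j\wedge\mu_k$ (so all quantities are finite and the absorption of the $\tfrac{1}{4C_0}\E\|\T\|^2_{L^2(\eta,\xi;H^2)}$ terms is legitimate), applying the stochastic Gronwall lemma for each $j$, and then letting $j\to\infty$. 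With these two repairs -- the inner localization $\tau_j$ and stochastic Gronwall in place of your Young-absorption step -- your argument becomes the paper's proof.
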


Theorem \ref{t:global_primitive_strong_strong} follows from Proposition \ref{prop:energy_estimate_primitive_strong_strong} in the same way as Theorem \ref{t:global_primitive} follows from Proposition \ref{prop:energy_estimate_primitive}. To avoid repetitions, we only give a sketch of the proof of Proposition \ref{prop:energy_estimate_primitive_strong_strong}, since it is an easy extension of the one given for Proposition \ref{prop:energy_estimate_primitive}. 

\begin{proof}[Proof of Proposition \ref{prop:energy_estimate_primitive_strong_strong} -- Sketch]
Let $T\in (0,\infty)$ be fixed.
Reasoning as in the proof of Lemma \ref{l:L_2_estimate_I}, one can check that there exists $C_T>0$ independent of $v_0,v,\T_0,\T$ such that 
\begin{equation}
\label{eq:norm_0_v_T_strong_strong}
\E[\norm_0(\tau;v)]
+\E[\norm_0(\tau;\T)]
\leq C_T\Big(1+\E\|v_0\|_{L^2}^2+\E\|\T_0\|_{L^2}^2\Big).
\end{equation}
The estimate
\eqref{eq:norm_0_v_T_strong_strong} and Assumption \ref{ass:global_primitive_strong_strong}\eqref{it:sublinearity_Gforce_strong_strong} yield 
\begin{equation}
\begin{aligned}
\label{eq:integrability_inhomogeneity_strong_strong}
\nabla_{\h} \int_{-h}^{\cdot}  (\kone(\cdot,\zeta)\T(\cdot,\zeta))\,d\zeta+ \fv(\cdot,v,\T,\nabla v,\nabla \T)
&\in L^2((0,\tau)\times \O;L^2),\\
(\gvn(\cdot,v,\T))_{n\geq 1}&\in L^2((0,\tau)\times \O;H^1(\ell^2)).
\end{aligned}
\end{equation}
Due to
\eqref{eq:integrability_inhomogeneity_strong_strong}, one can check that the proof of Proposition \ref{prop:energy_estimate_primitive} can be repeated also in the strong-strong setting. In particular, there exists a sequence of stopping times $(\mu_k')_{k\geq 1}$  with values in $[0,T]$ such that, a.s.\ for all $k\geq 1$, one has $\mu_k'\leq \tau$, $\lim_{k\to \infty}\P(\mu_k'=\tau)=1$ and 
\begin{equation}
\label{eq:norm_1_norm_0_estimate_strong_strong}
\E[\norm_1(\mu_k';v)]
+\E[\norm_0(\mu_k';\T)]
\leq c_{k,T}\Big(1+\E\|v_0\|_{H^1}^2+\E\|\T_0\|_{L^2}^2\Big).
\end{equation}
where $c_{k,T}$ is a constant (possibly) depending on $v_0,v$ and  $k\geq 1$.

It remains to prove the estimate for $\norm_1(\tau;\T)$. By \eqref{eq:norm_1_norm_0_estimate_strong_strong} and the above choice of $(\mu_k')_{k\geq 1}$, there exists a sequence of constants $(R_k)_{k\geq 1}$ for which the sequence of stopping times $(\mu_k)_{k\geq 1}$ defined as
\begin{align*}
\mu_k:=
\inf\big\{t\in [0,\mu_k')&\,:\,\|v(t)\|_{H^1}+\| v\|_{L^2(0,t;H^2)} \\
&+\|\T(t)\|_{L^2}+\| \T\|_{L^2(0,t;H^1)}+\|\y\|_{L^2(0,t;L^2)} \geq R_k\big\}\wedge T, 
\end{align*}
 where $\inf\emptyset:=\mu_k'$ and $\y$ is as in Assumption \ref{ass:global_primitive_strong_strong}\eqref{it:sublinearity_Gforce_strong_strong}, satisfies
$$
\lim_{k\to \infty}\P(\mu_k=\tau )=1.
$$
By \eqref{eq:norm_1_norm_0_estimate_strong_strong} and $\mu_k'\leq \mu_k$, to conclude the proof, it remains to prove that for each $k\geq 1$ there exists $C_{k,T}>0$ such that 
\begin{equation}
\label{eq:norm_1_T_estimate_strong_strong}
\E[\norm_1(\mu_k;\T)]\leq C_{k,T}\Big(1+\E\|\T_0\|_{L^2}^2\Big).
\end{equation}

As usual, to prove \eqref{eq:norm_1_T_estimate_strong_strong} we employ a localization argument. For each $j \geq 1$, let
$$
\tau_j :=\inf\big\{t\in [0,\tau)\,:\, \|\T(t)\|_{H^1}+ \|\T\|_{L^2(0,t;H^2)}\geq j\big\}\wedge T, \ \text{ where }\ \inf\emptyset:=\tau.
$$
By the stochastic Gronwall's lemma \cite[Lemma 5.3]{GHZ_09}, to prove \eqref{eq:norm_1_T_estimate_strong_strong} it remains to show that there exists $C_k>0$ such that for any $j\geq 1$ and any stopping times $\eta,\xi$ satisfying $0\leq\eta\leq \xi\leq \tau_j \wedge \mu_k$ a.s.\ one has
\begin{equation}
\label{eq:claim_T_estimate_strong}
\begin{aligned}
\E\Big[\sup_{t\in [\eta,\xi]}\|\T(t)\|_{H^1}^2 \Big] 
&+\E \|\T\|_{L^2(\eta,\xi;H^2)}^2\\
&\leq C_{k}\Big( 1+\E\|\T(\eta)\|_{H^1}^2+ \E\int_{\eta}^{\xi} (1+\|v\|_{H^2}^2)\|\T(s)\|_{H^1}^2\,ds \Big).
\end{aligned}
\end{equation}
Note that \cite[Lemma 5.3]{GHZ_09} is applicable since $\int_0^{\mu_k}\|v\|_{H^2}^2\,ds \leq R_k$ a.s.

To prove \eqref{eq:claim_T_estimate_strong}, we collect some useful facts.
By Proposition \ref{prop:SMR_2_strong} and \eqref{eq:primitive_strong_strong}, there exists a constant $C_0$ independent of $v_0,v,\T,\T_0,\eta,\xi,j,k$ such that 
\begin{equation}
\label{eq:T_estimate_SMR_global_C_0}
\begin{aligned}
\E\Big[\sup_{t\in [\eta,\xi]}\|\T(t)\|_{H^1}^2 \Big] 
&+\E \|\T\|_{L^2(\eta,\xi;H^2)}\leq C_0  \E\|\T(\eta)\|_{L^2}^2\\
& 
+C_0 \E\|\ft(\cdot,v,\T,\nabla v,\nabla\T)\|_{L^2(\eta,\xi;L^2)}^2\\
&
+C_0\E\|(\gtn(\cdot,v,\T))_{n\geq 1}\|_{L^2(\eta,\xi;H^1(\ell^2))}^2\\
& 
+ C_0(\E \|w(v)\partial_3 \T\|_{L^2(\eta,\xi;L^2)}^2+ \E \|(v\cdot \nabla_{\h}) \T\|_{L^2(\eta,\xi;L^2)}^2).
\end{aligned}
\end{equation}
It remains to estimate each term on the right hand side of the previous inequality.
Let us begin by noticing that \eqref{eq:norm_0_v_T_strong_strong} and Assumption \ref{ass:global_primitive_strong_strong}\eqref{it:sublinearity_Gforce_strong_strong} imply
\begin{equation}
\begin{aligned}
\label{eq:T_estimate_forcing_1}
\|\ft(\cdot,v,\T,\nabla v,\nabla \T)\|_{L^2((0,\tau)\times \O;L^2)}&\leq C_k,\\
\|(\gtn(\cdot,v,\T))_{n\geq 1}\|_{ L^2((0,\tau)\times \O;H^1(\ell^2))}&\leq C_k,
\end{aligned}
\end{equation}
where $C_k$ is independent of $\eta,\xi$ and $j\geq 1$.

Thus it remains to estimate the last two terms on the RHS of \eqref{eq:T_estimate_SMR_global_C_0}. To this end, note that by \eqref{eq:estimate_w_strong_weak} and the fact that $\xi\leq \mu_k$ a.s., 
\begin{equation}
\begin{aligned}
\label{eq:w_v_estimate_T_proof}
\|w(v)\|_{L^{\infty}(-h,0;L^4(\Tor^2))}
&\lesssim \|v\|_{H^{3/2}}\\
&\lesssim \|v\|_{H^{1}}^{1/2}\|v\|_{H^2}^{1/2}
\leq R_k^{1/2} \|v\|_{H^2}^{1/2},
\text{ a.e.\ on }[\eta,\xi]\times \O.
\end{aligned}
\end{equation}
Moreover, by the Sobolev embedding $H^1\embed L^6$, we get
\begin{equation}
\sup_{t\in [0,\mu_k]}\|v(t)\|_{L^6}\lesssim R_k\quad \text{a.s.\ }
\label{eq:v_L_6_estimate_T_proof}
\end{equation}
Using the interpolation inequality $\|\nabla \T\|_{L^{2}(-h,0;L^4(\Tor^2))}\lesssim \|\T\|_{H^{3/2}}\lesssim \|\T\|_{H^1}^{1/2}\|\T\|_{H^2}^{1/2}$, we have
\begin{equation}
\begin{aligned}
\label{eq:T_estimate_forcing_2}
\E \|w(v)\partial_3 \T\|_{L^2(\eta,\xi;L^2)}^2
&\stackrel{\eqref{eq:w_v_estimate_T_proof}}{\lesssim_{R_k}}
 \E\int_{\eta}^{\xi} \|v\|_{H^2}\|\nabla \T\|_{L^{2}(-h,0;L^4(\Tor^2))}^2\,ds\\
&\leq  C_{k}\E\int_{\eta}^{\xi} \|v\|_{H^2}^2 \| \T\|_{H^1}^2\,ds + \frac{1}{4C_0} \E\|\T\|_{L^2(\eta,\xi;H^2)}^2.
\end{aligned}
\end{equation}
Finally, by \eqref{eq:v_L_6_estimate_T_proof} and the interpolation inequality $\|\nabla \T\|_{L^{3}}\lesssim \|\T\|_{H^1}^{1/2}\|\T\|_{H^2}^{1/2}$, we have
\begin{equation}
\begin{aligned}
\label{eq:T_estimate_forcing_3}
\E\|(v\cdot \nabla_{\h})\T\|_{L^2(\eta,\xi;L^2)}^2
&\leq \E\int_{\eta}^{\xi}\|v\|_{L^6}^2\|\T\|_{H^{1,3}}^2\,ds\\
&\leq C_{k} \E\int_{\eta}^{\xi} \|\T\|_{H^1}^2\,ds + \frac{1}{4C_0} \E\|\T\|_{L^2(\eta,\xi;H^2)}^2. 
\end{aligned}
\end{equation}
Using the estimates \eqref{eq:T_estimate_forcing_1}, \eqref{eq:T_estimate_forcing_2} and \eqref{eq:T_estimate_forcing_3} in \eqref{eq:T_estimate_SMR_global_C_0}, one sees that \eqref{eq:claim_T_estimate_strong} follows.
As mentioned above, one can check that the stochastic Gronwall's lemma in \cite[Lemma 5.3]{GHZ_09} and \eqref{eq:claim_T_estimate_strong} already imply \eqref{eq:norm_1_T_estimate_strong_strong}. 
\end{proof}

\section{Inhomogeneous viscosity and conductivity}
\label{s:variable_viscosity}
In this section we show how the results of the Sections \ref{s:strong_weak} and \ref{s:strong_strong} can be extended to the case where
the Laplacians  
$
\Delta 
$
appearing in the first two equations in \eqref{eq:primitive_intro_v} are replaced by  elliptic second order differential operators with $(t,\om,x)$-dependent coefficients which will be denoted by $\Lv$ and $\LT$, respectively. In the strong-weak setting, to accommodate the weak setting for the $\T$-equation, $\LT$ is chosen to be a differential operator in divergence form.

Differential operators with $(t,\om,x)$-dependent coefficients can be useful 
to model inhomogeneous viscosity of the fluid and/or thermal conductivity. Moreover, 
if one considers the stochastic primitive equations with transport noise in Stratonovich form (see Section \ref{s:Stratonovich} below), then differential operators with $(t,\om,x)$-dependent coefficients appears naturally, and the principal part of such operators have coefficients 
\begin{equation}
\label{eq:coefficients_Stratonovich_example}
a_{\phi}^{i,j}:=\delta^{i,j}+\frac{1}{2}\sum_{n\geq 1} \phi^j_n\phi^i_n, 
\quad \text{ and }\quad
a_{\psi}^{i,j}:=\delta^{i,j}+\frac{1}{2}\sum_{n\geq 1} \psi^j_n\psi^i_n,
\end{equation}
respectively. Here $\delta^{i,j}$ is Kronecker's delta and $i,j\in \{1,2,3\}$. 
Let us stress that the Stratonovich formulation is often used in the physical literature (see e.g.\ \cite{HL84,Franzke14,W_thesis}) and can be studied using It\^{o}'s calculus by translating the Strotonivch integration into an It\^{o}'s ones plus additional correction terms. Such correction terms lead to consider the system \eqref{eq:primitive_intro_v} modified to include  variable viscosity and conductivity given by \eqref{eq:coefficients_Stratonovich_example}. 

This section is organized as follows. In Subsection \ref{ss:variable_viscosity_strong_weak} we state our main results on the stochastic primitive equations in the strong-weak setting and in Subsection \ref{ss:proofs_variable_viscosity_strong_weak} we provide the corresponding proofs. For brevity, we do not state any result in the strong-strong setting. However, the latter situation can be handled by extending the estimates for the strong-weak setting as we shown in Section \ref{s:strong_strong} for the case of homogeneous viscosity and/or conductivity, see Remark \ref{r:variable_viscosity_strong_strong} below for more comments.

\subsection{The strong-weak setting}
\label{ss:variable_viscosity_strong_weak}
Here we extend the results of Section \ref{s:strong_weak} to the case of variable viscosity and/or conductivity. More precisely, here we consider the primitive equations with transport noise in the strong-weak setting:
\begin{equation}
\label{eq:primitive_weak_strong_2}
\begin{cases}
\displaystyle{d v -\p[\Lv v]\, dt=\p\Big[ -(v\cdot \nabla_{\h}) v- \w(v)\partial_3 v+ \Lp (\cdot,v)} \\
\qquad\qquad \qquad \qquad\ \ \ \quad \displaystyle{
+\nabla_{\h}\int_{-h}^{\cdot} ( \kone (\cdot,\zeta)\T(\cdot,\zeta))\,d\zeta + \fv (\cdot,v,\T,\nabla v) \Big]dt }\\ 
\ \ \qquad \qquad\qquad \qquad \qquad\ \ \ \qquad 
\displaystyle{+\sum_{n\geq 1}\p \Big[(\phi_{n}\cdot\nabla) v  +\gvn(\cdot,v)\Big] d\beta_t^n}, \\
\displaystyle{d \Temp -\LT \Temp\, dt=\Big[ -(v\cdot\nabla_{\h}) \T -\w(v) \partial_3 \T+ \ft(\cdot,v,\T,\nabla v ) \Big]dt}
\\
\ \ \qquad \qquad\qquad \qquad \qquad\ \ 
\displaystyle{+\sum_{n\geq 1}\Big[(\psi_n\cdot \nabla) \Temp+\gtn(\cdot,v,\T,\nabla v)\Big] d\beta_t^n}, 
\\
v(\cdot,0)=v_0,\ \ \T(\cdot,0)=\T_0, 
\end{cases}
\end{equation}
on $\Dom=\Tor^2\times(-h,0)$ complemented with the following boundary conditions 
\begin{equation}
\label{eq:boundary_conditions_strong_weak_2}
\begin{aligned}
\partial_3 v (\cdot,-h)=\partial_3 v(\cdot,0)=0 \ \  \text{ on }\Tor^2,&\\
\partial_3 \T(\cdot,-h)=\partial_3 \T(\cdot,0)+\alpha \T(\cdot,0)=0\ \  \text{ on }\Tor^2.&
\end{aligned}
\end{equation}
Here $\Lp(\cdot,v)$ and $w(v)$ are as in \eqref{eq:def_P_gamma} and \eqref{eq:def_w_v}, respectively, and
\begin{align}
\label{eq:def_Lv_LT}
\Lv v= \sum_{i,j=1}^3 a_v^{i,j} \partial_{i,j}^2 v  +\sum_{j=1}^3 b_v^j \partial_j v,  \qquad
\LT \T= \sum_{i,j=1}^3 \partial_i (a_{\T}^{i,j} \partial_{j} \T )+\sum_{j=1}^3 b_{\T}^j \partial_j \T.
\end{align}
Here, $0$-th order terms in \eqref{eq:def_Lv_LT} can be added as well under suitable integrability conditions on the coefficients. 
Since it turns out  not to be useful when dealing with the primitive equations with Stratonovich noise (see Section \ref{s:Stratonovich}), we will not consider such terms.

To show local existence for \eqref{eq:primitive_weak_strong_2}-\eqref{eq:boundary_conditions_strong_weak_2} we employ the following 

\begin{assumption} 
\label{ass:local_strong_weak_2}
Let Assumption \ref{ass:well_posedness_primitive}\eqref{it:well_posedness_measurability}-\eqref{it:well_posedness_primitive_kone_smoothness} and 
\eqref{it:nonlinearities_measurability}-\eqref{it:nonlinearities_strong_weak} be satisfied. 
Suppose that there exist $K,\eta>0$ for which the following hold.
\begin{enumerate}[{\rm(1)}]
\item For all $i,j\in \{1,2,3\}$,
\begin{align*}
a^{i,j}_v,a^{i,j}_{\T}:\R_+\times \O\times \Dom\to \R
\end{align*}
are $\Progress\otimes \Borel$-measurable. Moreover, $a_v^{i,j}=a_v^{j,i}$ for all $i,j\in \{1,2,3\}$;
\item\label{it:a_ij_regularity_strong_weak_2} a.s.\ for all $t\in \R$ and $i,j\in \{1,2,3\}$,
\begin{align*}
\|a^{i,j}_v(t,\cdot)\|_{H^{1,3+\eta}(\Dom)}+
\|a_{\T}^{i,j}(t,\cdot)\|_{C(\overline{\Dom})}
\leq K,&\\
\|b_{v}^j(t,\cdot)\|_{L^{3+\eta}(\Dom)}
+
\|b_{\T}^j(t,\cdot)\|_{L^{3+ \eta}(\Dom)}\leq K;&
\end{align*}
\item\label{it:local_strong_weak_2_a_3} a.s.\ for all $t\in \R_+$, $x_{\h}\in \Tor^2$ and $j\in \{1,2\}$,
\begin{equation*}
 \|a^{3,j}_v(t,\cdot,0)\|_{H^{\frac{1}{2}+\eta}(\Tor^2)}\leq K, 
\ \ \text{ and } \ \ 
 a_{\T}^{3,j}(t,x_{\h},0)=a_{\T}^{3,j}(t,x_{\h},-h)=0;
\end{equation*}
\item\label{it:ellipticity_strong_weak_2} there exists $\ellip>0$ such that, a.s.\ for all $t\in \R$, $x\in \Dom$ and $\xi\in \R^d$,
\begin{align*}
\sum_{i,j=1}^3 \Big(a^{i,j}_{v}(t,x)-\frac{1}{2}\sum_{n\geq 1} \phi^j_n(t,x) \phi^i_n(t,x) \Big) \xi_i\xi_j&\geq \ellip |\xi|^2, \\
 \sum_{i,j=1}^3 \Big(a^{i,j}_{\T}(t,x)-\frac{1}{2}\sum_{n\geq 1} \psi^j_n(t,x) \psi^i_n (t,x)\Big) \xi_i\xi_j&\geq \ellip |\xi|^2.
\end{align*}
\end{enumerate}
\end{assumption}
Comments on Assumption \ref{ass:well_posedness_primitive} can be found in Remark \ref{r:assump_local_existence_strong_weak}. 
Let us collect some remarks on 
Assumption \ref{ass:local_strong_weak_2} in the following
\begin{remark}\
\label{r:well_posedness_assumption_variable_viscosity}
\begin{enumerate}[{\rm(a)}]
\item By \eqref{it:a_ij_regularity_strong_weak_2} and Sobolev embeddings, for all $i,j\in \{1,2,3\}$ we have 
$$
\|a^{i,j}_v(t,\cdot)\|_{ C(\overline{\Dom})}\lesssim_{\eta} K, \ \  \text{ a.s.\ for all $t\in \R_+;$}
$$
\item The regularity assumption on the trace $a_v^{i,j}(t,\cdot,0)$ in \eqref{it:local_strong_weak_2_a_3} is 
motivated by Lemma \ref{l:Kadlec_formula_II} which will be needed in the proofs of the results below;
\item 
\eqref{it:ellipticity_strong_weak_2} is the usual stochastic parabolicity condition, cf.\ Remark \ref{r:assump_local_existence_strong_weak}\eqref{it:remark_parabolicity}.
\end{enumerate}
\end{remark}

The notion of $L^2$-maximal strong-weak solution to \eqref{eq:primitive_weak_strong_2}-\eqref{eq:boundary_conditions_strong_weak_2} can be defined as in Definition \ref{def:sol_strong_weak} where the weak Robin Laplacian $\Dr$ has to be substitute by the weak formulation of $\LT$, which will be denoted by $\LTw$ and it is defined as
\begin{equation}
\label{eq:def_LTw}
\begin{aligned}
\l \LTw(t) \T, \varphi\r 
:= 
&- \int_{\Dom} \Big(\sum_{i,j=1}^3
a^{i,j}(t,\cdot)\partial_j \T \partial_i \varphi\,+ \sum_{j=1}^3 b^j_{\T}(t,\cdot) (\partial_j \T)\, \varphi\Big) dx
\\
&-\alpha \int_{\Tor^2} a^{3,3}(t,\cdot,0) \T(\cdot,0)\varphi(\cdot,0)\,dx_{\h},
\end{aligned}
\end{equation}
for all $\T,\varphi\in H^1(\Dom)$ and $t\in \R_+$. Here $\l \cdot,\cdot\r$ denotes the pairing in the duality $(H^1(\Dom))^*\times H^1(\Dom)$.
Note that the above formula is consistent with a formal integration by part using \eqref{eq:boundary_conditions_strong_weak_2} and the second statement in Assumption \ref{ass:local_strong_weak_2}\eqref{it:local_strong_weak_2_a_3}.

The following is an extension of Theorem \ref{t:local_primitive}. 
Recall that $\Hs_{\n}^2$ and $\norm_k$ are as in \eqref{eq:def_H_2_N_strong_weak} and \eqref{eq:def_norm}, respectively.

\begin{theorem}[Local existence - Inhomogeneous viscosity and/or conductivity]
\label{t:local_primitive_strong_weak_2}
Let Assumption \ref{ass:local_strong_weak_2} be satisfied.
 Then for each 
\begin{equation}
\label{eq:data_strong_weak_2}
v_0\in L^0_{\F_0}(\O;\Hs^1(\Dom)), \ \ \text{ and }\ \ \T_0\in L^0_{\F_0}(\O;L^2(\Dom)),
\end{equation}
there exists an \emph{$L^2$-maximal strong-weak solution} $((v,\theta) ,\tau)$ to \eqref{eq:primitive_weak_strong_2}-\eqref{eq:boundary_conditions_strong_weak_2} such that $\tau>0$ a.s. Moreover:
\begin{enumerate}[{\rm(1)}]
\item $\displaystyle{
(v,\T)\in L^2_{\loc}([0,\tau);\Hs_{\n}^2(\Dom)\times H^1(\Dom))\cap C([0,\tau);\Hs^1(\Dom)\times L^2(\Dom))}$ a.s.;
\vspace{0.2cm}
\item\label{it:blow_up_criteria_variable_viscosity}
$
\displaystyle{
\P\big(\tau<T,\,  \norm_{1}(\tau;v)+\norm_{0}(\tau;\T) <\infty\big)=0} 
$ for all $T\in (0,\infty)$.
\end{enumerate}
\end{theorem}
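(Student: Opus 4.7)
The plan is to mirror the strategy of Theorem \ref{t:local_primitive}, reformulating \eqref{eq:primitive_weak_strong_2}--\eqref{eq:boundary_conditions_strong_weak_2} as an abstract semilinear stochastic evolution equation on $X_0=\Ls^2\times (H^1)^*$ with $X_1=\Hs^2_{\n}\times H^1$ and then invoking \cite[Theorem 4.8]{AV19_QSEE_1} and \cite[Theorem 4.11]{AV19_QSEE_2}. Concretely I would replace the pair $(A,B)$ of Remark \ref{r:SMR_2}\eqref{it:choice_AB_strong_weak} by
\begin{equation*}
A(\cdot)U:=
\begin{bmatrix}
-\p[\Lv v]-\p[\Lpp v+\op_{\kone}\T]\\
-\LTw\,\T
\end{bmatrix},
\qquad
B_n(\cdot)U:=
\begin{bmatrix}
\p[(\phi_n\cdot\nabla)v]\\
(\psi_n\cdot\nabla)\T
\end{bmatrix},
\end{equation*}
keeping $F$ and $G$ exactly as in Subsection \ref{ss:proof_local_strong_weak}. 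Since the nonlinearities and the regularity assumptions on $\phi_n,\psi_n,\kappa,\fv,\ft,\gvn,\gtn$ coincide with those of Assumption \ref{ass:well_posedness_primitive}, Steps 1--3 of the proof of Theorem \ref{t:local_primitive} apply verbatim, yielding the required subcritical estimates on $F$ and $G$ with the same choices of $\rho_j,\beta_j,\varphi_j$.

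The only new ingredient is stochastic maximal $L^2$-regularity for the linearized problem with variable coefficients, i.e.\ $(A,(B_n)_{n\geq 1})\in\mathcal{SMR}^\bullet_2(T)$. I would establish this as an analog of Proposition \ref{prop:SMR_2}, again via the method of continuity of \cite[Proposition 3.13]{AV19_QSEE_2}, taking as a base point the pair $((-\Delta_\n,-\Dr),0)$ which is already known to belong to $\mathcal{SMR}^\bullet_2(T)$. The continuity endpoint is our new $(A,(B_n)_{n\geq 1})$, and the required \textit{a priori} estimate is deduced in two steps as before. For the temperature equation, since $a^{3,j}_\T(\cdot,0)=a^{3,j}_\T(\cdot,-h)=0$, applying \Ito's formula to $\T\mapsto\|\T\|_{L^2}^2$ after smoothing produces no awkward boundary terms; the bulk term $2(\LTw\T,\T)$ gives $-2\int \sum a^{i,j}_\T\partial_j\T\,\partial_i\T\,dx$ plus lower order pieces bounded via $b^j_\T\in L^{3+\eta}$, while the Itô correction $\sum\|\psi_n\cdot\nabla\T\|_{L^2}^2$ combines with the diffusion using precisely Assumption \ref{ass:local_strong_weak_2}\eqref{it:ellipticity_strong_weak_2} to produce a term $\geq 2\ellip\|\nabla\T\|_{L^2}^2$. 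For the velocity equation, applying \Ito's formula to $v\mapsto\|\nabla v\|_{L^2}^2$, the leading deterministic contribution $2(\p[\Lv v],-\Delta v)_{L^2}$ is treated via integration by parts together with a Kadlec-type formula (as in Remark \ref{r:well_posedness_assumption_variable_viscosity}, using Lemma \ref{l:Kadlec_formula_II} and the trace bound $\|a^{3,j}_v(\cdot,0)\|_{H^{1/2+\eta}(\Tor^2)}\leq K$ to justify the boundary reshuffling), while the $H^{1,3+\eta}$-regularity of $a^{i,j}_v$ lets one put all terms involving $\nabla a^{i,j}_v$ or $b^j_v$ under the leading $\|\Delta v\|_{L^2}^2$ via H\"older, Sobolev embedding $H^1\hookrightarrow L^6$, and Young's inequality exactly as in \eqref{eq:estimate_Ito_correction_v_smr}. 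The pointwise ellipticity then absorbs the stochastic correction $\sum\|\nabla\p[(\phi_n\cdot\nabla)v]\|_{L^2}^2$, and the $\op_{\kone}$ and $\Lpp$ terms are of lower order by \eqref{eq:estimate_theta_term_in_v_variable} and Remark \ref{r:Lpp_lower_order}. The bound \eqref{eq:claimed_a_priori_estimate} with the new operators follows, and hence $(A,(B_n)_{n\geq 1})\in\mathcal{SMR}^\bullet_2(T)$.

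With stochastic maximal regularity in hand and the nonlinear estimates already verified, \cite[Theorem 4.8]{AV19_QSEE_1} provides an $L^2$-maximal strong-weak solution with $\tau>0$ a.s.\ and the claimed pathwise regularity via \cite[Proposition 3.10]{AV19_QSEE_2}, while the blow-up criterion \eqref{it:blow_up_criteria_variable_viscosity} is obtained from \cite[Theorem 4.11]{AV19_QSEE_2} exactly as in the proof of Theorem \ref{t:local_primitive}\eqref{it:blow_up_criterium_strong_weak}.

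The main obstacle is the SMR step for the velocity equation: one must couple the variable-coefficient second order bulk term with the stochastic Itô correction so that the combined ellipticity from Assumption \ref{ass:local_strong_weak_2}\eqref{it:ellipticity_strong_weak_2} yields a strict gain over $\|\Delta v\|_{L^2}^2$, and simultaneously control boundary contributions coming from the Kadlec-type integration by parts (which forces the regularity of the trace $a^{3,j}_v(\cdot,0)$) and the cross terms $\partial_j a^{i,j}_v \cdot \partial_i v$, $b^j_v\partial_j v$ (which force the $H^{1,3+\eta}$ and $L^{3+\eta}$ hypotheses respectively). Once this quantitative a priori estimate is secured, everything else is a routine transcription of the homogeneous case.
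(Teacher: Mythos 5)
Your proposal is correct and follows essentially the same route as the paper: the paper also reduces Theorem \ref{t:local_primitive_strong_weak_2} to the abstract results of \cite{AV19_QSEE_1,AV19_QSEE_2} by carrying over the nonlinear estimates of Theorem \ref{t:local_primitive} verbatim and extending Proposition \ref{prop:SMR_2} to the variable-coefficient operators $\Lv$, $\LTw$, using the coupled parabolicity condition of Assumption \ref{ass:local_strong_weak_2}\eqref{it:ellipticity_strong_weak_2} to absorb the \Ito\ corrections and the modified Kadlec formula of Lemma \ref{l:Kadlec_formula_II} (with the trace regularity of $a_v^{3,j}(\cdot,0)$) for the integration-by-parts step in the $v$-equation. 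Your identification of the SMR estimate for the velocity equation as the only genuinely new ingredient, and of the role of each coefficient hypothesis, matches the paper's proof sketch.
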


As in Subsection \ref{ss:main_results_strong_weak}, for global existence we need additional assumptions.

\begin{assumption}\
\label{ass:global_primitive_inhomogeneous}
Let Assumption \ref{ass:global_primitive} be satisfied. Suppose that 
\label{ass:global_primitive_strong_weak_2}
\begin{enumerate}[{\rm(1)}]
\item\label{it:independence_z_variable_a_ij} a.s.\ for all $n\geq 1$, $x=(x_{\h},x_3)\in \Tor^2\times (-h,0)=\Dom$ and $t\in \R_+$, 
\begin{align*}
a^{i,j}_v(t,x), \ b^j_v(t,x) \text{ are independent of $x_3$ where }i,j\in \{1,2\}.
\end{align*}
\item\label{it:a_3j_null_boundary} 
a.s.\ for all $n\geq 1$, $x_{\h}\in \Tor^2$ and $t\in \R_+$,
$$
a_v^{3,j}(t,x_{\h},0)=
a_v^{3,j}(t,x_{\h},-h)=0, \ \text{ where } j\in \{1,2\}. 
$$
\end{enumerate}
\end{assumption}

Remarks on Assumption \ref{ass:global_primitive} can be found in Remark \ref{r:global_primitive}. Note that 
Assumption \ref{ass:global_primitive_inhomogeneous}\eqref{it:independence_z_variable_a_ij} is the analogue of Assumption \ref{ass:global_primitive}\eqref{it:independence_z_variable}. Note that Assumption \ref{ass:global_primitive_inhomogeneous}\eqref{it:a_3j_null_boundary} implies the first condition in Assumption \ref{ass:local_strong_weak_2}\eqref{it:local_strong_weak_2_a_3}, and it will be needed to extend the $L^2$-estimate of Lemma \ref{l:L_2_estimate_I} to the case of inhomogeneous viscosity and/or conductivity.

The following is an extension of Theorem \ref{t:global_primitive}.

\begin{theorem}[Global existence - Inhomogeneous viscosity and/or conductivity]
\label{t:global_primitive_strong_weak_2}
Let Assumptions \ref{ass:local_strong_weak_2} and \ref{ass:global_primitive_strong_weak_2} be satisfied.  
Let $(v_0,\T_0)$ be as in \eqref{eq:data_strong_weak_2}. 
Then the $L^2$-maximal strong-weak solution $((v,\T),\tau)$ to  \eqref{eq:primitive_weak_strong_2}-\eqref{eq:boundary_conditions_strong_weak_2} provided by Theorem \ref{t:local_primitive_strong_weak_2} is \emph{global in time}, i.e.\ $\tau=\infty$ a.s. 
\end{theorem}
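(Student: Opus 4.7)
The plan is to mirror the proof of Theorem \ref{t:global_primitive} in Subsection \ref{ss:proof_global_strong_weak}: combine the blow-up criterion of Theorem \ref{t:local_primitive_strong_weak_2}\eqref{it:blow_up_criteria_variable_viscosity} with an energy estimate that is the exact analogue of Proposition \ref{prop:energy_estimate_primitive}. More precisely, it suffices to show that for $(v_0,\T_0)\in L^\infty_{\F_0}(\O;\Hs^1\times L^2)$ there is a sequence of stopping times $(\mu_k)_{k\geq 1}$ with $\mu_k\leq \tau\wedge T$, $\P(\mu_k=\tau\wedge T)\to 1$, and a constant $C_{k,T}>0$ with $\E[\norm_1(\mu_k;v)+\norm_0(\mu_k;\T)]\leq C_{k,T}(1+\E\|v_0\|_{H^1}^2+\E\|\T_0\|_{L^2}^2)$. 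Once this is available, the argument at the beginning of Subsection \ref{ss:proof_global_strong_weak} ports verbatim.

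First I would establish the variable-coefficient version of stochastic maximal $L^2$-regularity, i.e.\ an analogue of Proposition \ref{prop:SMR_2} for the linearization of \eqref{eq:primitive_weak_strong_2}. The key input is the combined parabolicity in Assumption \ref{ass:local_strong_weak_2}\eqref{it:ellipticity_strong_weak_2}, which already incorporates the It\^o correction from the transport noise. The argument in Subsection \ref{ss:proof_SMR_2} still applies: use the method of continuity to link the variable coefficient operators to the self-adjoint operators $-\Delta_{\n}$ and $-\Dr$ (via a straight-line homotopy in the coefficients $a_v^{i,j},a_{\T}^{i,j},b_v^j,b_{\T}^j$), and rerun the It\^o/Kadlec computation with the ellipticity constant $\ellip>0$ of Assumption \ref{ass:local_strong_weak_2}\eqref{it:ellipticity_strong_weak_2} playing the role previously played by $2-\ellip$. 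The regularity in Assumption \ref{ass:local_strong_weak_2}\eqref{it:a_ij_regularity_strong_weak_2} and Remark \ref{r:well_posedness_assumption_variable_viscosity} lets us absorb $\nabla a_v^{i,j}$-terms via the same interpolation trick used after \eqref{eq:estimate_Ito_correction_v_smr}, while the trace condition in Assumption \ref{ass:local_strong_weak_2}\eqref{it:local_strong_weak_2_a_3} controls the Robin-type boundary term appearing in the $\Temp$-energy (cf.\ the proof of Proposition \ref{prop:SMR_2_strong}).

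Next I would repeat the four steps of Subsection \ref{sss:proof_proposition_energy_estimate_I}. The $L^2$-energy estimate of Lemma \ref{l:L_2_estimate_I} extends directly: applying \Ito's formula to $\|v\|_{L^2}^2$ and $\|\T\|_{L^2}^2$, the cancellations \eqref{eq:cancellation_L_2_estimate} and \eqref{eq:cancellation_L_2_estimate_v} are unchanged (they rest on $\div_{\h}v+\partial_3 w=0$), while the diffusive term $\l\p[\Lv v],v\r$ yields $-\int \sum a_v^{i,j}\partial_i v\cdot \partial_j v\,dx$ up to lower-order commutator terms, and combined with the It\^o correction this gives coercivity by Assumption \ref{ass:local_strong_weak_2}\eqref{it:ellipticity_strong_weak_2}. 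The boundary term from integrating $\l\p[\Lv v],v\r$ against $v$ vanishes thanks to Assumption \ref{ass:global_primitive_inhomogeneous}\eqref{it:a_3j_null_boundary} combined with the homogeneous Neumann condition \eqref{eq:boundary_conditions_strong_weak_2} on $v$. For the splitting $v=\overline{v}+\wt{v}$, Assumption \ref{ass:global_primitive_inhomogeneous}\eqref{it:independence_z_variable_a_ij} and \eqref{it:a_3j_null_boundary} together guarantee that averaging the velocity equation in $x_3$ produces a closed two-dimensional parabolic system for $\overline{v}$ with principal part $\overline{\Lv_{\h}}\,\overline{v}$ (whose coefficients are in $H^{1,3+\eta}(\Tor^2)$), so SMR in 2D as in \eqref{eq:smr_bar_operator} still applies. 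For the $L^\infty_t(L^2_x)\cap L^2_t(H^1_x)$-bound on $\partial_3 v$, the Neumann vanishing $a_v^{3,j}(\cdot,\pm h/2\text{ or }0)=0$ is exactly what is needed to differentiate the equation in $x_3$ and integrate by parts without boundary contributions (and to apply Kadlec's formula as in Lemma \ref{l:Kadlec_formula}/\ref{l:Kadlec_formula_II}). The $L^4$-estimate of $\wt{v}$ and the final stochastic Gronwall argument then proceed as in Steps 3--4 of Lemma \ref{l:intermediate_estimate} and the proof of Proposition \ref{prop:energy_estimate_primitive}.

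The main obstacle will be organizing the estimates for the commutator terms $[\partial_3,\Lv]v = \sum (\partial_3 a_v^{i,j})\partial_{ij}^2 v + (\partial_3 b_v^j)\partial_j v$ that appear when one differentiates the velocity equation in $x_3$: these carry only $L^{3+\eta}$-integrability of the coefficients and must be absorbed using interpolation between $H^1$ and $H^2$ in the spirit of the derivation of \eqref{eq:estimate_Ito_correction_v_smr}, so that the parabolic regularization on the left-hand side dominates. A parallel commutator analysis is required in Step 1 to pass from $\overline{\Lv v}$ to $\overline{\Lv}\,\overline{v}$ (using Assumption \ref{ass:global_primitive_inhomogeneous}\eqref{it:independence_z_variable_a_ij}), and in Step 3 to handle the $L^4$-estimate for $\wt v$ in the presence of variable viscosity. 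Everything else amounts to transcribing Subsection \ref{ss:proof_global_strong_weak} with $\Delta$ replaced by $\Lv$ and $\Dr$ replaced by $\LTw$, and checking that the implicit constants depend only on $M,\eta,K,\ellip$ and the fixed horizon $T$.
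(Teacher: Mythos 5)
Your proposal follows the same strategy as the paper: reduce to the blow-up criterion of Theorem~\ref{t:local_primitive_strong_weak_2}\eqref{it:blow_up_criteria_variable_viscosity} plus an extension of Proposition~\ref{prop:energy_estimate_primitive} to variable coefficients, and then check that Lemmas~\ref{l:L_2_estimate_I} and~\ref{l:intermediate_estimate} go through when $\Delta$ is replaced by $\Lv$ and $\Dr$ by $\LTw$. You correctly single out the two structural uses of Assumption~\ref{ass:global_primitive_inhomogeneous}: the $x_3$-independence of $a_v^{i,j}$ for $i,j\in\{1,2\}$ to close the $\overline{v}$-equation and to integrate by parts in the $\partial_3 v$-estimate, and the vanishing of $a_v^{3,j}$ at the top and bottom to kill the boundary term in the $L^2$-estimate. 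This matches what the paper does.

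Two small inaccuracies worth flagging. First, the boundary values in the vanishing condition for $a_v^{3,j}$ are $x_3=0$ and $x_3=-h$, not $\pm h/2$ — Assumption~\ref{ass:global_primitive_inhomogeneous}\eqref{it:a_3j_null_boundary} is stated at the two horizontal faces of $\Dom=\Tor^2\times(-h,0)$. Second, you attribute to Assumption~\ref{ass:local_strong_weak_2}\eqref{it:local_strong_weak_2_a_3} a Robin-boundary role analogous to Proposition~\ref{prop:SMR_2_strong}, but in the strong-weak setting the $\T$-equation is treated in the weak form $\LTw$ of \eqref{eq:def_LTw}, so the condition $a_{\T}^{3,j}(t,\cdot,0)=a_{\T}^{3,j}(t,\cdot,-h)=0$ simply makes that weak form consistent with the Robin conditions after a formal integration by parts; the $H^{1/2+\eta}$-trace bound on $a_v^{3,j}(\cdot,0)$ is what feeds into Lemma~\ref{l:Kadlec_formula_II} for the \emph{velocity} energy, not the temperature. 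With these bookkeeping corrections, your plan coincides with the paper's proof, in particular with the integration-by-parts identity \eqref{eq:integration_by_parts_inhomogeneous_v_3} replacing the use of Kadlec's formula I by Kadlec's formula II, and with the lower-order terms $\int(\partial_j a_v^{i,j})\,\partial_i v_3\cdot v_3\,dx$ absorbed by interpolation as you describe.
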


The proofs of Theorems \ref{t:local_primitive_strong_weak_2} and \ref{t:global_primitive_strong_weak_2} are given in Subsection \ref{ss:proofs_variable_viscosity_strong_weak} below and consist of a variation of the one in Section \ref{s:proofs_strong_weak} given for Theorems \ref{t:local_primitive} and \ref{t:global_primitive}. The major difference is that we will use the Kaldec's formula in Lemma \ref{l:Kadlec_formula_II} instead of the one in Lemma \ref{l:Kadlec_formula}.

\begin{remark}[Inhomogeneous viscosity and/or conductivity - Strong-strong setting]
\label{r:variable_viscosity_strong_strong}
As in Section \ref{s:strong_strong}, one sees that the results of Theorems \ref{t:local_primitive_strong_weak_2} and \ref{t:global_primitive_strong_weak_2} extend to the strong-strong setting under the following minor modifications: 
\begin{enumerate}[{\rm(a)}]
\item The local existence result of Theorem \ref{t:local_primitive_strong_weak_2} holds provided we take into account the following modifications.
\begin{itemize}
\item The symmetry assumption $a^{i,j}_{\T}=a^{j,i}_{\T}$ for all $i,j\in \{1,2,3\}$ is added.
 \item The regularity assumptions on $\psi_n$ and $a^{3,j}_{\T}$ in Assumptions \ref{ass:well_posedness_primitive}\eqref{it:well_posedness_primitive_phi_smoothness} and \ref{ass:local_strong_weak_2}\eqref{it:a_ij_regularity_strong_weak_2} are strengthened to, 
a.s.\ for all $t\in \R_+$ and $j,k\in \{1,2,3\}$,
\begin{equation}
\label{eq:additional_condition_psi_strong_strong}
\|a^{i,j}_{\T}(t,\cdot)\|_{H^{1,3+\eta}(\Dom)}+\|(\psi^j_n(t,\cdot))_{n\geq 1}\|_{H^{1,3+\eta}(\Dom;\ell^2)}
 \leq K.
\end{equation}
 \item The second condition in Assumption \ref{ass:local_strong_weak_2}\eqref{it:local_strong_weak_2_a_3} is replaced by, 
 a.s.\ for all $t\in \R_+$ and $j\in \{1,2\}$,
 \begin{equation}
 \label{eq:boundary_conditions_a_T_strong_strong}
\|a_{\T}^{3,j}(t,\cdot,0)\|_{H^{\frac{1}{2}+\eta}(\Tor^2)}
+
\|a_{\T}^{3,j}(t,\cdot,-h)\|_{H^{\frac{1}{2}+\eta}(\Tor^2)}
\leq K.
 \end{equation}
\item Assumption \ref{ass:well_posedness_primitive}\eqref{it:nonlinearities_measurability}-\eqref{it:nonlinearities_strong_weak} has to be replaced by Assumption \ref{ass:well_posedness_primitive_double_strong}\eqref{it:nonlinearities_measurability_strong_strong}-\eqref{it:nonlinearities_strong_strong}.
\end{itemize}
Note that \eqref{eq:additional_condition_psi_strong_strong}
  (resp.\ \eqref{eq:boundary_conditions_a_T_strong_strong}) is stronger (resp.\ weaker) than the corresponding assumption in the strong-weak setting. Let us stress that \eqref{eq:boundary_conditions_a_T_strong_strong} is enough in the strong setting for the local existence to hold due to the modified Kadlec's formula of Lemma \ref{l:Kadlec_formula_II} below, cf.\ the proof of Theorem \ref{t:local_primitive_strong_weak_2};
\item 
The global existence results of Theorem \ref{t:global_primitive_strong_weak_2} 
 also holds for \eqref{eq:primitive_weak_strong_2}-\eqref{eq:boundary_conditions_strong_weak_2} provided Assumptions \ref{ass:local_strong_weak_2} and \ref{ass:global_primitive_inhomogeneous} are satisfied, Assumption \ref{ass:global_primitive}\eqref{it:sublinearity_Gforce} is replaced by Assumption \ref{ass:global_primitive_strong_strong}\eqref{it:sublinearity_Gforce_strong_strong}, and also \eqref{eq:additional_condition_psi_strong_strong} holds. 
\end{enumerate}
\end{remark}

\subsection{Proof of Theorems \ref{t:local_primitive_strong_weak_2} and \ref{t:global_primitive_strong_weak_2}}
\label{ss:proofs_variable_viscosity_strong_weak}
In this subsection we collect the proofs of Theorems \ref{t:local_primitive_strong_weak_2} and \ref{t:global_primitive_strong_weak_2}. As the arguments are similar to the one used for Theorems \ref{t:local_primitive} and \ref{t:global_primitive}, respectively, we only give a sketch.

\begin{proof}[Proof of Theorem \ref{t:local_primitive_strong_weak_2} -- Sketch]
To prove Theorem \ref{t:local_primitive_strong_weak_2} one can follow verbatim the one of Theorem \ref{t:local_primitive} using the results in \cite{AV19_QSEE_1,AV19_QSEE_2} once the stochastic maximal $L^2$-regularity result of Proposition \ref{prop:SMR_2} holds in the present case, namely where $\Dr$ and $\Delta$ are replaced by $\LTw$ and $\Lv$ respectively. Let us note that, in the proof of Proposition \ref{prop:SMR_2}, the structure of the operators plays a role in the integration by parts arguments used. Below, we provide some comments which allows one to extend the argument given in Proposition \ref{prop:SMR_2} for the case of homogeneous viscosity and/or conductivity. 
In particular, as in Proposition \ref{prop:SMR_2}, we will assume that 
$$
(v,\T)
\in L^2((0,\tau)\times \O;\Hs_{\n}^2\times H^1)\cap  L^2(\O;C([0,\tau];\Hs^1\times L^2)),
$$ 
where $\tau:\O\to [0,T]$ is a stopping time and $T\in (0,\infty)$ is fixed. 
To begin we comment how to extend Step 1 in Proposition \ref{prop:SMR_2}. 
Let $\e_k$ be as in \eqref{eq:def_e_k}.
Note that $\e_k$ is symmetric since $\Dr$ is self-adjoint. 
Thus, a.e.\ on $[0,\tau]\times \O$,
\begin{equation}
\label{eq:convergence_mollified_operator_L_T}
\begin{aligned}
\int_{\Dom}\e_k [\LTw(t) \T(t,\cdot)] \T(t,x)\,dx
&= \l \LTw(t) \T, \e_k \T(t,\cdot)\r\\
& 
= -\alpha \int_{\Tor^2} a^{3,3}(t,x_{\h},0) \T(x_{\h},0)(\e_k\T)(x_{\h},0)\,dx_{\h}\\
&-\sum_{i,j=1}^3 \int_{\Dom} a^{i,j}(t,x)\partial_j \T(t,x) \partial_i (\e_k\T)(t,x)\,dx\\
&+ \sum_{j=1}^3 \int_{\Dom} b^j_{\T}(t,x) \partial_j \T(t,x)\, (\e_k \T)(t,x) \,dx \\
&\stackrel{k\to \infty}{\to}\l \LTw(t)\T, \T(t,\cdot)\r.
\end{aligned}
\end{equation}
The above estimate and the Lebesgue dominated convergence theorem yield an identity similar to the one in \eqref{eq:Ito_formula_theta_L2}. Note that, in all the remaining terms in \eqref{eq:Ito_formula_theta_L2}, the Lebesgue dominated convergence theorem can be applied since $\T\in L^2((0,\tau)\times \O;H^1)\cap L^2(\O; C([0,\tau];L^2))$ by assumption.

Considering the estimate $II_t$ in \eqref{eq:Ito_formula_theta_L2} (see the estimate before \eqref{eq:first_step_estimate_temperature_L_2}),
note that,  by Assumption \ref{ass:well_posedness_primitive}\eqref{it:well_posedness_primitive_L_infty_bound}, $\big|\sum_{n\geq 1}\sum_{i,j=1}^3 \psi^j_n(t,x) \psi ^i_n(t,x)  \xi_i\xi_j\big|\leq 9 M^2 |\xi|^2 $ a.s.\ for all $t\in \R_+$, $x\in \Dom$ and $\xi\in \R^d$. Let $R_{\T}:=1+\frac{\ellip}{18 M^2}>1$. Thus the parabolicity condition of Assumption 
\ref{ass:local_strong_weak_2}\eqref{it:ellipticity_strong_weak_2} yields, a.s.\ for all $t\in \R_+$ and $x\in \Dom$,
\begin{align}
\label{eq:ellipticity_a_T_R}
\frac{R_{\T}}{2}\sum_{n\geq 1} \psi^j_n(t,x) \psi^i_n(t,x)  \xi_i\xi_j&\leq 
\sum_{i,j=1}^3 a^{i,j}_{\T}(t,x) \xi_i\xi_j 
-\frac{\ellip}{2} |\xi|^2.
\end{align}
By \eqref{eq:convergence_mollified_operator_L_T}-\eqref{eq:ellipticity_a_T_R}, one can check that \eqref{eq:first_step_estimate_temperature_L_2} also holds in the present case where  in the estimates leading to \eqref{eq:first_step_estimate_temperature_L_2} one uses \eqref{eq:ellipticity_a_T_R} and $ R_{\T}$ instead of Assumption \ref{ass:well_posedness_primitive}\eqref{it:well_posedness_primitive_parabolicity} and $\frac{\ellip'}{\ellip}$, respectively. 

Next we give some comments which are useful to extend Step 2 of Proposition \ref{prop:SMR_2} to the present case. 
Let us begin by checking that the argument in \eqref{eq:convergence_mollified_operator_L_T} can be performed also for the $v$-equation. Let $\mathcal{R}_{k}:=k(k+1+\Delta_{\n})^{-1}$ where $\Delta_{\n}$ is the strong Neumann Laplacian on $\Ls^2$, see \eqref{eq:def_strong_Neumann}.
To this end, note that, integrating by parts, we have
\begin{equation}
\label{eq:convergence_mollified_operator_L_v}
\begin{aligned}
\int_{\Dom}(\nabla \mathcal{R}_k [\p (\Lv v)])\cdot \nabla v\,dx  
&\stackrel{(i)}{=} -\int_{\Dom}\mathcal{R}_k [\p(\Lv v)] \cdot \Delta v\,dx\\
&\stackrel{k\to \infty}{\to}-\int_{\Dom}[\p(\Lv v)] \cdot \Delta v\,dx\\
&\stackrel{(ii)}{=}- \int_{\Dom}\Lv v \cdot \Delta v\,dx
\end{aligned}
\end{equation}
a.e.\ on $[0,\tau]\times \O$. Here in $(i)$ we used the boundary conditions $\partial_3 v(\cdot,0)=\partial_3 v(\cdot,-h)=0$ on $\Tor^2$ and in $(ii)$ we have used that $\p$ is self-adjoint and $\Delta$ commutes with $\p$ since $v\in \Hs^2(\Dom)$ on $[0,\tau]\times \O$ and satisfies the above mentioned homogeneous Neumann boundary conditions. 
As for \eqref{eq:convergence_mollified_operator_L_T}, the limit in \eqref{eq:convergence_mollified_operator_L_v} as $k\to \infty$ holds a.e.\ on $[0,\tau)\times \O$ by Lebesgue's dominated convergence theorem and the fact that $v\in L^2((0,\tau)\times \O;H^2)\cap L^2(\O; C([0,\tau];H^1))$ by assumption.

To extend the estimate in \eqref{eq:estimate_Ito_correction_v_smr} similarly as for the $\T$-equation, note that, by Assumption \ref{ass:well_posedness_primitive}\eqref{it:well_posedness_primitive_phi_smoothness} (see  Remark \ref{r:assump_local_existence_strong_weak}\eqref{it:Holder_continuity_phi}), one sees that \eqref{eq:ellipticity_a_T_R} holds for $\psi,a_{\T}$ replaced by $\phi,a_{v}$ with $R_v>1$ depending only on $K,\delta$. 
Thus, one obtains the estimate,
\begin{equation}
\label{eq:Ito_correction_bound_variable_viscosity}
\sum_{n\geq 1}\| \nabla \p[(\phi_n\cdot \nabla )v]\|_{L^2}^2	\leq 
\frac{2}{R_v} \sum_{i,j,k=1}^3 \int_{\Dom}a_v^{i,j}\partial_{i,k}^2 v\cdot \partial_{j,k}^2 v\,dx
+c_{R_v,M}\| v\|_{L^{2}}^2.
\end{equation}
By the modified Kadlec's formula of Lemma \ref{l:Kadlec_formula_II}, one can estimate the first term on the RHS of \eqref{eq:Ito_correction_bound_variable_viscosity} by $\frac{2}{R_v'}\int_{\Dom}\Lv v \cdot \Delta v\,dx+ C_{K,\eta} \int_{\Dom}|v|^2\,dx$ for any $1<R'_v<R_v$ and it can be adsorbed in the LHS of the modified It\^{o}'s identity for $\nabla v^{\tau}$ due to \eqref{eq:convergence_mollified_operator_L_v} and that $R_v'>1$.
\end{proof}

\begin{proof}[Proof of Theorem \ref{t:global_primitive_strong_weak_2} -- Sketch]
To prove Theorem \ref{t:global_primitive_strong_weak_2} one can follow the argument used in Subsection \ref{ss:proof_global_strong_weak}. Indeed, since the blow-up criteria in Theorem \ref{t:local_primitive_strong_weak_2}\eqref{it:blow_up_criteria_variable_viscosity} holds, it remains to show that the energy estimate of Proposition \ref{prop:energy_estimate_primitive} also holds in the case of inhomogeneous viscosity and/or conductivity. 

Note that the $L^2$-estimate of Lemma \ref{l:L_2_estimate_I} follows similarly where one also need to use Assumption \ref{ass:global_primitive_inhomogeneous}\eqref{it:a_3j_null_boundary} to integrate by parts in the $v$-equation, cf.\ \eqref{eq:def_LTw}.

Also the content of Lemma \ref{l:intermediate_estimate} holds in this case. For simplicity, we consider the case $b_v^j\equiv 0$, the general case is analogous. To see that Lemma \ref{l:intermediate_estimate} also holds in this case,  note that, Assumption \ref{ass:global_primitive_inhomogeneous}\eqref{it:independence_z_variable_a_ij} is needed to obtain an equation for $\overline{v}$ similar to \eqref{eq:primitive_bar} and also in Step 3 where we estimate $v_3=\partial_3 v$ to get
\begin{equation}
\begin{aligned}
\label{eq:integration_by_parts_inhomogeneous_v_3}
\int_{\Dom} \p [\Lv v] \cdot \partial_{3,3}^2 v\,dx
&\stackrel{(i)}{=}
\int_{\Dom}\Lv v \cdot \partial_{3,3}^2 v\\
&\stackrel{(ii)}{=}
\sum_{i,j=1}^3 \int_{\Dom}a^{i,j} \partial_j v_3\cdot \partial_{i} v_3- 
\sum_{i,j=1}^2 \int_{\Dom} (\partial_j a^{i,j}) \partial_j v_3 \cdot v_3\, dx,
\end{aligned}
\end{equation}
where in $(i)$ we used that $\p [\Lv v]=\Lv v -\q [\Lv v]$ and $\int_{\Dom}\q [\Lv v]\partial_{3,3}^2 v\,dx=0$ since $\q[\Lv v]$ is independent of $x_3$ and $\partial_3v (\cdot,-h)=\partial_3 v(\cdot,0)=0$ on $\Tor^2$.  While $(ii)$ follows by the symmetry of the matrix $a=(a^{i,j})_{i,j=1}^3$ and the fact that, for all $f\in C^{3}(\overline{\Dom})$ such that $\partial_3 f (\cdot,-h)=\partial_3 f(\cdot,0)=0$ on $\Tor^2$,
\begin{align*}
\int_{\Dom} a^{i,j} \partial_{i,j}^2 f\cdot \partial_{3,3}^2 f\,dx
&=
-\int_{\Dom} a^{i,j} \partial_{i,j,3}^2 f\cdot f_3 \,dx\\
&=
\int_{\Dom} a^{i,j} \partial_{i,j}^2 f_3 \cdot f_3\,dx\\
&=
-\int_{\Dom} a^{i,j} \partial_{i} f_3 \cdot \partial_j f_3- \int_{\Dom} (\partial_j a^{i,j}) \partial_i f_3 \cdot f_3\,dx
\end{align*}
where $f_3:=\partial_3 f$ and we have used  Assumption \ref{ass:global_primitive_inhomogeneous}\eqref{it:independence_z_variable_a_ij} in the first integration by part. As a concluding remark, let us stress that the last  terms in \eqref{eq:integration_by_parts_inhomogeneous_v_3} are of lower-order type.
Indeed, $\partial_j a^{i,j}\in H^{1,3+\eta}(\Dom)$ by Assumption \ref{ass:local_strong_weak_2}\eqref{it:a_ij_regularity_strong_weak_2}, and one can reason as in the proof of \eqref{eq:estimate_Ito_correction_v_smr}  to get, for each $\varepsilon>0$,
$$
\Big|\int_{\Dom} (\partial_j a^{i,j}) \partial_j v_3 \cdot v_3\, dx\Big|
\leq \varepsilon\|\nabla v_3\|_{L^2}^2+ C_{\varepsilon,K,\eta}  \|v_3\|^2_{L^2}, 
$$
where $K$ is as in Assumption \ref{ass:global_primitive_inhomogeneous}\eqref{it:a_ij_regularity_strong_weak_2}.

Having extended Lemmas \ref{l:L_2_estimate_I} and \ref{l:intermediate_estimate}, the extension of the Proposition \ref{prop:energy_estimate_primitive} in the present case follows verbatim from the one  given in Subsection \ref{ss:proof_global_strong_weak} since Proposition \ref{prop:SMR_2} holds also in the case of inhomogeneous viscosity and/or conductivity (cf.\  the proof of Theorem \ref{t:local_primitive_strong_weak_2}).
\end{proof}

\section{Transport noise of Stratonovich type}
\label{s:Stratonovich}
In this section we show how the results of the previous section also covers the situation where the primitive equations are perturbed by a transport noise in Stratonovich form. More precisely, here we consider 
\begin{equation}
\label{eq:primitive_Stratonovich}
\begin{cases}
\displaystyle{d v -\Delta v\, dt=\p\Big[  -(v\cdot \nabla_{\h})v- w(v)\partial_3 v}\\
\qquad \qquad\quad
 \displaystyle{+\nabla_{\h} \int_{-h}^{\cdot}  (\kone(\cdot,\zeta)\T(\cdot,\zeta))\,d\zeta+ \fv(\cdot,v,\T) \Big]dt }\\
\qquad \qquad \ \ \ \ \ \qquad  \qquad   \qquad   \ \ 
 \displaystyle{+\sum_{n\geq 1}\p [(\phi_{n}\cdot\nabla) v ]\circ d\beta_t^n}, 
& \ \ 
\text{on }\Dom,\\
\displaystyle{d \T -\Delta \T\, dt=\Big[ -(v\cdot \nabla_{\h})\T- w(v)\partial_3 \theta + \ft(\cdot,v,\T ) \Big]dt} \\
\qquad \qquad\qquad\qquad\ \ \ \ \  \qquad  \qquad  
\displaystyle{+\sum_{n\geq 1}(\psi_{n}\cdot\nabla) \T \circ d\beta_t^n}, 
&\ \ 
\text{on }\Dom,\\
v(\cdot,0)=v_0,\ \ \T(\cdot,0)=\T_0, &\ \ 
\text{on }\Dom,
\end{cases}
\end{equation}
where  $\circ$ denotes the Stratonovich integration (see e.g.\ \cite{Gar09}) and,  as above, \eqref{eq:primitive_Stratonovich} is complemented with the following boundary conditions
\begin{equation}
\label{eq:primitive_Stratonovich_boundary_conditions}
\begin{aligned}
\partial_3 v (\cdot,-h)=\partial_3 v(\cdot,0)=0 \ \  \text{ on }\Tor^2,&\\
\partial_3 \T(\cdot,-h)= \partial_3 \T(\cdot,0)+\alpha \T(\cdot,0)=0\ \  \text{ on }\Tor^2.&
\end{aligned}
\end{equation}
In order to make this section as clear as possible, in contrast to Sections \ref{s:strong_weak}, \ref{s:strong_strong} and \ref{s:variable_viscosity}, in \eqref{eq:primitive_Stratonovich} we do not consider lower order terms in the stochastic part keeping only the transport terms $(\phi_n\cdot \nabla) v \circ d\beta^n_t$ and $(\psi_n\cdot \nabla )\T \circ d\beta^n_t$ which are the most relevant from a physical point of view (see \cite{BiFla20, MR01, MR04} and the references therein) and mathematically the lower order terms are easier to handle. However, our methods can be also extended to the case of lower-order stochastic perturbations. 

As common in SPDEs, the Stratonovich integration in $\p [(\phi_n\cdot \nabla) v]\circ d\beta^n_t$ and $(\psi_n\cdot \nabla )\T\circ d\beta^n_t$ will be understood as an It\^{o}'s ones plus some correction terms. As remarked at the beginning of Section \ref{s:variable_viscosity} latter terms yield (in general) non-constant viscosity and/or conductivity and therefore \eqref{eq:primitive_Stratonovich} fits into the scheme of such section.

This section is organized as follows. In Subsection \ref{ss:main_results_Stratonovich_strong_weak} we study the equations 
\eqref{eq:primitive_Stratonovich}-\eqref{eq:primitive_Stratonovich_boundary_conditions} in the strong-weak setting and in Subsection \ref{ss:proofs_Stratonovich} we provide the corresponding proofs.
For brevity, we do not give the explicit statements in the strong-strong setting. The modifications needed in the latter situation is commented on in Remark \ref{r:strong_strong_Stratonovich} below.

\subsection{The strong-weak setting in the Stratonovich case}
\label{ss:main_results_Stratonovich_strong_weak}
Here we analyze \eqref{eq:primitive_Stratonovich}-\eqref{eq:primitive_Stratonovich_boundary_conditions}
 in the strong-weak setting under the following 
\begin{assumption}
\label{ass:local_Stratonovich_strong_weak}
There exists $M,\delta>0$ for which the following hold.
\begin{enumerate}[{\rm(1)}]
\item\label{it:well_posedness_measurability_Stratonovich} For all $n\geq 1$ and $j\in \{1,2,3\}$, 
$\phi^j_n,\psi^j_n :\O\times \Dom \to \R$ and $\kone: \R_+\times \O\times \Dom\to \R$ 
are $\F_0\otimes \Borel$-measurable and $\Progress\otimes \Borel$-measurable, respectively;
\item\label{it:well_posedness_primitive_phi_psi_smoothness_Stratonovich} 
 a.s.\ for all $t\in \R_+$, $x\in \Dom$ and $j,k\in \{1,2,3\}$,
\begin{align*}
\Big\|\Big(\sum_{n\geq 1}| \phi^j_n|^2\Big)^{1/2} \Big\|_{L^{3+\delta}(\Dom)}+
\Big\|\Big(\sum_{n\geq 1}|\partial_k \phi^j_n|^2\Big)^{1/2} \Big\|_{L^{3+\delta}(\Dom)} 
\leq M,&\\
\Big(\sum_{n\geq 1} | \psi^j_n(x) |^2\Big)^{1/2} + 
\Big\|\Big(\sum_{n\geq 1}|\div\,\psi_n|^2 \Big)^{1/2}\Big\|_{L^{3+\delta}(\Dom)}
\leq M,&
\end{align*}
where $\div\,\psi_n=\sum_{j=1}^3 \partial_j \psi^j_n$;
\item\label{it:well_posedness_primitive_kone_smoothness_Stratonovich}
a.s.\ for all $t\in \R_+$, $x_{\h}\in \Tor^2$, $j\in \{1,2,3\}$ and $i\in \{1,2\}$,
$$
\| \kone(t,x_{\h},\cdot) \|_{L^2(-h,0)} +\|\partial_i \kone(t,\cdot) \|_{L^{2+\delta}(\Tor^2;L^2(-h,0))} \leq M;
$$
\item\label{it:independence_z_variable_Stratonovich}
a.s.\ for all $n\geq 1$, $x=(x_{\h},x_3)\in \Tor^2\times (-h,0)=\Dom$ and $j\in \{1,2\}$,
\begin{align*}
\phi_n^j(x) \text{ is independent of $x_3$}.
\end{align*}
\item a.s. for all $n\geq 1$ and $j\in\{1,2\}$,
$$
\Big\|\sum_{n\geq 1} \phi_n^j(\cdot,0)\phi_n^3(\cdot,0)\Big\|_{H^{\frac{1}{2}+\delta}(\Tor^2)}
+
\Big\|\sum_{n\geq 1} \phi_n^j(\cdot,-h)\phi_n^3(\cdot,-h)\Big\|_{H^{\frac{1}{2}+\delta}(\Tor^2)}
\leq M;
$$
\item\label{it:psi_3_null_regularity_assumption_phi_boundary} 
a.s.\ for all $n\geq 1$ and  $x_{\h}\in \Tor^2$,
\begin{align*}
\psi_n^3(x_{\h},0)=
\psi_n^3(x_{\h},-h)=0;
\end{align*}
\item 
the maps 
\begin{align*}
&\fv\colon \R_+\times \O\times \R^2\times \R^6\times \R \to \R^2, \\
&\ft\colon \R_+\times \O\times \R^2\times \R^6\times \R \to \R
\end{align*}
are $\Progress\otimes \Borel$-measurable. 
%
For all $T\in (0,\infty)$ and $i\in\{1,2\}$, 
\begin{align*}
\fv^i (\cdot,0),\ft(\cdot,0)\in L^2((0,T)\times\O\times \Dom).
\end{align*}
%
Finally, for all $n\geq 1$, $t\in \R_+$,  $x\in \Dom$, $y,y'\in \R^2$, $Y,Y'\in \R^6$ and $z,z'\in \R$,
\begin{align*}
&|\fv(t,x,y,z,Y)-\fv(t,x,y',z',Y')|+
|\ft(t,x,y,z,Y)-\ft(t,x,y',z',Y')|\\
&\qquad\qquad
\lesssim (1+|y|^4+ |y'|^4)|y-y'|+
(1+|z|^{2/3}+|z'|^{2/3})|z-z'|\\
&\qquad\qquad
+(1+|Y|^{2/3}+|Y'|^{2/3})|Y-Y'|.
\end{align*}
\end{enumerate}
\end{assumption}

\begin{remark} Some remark on Assumption \ref{ass:local_Stratonovich_strong_weak} may be in order.
\begin{enumerate}[{\rm(a)}]
\item \eqref{it:well_posedness_measurability_Stratonovich}-\eqref{it:well_posedness_primitive_kone_smoothness_Stratonovich} should be compared with Assumption \ref{ass:well_posedness_primitive}\eqref{it:well_posedness_measurability}-\eqref{it:well_posedness_primitive_kone_smoothness}. Note that in \eqref{it:well_posedness_primitive_phi_psi_smoothness_Stratonovich} a regularity assumption on $\div\,\psi_n$ has been added;
\item \eqref{it:independence_z_variable_Stratonovich} coincide with Assumption \ref{ass:global_primitive}\eqref{it:independence_z_variable} (see also \eqref{eq:coefficients_Stratonovich} below for the choice of $\hp_n^{\ell,m}$).
\end{enumerate}
\end{remark}

As explained at the beginning of Section \ref{s:variable_viscosity}, the Stratonovich problem \eqref{eq:primitive_Stratonovich}-\eqref{eq:primitive_Stratonovich_boundary_conditions} will be viewed in the form  \eqref{eq:primitive_weak_strong_2}-\eqref{eq:boundary_conditions_strong_weak_2} where inhomogeneous viscosity and/or conductivity is considered. To this end, set $a_{\phi}=(a^{i,j}_{\phi})_{i,j=1}^3$ and $a_{\psi}=(a^{i,j}_{\psi})_{i,j=1}^3$, where a.s.\ for all $x\in \Dom$,
\begin{equation}
\label{eq:def_a_phi_psi_strong_weak}
a_{\phi}^{i,j}(x):=\delta^{i,j}+\frac{1}{2}\sum_{n\geq 1} \phi^i_n (x)\phi^j_n (x),
\ \text{ and } \ 
a_{\psi}^{i,j} (x):=\delta^{i,j}+\frac{1}{2}\sum_{n\geq 1} \psi^i_n (x)\psi^j_n (x).
\end{equation}
If Assumption \ref{ass:local_Stratonovich_strong_weak}\eqref{it:independence_z_variable_Stratonovich} holds, then at least formally one has,
\begin{align}
\label{eq:Ito_corrections_strong_weak_T}
(\psi_n \cdot \nabla ) \T \circ d\beta_t^n 
&=
\LTphi \T\,dt
+
(\psi_n \cdot \nabla ) \T \, d\beta_t^n,
\\
\label{eq:Ito_corrections_strong_weak_v}
\p[(\phi_n \cdot \nabla) v]\circ d\beta^n_t
&=
\p\big[\Lvphi v+ \Lvp v \big]\,dt+ \p[(\phi_n \cdot \nabla) v]\, d\beta^n_t,
\end{align}
where 
\begin{align}
\label{eq:def_LTp}
\LTphi \T
&:=
\div(a_{\psi} \cdot\nabla \T)\,dt-\frac{1}{2}\sum_{n\geq 1} (\div \,\psi_n )[ (\psi_n\cdot \nabla)\T],
\\
\label{eq:def_Lvp}
\Lvphi v
&:=
 \sum_{i,j=1}^3 \Big(a_{\phi}^{i,j} \partial_{i,j}^2 v + \frac{1}{2}\sum_{n\geq 1}(\partial_i \phi^j_n) \phi^i_n \partial_j v\Big),\\
\label{eq:def_Lvp_two}
\Lvp v
&:=\Big(\sum_{n\geq 1}\sum_{i=1}^2\partial_j \phi_n^i  (\q[(\phi_n\cdot\nabla) v])^i \Big)_{j=1}^2,
\end{align}
where $(\q[(\phi_n\cdot\nabla) v])^i$ denotes the $i$-th component of $\q[(\phi_n\cdot\nabla) v]$.
We postpone the proof of \eqref{eq:Ito_corrections_strong_weak_T}-\eqref{eq:Ito_corrections_strong_weak_v} to the beginning of Subsection \ref{ss:proofs_Stratonovich} below.

By \eqref{eq:Ito_corrections_strong_weak_T}-\eqref{eq:def_Lvp_two}, the problem \eqref{eq:primitive_Stratonovich}-\eqref{eq:primitive_Stratonovich_boundary_conditions} is (formally) equivalent to \eqref{eq:primitive_weak_strong_2}-\eqref{eq:boundary_conditions_strong_weak_2}  with 
\begin{equation}
\label{eq:coefficients_Stratonovich}
\begin{aligned}
\gvn\equiv 0, \ \ \ \gtn\equiv 0, \ \ \ a=a_{\phi},\ \ \ a=a_{\psi}, \ \ \ \hp^{\ell,m}_n=\partial_{\ell} \phi^m_n,& \\
b_{v}^j =\sum_{n\geq 1}\sum_{i=1}^3\frac{1}{2}(\partial_i \phi^j_n) \phi^i_n, \ \ \ \text{ and }\ \ \
b_{\T}^j= -\frac{1}{2}\sum_{n\geq 1} (\div \,\psi_n )\psi^j_n.&
\end{aligned}
\end{equation}

Since \eqref{eq:primitive_Stratonovich}-\eqref{eq:primitive_Stratonovich_boundary_conditions} is in the form \eqref{eq:primitive_weak_strong_2}-\eqref{eq:boundary_conditions_strong_weak_2} with the above choice, the notion of $L^2$-maximal strong-weak solution corresponds to the one of \eqref{eq:primitive_weak_strong_2}-\eqref{eq:boundary_conditions_strong_weak_2}.
In the following $\Hs_{\n}^2$ and $\norm_k$ are as in \eqref{eq:def_H_2_N_strong_weak} and \eqref{eq:def_norm}, respectively.

\begin{theorem}[Local existence - Stratonovich case]
\label{t:local_primitive_Stratonovich_strong_weak}
Let Assumption \ref{ass:local_Stratonovich_strong_weak} be satisfied.
 Then for each 
\begin{equation}
\label{eq:data_Stratonovich_strong_weak}
v_0\in L^0_{\F_0}(\O;\Hs^1(\Dom)), \ \ \text{ and }\ \ \T_0\in L^0_{\F_0}(\O;L^2(\Dom)),
\end{equation}
there exists an \emph{$L^2$-maximal strong-weak solution} $((v,\theta) ,\tau)$ to \eqref{eq:primitive_Stratonovich}-\eqref{eq:primitive_Stratonovich_boundary_conditions} such that $\tau>0$ a.s. Moreover:
\begin{enumerate}[{\rm(1)}]
\item $\displaystyle{
(v,\T)\in L^2_{\loc}([0,\tau);\Hs_{\n}^2(\Dom)\times H^1(\Dom))\cap C([0,\tau);\Hs^1(\Dom)\times L^2(\Dom))}$ a.s.;
\vspace{0.1cm}
\item 
$
\displaystyle{
\P\Big(\tau<T,\,  \norm_{1}(\tau;v)+\norm_{0}(\tau;\T) <\infty\Big)=0} 
$ for all $T\in (0,\infty)$.
\end{enumerate}
\end{theorem}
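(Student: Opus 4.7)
The plan is to reduce the Stratonovich problem \eqref{eq:primitive_Stratonovich}--\eqref{eq:primitive_Stratonovich_boundary_conditions} to the \Ito\ system with variable viscosity and conductivity studied in Section \ref{s:variable_viscosity}, and then to invoke Theorem \ref{t:local_primitive_strong_weak_2}. Concretely, I will justify the identities \eqref{eq:Ito_corrections_strong_weak_T}--\eqref{eq:Ito_corrections_strong_weak_v} (which are the Stratonovich-to-\Ito\ conversion for the transport terms) and then check that the resulting \Ito\ system coincides with \eqref{eq:primitive_weak_strong_2}--\eqref{eq:boundary_conditions_strong_weak_2} with the choice \eqref{eq:coefficients_Stratonovich} and satisfies Assumption \ref{ass:local_strong_weak_2}.

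For the conversion, the formal Stratonovich correction for the temperature is $\tfrac12\sum_n (\psi_n\cdot\nabla)^2\T$. A direct computation using the product rule together with $a_{\psi}^{i,j}=\delta^{i,j}+\tfrac12\sum_n\psi_n^i\psi_n^j$ gives
\[
\Delta\T+\tfrac12\sum_n(\psi_n\cdot\nabla)^2\T=\div(a_{\psi}\nabla\T)-\tfrac12\sum_n(\div\,\psi_n)(\psi_n\cdot\nabla\T)=\LTphi\T,
\]
so the \Ito\ version of the $\T$-equation has the divergence-form operator $\LTphi$ as principal part, which at low regularity is to be understood through the weak formulation \eqref{eq:def_LTw}. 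For the velocity the analogous formal correction is $\tfrac12\sum_n\p[(\phi_n\cdot\nabla)(\p[(\phi_n\cdot\nabla)v])]$. Using $\p+\q=\mathrm{Id}$ and writing $(\phi_n\cdot\nabla)v=(\phi_n\cdot\nabla)v-\q[(\phi_n\cdot\nabla)v]+\q[(\phi_n\cdot\nabla)v]$, one separates a symmetric second-order piece (which, after expanding in the same way, gives $\p[\Lvphi v]$) from a residual first-order piece driven by the ``turbulent pressure'' gradient $\q[(\phi_n\cdot\nabla)v]=\nabla_{\h}\wt{P}_n$; recalling Assumption \ref{ass:local_Stratonovich_strong_weak}\eqref{it:independence_z_variable_Stratonovich}, this residual reduces to the non-local operator $\p[\Lvp v]$ appearing in \eqref{eq:def_Lvp_two}. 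This explains the form of \eqref{eq:Ito_corrections_strong_weak_v} and why the choice $\hp^{\ell,m}_n=\partial_{\ell}\phi^m_n$ is the correct one in \eqref{eq:coefficients_Stratonovich}.

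The remaining task is to verify that Assumption \ref{ass:local_strong_weak_2} holds for the coefficients \eqref{eq:coefficients_Stratonovich}. Ellipticity is automatic: by construction $a_{\phi}^{i,j}-\tfrac12\sum_n\phi_n^i\phi_n^j=\delta^{i,j}$ and analogously for $a_{\psi}$, so the condition in Assumption \ref{ass:local_strong_weak_2}\eqref{it:ellipticity_strong_weak_2} holds with $\ellip=1$. The regularity $a_{\phi}^{i,j}\in H^{1,3+\eta}(\Dom)$ follows from Assumption \ref{ass:local_Stratonovich_strong_weak}\eqref{it:well_posedness_primitive_phi_psi_smoothness_Stratonovich} by Cauchy--Schwarz in $\ell^2$, using the Sobolev embedding $H^{1,3+\delta}(\Dom;\ell^2)\embed L^{\infty}(\Dom;\ell^2)$ to control the factor without a derivative; the $L^\infty$-bound on $a_{\psi}^{i,j}$ is obtained similarly, and the trace bound $\|a_{\phi}^{3,j}(t,\cdot,0)\|_{H^{1/2+\eta}(\Tor^2)}\leq K$ comes from Assumption \ref{ass:local_Stratonovich_strong_weak}(5). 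The boundary identity $a_{\psi}^{3,j}(\cdot,0)=a_{\psi}^{3,j}(\cdot,-h)=0$ for $j\in\{1,2\}$ is immediate from Assumption \ref{ass:local_Stratonovich_strong_weak}\eqref{it:psi_3_null_regularity_assumption_phi_boundary}, since $\psi_n^3$ vanishes on the horizontal boundaries. Finally, the lower-order drifts $b_v^j$ and $b_{\T}^j$ in \eqref{eq:coefficients_Stratonovich} sit in $L^{3+\delta}(\Dom)$ by another Cauchy--Schwarz in $\ell^2$, pairing the $L^{3+\delta}(\Dom;\ell^2)$-factor against the $L^{\infty}(\Dom;\ell^2)$-factor. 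Once all these items are in place, Theorem \ref{t:local_primitive_strong_weak_2} applied to the \Ito\ reformulation produces the maximal $L^2$-strong-weak solution with the stated regularity and blow-up criterion.

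The main technical point will be the rigorous justification of the Stratonovich-to-\Ito\ identities \eqref{eq:Ito_corrections_strong_weak_T}--\eqref{eq:Ito_corrections_strong_weak_v} at the level of regularity dictated by the local-existence class, since neither $(\psi_n\cdot\nabla)^2\T$ nor the iterated expression for $v$ is a priori defined in the strong sense for $\T\in L^2_t H^1_x$, $v\in L^2_t H^2_x$. This should be done by first proving the equivalence on smooth approximations (e.g.\ by mollifying the initial data and the coefficients) and then passing to the limit using the stochastic maximal $L^2$-regularity estimates of Proposition \ref{prop:SMR_2} applied to the variable-coefficient linearization, combined with the non-local correction $\Lvp$ being of strictly lower order (first order with coefficient in $L^{3+\delta}(\ell^2)$).
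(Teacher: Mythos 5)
Your proposal takes essentially the same route as the paper: rewrite the Stratonovich transport terms in \Ito\ form via \eqref{eq:Ito_corrections_strong_weak_T}--\eqref{eq:Ito_corrections_strong_weak_v}, identify the resulting system with \eqref{eq:primitive_weak_strong_2}--\eqref{eq:boundary_conditions_strong_weak_2} for the coefficients \eqref{eq:coefficients_Stratonovich}, check Assumption \ref{ass:local_strong_weak_2} (ellipticity with $\ellip=1$, $H^{1,3+\delta}$-regularity of $a_{\phi}$, the trace and boundary conditions from Assumption \ref{ass:local_Stratonovich_strong_weak}, and the drifts and $\hp_n^{\ell,m}=\partial_\ell\phi_n^m$ in $L^{3+\delta}$), and apply Theorem \ref{t:local_primitive_strong_weak_2}, which is exactly the paper's (one-line) argument, and your verification of the hypotheses is correct and more detailed. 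The only unnecessary step is your final mollification/limit argument: since the notion of $L^2$-maximal strong-weak solution to the Stratonovich problem is defined in the paper through the \Ito\ reformulation, the conversion identities need only be justified formally and no passage to the limit at the solution's regularity level is required.
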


As usual, under additional assumptions we obtain a global existence result. 
 
\begin{assumption}\ 
\label{ass:global_primitive_Stratonovich_strong_weak}
\begin{enumerate}[{\rm(1)}]
\item\label{it:null_phi_3_Stratonovich_global} 
a.s.\ for all $n\geq 1$ and $x_{\h}\in \Tor^2$,
$$
\phi^3_n(x_{\h},0)=
\phi^3_n(x_{\h},-h)=0;
$$
\item there exist $C>0$ and $\y\in L^0_{\Progress}(\O;L^2_{\loc}([0,\infty);L^2(\Dom)))$ such that, a.s.\ for all $t\in \R_+$, $j\in \{1,2,3\}$, $x\in \Dom$, $y\in \R^2$, $z\in \R$ and $Y\in \R^{6}$, 
\begin{align*}
|\fv(t,x,y,z,Y)|+|\ft(t,x,y,z,Y)|
\leq C(\y(t,x)+|y|+|z|+|Y|).
\end{align*}
\end{enumerate}
\end{assumption}

\begin{theorem}[Global existence - Stratonovich case]
\label{t:global_primitive_strong_strong_2}
Let Assumptions \ref{ass:local_Stratonovich_strong_weak} and \ref{ass:global_primitive_Stratonovich_strong_weak} be satisfied.
Let $(v_0,\T_0)$ be as in \eqref{eq:data_Stratonovich_strong_weak}.
Then the $L^2$-maximal strong-weak solution $((v,\T),\tau)$ to  \eqref{eq:primitive_Stratonovich}-\eqref{eq:primitive_Stratonovich_boundary_conditions} provided by Theorem \ref{t:local_primitive_Stratonovich_strong_weak} is \emph{global in time}, i.e.\ $\tau=\infty$ a.s. 
\end{theorem}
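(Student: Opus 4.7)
The strategy is to reduce Theorem~\ref{t:global_primitive_strong_strong_2} to Theorem~\ref{t:global_primitive_strong_weak_2}. Using the Stratonovich-to-It\^o conversion \eqref{eq:Ito_corrections_strong_weak_T}--\eqref{eq:Ito_corrections_strong_weak_v} (which will be proven at the beginning of Subsection~\ref{ss:proofs_Stratonovich}, and which for this theorem we take as given), the system \eqref{eq:primitive_Stratonovich}--\eqref{eq:primitive_Stratonovich_boundary_conditions} is recast as an instance of \eqref{eq:primitive_weak_strong_2}--\eqref{eq:boundary_conditions_strong_weak_2} with the specific coefficient choices \eqref{eq:coefficients_Stratonovich}, namely $a=a_{\phi}$, $a=a_{\psi}$, $\hp^{\ell,m}_n=\partial_\ell \phi^m_n$, $\gvn\equiv\gtn\equiv 0$, together with the lower-order drifts $b_v^j,\,b_\T^j$ displayed in \eqref{eq:coefficients_Stratonovich}. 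The $L^2$-maximal strong-weak solution provided by Theorem~\ref{t:local_primitive_Stratonovich_strong_weak} is then exactly the one coming from Theorem~\ref{t:local_primitive_strong_weak_2} applied to this Itô system. It therefore suffices to check that Assumptions~\ref{ass:local_strong_weak_2} and~\ref{ass:global_primitive_strong_weak_2} hold for these coefficients under the hypotheses of Theorem~\ref{t:global_primitive_strong_strong_2}, and then to invoke Theorem~\ref{t:global_primitive_strong_weak_2}.

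\paragraph*{Verification of the local assumption.}
The central observation is that the Stratonovich correction \emph{exactly cancels} the stochastic It\^o correction in the principal part: indeed,
\[
a_{\phi}^{i,j}(x)-\frac{1}{2}\sum_{n\geq 1}\phi_n^i(x)\phi_n^j(x)=\delta^{i,j},\qquad a_{\psi}^{i,j}(x)-\frac{1}{2}\sum_{n\geq 1}\psi_n^i(x)\psi_n^j(x)=\delta^{i,j},
\]
so the parabolicity condition in Assumption~\ref{ass:local_strong_weak_2}\eqref{it:ellipticity_strong_weak_2} holds with $\ellip=1$ for any noise satisfying Assumption~\ref{ass:local_Stratonovich_strong_weak}. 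The regularity $a_{\phi}^{i,j}\in H^{1,3+\eta}(\Dom)$ and $a_{\psi}^{i,j}\in C(\overline{\Dom})$ follows by pairing the $\ell^2$-Cauchy--Schwartz inequality with Sobolev embedding: $(\phi^j_n)_n\in H^{1,3+\delta}(\Dom;\ell^2)\hookrightarrow C^\alpha(\Dom;\ell^2)$ from Assumption~\ref{ass:local_Stratonovich_strong_weak}\eqref{it:well_posedness_primitive_phi_psi_smoothness_Stratonovich}, and $(\psi^j_n)_n\in L^\infty(\Dom;\ell^2)$ from the same assumption. The drifts $b^j_v=\tfrac{1}{2}\sum_{n,i}(\partial_i\phi^j_n)\phi^i_n$ and $b^j_\T=-\tfrac{1}{2}\sum_n (\div\,\psi_n)\psi^j_n$ lie in $L^{3+\eta}(\Dom)$ by H\"older in $\ell^2$, again using Assumption~\ref{ass:local_Stratonovich_strong_weak}\eqref{it:well_posedness_primitive_phi_psi_smoothness_Stratonovich}. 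The boundary condition $a_\T^{3,j}(t,x_{\h},0)=a_\T^{3,j}(t,x_{\h},-h)=0$ for $j\in\{1,2\}$ follows because $a_\psi^{3,j}=\tfrac{1}{2}\sum_n \psi^3_n\psi^j_n$ and $\psi^3_n$ vanishes on the top and bottom by Assumption~\ref{ass:local_Stratonovich_strong_weak}\eqref{it:psi_3_null_regularity_assumption_phi_boundary}; the trace regularity $\|a_v^{3,j}(t,\cdot,0)\|_{H^{1/2+\eta}(\Tor^2)}\leq K$ is precisely what Assumption~\ref{ass:local_Stratonovich_strong_weak} guarantees for the quadratic form $\sum_n \phi_n^j\phi_n^3$ at $x_3=0$ (and similarly at $x_3=-h$). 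Finally, the condition on $\hp^{\ell,m}_n=\partial_\ell \phi^m_n$ in Assumption~\ref{ass:well_posedness_primitive}\eqref{it:well_posedness_primitive_phi_smoothness}, used implicitly through Assumption~\ref{ass:global_primitive}, is immediate.

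\paragraph*{Verification of the global assumption and the main difficulty.}
For Assumption~\ref{ass:global_primitive_strong_weak_2}, the $x_3$-independence of $a_\phi^{i,j}$ and of $\hp^{\ell,m}_n=\partial_\ell \phi^m_n$ for $i,j,\ell,m\in\{1,2\}$ is a direct consequence of Assumption~\ref{ass:local_Stratonovich_strong_weak}\eqref{it:independence_z_variable_Stratonovich}. The crucial identity $a_v^{3,j}(x_{\h},0)=a_v^{3,j}(x_{\h},-h)=0$ for $j\in\{1,2\}$ is precisely where Assumption~\ref{ass:global_primitive_Stratonovich_strong_weak}\eqref{it:null_phi_3_Stratonovich_global} enters: it forces $\phi^3_n|_{x_3=0,-h}=0$, hence $a_v^{3,j}=\tfrac{1}{2}\sum_n\phi^3_n\phi^j_n$ vanishes on both boundary components. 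Assumption~\ref{ass:global_primitive}\eqref{it:sublinearity_Gforce} holds trivially since $\gvn\equiv\gtn\equiv 0$, while the sublinear growth of $\fv,\ft$ is given. Having checked all the hypotheses, Theorem~\ref{t:global_primitive_strong_weak_2} yields $\tau=\infty$ a.s. The main conceptual obstacle is not in this verification -- which is essentially algebraic -- but in the Stratonovich-to-It\^o passage \eqref{eq:Ito_corrections_strong_weak_v} itself: because the stochastic equation for $v$ contains the hydrostatic projection $\p$, the quadratic covariation that generates the Stratonovich correction picks up not only the principal-part contribution $\div(a_\phi\nabla v)$ but also the non-local term $\p[\Lvp v]$ coming from the complementary projection $\q[(\phi_n\cdot\nabla)v]$. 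Provided this identity is handled rigorously (cf.~the promised proof at the start of Subsection~\ref{ss:proofs_Stratonovich}), the global theorem is then a clean consequence of the general result under variable viscosity.
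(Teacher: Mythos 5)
Your proposal is correct and follows essentially the same route as the paper: the paper's proof of Theorem \ref{t:global_primitive_strong_strong_2} likewise reduces to Theorem \ref{t:global_primitive_strong_weak_2} via the It\^o reformulation with the coefficient choice \eqref{eq:coefficients_Stratonovich}, noting that Assumption \ref{ass:local_Stratonovich_strong_weak}\eqref{it:independence_z_variable_Stratonovich} and Assumption \ref{ass:global_primitive_Stratonovich_strong_weak}\eqref{it:null_phi_3_Stratonovich_global} yield Assumption \ref{ass:global_primitive_inhomogeneous}\eqref{it:independence_z_variable_a_ij} and \eqref{it:a_3j_null_boundary}, respectively. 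Your additional verification of the local Assumption \ref{ass:local_strong_weak_2} (exact cancellation giving $\ellip=1$, regularity of $a_\phi,a_\psi,b_v,b_\T$, trace conditions) is what the paper delegates to the proof of Theorem \ref{t:local_primitive_Stratonovich_strong_weak}, so it is consistent, just redundant for this statement.
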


The proofs of Theorems \ref{t:local_primitive_Stratonovich_strong_weak} and \ref{t:global_primitive_strong_strong_2} as well as \eqref{eq:Ito_corrections_strong_weak_T}-\eqref{eq:Ito_corrections_strong_weak_v} will be given in Subsection \ref{ss:proofs_Stratonovich} below. We conclude this subsection with a few remarks.

\begin{remark}
Assumption \ref{ass:global_primitive_Stratonovich_strong_weak}\eqref{it:null_phi_3_Stratonovich_global}
ensures that $\phi^3_n$ does \emph{not} interact with the boundary. Such assumption will be used to ensure that $a_{\phi}^{3,j}(x_{\h},0)=a_{\phi}^{3,j}(x_{\h},-h)=0$ for all $j\in \{1,2\}$ and $x_{\h}\in \Tor^2$, cf.\ Assumption \ref{ass:global_primitive_inhomogeneous}\eqref{it:a_3j_null_boundary}.
\end{remark}

\begin{remark}[The strong-strong setting for \eqref{eq:primitive_Stratonovich}]\
\label{r:strong_strong_Stratonovich}
By Remark \ref{r:variable_viscosity_strong_strong} in the strong-strong setting (cf.\ Section \ref{s:strong_strong} for the It\^{o}'s case) the following hold for \eqref{eq:primitive_Stratonovich}-\eqref{eq:primitive_Stratonovich_boundary_conditions}:
\begin{enumerate}[{\rm(a)}]
\item 
The local existence results of Theorem \ref{t:local_primitive_Stratonovich_strong_weak} 
 also holds for \eqref{eq:primitive_Stratonovich}-\eqref{eq:primitive_Stratonovich_boundary_conditions} also holds in the strong-strong setting  
 provided Assumptions \ref{ass:local_Stratonovich_strong_weak} and \ref{ass:global_primitive_Stratonovich_strong_weak}
hold and we add the following modifications: 
\begin{itemize}
 \item The regularity assumptions on $\psi_n$ in Assumption \ref{ass:local_Stratonovich_strong_weak}\eqref{it:well_posedness_primitive_phi_psi_smoothness_Stratonovich} are replaced by 
\begin{equation}
\label{eq:additional_condition_psi_strong_strong_Stratonovich}
\| (\psi^j_n)_{n\geq 1} \|_{H^{1,3+\delta}(\Dom;\ell^2)} \leq M, \ \ \text{ for all }j\in \{1,2,3\}; 
\end{equation}
 \item Assumption \ref{ass:local_Stratonovich_strong_weak}\eqref{it:psi_3_null_regularity_assumption_phi_boundary} is replaced by, for all $j\in \{1,2\}$,
 \begin{equation}
 \label{eq:boundary_conditions_psi_strong_strong_Stratonovich}
\Big\|\sum_{n\geq 1} \psi_n^j(\cdot,0)\psi_n^3(\cdot,0)\Big\|_{H^{\frac{1}{2}+\delta}(\Tor^2)}
+
\Big\|\sum_{n\geq 1} \psi_n^j(\cdot,-h)\psi_n^3(\cdot,-h)\Big\|_{H^{\frac{1}{2}+\delta}(\Tor^2)}
\leq M.
 \end{equation}
\end{itemize}
Note that  \eqref{eq:additional_condition_psi_strong_strong_Stratonovich} (resp.\ 
 \eqref{eq:boundary_conditions_psi_strong_strong_Stratonovich}) is stronger (resp.\ weaker) than the corresponding assumption in the strong-weak setting;
\item 
The global existence results of Theorem \ref{t:global_primitive_strong_strong_2} 
 also holds for \eqref{eq:primitive_Stratonovich}-\eqref{eq:primitive_Stratonovich_boundary_conditions} provided Assumptions \ref{ass:local_Stratonovich_strong_weak} and \ref{ass:global_primitive_Stratonovich_strong_weak} are satisfied and also \eqref{eq:additional_condition_psi_strong_strong_Stratonovich} holds. 
\end{enumerate}
\end{remark}

\subsection{Proofs of \eqref{eq:Ito_corrections_strong_weak_T}-\eqref{eq:Ito_corrections_strong_weak_v} and Theorems \ref{t:local_primitive_Stratonovich_strong_weak} and \ref{t:global_primitive_strong_strong_2}}
\label{ss:proofs_Stratonovich}

\begin{proof}[Formal proof of \eqref{eq:Ito_corrections_strong_weak_T}-\eqref{eq:Ito_corrections_strong_weak_v}]
To motivate \eqref{eq:Ito_corrections_strong_weak_T}, recall that
\begin{equation}
\label{eq:stratonovich_correction_motivation_T}
(\psi_n \cdot \nabla) \T\circ d\beta^n_t
=
\frac{1}{2}(\psi_n \cdot \nabla)[(\psi_n \cdot \nabla) \T]\,dt+
(\psi_n \cdot \nabla) \T\,d\beta^n_t.
\end{equation}
Thus \eqref{eq:Ito_corrections_strong_weak_v} follows by 
noticing that $(\psi_n \cdot\nabla) f= \div(\psi_n f)-(\div\, \psi) f$ for all sufficiently smooth function $f$.

To motivate \eqref{eq:Ito_corrections_strong_weak_v}, reasoning as in \eqref{eq:stratonovich_correction_motivation_T} (cf.\ \cite[Chapter 3]{F_lectures_Waseda}), we have
\begin{equation}
\label{eq:correction_v_step_1}
\p[(\phi_n \cdot \nabla) v]\circ d\beta^n_t
=
\p[(\phi_n \cdot \nabla) v ]\,d\beta^n_t
+
\frac{1}{2}\p\Big[(\phi_n \cdot \nabla)\big(\p[(\phi_n \cdot \nabla) v] \big)\Big] \,dt.
\end{equation}
Next, we rewrite the last term appearing in the RHS of \eqref{eq:correction_v_step_1}. To this end, recall that $\nabla_{\h} \wt{P}_n=\q [(\phi_n \cdot \nabla) v]$ and $\p=I- \q $ (here $I$ denotes the identity operator), cf.\ Subsection \ref{ss:set_up}. Thus
\begin{equation}
\label{eq:correction_v_step_2}
\p\Big[(\phi_n \cdot \nabla)\p[(\phi_n \cdot \nabla) v ]\Big]
=
\p\Big[(\phi_n \cdot \nabla)[(\phi_n \cdot \nabla) v]\Big]
- \p\Big[(\phi_n \cdot \nabla)\nabla_{\h} \wt{P}_n\Big].
\end{equation}
By the product rule the first term on the right hand side of \eqref{eq:correction_v_step_2} is equivalent to $\p[\Lvphi v]\,dt$. It remains to show that the second term on the right hand side of \eqref{eq:correction_v_step_1} is equivalent to $\p[\Lvp v]\,dt$. 
For $j\in \{1,2\}$, note that
$$
(\phi_n \cdot \nabla)\partial_j  \wt{P}_n\stackrel{(i)}{=}
(\phi_{n,\h} \cdot \nabla_{\h})\partial_j  \wt{P}_n
=
\partial_j \big[ (\phi_{n,\h} \cdot \nabla_{\h}) \wt{P}_n \big]- \sum_{i=1}^2 ( \partial_j \phi^i_n  ) \partial_i \wt{P}_n,
$$
where $\phi_{n,\h}=(\phi^1_n,\phi^2_n)$ and in $(i)$ we used that $\wt{P}_n$ is independent of $x_3$. 
By Assumption \ref{ass:local_Stratonovich_strong_weak}\eqref{it:independence_z_variable_Stratonovich}, one has $\p \Big[\nabla_{\h} [ (\phi_{n,\h} \cdot \nabla_{\h}) \wt{P}_n] \Big]=\p_{\h} \Big[\nabla_{\h} [ (\phi_{n,\h} \cdot \nabla_{\h}) \wt{P}_n] \Big]=0$ and therefore
$$
\p\Big[
(\phi_n \cdot \nabla)\big[\nabla_{\h}  \wt{P}_n\big]\Big]=-\sum_{i=1}^2 \p\Big[
 ( \nabla_{\h} \phi^i_n  ) \partial_i \wt{P}_n\Big].
$$
Since $\partial_i\wt{P}_n=(\q[(\phi_n\cdot \nabla) v])^i$, the previous identity shows that the 
second term on the RHS of \eqref{eq:correction_v_step_1} is equivalent to $\p[\Lvp v]\,dt$ as desired.
\end{proof}

It remains to prove 
Theorems \ref{t:local_primitive_Stratonovich_strong_weak} and \ref{t:global_primitive_strong_strong_2}.

\begin{proof}[Proof of Theorem \ref{t:local_primitive_Stratonovich_strong_weak}]
The claim follows from Theorem \ref{t:local_primitive_strong_weak_2} noticing that Assumption \ref{ass:local_Stratonovich_strong_weak} yield Assumption \ref{ass:local_strong_weak_2} with the choice \eqref{eq:coefficients_Stratonovich}. Among others, note that $(\hp_{n}^{\ell,m})_{n\geq 1}=(\partial_{\ell} \phi^m_n)_{n\geq 1}\in L^{3+\delta}(\Dom;\ell^2) $ by 
Assumption \ref{ass:local_Stratonovich_strong_weak}\eqref{it:well_posedness_primitive_phi_psi_smoothness_Stratonovich}.
\end{proof}

\begin{proof}[Proof of Theorem \ref{t:global_primitive_strong_strong_2}]
The claim follows  from Theorem \ref{t:global_primitive_strong_weak_2} noticing that Assumption \ref{ass:global_primitive_inhomogeneous} are satisfied with the choice \eqref{eq:coefficients_Stratonovich} due to Assumption \ref{ass:global_primitive_Stratonovich_strong_weak}. In particular, note that Assumption \ref{ass:local_Stratonovich_strong_weak}\eqref{it:independence_z_variable_Stratonovich} and \ref{ass:global_primitive_Stratonovich_strong_weak}\eqref{it:null_phi_3_Stratonovich_global} ensure that 
Assumption \ref{ass:global_primitive_inhomogeneous}\eqref{it:independence_z_variable_a_ij} and \eqref{it:a_3j_null_boundary} hold, respectively.
\end{proof}

\appendix

\section{Kadlec's formulas}
The following was used to prove Propositions \ref{prop:SMR_2} and \ref{prop:SMR_2_strong}. Below $\nabla_{\h}=(\partial_1,\partial_2)$.

\begin{lemma}[Kadlec's formula]
\label{l:Kadlec_formula}
Let $\Dom=\Tor^2\times (-h,0)$ for some $h>0$. Let $\beta\in \R$ and 
$
f\in H^{2}(\Dom) 
$
be such that 
$\partial_3 f(\cdot,-h)=\partial_3 f(\cdot,0)+\beta f(\cdot,0)=0$ on $\Tor^2$. Then
$$
\sum_{i,j=1}^3 \int_{\Dom}|\partial_{i,j}^2 f |^2 \,dx = \int_{\Dom} |\Delta f|^2 \,dx - 
2\beta \int_{\Tor^2 } |\nabla_{\h} f(\cdot,0)|^2\,dx.
$$
In particular, for all $\varepsilon \in (0,1)$ there exists $C(\varepsilon)>0$ independent of $f$ such that 
$$
\sum_{i,j=1}^3 \int_{\Dom}|\partial_{i,j}^2 f |^2 \,dx\leq (1+\varepsilon)  \int_{\Dom} |\Delta f|^2 \,dx + C_{\varepsilon}
 \int_{\Dom} | f|^2 \,dx.
$$
\end{lemma}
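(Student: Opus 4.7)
By a standard density argument one reduces to the case where $f\in C^\infty(\overline{\Dom})$ still satisfying the Robin/Neumann conditions $\partial_3 f(\cdot,-h)=0$ and $\partial_3 f(\cdot,0)+\beta f(\cdot,0)=0$, so that all derivatives up to order three that appear in the integration by parts make classical sense; the general $H^2$-statement then follows from continuity of both sides in the $H^2$-norm. The computational heart of the proof is the pointwise identity
\begin{equation*}
\partial_{i,i}^2 f\,\partial_{j,j}^2 f-(\partial_{i,j}^2 f)^2
=\partial_i\bigl(\partial_i f\,\partial_{j,j}^2 f\bigr)-\partial_j\bigl(\partial_i f\,\partial_{i,j}^2 f\bigr),
\end{equation*}
valid for each pair $i,j\in\{1,2,3\}$; squaring $\Delta f=\sum_i\partial_{i,i}^2 f$ and subtracting $\sum_{i,j}|\partial_{i,j}^2 f|^2$ leaves exactly the sum of these divergence-type expressions over the off-diagonal pairs (the $i=j$ terms cancel).

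Integrating and applying the divergence theorem, each term $\partial_k(\cdots)$ yields a boundary contribution only when $k=3$, since $\Tor^2$ has no boundary. I would then enumerate: pairs $(i,j)\in\{1,2\}^2$ contribute nothing; for $(i,j)=(3,\ell)$ with $\ell\in\{1,2\}$ the first divergence yields $\int_{\Tor^2}[\partial_3 f\,\partial_{\ell,\ell}^2 f]_{-h}^{0}\,dx_{\h}=-\beta\int_{\Tor^2}f(\cdot,0)\,\partial_{\ell,\ell}^2 f(\cdot,0)\,dx_{\h}$ (using the Robin condition at $x_3=0$ and the Neumann one at $x_3=-h$), which a horizontal integration by parts on $\Tor^2$ turns into $\beta\int_{\Tor^2}|\partial_\ell f(\cdot,0)|^2dx_{\h}$; for the symmetric pairs $(\ell,3)$ the second divergence gives the same value since $\partial_{3,\ell}^2 f(\cdot,0)=-\beta\partial_\ell f(\cdot,0)$ and $\partial_{3,\ell}^2 f(\cdot,-h)=0$. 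Summing produces the total $2\beta\int_{\Tor^2}|\nabla_{\h}f(\cdot,0)|^2\,dx_{\h}$, yielding the claimed identity.

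For the quantitative second assertion, when $\beta\ge 0$ the identity already gives the inequality with constant $1$. For $\beta<0$, it suffices to control the boundary term by $\varepsilon\sum_{i,j}\|\partial_{i,j}^2 f\|_{L^2}^2+C_\varepsilon\|f\|_{L^2}^2$ and absorb. This follows from the trace inequality $\|\nabla_{\h}f(\cdot,0)\|_{L^2(\Tor^2)}\lesssim\|f\|_{H^{3/2+\eta}(\Dom)}$ for small $\eta>0$, combined with the interpolation estimate $\|f\|_{H^{3/2+\eta}}\lesssim\|f\|_{H^{2}}^{3/4+\eta/2}\|f\|_{L^2}^{1/4-\eta/2}$ and Young's inequality, giving $2|\beta|\|\nabla_{\h}f(\cdot,0)\|_{L^2(\Tor^2)}^2\le\varepsilon\|f\|_{H^2}^2+C_\varepsilon\|f\|_{L^2}^2$; bounding $\|f\|_{H^2}^2$ by $\sum_{i,j}\|\partial_{i,j}^2 f\|_{L^2}^2+\|f\|_{L^2}^2$ and absorbing the small multiple into the left-hand side finishes the proof.

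The only mildly delicate point is the density step: one wants to approximate $f\in H^2$ satisfying the specified boundary conditions by smooth functions in the same class. This is standard (e.g.\ via reflection/extension across each boundary component together with convolution in $\Tor^2$), but is the step where care is required; once it is in place, the remainder is a bookkeeping exercise in integration by parts plus a routine trace-interpolation estimate.
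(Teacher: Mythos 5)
Your proof is correct and follows essentially the same route as the paper: reduce to smooth $f$ by density, integrate by parts to isolate the boundary contributions at $x_3=0$ and $x_3=-h$ using the Neumann/Robin conditions, and then control the boundary term via the $H^{3/2+\eta}\to L^2(\Tor^2)$ trace estimate plus interpolation and Young's inequality. The only (cosmetic) difference is that you organize the bookkeeping through the pointwise divergence identity $\partial_{i,i}^2 f\,\partial_{j,j}^2 f-(\partial_{i,j}^2 f)^2=\partial_i(\partial_i f\,\partial_{j,j}^2 f)-\partial_j(\partial_i f\,\partial_{i,j}^2 f)$ rather than doing the two integrations by parts directly for each index pair, but the computation and the resulting boundary term $-2\beta\int_{\Tor^2}|\nabla_{\h}f(\cdot,0)|^2\,dx_{\h}$ are the same.
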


Lemma \ref{l:Kadlec_formula} is well-known to experts and actually holds under more general assumption on the domain. For the sake of completeness we provide a proof.

\begin{proof}[Proof of Lemma \ref{l:Kadlec_formula}]
Since $\Dom$ is a smooth manifold with boundary, 
by a standard density argument we may assume $f\in C^{3}(\overline{\Dom})$. Note that, for $i,j\in \{1,2\}$, integrating by parts we have  due to the periodicity in the horizontal directions that
$$
\int_{\Dom} \partial_{i,j}^2 f \, \partial_{i,j}^2 f\, dx=
-\int_{\Dom} \partial_{i,j,j}^3 f \, \partial_{i} f\, dx=
\int_{\Dom} \partial_{i,i}^2 f \, \partial_{j,j}^2 f\, dx,
$$
and using that 
$\partial_3 f(\cdot,-h)=\partial_3 f(\cdot,0)+\beta f(\cdot,0)=0$ on $\Tor^2$,
\begin{align*}
\int_{\Dom} \partial_{3, j}^2 f\,  \partial_{3, j}^2 f \,dx 
&=- \int_{\Dom} \partial_{3, j ,j}^3 f \, \partial_{3} f \,dx\\ 
&=- \int_{\Tor^2} \Big[\partial_{ j, j}^2 f \,\partial_{3} f\Big]_{x_3=-h}^{x_3=0} \,dx_{\h}+
\int_{\Dom} \partial_{ j,j}^2 f \,\partial_{3,3}^2f\,dx\\
&= \beta \int_{\Tor^2} \partial_{ j j}^2 f(\cdot,0) \, f(\cdot,0) \,dx_{\h}+
\int_{\Dom} \partial_{ j,j}^2 f \,\partial_{3,3}^2f\,dx\\
&= -\beta \int_{\Tor^2} |\partial_{ j} f(\cdot,0)|^2 \,dx_{\h}+
\int_{\Dom} \partial_{ j,j}^2 f \,\partial_{3,3}^2f\,dx.
\end{align*}
Thus
\begin{align*}
&\sum_{i,j=1}^3\|\partial_{i,j}^2 f  \|_{L^2(\Dom)}^2\\
&=
\sum_{i,j=1}^2\|\partial_{i,j}^2 f\|_{L^2(\Dom)}^2+ 2 \sum_{j=1}^2 \|\partial_{3,j}^2 f \|_{L^2(\Dom)}^2
+ \|\partial_{3,3}^2 f \|_{L^2(\Dom)}^2\\
&=\int_{\Dom}\Big( \sum_{i,j=1}^2\partial_{i,i}^2 f\, \partial_{j,j}^2 f 
+ 2\sum_{j=1}^2 \partial_{3,3}^2 f\, \partial_{j,j}^2 f +|\partial_{3,3}^2 f|^2\Big) \,dx-2 \beta \int_{\Tor^2} |\nabla_{\h} f(\cdot,0)|^2\,dx_{\h}\\
&=
\int_{\Dom}|\Delta f|^2\,dx -2 \beta \int_{\Tor^2} |\nabla_{\h} f(\cdot,0)|^2\,dx_{\h}.
\end{align*}
Recalling that $\|\nabla_{\h} f(\cdot,0)\|_{L^2(\Tor^2)}\lesssim \|f\|_{H^{1+s}(\Dom)}$ for any $s>\frac{1}{2}$, the last inequality follows from a standard interpolation argument.
\end{proof}

The above argument can be easily extended to prove the following

\begin{lemma}[Kadlec's formula II]
\label{l:Kadlec_formula_II}
Let $\Dom=\Tor^2\times (-h,0)$ for some $h>0$.  
Assume that there are $a^{i,j}\colon \Dom \rightarrow \R$ for $i,j\in \{1,2,3\}$ and  some $\delta,\ellip\in (0,1)$ and $M\geq 1$ such that
\begin{align}
\label{eq:a_ij_modified_Kadlec}
a^{i,j}=a^{j,i}, \ \ \ \qquad 
\|a^{i,j}\|_{ H^{1,3+\delta}(\Dom)} \leq M, & 
&\text{ for all }i,j\in \{1,2,3\},&
\\
\label{eq:a_3j_trace_regularity_Kadlec}
\|a^{3,j}(\cdot,0)\|_{H^{\frac{1}{2}+\delta}(\Tor^2)} \leq M, & 
&\text{ for all }j\in \{1,2\},&
\\
\label{eq:ellipticity_Kadlec}
\sum_{i,j=1}^3 a^{i,j} \xi_i\xi_j \geq \ellip |\xi|^2,&   &\text{ for all }\xi\in \R^d.&
\end{align}
Let $\beta\in \R$ and 
$
f\in H^{2}(\Dom) 
$
be such that 
$\partial_3 f(\cdot,-h)=\partial_3 f(\cdot,0)+\beta f(\cdot,0)=0$ on $\Tor^2$. 
Then for all $\varepsilon\in (0,1)$ there exists 
$
C(\varepsilon,\ellip,\delta,M)>0
$ 
independent of $f$ such that 
\begin{equation}
\label{eq:claim_Kadlec_2}
\sum_{i,j,k=1}^3 \int_{\Dom}a^{i,j}\partial_{i,k}^2 f \partial_{j,k}^2 f\,dx
\leq (1+\varepsilon) 
\sum_{i,j=1}^3 \int_{\Dom}a^{i,j}\partial_{i,j}^2 f \Delta f \,dx
 + C
 \int_{\Dom} | f|^2 \,dx.
\end{equation}
\end{lemma}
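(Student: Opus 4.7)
The plan is to extend the proof of Lemma \ref{l:Kadlec_formula} to variable coefficients. By density I may assume $f\in C^3(\overline{\Dom})$. Set
\begin{equation*}
A := \sum_{i,j,k=1}^3 \int_{\Dom} a^{i,j}\,\partial_{i,k}^2 f\cdot \partial_{j,k}^2 f\,dx,
\qquad
M := \sum_{i,j=1}^3\int_{\Dom} a^{i,j}\,\partial_{i,j}^2 f\cdot \Delta f\,dx.
\end{equation*}
Integrating by parts twice in succession --- first in $x_i$ on the first factor $\partial_{i,k}^2 f$, and then in $x_k$ on the resulting third-order term (viewed as $\partial_k(\partial_{i,j}^2 f)$) --- produces the identity $A = M + R + B$, where $R$ collects two bulk remainders of the schematic form $\int \partial a\cdot\partial f\cdot \partial^2 f\,dx$ (each carrying one factor of $\nabla a$), and $B = B_1 - B_2$ is a surface integral on $\Tor^2\times\{0,-h\}$, with $B_1$ coming from the first IBP (contributing only when $i=3$) and $B_2$ from the second (contributing only when $k=3$).

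The bulk remainder $R$ is controlled exactly as in the derivation of \eqref{eq:estimate_Ito_correction_v_smr}: H\"older's inequality with $\partial_m a^{i,j}\in L^{3+\delta}(\Dom)$, one factor $\partial^2 f\in L^2$, and one factor $\partial f\in L^{\varrho}$ where $\tfrac{1}{\varrho}=\tfrac{1}{2}-\tfrac{1}{3+\delta}$, followed by the Sobolev embedding $H^{3/(3+\delta)}(\Dom)\hookrightarrow L^\varrho(\Dom)$, interpolation, and Young's inequality, give $|R|\leq \varepsilon'\|\nabla^2 f\|_{L^2}^2 + C_{\varepsilon',M,\delta}\|f\|_{L^2}^2$ for any preassigned $\varepsilon'>0$.

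The delicate piece is $B$. Substituting the boundary conditions $\partial_3 f(\cdot,-h)=0$ and $\partial_3 f(\cdot,0)=-\beta f(\cdot,0)$ and cataloguing contributions by the indices $(i,j,k)\in\{1,2,3\}^3$ at each face, I would verify that the ``dangerous'' terms containing $\partial_{3,3}^2 f$ evaluated at the boundary --- for which no meaningful trace exists for $f\in H^2(\Dom)$ --- cancel exactly between $B_1$ and $B_2$ at both $x_3=0$ and $x_3=-h$. The residual boundary expressions then split into (i) harmless products of $a,\nabla_{\h} f, f$ at $x_3=0$, controlled via the trace inequality and 2D Sobolev embedding $H^{3/2}(\Tor^2)\hookrightarrow L^\infty$, and (ii) the critical pairings $\int_{\Tor^2} a^{3,j}(\cdot,z)\,\partial_j f(\cdot,z)\,\Delta_{\h} f(\cdot,z)\,dx_{\h}$ (for $z\in\{0,-h\}$, $j\in\{1,2\}$) together with $\beta\int_{\Tor^2} a^{3,3}(\cdot,0)\,f(\cdot,0)\,\Delta_{\h} f(\cdot,0)\,dx_{\h}$. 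For each critical pairing I would integrate by parts once more on $\Tor^2$ with $\Delta_{\h}=\partial_1^2+\partial_2^2$, and rewrite the arising cross term as $\sum_\ell a^{3,j}\partial_{\ell,j}^2 f\,\partial_\ell f = \tfrac{1}{2} a^{3,j}\partial_j(|\nabla_{\h} f|^2)$ before IBP-ing once more on $\Tor^2$ to transfer $\partial_j$ onto $a^{3,j}$. The resulting surface integrals are estimated via H\"older or by duality $H^{1/2+\delta}(\Tor^2)\times H^{-1/2-\delta}(\Tor^2)$, using the hypothesis \eqref{eq:a_3j_trace_regularity_Kadlec} (and the analogous bound at $z=-h$, which follows automatically from \eqref{eq:a_ij_modified_Kadlec} via the 2D trace-Sobolev embedding $W^{1-\frac{1}{3+\delta},3+\delta}(\Tor^2)\hookrightarrow H^{(3+2\delta)/(3+\delta)}(\Tor^2)$), combined with the trace bound $\|\nabla_{\h} f(\cdot,z)\|_{H^{1/2}(\Tor^2)}\lesssim \|f\|_{H^2(\Dom)}$ and the interpolation $\|f\|_{H^{2-\sigma}(\Dom)}^2\leq \varepsilon'\|f\|_{H^2(\Dom)}^2 + C_{\varepsilon'}\|f\|_{L^2(\Dom)}^2$ for small $\sigma>0$. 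This gives $|B|\leq \varepsilon'\|\nabla^2 f\|_{L^2}^2 + C_{\varepsilon'}\|f\|_{L^2}^2$.

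Combining these estimates yields $|A-M|\leq 2\varepsilon'\|\nabla^2 f\|_{L^2}^2 + C_{\varepsilon'}\|f\|_{L^2}^2$. The ellipticity condition \eqref{eq:ellipticity_Kadlec} supplies the pointwise bound $\ellip|\nabla^2 f|^2\leq \sum_{i,j,k}a^{i,j}\partial_{i,k}^2 f\cdot\partial_{j,k}^2 f$, and in particular $\ellip\|\nabla^2 f\|_{L^2}^2\leq A$; choosing $\varepsilon'$ so small that $2\varepsilon'/\ellip<\varepsilon/(1+\varepsilon)$ allows the $\|\nabla^2 f\|_{L^2}^2$ contribution on the right to be absorbed into $A/\ellip$ on the left, producing \eqref{eq:claim_Kadlec_2}. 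The main obstacle throughout is the boundary analysis: verifying the exact cancellation of the $\partial_{3,3}^2 f$ contributions between $B_1$ and $B_2$, and then closing the estimate on the residual pairings $\int_{\Tor^2} a^{3,j}\,\partial_j f\,\Delta_{\h} f\,dx_{\h}$ --- these live precisely at the borderline two-dimensional regularity $H^{1/2}\times H^{-1/2}$, which is exactly what motivates the explicit trace hypothesis \eqref{eq:a_3j_trace_regularity_Kadlec}.
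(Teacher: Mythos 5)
Your overall strategy is the same family as the paper's: exchange derivatives by two integrations by parts, control the bulk remainders carrying one factor of $\nabla a\in L^{3+\delta}$ by H\"older, Sobolev embedding, interpolation and Young (exactly as in \eqref{eq:estimate_Ito_correction_v_smr}), treat the boundary terms using the Robin/Neumann conditions and the trace regularity of $a^{3,j}$, and absorb at the end via the ellipticity \eqref{eq:ellipticity_Kadlec}. Two of your structural claims are correct: the only boundary contributions containing $\partial^2_{3,3}f$ (which has no trace for $f\in H^2$) are of the form $a^{3,3}\partial_3 f\,\partial^2_{3,3}f$, one from each integration by parts, and they do cancel in $B_1-B_2$; and the bottom face needs no extra hypothesis, since the trace of $a^{i,j}\in H^{1,3+\delta}(\Dom)$ lies in $W^{1-\frac{1}{3+\delta},3+\delta}(\Tor^2)\embed H^{\frac{2+\delta}{3+\delta}}(\Tor^2)\subset H^{1/2+\delta'}(\Tor^2)$ for some $\delta'>0$ (your stated target space $H^{(3+2\delta)/(3+\delta)}$ has smoothness larger than one and cannot be correct, but the conclusion you draw is).

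The gap is in the estimate of the residual boundary pairings, i.e.\ exactly the step you flag as the main obstacle. With your fixed order of integration by parts, the surviving critical terms at both faces are $\sum_{j,k=1}^2\int_{\Tor^2}a^{3,j}\,\partial_k f\,\partial^2_{j,k}f\,dx_{\h}=\tfrac12\sum_{j=1}^2\int_{\Tor^2}a^{3,j}\,\partial_j\bigl(|\nabla_{\h}f|^2\bigr)\,dx_{\h}$: all derivatives are tangential, so the boundary conditions cannot be used, and the ``low'' factor is $\nabla_{\h}f$ rather than $f$. This defeats the tools you list. If you transfer $\partial_j$ onto $a^{3,j}$, you must pair $\partial_j a^{3,j}(\cdot,z)\in H^{-1/2+\delta}(\Tor^2)$ against $|\nabla_{\h}f|^2(\cdot,z)$, which would have to lie in $H^{1/2-\delta}(\Tor^2)$; but the trace of $\nabla_{\h}f$ is only in $H^{1/2}(\Tor^2)$ (already at cost $\|f\|_{H^2(\Dom)}$), and in two dimensions the product of two $H^{1/2}$ functions lies in $H^{s}$ only for $s<0$, so $|\nabla_{\h}f|^2\in H^{1/2-\delta}$ would require $\nabla_{\h}f(\cdot,z)\in H^{3/4-\delta/2+\epsilon}(\Tor^2)$, i.e.\ roughly $f\in H^{9/4-\delta/2}(\Dom)$, beyond the available regularity. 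If instead you pair directly, you need $a^{3,j}\partial_kf(\cdot,z)\in H^{1/2+\delta}(\Tor^2)$, which again exceeds the $H^{1/2}$ trace regularity of $\nabla f$. Hence the claimed bound $|B|\leq\varepsilon'\|\nabla^2 f\|_{L^2}^2+C_{\varepsilon'}\|f\|_{L^2}^2$ is not obtained. The paper avoids the issue by choosing, for each index triple $(i,j,k)$ and using the symmetry $a^{i,j}=a^{j,i}$, in which two directions to integrate by parts: when $i=3$ and $j,k\in\{1,2\}$ the term is rewritten as the $(j,3,k)$-term and both integrations by parts are horizontal (periodic), so no boundary term is created at all; boundary terms only arise in the cases of Steps 1a--1b, where $\partial_3 f$ is replaced by $-\beta f$ (or $0$), leaving $f$ itself in the low slot, and then $a^{i,j}(\cdot,0)f(\cdot,0)\in H^{1/2+\delta}(\Tor^2)$ at cost $\|f\|_{H^{2-\delta}}$, so the $H^{1/2+\delta}\times H^{-1/2-\delta}$ duality closes with the necessary $\delta$-room. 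To rescue your global organization you would either have to adopt this case-dependent choice of directions, or run a genuinely finer $L^p$-based duality on the critical terms using the full $W^{1-\frac{1}{3+\delta},3+\delta}$ trace regularity of $a^{3,j}$; the latter is substantially more delicate than what you wrote.
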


The integrals in \eqref{eq:claim_Kadlec_2} are well-defined since, 
by \eqref{eq:a_ij_modified_Kadlec} and Sobolev embeddings, 
\begin{equation}
\label{eq:a_ij_boundedness_Kadlec_2}
\|a^{i,j}\|_{ L^{\infty}(\Dom)}\lesssim_{\delta} M.
\end{equation}  
The trace $a^{i,j}(\cdot,0)$ is well-defined since $a^{i,j}\in H^{1,3+\delta}(\Dom)$ but the latter only implies $a^{i,j}(\cdot,0)\in H^{1/2,3+\delta}(\Tor^2)$ which is not enough for \eqref{eq:claim_Kadlec_2} to hold.

\begin{proof}
The proof follows from the argument used in Lemma \ref{l:Kadlec_formula}. As above it is enough to consider the case $f\in C^3(\overline{\Dom})$. Without loss of generality we assume that $\delta\in (0,\frac{1}{4})$.

\emph{Step 1: For all $\eta>0$ there exists $C_1(\delta, M,\eta)>0$ such that for all $i,j,k\in \{1,2,3\}$}
$$
\int_{\Dom}a^{i,j}\partial_{i,k}^2 f \partial_{j,k}^2 f\,dx
\leq 
 \Big(\int_{\Dom}a^{i,j}\partial_{i,j}^2 f \partial_{k,k}^2 f \,dx
 +\eta  \int_{\Dom}| D^2 f|^2\,dx\Big)
 +C_1
 \int_{\Dom} | f|^2 \,dx,
$$
\emph{where $D^2 f:=(\partial_{i,j}^2 f)_{i,j=1}^3$ and $|D^2 f|^2:=\sum_{i,j=1}^3|\partial_{i,j}^2 f |^2 $.}

Let us divide the proof of this step into three sub-steps. Note that in case $i=j=k$ the claim of Step 1 follows. Hence we will assume either $i\neq j$ or $j\neq k$.

\emph{Sub-step 1a: Step 1 holds in case $k=3$}. Since either $i\neq 3$ or $j\neq 3$, without loss of generality we may assume $i\neq 3$. Integrating by parts and using that $\partial_3 f(\cdot,-h)=0$, 
\begin{align*}
\int_{\Dom}a^{i,j}\partial_{i,j}^2 f \partial_{3,3}^2 f\,dx
&=
\int_{\Tor^2}a^{i,j}(\cdot,0)\partial_{i,j}^2 f(\cdot,0)\partial_{3} f(\cdot,0)\,dx_{\h}\\
&- 
\int_{\Dom}\partial_3 a^{i,j}\partial_{i,j}^2 f \partial_{3} f\,dx
-\int_{\Dom}a^{i,j}\partial_{i,j,3}^3 f \partial_{3} f\,dx.
\end{align*}
Since $\partial_3 f(\cdot,0)=-\beta f(\cdot,0)$,
$$
\int_{\Tor^2}a^{i,j}(\cdot,0)\partial_{i,j}^2 f(\cdot,0)\partial_{3} f(\cdot,0)\,dx_{\h}
=-\beta \int_{\Tor^2}a^{i,j}(\cdot,0)\partial_{i,j}^2 f(\cdot,0) f(\cdot,0)\,dx_{\h},
$$
and by $i\neq 3$, 
$$
-\int_{\Dom}a^{i,j}\partial_{i,j,3}^3 f \partial_{3} f\,dx=
\int_{\Dom}\partial_i a^{i,j} \partial_{j,3}^2 f \partial_{3} f
+
\int_{\Dom}a^{i,j}\partial_{j,3}^2 f \partial_{i,3}^2 f\,dx.
$$
Thus the above equalities yield
\begin{align*}
&\int_{\Dom}a^{i,j}\partial_{i,j}^2 f \partial_{3,3}^2 f\,dx
=-\beta \int_{\Tor^2}a^{i,j}(\cdot,0)\partial_{i,j}^2 f(\cdot,0) f(\cdot,0)\,dx_{\h}\\
&- 
\int_{\Dom}\partial_3 a^{i,j}\partial_{i,j}^2 f \partial_{3} f\,dx
+\int_{\Dom}\partial_i a^{i,j} \partial_{j,3}^2 f \partial_{3} f
+
\int_{\Dom}a^{i,j}\partial_{j,3}^2 f \partial_{i,3}^2 f\,dx\\
&=: I_1+I_2+I_3+
\int_{\Dom}a^{i,j}\partial_{j,3}^2 f \partial_{i,3}^2 f\,dx.
\end{align*}

Let us show that the additional term in the previous estimate are of lower order. Recall that the trace operator $H^{s+\frac{1}{2}}(\Dom)\ni f\mapsto f(\cdot,0)\in H^{s}(\Tor^2)$ is bounded for all $s>0$.
Thus, the boundary term is lower order since 
\begin{align*}
|I_1|
&\lesssim_{\beta} 
\|a^{i,j}(\cdot,0) f(\cdot,0)\|_{H^{\frac{1}{2}+\delta}(\Tor^2)}\|\partial_{i,j}^2 f(\cdot,0)\|_{H^{-\frac{1}{2}-\delta}(\Tor^2)}\\
&\lesssim_{\delta}  
\|a^{i,j}(\cdot,0) f(\cdot,0)\|_{H^{\frac{1}{2}+\delta}(\Tor^2)}\| f(\cdot,0)\|_{H^{\frac{3}{2}-\delta}(\Tor^2)}\\
&\lesssim_{\delta}  
\Big( 
\|a^{i,j}(\cdot,0)\|_{H^{\frac{1}{2}+\delta}(\Tor^2)}\| f(\cdot,0)\|_{L^{\infty}(\Tor^2)}\\
&\ \ \  +
\|a^{i,j}(\cdot,0)\|_{L^{\infty}(\Tor^2)}\| f(\cdot,0)\|_{H^{\frac{1}{2}+\delta}(\Tor^2)}
\Big) \| f(\cdot,0)\|_{H^{\frac{3}{2}-\delta}(\Tor^2)}
\lesssim_{\delta,M} \|f\|_{H^{2-\delta}}^2,
\end{align*} 
where in the last inequality we used \eqref{eq:a_3j_trace_regularity_Kadlec}, \eqref{eq:a_ij_boundedness_Kadlec_2} and that  
$$
\|f(\cdot,0)\|_{L^{\infty}(\Tor^2)}\lesssim_{\delta}\|f(\cdot,0)\|_{H^{1+\delta}(\Tor^2)} \lesssim \|f\|_{H^{\frac{3}{2}+\delta}}
\stackrel{(\delta<\frac{1}{4})}{\lesssim} \|f\|_{H^{2-\delta}}.
$$
Let $\varrho\in (2,6)$ be such that $\frac{1}{3+\delta} + \frac{1}{\varrho}=\frac{1}{2}$. Then there exists $\delta'>0$ such that $H^{1-\delta'}(\Dom)\embed L^{\varrho}(\Dom)$, and by H\"{o}lder inequality,
\begin{equation}
\label{eq:I_2_3_appendix_estimate}
|I_2|+|I_3|\lesssim  
\big\| | \nabla a^{i,j}| |\nabla f|\big\|_{L^2}\|D^2 f\|_{L^2} 
\lesssim \|\nabla a^{i,j} \|_{L^{3+\delta}}\| f\|_{H^{2-\delta'}} \|D^2 f\|_{L^2}.
\end{equation}

The previous estimates show that $I_1,I_2$ and $I_3$ are lower order terms w.r.t.\ $\|f\|_{H^2}$. Thus the claim of Step 1a follows from interpolation and Young inequalities. 

\emph{Sub-step 1b: Step 1 holds in case $i=j=3$ and $k\in \{1,2\}$}. Integrating by parts we have
\begin{align*}
\int_{\Dom}a^{3,3}\partial_{3,3}^2 f \partial_{k,k}^2 f\,dx
&= 
-\int_{\Dom}a^{3,3}\partial_{3,3,k}^3 f \partial_{k} f\,dx
-\int_{\Dom}\partial_k a^{3,3}\partial_{3,3}^2 f \partial_{k} f\,dx\\
&
\stackrel{(i)}{=}-
\int_{\Tor^2}a^{3,3}(\cdot,0)\partial_{3,k}^2 f(\cdot,0) \partial_{k} f(\cdot,0)\,dx_{\h}
+\int_{\Dom}\partial_3 a^{3,3}\partial_{3,k}^2 f \partial_{k} f\,dx\\
&\qquad \qquad \qquad
+\int_{\Dom} a^{3,3}|\partial_{3,k}^2 f|^2\,dx
-\int_{\Dom}\partial_k a^{3,3}\partial_{3,3}^2 f \partial_{k} f\,dx\\
&
\stackrel{(ii)}{=}
-\beta \int_{\Dom}a^{3,3}(\cdot,0)|\partial_{k} f(\cdot,0) |^2\,dx_{\h}
+\int_{\Dom}\partial_3 a^{3,3}\partial_{3,k}^2 f \partial_{k} f\,dx\\
&\qquad \qquad \qquad
+\int_{\Dom} a^{3,3}|\partial_{3,k}^2 f|^2\,dx
-\int_{\Dom}\partial_k a^{3,3}\partial_{3,3}^2 f \partial_{k} f\,dx
\end{align*}
where in $(i)$ and $(ii)$ we used that  $\partial_{3,k}f (\cdot,-h)=0$ and $\partial_{3,k}^2 f(\cdot,0)= -\beta \partial_k f(\cdot,0)$ on $\Tor^2$, respectively.  

Note that, by \eqref{eq:a_ij_boundedness_Kadlec_2}, for all $s\in (0,\frac{1}{2})$, 
$$
\Big|\int_{\Dom}a^{3,3}(\cdot,0)|\partial_{k} f(\cdot,0) |^2\,dx_{\h}\Big|
\leq \|a^{i,j}\|_{L^{\infty}} \|\nabla f(\cdot,0)\|_{L^2(\Tor^2)}^2\lesssim_{\delta}  M \|f\|_{H^{3/2+s}}^2
$$
Now, the claim follows from the previous inequalities by repeating the argument in \eqref{eq:I_2_3_appendix_estimate} to estimate the remaining terms.

\emph{Sub-step 1c: Step 1 holds in case $k\in \{1,2\}$ and either $i\neq 3$ or $j\neq 3$}. Without loss of generality we assume $i\neq 3$. 
In the latter case, one can integrate by parts on the $k$- and $i$-coordinates and therefore no boundary terms appear. The claim of Substep 1c follows by repeating the estimates in \eqref{eq:I_2_3_appendix_estimate}.

\emph{Step 2: Proof of \eqref{eq:claim_Kadlec_2}}. Fix $\varepsilon\in (0,1)$ and choose $\eta>0$ such that $1+\varepsilon=
\big(1-\frac{27\eta}{\ellip}\big)^{-1}$. 
By ellipticity \eqref{eq:ellipticity_Kadlec},
$$
\eta 
\int_{\Dom}|D f|^2\,dx
\leq \frac{\eta}{\ellip}
\sum_{i,j,k=1}^3 \int_{\Dom}a^{i,j}\partial_{i,k}^2 f \partial_{j,k}^2 f\,dx.
$$
Thus, by Step 1 and summing over $i,j,k\in \{1,2,3\}$, we have
$$
\Big(1-\frac{27\eta}{\ellip}\Big)
\sum_{i,j,k=1}^3 \int_{\Dom}a^{i,j}\partial_{i,k}^2 f \partial_{j,k}^2 f\,dx
\leq 
\sum_{i,j=1}^3 \int_{\Dom}a^{i,j}\partial_{i,j}^2 f \Delta f \,dx
 + C_{2}
 \int_{\Dom} | f|^2 \,dx
$$
where $C_2:=27C_1$.
The above choice of $\eta$ yields \eqref{eq:claim_Kadlec_2} with $C:=C_2\big(1-\frac{27\eta}{\ellip}\big)^{-1}$.
\end{proof}


\bibliographystyle{alpha-sort}
\bibliography{literature}

\end{document}